\providecommand\@dotsep{5}
\numberwithin{equation}{section}
\newtheorem{theorem}{Theorem}[section]
\newtheorem{corollary}[theorem]{Corollary}
\newtheorem{lemma}[theorem]{Lemma}
\newtheorem{proposition}[theorem]{Proposition}
\newtheorem{definition}[theorem]{Definition}
\newtheorem{conjecture}{Conjecture}
\newtheorem{example}[theorem]{Example}
\theoremstyle{definition}
\newtheorem*{induction hypothesis}{Induction hypothesis}
\theoremstyle{remark}
\newtheorem*{remark}{Remark}
\def\inn#1#2{\langle#1,#2\rangle}
\newcommand{\uthree}{u_{3,1}}
\def\bu{u}
\def\lesim{\lesssim}
\newcommand{\set}[1]{ \left\{ #1 \right\} }
\newcommand{\supp}{\mathrm{supp}\,}
\newcommand{\xisupp}{\mathrm{supp}_{\xi}\,}
\newcommand{\R}{\mathbb{R}}
\newcommand{\Z}{\mathbb{Z}}
\newcommand{\N}{\mathbb{N}}
\newcommand{\ud}{\mathrm{d}}
\newcommand{\Scn}{\mathscr{S}(\R^n)}
\newcommand{\xip}{\tfrac{\xi}{|\xi|}}
\newcommand{\be}{\mathbf{e}}
\newcommand{\ba}{\mathbf{a}}
\newcommand{\bg}{\mathbf{g}}
\newcommand{\floor}[1]{\lfloor #1 \rfloor }
\def\lc{\lesssim}
\def\gc{\grtsim}
\def\bbone{{\mathbbm 1}}
\newcommand{\Be}{\begin{equation}}
\newcommand{\Ee}{\end{equation}}
\newcommand{\Bm}{\begin{multline}}
\newcommand{\Em}{\end{multline}}
\newcommand{\Bea}{\begin{eqnarray}}
\newcommand{\Eea}{\end{eqnarray}}
\newcommand{\Beas}{\begin{eqnarray*}}
\newcommand{\Eeas}{\end{eqnarray*}}
\newcommand{\Benu}{\begin{enumerate}}
\newcommand{\Eenu}{\end{enumerate}}
\newcommand{\Bi}{\begin{itemize}}
\newcommand{\Ei}{\end{itemize}}
\def\intslash{\rlap{\kern  .32em $\mspace {.5mu}\backslash$ }\int}
\def\qsl{{\rlap{\kern  .32em $\mspace {.5mu}\backslash$ }\int_{Q_x}}}
\def\N{\mathbb N}
\def\set#1{{ \left\{ #1 \right\} }}
\def\floor#1{{\lfloor #1 \rfloor }}
\def\emph#1{{\it #1 }}
\def\diam{{\text{\rm  diam}}}
\def\Ga{\Gamma}
\def\ga{\gamma}
\def\inn#1#2{\langle#1,#2\rangle}
\def\meas{{\text{\rm meas}}}
\def\lc{\lesssim}
\def\gc{\gtrsim}
\newcommand{\eps}{\varepsilon}
\newcommand{\ka}{\kappa}
\newcommand{\la}{\lambda}
\def\om{\omega}
\def\fN{{\mathfrak {N}}}
\def\bbN{{\mathbb {N}}}
\def\bbR{{\mathbb {R}}}
\def\bbZ{{\mathbb {Z}}}
\def\cE{{\mathcal {E}}}
\def\cF{{\mathcal {F}}}
\def\cS{{\mathcal {S}}}
\def\cZ{{\mathcal {Z}}}
\title[Sobolev improving for averages over curves]{Sobolev improving for averages over curves in
$\mathbb{R}^4$}
\author[D. Beltran]{David Beltran}
\author[S. Guo]{Shaoming Guo}
\author[J. Hickman]{Jonathan Hickman}
\author[A. Seeger]{Andreas Seeger}
\date{\today}
\address{David Beltran: Department of Mathematics, University of Wisconsin, 480 Lincoln Drive, Madison, WI, 53706, USA.}
\email{dbeltran@math.wisc.edu}
\address{Shaoming Guo: Department of Mathematics, University of Wisconsin, 480 Lincoln Drive, Madison, WI, 53706, USA.}
\email{shaomingguo@math.wisc.edu}
\address{Jonathan Hickman: School of Mathematics, James Clerk Maxwell Building, The King's Buildings, Peter Guthrie Tait Road, Edinburgh, EH9 3FD, UK.}
\email{jonathan.hickman@ed.ac.uk}
\address{Andreas Seeger: Department of Mathematics, University of Wisconsin, 480 Lincoln Drive, Madison, WI, 53706, USA.}
\email{seeger@math.wisc.edu}
\begin{document}

\maketitle

\begin{abstract} We study $L^p$-Sobolev improving for averaging operators $A_{\gamma}$ given by convolution with a compactly supported smooth density $\mu_{\gamma}$ on a non-degenerate curve. In particular, in 4 dimensions we show that $A_{\gamma}$ maps $L^p(\R^4)$ to the Sobolev space $L^p_{1/p}(\R^4)$ for all $6 < p < \infty$. This implies the complete optimal range of $L^p$-Sobolev estimates, except possibly for certain endpoint cases. The proof relies on decoupling inequalities for a family of cones which decompose the wave front set of $\mu_{\gamma}$. In higher dimensions, a new non-trivial necessary condition for $L^p(\R^n) \to L^p_{1/p}(\R^n)$ boundedness is obtained, which motivates a conjectural range of estimates.
\end{abstract}

%%%%%%%%%%%%%%%%%%%%%%%%%%%%%%%%%%%%%%%%%%%%%%%%%%%%%%%%%%%%%%%%%%%%%%%%%%%%%%%%%%%%%%%%%%%%%%%%

%                                          Introduction

%%%%%%%%%%%%%%%%%%%%%%%%%%%%%%%%%%%%%%%%%%%%%%%%%%%%%%%%%%%%%%%%%%%%%%%%%%%%%%%%%%%%%%%%%%%%%%%%

\section{Introduction} For $n\ge 2$ let $\gamma \colon I \to \R^n$ be a smooth curve,\footnote{Throughout, any curve is tacitly assumed to be simple (that is, $\gamma$ is injective) and regular ($\gamma'$ is non-vanishing).} where $I \subset \R$ is a compact interval, and $\chi \in C^{\infty}(\R)$ be a bump function supported on the interior of $I$. Consider the averaging operator 
\begin{equation}\label{averaging operator}
    A_{\gamma}f(x) := \int_{\R} f(x - \gamma(s))\,\chi(s) \, \ud s;
\end{equation}
in particular, $A_{\gamma}f=\mu_\gamma \ast f$, where $\mu_{\gamma}$ is the measure given by the push-forward of $\chi(s)\ud s$ under~$\gamma$.

The goal of this paper is to study sharp $L^p$-Sobolev improving bounds for the operator $A_{\gamma}$ for a wide class of curves in $\R^4$. To state the main theorem, we say a smooth curve $\gamma \colon I \to \R^n$ is \textit{non-degenerate} if there is a constant $c_0 > 0$ such that 
\begin{equation}\label{eq:nondegenerate}
    |\det(\gamma'(s), \cdots, \gamma^{(n)}(s))| \geq c_0 \qquad \textrm{for all $s \in I$}
\end{equation}
or, equivalently, the $n-1$ curvature functions of $\gamma$ are all bounded away from 0.

\begin{theorem}\label{non-degenerate theorem} If $\gamma \colon I \to \R^4$ is non-degenerate and $6 < p < \infty$, then
\begin{equation*}
   \|A_{\gamma}f\|_{L^p_{1/p}(\R^4)} \lesssim_{p,\gamma, \chi} \|f\|_{L^p(\R^4)}.
\end{equation*}
\end{theorem}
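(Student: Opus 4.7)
The plan is to combine frequency localization, stationary phase, and decoupling inequalities for cones adapted to the curve. First, I would decompose $A_\gamma$ dyadically as $A_\gamma = A_\gamma^{\mathrm{low}} + \sum_{k \ge 1} A_\gamma^{k}$, where $A_\gamma^{k}$ has Fourier support in $|\xi| \sim 2^k$. The low-frequency part is harmless. The main objective is to show the uniform dyadic bound $\|A_\gamma^{k} f\|_{L^p(\R^4)} \lesssim 2^{-k/p} \|f\|_{L^p(\R^4)}$ for $k \ge 1$; summing geometrically and applying Littlewood--Paley theory then yields the $L^p \to L^p_{1/p}$ estimate.

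Next, I would carry out stationary-phase analysis of $\widehat{\mu_\gamma}$. Non-degeneracy of $\gamma$ implies that the critical points of the phase $s \mapsto \xi \cdot \gamma(s)$ are controlled by how $\xi$ aligns with the Frenet frame $\{\gamma'(s), \gamma''(s), \gamma'''(s), \gamma^{(4)}(s)\}$. The effective wave front set at frequency scale $2^k$ is a neighborhood of the \emph{dual cone} $\cC := \{\xi \in \R^4 : \xi \cdot \gamma'(s) = 0 \text{ for some } s \in I\}$, a $3$-dimensional ruled subvariety of $\R^4$. I would stratify $\cC \supset \cC_2 \supset \cC_3$, where $\cC_j$ collects those $\xi$ for which $\xi \cdot \gamma^{(\ell)}(s_\xi) = 0$ for all $2 \le \ell \le j$. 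On each stratum, the pointwise decay $|\widehat{\mu_\gamma}(\xi)| \lesssim |\xi|^{-1/j}$ becomes sharper as $j$ grows, and the natural frequency support is an anisotropic plate with dimensions dictated by the Frenet scales.

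The core input is a decoupling inequality for each family of plates covering these strata. For the top stratum, this is an $\ell^p$-decoupling for the $3$-dimensional cone $\cC$ in $\R^4$, which is the non-standard cone whose directrix is the Frenet curve $s \mapsto \gamma'(s)^\perp \cap \mathbb{S}^3$; in the range $p > 6$ — exactly the range of the theorem — such a decoupling should deliver the target amplitude $2^{-k/p}$ when combined with the stationary-phase bound $|\widehat{\mu_\gamma}(\xi)| \lesssim |\xi|^{-1/2}$ on this stratum. For the deeper strata $\cC_2$ and $\cC_3$, the sharper symbol decay compensates for the thinner geometry: one applies decoupling (or an $L^2$ orthogonality estimate when the stratum has sufficiently low dimension) to the correspondingly thinner plates. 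Summing contributions from all strata gives the desired dyadic estimate.

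The main obstacle will be establishing the sharp decoupling for the cone $\cC$ in $\R^4$ in the full range $p > 6$. Because $\cC$ is ruled by lines through the origin with the directrix governed by a non-degenerate curve (rather than a round sphere), standard Bourgain--Demeter cone decoupling does not apply off the shelf; one must adapt the Bourgain--Demeter--Guth framework to this curve-adapted cone, presumably by induction-on-scales combined with multilinear Kakeya or multilinear restriction. A secondary technical point is tracking how the plate decomposition transitions between strata and ensuring the bookkeeping does not lose factors of $2^{\epsilon k}$ beyond what can be absorbed; this is controlled by an $\epsilon$-removal argument in the final range $p > 6$.
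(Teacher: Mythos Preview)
Your overall architecture---dyadic frequency localization, stratification by the cones where successive derivatives $\langle\gamma^{(j)}(s),\xi\rangle$ vanish, and decoupling on each stratum---matches the paper's. But two points are genuinely wrong and one crucial idea is missing.

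First, the direction of compensation is backwards. On the deeper strata (more vanishing derivatives) the symbol decay is \emph{worse}, not sharper: on $\cC\setminus\cC_2$ one has $|\widehat{\mu_\gamma}(\xi)|\lesssim|\xi|^{-1/2}$, on $\cC_2\setminus\cC_3$ only $|\xi|^{-1/3}$, and near $\cC_3$ only $|\xi|^{-1/4}$. What compensates is that the deeper cones are lower-dimensional, so the plate family there is smaller and the decoupled pieces carry better $L^\infty$ bounds after localization along the curve. Getting this trade-off right is the whole numerology of the proof.

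Second---and this is the main gap---the worst stratum (the paper's $J=4$ case, your $\cC_3$) cannot be handled by a single decoupling. The paper's key new idea is an \emph{iterated} decoupling: one first decomposes with respect to the codimension-$2$ cone $\Gamma_2$ (your $\cC_3$) using a curve-cone decoupling at exponent $d(d+1)=12$; then, on the pieces $b_{k,\bm{\ell}}$ where $\langle\gamma''(s),\xi\rangle$ genuinely vanishes, one rescales so that the situation resembles the $J=3$ problem and applies a \emph{second} decoupling with respect to the codimension-$1$ cone $\Gamma_1$ at exponent $d(d+1)=6$. Without this nested step the $b$-pieces do not sum for $p>6$. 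The relevant cone decouplings (Theorem~4.2 in the paper) are not proved by multilinear Kakeya but are deduced from Bourgain--Demeter--Guth decoupling for the moment curve via the Pramanik--Seeger cone-flattening induction-on-scales; no $\varepsilon$-removal is needed because the open range $p>6$ absorbs the $r^{-\varepsilon}$ losses geometrically.

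A smaller point: ``Littlewood--Paley'' alone does not pass from the dyadic bound $\|A_\gamma^k f\|_p\lesssim 2^{-k/p}\|f\|_p$ to $L^p\to L^p_{1/p}$, since these pieces are merely uniformly bounded after applying $(1-\Delta)^{1/2p}$. The paper invokes a vector-valued Calder\'on--Zygmund estimate from \cite{PRS2011} (their Proposition~A.2) that exploits the kernel support near the curve.
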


This result is sharp up to $p = 6$ in the sense that the $L^p \to L^p_{1/p}$ bound fails whenever $2 \leq p < 6$: see Proposition~\ref{intro necessity proposition} below.  Furthermore, interpolation with the elementary $L^2 \to L^2_{1/4}$ inequality and duality give the complete range of $L^p \to L^p_{\alpha}$ estimates for all $1 \leq p \leq \infty$, except possibly for endpoint cases.\medskip

In higher dimensions no $L^p \to L^p_{1/p}$ estimates are currently known to hold for such averaging operators, although it is natural to conjecture that the following holds.

\begin{conjecture}\label{main conjecture} If $\gamma \colon I \to \R^n$ is non-degenerate and $2n - 2 < p < \infty$, then
\begin{equation}\label{conjecture equation}
   \|A_{\gamma}f\|_{L^p_{1/p}(\R^n)} \lesssim_{p,\gamma, \chi} \|f\|_{L^p(\R^n)}.
\end{equation}
\end{conjecture}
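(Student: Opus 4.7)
My first step is to dyadically decompose $A_\gamma$ in frequency: write $A_\gamma = \sum_{\lambda \geq 1} A_\gamma^\lambda$, where $A_\gamma^\lambda = \mu_\gamma \ast P_\lambda$ has Fourier support in $|\xi| \sim \lambda$. It then suffices to prove, for each dyadic $\lambda \geq 1$ and $p > 6$, the frequency-localized estimate
\[
\|A_\gamma^\lambda f\|_{L^p(\R^4)} \lesssim \lambda^{-1/p + \epsilon}\|f\|_{L^p(\R^4)},
\]
and to absorb the $\lambda^\epsilon$ loss via a standard $\epsilon$-removal argument. Summation in $\lambda$ yields the desired Sobolev estimate.

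\textbf{Stationary phase and slab decomposition.} The Fourier transform $\widehat{\mu_\gamma}(\xi) = \int e^{-i\xi \cdot \gamma(s)}\chi(s)\,ds$ obeys the usual stationary phase expansion: on $|\xi| \sim \lambda$ its amplitude is of order $\lambda^{-1/2}$, concentrated near critical points $s_\ast(\xi)$ with $\xi \cdot \gamma'(s_\ast(\xi)) = 0$, and the non-degeneracy hypothesis \eqref{eq:nondegenerate} ensures that such critical points are non-degenerate and finite in number. Partition $I$ into arcs $\{J_\nu\}$ of length $\delta \sim \lambda^{-1/2}$ centred at points $\{s_\nu\}$, and decompose $A_\gamma^\lambda = \sum_\nu A_{\gamma,\nu}^\lambda$ so that each $A_{\gamma,\nu}^\lambda$ has Fourier support in an anisotropic slab $\Gamma_\nu^\lambda$ of thickness $\sim \lambda^{1/2}$ around the sector $\gamma'(s_\nu)^\perp \cap \{|\xi| \sim \lambda\}$. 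Up to negligible terms the $\Gamma_\nu^\lambda$ decompose the wave front set of $\mu_\gamma$ at scale $\lambda$, and by the non-degeneracy condition they fit canonically inside a family of 3-dimensional cones in $\R^4$ whose base geometry is governed by the higher-order Frenet data of $\gamma$.

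\textbf{Application of cone decoupling.} Next I apply an $\ell^p$ decoupling inequality for this cone family:
\[
\Big\|\sum_\nu A_{\gamma,\nu}^\lambda f\Big\|_{L^p(\R^4)} \lesssim \lambda^\epsilon \Big(\sum_\nu \|A_{\gamma,\nu}^\lambda f\|_{L^p(\R^4)}^p\Big)^{1/p} \quad \text{for } p \geq 6.
\]
The critical exponent $p = 6 = 2(n-1)$ for $n=4$ coincides with the sharp decoupling exponent for the relevant cones and matches the necessary condition from Proposition~\ref{intro necessity proposition}. Each piece $A_{\gamma,\nu}^\lambda f$ is then controlled by combining the $\lambda^{-1/2}$ amplitude of $\widehat{\mu_\gamma}$ on $\Gamma_\nu^\lambda$ with Bernstein's inequality on the thin slab, obtaining $\|A_{\gamma,\nu}^\lambda f\|_{L^p} \lesssim \lambda^{-1/2}\|P_{\Gamma_\nu^\lambda}f\|_{L^p}$ up to harmless factors. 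Summing over the $\sim \lambda^{1/2}$ slabs in $\ell^p$ and invoking a (dual) square-function/orthogonality estimate for the disjointly supported $P_{\Gamma_\nu^\lambda}f$ yields the target decay $\lambda^{-1/p + \epsilon}$.

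\textbf{Main obstacle.} The crux of the argument is establishing the cone decoupling above at the critical exponent $p = 6$ for the specific family of cones arising from the wave front set of $\mu_\gamma$. Unlike the classical lightcone setting of Bourgain--Demeter, these cones are parametrised by $s \in I$ and have bases of non-vanishing Gaussian curvature determined by the Frenet frame, so one cannot directly invoke an off-the-shelf decoupling theorem. Obtaining the sharp endpoint bound requires combining Bourgain--Demeter--Guth-type decoupling with a conical scaling argument adapted to the wave front geometry of $\mu_\gamma$, while carefully handling the interactions between multiple stationary-phase branches where distinct cones meet. Removing the $\lambda^\epsilon$ loss at the critical exponent is another subtle step, which will likely require an $\epsilon$-removal lemma tailored to the cone geometry.
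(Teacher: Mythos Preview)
The proposal contains a fundamental error in the stationary phase step. You assert that the non-degeneracy hypothesis \eqref{eq:nondegenerate} ``ensures that such critical points are non-degenerate and finite in number'' and that the amplitude of $\widehat{\mu_\gamma}$ is uniformly of order $\lambda^{-1/2}$. This is false: condition \eqref{eq:nondegenerate} only guarantees that the derivatives $\gamma'(s),\dots,\gamma^{(n)}(s)$ span $\R^n$, not that $\xi\cdot\gamma''(s_\ast)\neq 0$ at a critical point. For directions $\xi$ near the codimension-$2$ cone $\Gamma_2=\{\xi:\xi\cdot\gamma^{(j)}(s)=0,\ j=1,2,3,\ \text{some }s\}$ the critical point is degenerate of order $4$ and the decay of $\widehat{\mu_\gamma}$ drops to $\lambda^{-1/4}$; near the codimension-$1$ cone $\Gamma_1=\{\xi:\xi\cdot\gamma^{(j)}(s)=0,\ j=1,2\}$ it drops to $\lambda^{-1/3}$. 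Consequently a uniform slab decomposition at scale $\lambda^{-1/2}$, together with a single cone decoupling, cannot yield the target $\lambda^{-1/p}$: the pieces near $\Gamma_2$ are simply too large.

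The paper's argument (for $n=4$, the only case it proves; the general statement remains open) is organised precisely around this obstruction. One performs a \emph{two-parameter} dyadic decomposition of the frequency annulus according to the sizes of $u_2(\xi)=\inn{\gamma''\circ\theta_2(\xi)}{\xi}$ and $u_{1,2}(\xi)=\inn{\gamma'\circ\theta_2(\xi)}{\xi}$ (distances to $\Gamma_2$), and then a further decomposition in $u_1(\xi)$ (distance to $\Gamma_1$) on the pieces where $u_2<0$. Each localised piece admits an improved $L^2$ bound via van der Corput, paired with an $L^\infty$ bound coming from $s$-localisation; the latter is accessed through \emph{iterated} decoupling --- first with respect to the Frenet boxes of $\Gamma_2$ (moment-curve decoupling with $d=3$, critical exponent $12$), then after rescaling with respect to $\Gamma_1$ (moment-curve decoupling with $d=2$, critical exponent $6$). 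Your single decoupling step and uniform amplitude bound miss both the nested cone structure and the $L^2$/$L^\infty$ interpolation that makes the numerology close. Finally, the $\epsilon$-removal you invoke is not carried out in the paper; the endpoint $p=6$ is left open.
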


If true, then the above conjectured range would be sharp except for some endpoint cases, viz.

\begin{proposition}\label{intro necessity proposition} Let $2 \leq p \leq \infty$. If $\gamma \colon I \to \R^n$ is non-degenerate and the inequality 
\begin{equation*}
    \| A_\gamma f \|_{L^p_\alpha(\R^n)} \lesssim_{p,\gamma,\chi} \| f \|_{L^p(\R^n)}
\end{equation*}
holds, then we must have $\alpha \leq \min \Big\{ \frac{1}{n}\Big(\frac{1}{2} + \frac{1}{p} \Big), \frac{1}{p} \Big\}$.
\end{proposition}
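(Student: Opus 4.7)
The proof establishes the two upper bounds $\alpha\leq 1/p$ and $\alpha\leq \frac{1}{n}\bigl(\frac12+\frac1p\bigr)$ by separate test families. For $\alpha\leq 1/p$, I would use duality. Since $A_\gamma$ is convolution with $\mu_\gamma$ and hence commutes with Bessel potentials, its adjoint is $A_{-\gamma}$; the hypothesis $A_\gamma\colon L^p\to L^p_\alpha$ is then equivalent to $A_{-\gamma}\colon L^{p'}\to L^{p'}_\alpha$. As \eqref{eq:nondegenerate} is invariant under $s\mapsto -s$, it suffices to test this dual inequality. Take $\phi\in C_c^\infty(\R^n)$ a fixed non-negative bump and set $f_\delta(x):=\phi(x/\delta)$: one computes $\|f_\delta\|_{L^{p'}}\sim \delta^{n/p'}$, while $A_{-\gamma}f_\delta$ is essentially a smoothed curve measure of magnitude $\sim\delta$ on the $\delta$-tube around $-\gamma(I)$ (of volume $\sim\delta^{n-1}$), with transverse variation on scale $\delta$. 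Hence $\|A_{-\gamma}f_\delta\|_{L^{p'}_\alpha}\gtrsim \delta^{1-\alpha+(n-1)/p'}$, and the assumed boundedness forces $\alpha\leq 1-1/p' = 1/p$ upon letting $\delta\to 0$.

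For the sharper bound $\alpha\leq \frac{1}{n}\bigl(\frac12+\frac1p\bigr)$, I would construct a Knapp-type test function adapted to the direction of worst decay for $\hat\mu_\gamma$. Fix $s_0\in I$; by \eqref{eq:nondegenerate} the vectors $\gamma'(s_0),\ldots,\gamma^{(n-1)}(s_0)$ are linearly independent. Select a unit vector $e_n$ orthogonal to all of them (so that $e_n\cdot\gamma^{(n)}(s_0)\neq 0$), and let $(e_1,\ldots,e_n)$ be an orthonormal Frenet-adapted basis at $s_0$. Set $\xi_0:=\lambda e_n$ for large $\lambda$: the phase $s\mapsto \xi_0\cdot\gamma(s)$ has a critical point of order $n$ at $s_0$, so by stationary phase $|\hat\mu_\gamma(\xi_0)|\sim\lambda^{-1/n}$. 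Take $f_\lambda$ with $\widehat{f_\lambda}$ a smooth bump adapted to the Knapp box $B$ at $\xi_0$ of anisotropic dimensions $\lambda^{1/n}\times\lambda^{2/n}\times\cdots\times\lambda$ in $(e_1,\ldots,e_n)$; on $B$, the stationary phase formula gives $\hat\mu_\gamma(\xi)\approx \lambda^{-1/n}e^{-i\xi\cdot\gamma(s_c(\xi))}$ with a nonlinear phase controlled by the critical point $s_c(\xi)$. A careful computation of $\|f_\lambda\|_{L^p}$ and $\|A_\gamma f_\lambda\|_{L^p_\alpha}$, tracking the phase structure of $\hat\mu_\gamma$ over $B$, should then produce the required bound.

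The main obstacle is the second part. A naive computation tracking only the pointwise decay $|\hat\mu_\gamma|\sim\lambda^{-1/n}$ on $B$ yields merely the weaker constraint $\alpha\leq 1/n$, corresponding to the $L^2\to L^2_{1/n}$ endpoint. To extract the sharp $p$-dependent bound $\alpha\leq \frac{1}{n}\bigl(\frac12+\frac1p\bigr)$, one must exploit the oscillatory phase $e^{-i\xi\cdot\gamma(s_c(\xi))}$ of $\hat\mu_\gamma$ on $B$: as a stationary-phase effect, this phase spreads $A_\gamma f_\lambda$ beyond the physical dual box of $B$ into an extended region along the curve, whose $L^p$-contribution interpolates between the $L^2$ and $L^\infty$ bounds and is sensitive to the value of $p$. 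Tracking this $p$-dependent enlargement of the effective support of $A_\gamma f_\lambda$ is the delicate step.
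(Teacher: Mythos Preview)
Your argument for $\alpha\le 1/p$ is essentially the paper's argument (Lemma~\ref{1/p nec lemma}): test the dual $L^{p'}$ inequality against a bump at scale $\delta\sim\lambda^{-1}$ and use that $A_\gamma$ of such a bump has magnitude $\sim\delta$ on a $\delta$-neighbourhood of the curve. The paper frames it in terms of band-limited functions $f\in\cZ_\lambda$, but the content is the same.

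The second constraint is where your proposal has a genuine gap. A \emph{single} Knapp box cannot produce the bound $\alpha\le\frac{1}{n}\bigl(\frac12+\frac1p\bigr)$, and the phase-spreading mechanism you sketch does not recover the missing factor. The $\tfrac12$ in the exponent is a square-root gain, and square-root gains in $L^p$ for $p>2$ come from \emph{many} nearly orthogonal pieces, not from the oscillation of a single piece. Concretely: if $\hat f_\lambda$ is a bump on one anisotropic box $B$, then $\widehat{A_\gamma f_\lambda}=\hat\mu_\gamma\hat f_\lambda$ differs from $\hat f_\lambda$ by a factor $\lambda^{-1/n}e^{-i\phi(\xi)}$ with $\phi$ homogeneous of degree one; the linear part of $\phi$ over $B$ is a translation, and the higher-order variation of $\phi$ over $B$ is $O(1)$ (this is exactly the Knapp scaling), so $A_\gamma f_\lambda$ is, up to translation, essentially $\lambda^{-1/n}f_\lambda$. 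You recover only $\alpha\le 1/n$, as you yourself note.

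The paper's construction (Proposition~\ref{Fourier nec prop}) is fundamentally different: it takes roughly $\lambda^{1/n}$ many balls of radius $\lambda^{1/n}$ centred at points $\xi^\nu=\lambda\Gamma(\nu\lambda^{-1/n})$ along the ``worst decay cone'' $\Gamma_2$ in frequency space, and forms the \emph{randomised} sum $g^\omega=\sum_\nu r_\nu(\omega)g_\nu$. Khinchine's inequality gives $\|g^\omega\|_p\sim(\#\nu)^{1/2}\|g_\nu\|_p\sim\lambda^{1/(2n)}\lambda^{1-1/p}$; this is where the $\tfrac12$ enters. One then sets $\hat f_\nu:=\hat g_\nu/\hat\mu_\gamma$ (well-defined since $|\hat\mu_\gamma|\sim\lambda^{-1/n}$ on $\supp\hat g_\nu$), so that $A_\gamma f^\omega=g^\omega$ exactly, and bounds $\|f^\omega\|_p$ via a separation property of the spatial centres $x^\nu=-\nabla\phi(\xi^\nu)$: these are $\gtrsim\lambda^{-1/n}$-separated along a curve, and the resulting $\ell^p$ sum gives $\|f^\omega\|_p\lesssim\lambda^{(n+1)/n-(n-1)/(np)}$. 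Dividing yields the lower bound $\lambda^{-\frac{1}{n}(\frac12+\frac1p)}$ for the operator norm on $\cZ_\lambda$. The construction is the Wolff example from $\ell^p$-decoupling, transplanted via the asymptotics of $\hat\mu_\gamma$ near the worst decay cone; the randomisation is not optional.
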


As in the case of Theorem \ref{non-degenerate theorem}, the sharp estimates for $1 \leq p \leq 2n-2$ would follow from Conjecture \ref{main conjecture} by interpolation with the $L^2 \to L^2_{1/n}$ inequality and duality, except the endpoint regularity estimates for $ \frac{2n-2}{2n-3} \leq p \leq 2n-2$, $p\neq 2$.
\medskip

In the euclidean plane Conjecture~\ref{main conjecture} is an elementary consequence of the decay of the Fourier transform of the measure $\mu_{\gamma}$. In higher dimensions the problem is significantly more difficult, owing to the weaker rate of Fourier decay.\footnote{In particular, the curve is no longer a Salem set.} The $n=3$ case was established up to the $p=4$ endpoint by Pramanik and the fourth author \cite{PS2007}, conditional on the sharp Wolff-type `$\ell^p$-decoupling' inequality for the light cone. The sharp decoupling inequality was later proved by Bourgain--Demeter \cite{BD2015}, thus establishing the bounds for the averaging operators unconditionally. Theorem~\ref{non-degenerate theorem} verifies the $n=4$ case of Conjecture~\ref{main conjecture} up to the $p =6$ endpoint. The proof strategy behind Theorem~\ref{non-degenerate theorem} is based on that used to study the $n=3$ case in \cite{PS2007}, although significant new features and additional complications arise in the four-dimensional setting. To overcome these difficulties, advantage is taken of recent new advances in the understanding of decoupling theory. A key tool is the Bourgain--Demeter--Guth decoupling theorem for curves \cite{BDG2016}. 

The first stage of the argument relies on a careful decomposition of the operator in the frequency domain. This part of the proof is inspired by the analysis of the helical maximal function appearing in \cite{BGHS-helical} (see also \cite{PS2007}). Indeed, the maximal problem treated in \cite{BGHS-helical} shares a number of essential features with Theorem~\ref{non-degenerate theorem}. In particular, for both problems it is natural to microlocalise the operator with respect to a pair of nested cones in the frequency domain (see the introductory discussion in \cite{BGHS-helical} for more details).  However, a quick comparison between this paper and \cite{BGHS-helical} shows that the methods and overall proof scheme differ on a number of key points. For instance, the frequency decomposition used here is significantly more involved than that used in \cite{BGHS-helical}, owing to additional complications which arise when working in $\R^4$ rather than $\R^3$. Furthermore, whilst decoupling plays an important r\^ole in the current paper, the analysis in \cite{BGHS-helical} relies on square function estimates. One useful feature of decoupling (as opposed to the use of square functions) is that decoupling inequalities are readily iterated. We make use of this fact in a fundamental way when decomposing the operator with respect to the different frequency cones.

%%%%%%%%%%%%%%%%%%%%%%%%%%%%%%%%%%%%%%%%%%%%%%%%%%%%%%%%%%%%%%%%%%%%%%%%%%%%%%%%%%%%%%%%%%%%%%%%

%            Corollaries

%%%%%%%%%%%%%%%%%%%%%%%%%%%%%%%%%%%%%%%%%%%%%%%%%%%%%%%%%%%%%%%%%%%%%%%%%%%%%%%%%%%%%%%%%%%%%%%%

\subsection{Corollaries} Theorem~\ref{non-degenerate theorem} has a number of consequences which follow immediately from known arguments.\medskip

\noindent\textit{Extension to finite type curves.}  Using arguments from \cite{PS2007}, one can show that  Theorem~\ref{non-degenerate theorem} implies bounds for a more general class of curves. We say a smooth curve $\gamma \colon I \to \R^n$ is of \textit{finite maximal type} if there exists $d \in \N$ and a constant $c_0 > 0$ such that 
\begin{equation}\label{finite type}
    \sum_{j=1}^d |\inn{\gamma^{(j)}(s)}{\xi}| \geq c_0|\xi| \qquad \textrm{for all $s \in I$, $\xi \in \R^n$.}
\end{equation}
For fixed $s$, the smallest $d$ for which \eqref{finite type} holds for some $c_0 > 0$ is called the \textit{type of $\gamma$ at $s$}. The type is an upper semicontinuous function, and the supremum of the types over all $s \in I$ is referred to as the \textit{maximal type} of $\gamma$.

\begin{corollary}\label{finite type corollary} If $\gamma \colon I \to \R^4$ is of maximal type $d \in \N$ and $\max\{6, d\} < p < \infty$, then 
\begin{equation*}
    \|A_{\gamma}f\|_{L^p_{1/p}(\R^4)} \lesssim_{p,\gamma, \chi} \|f\|_{L^p(\R^4)}.
\end{equation*}
\end{corollary}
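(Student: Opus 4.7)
The plan is to follow the dyadic rescaling scheme of Pramanik and the fourth author~\cite{PS2007}, using Theorem~\ref{non-degenerate theorem} in place of its three-dimensional counterpart as the crucial input. Using a smooth partition of unity on $I$ together with upper semicontinuity of the type function, I would reduce to one of two cases: either (i) $\chi$ is supported on a closed subinterval of $I$ on which $\gamma$ is non-degenerate in the sense of \eqref{eq:nondegenerate}, so that Theorem~\ref{non-degenerate theorem} applies directly; or (ii) $\chi$ is supported in an arbitrarily small neighborhood of a single point $s_0 \in I$ where \eqref{eq:nondegenerate} fails but \eqref{finite type} still holds with some type $d(s_0) \le d$. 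For case (ii), I would decompose dyadically away from the bad point, writing $\chi = \sum_{k \ge k_0} \chi_k$ with $\chi_k$ a smooth bump on $\{s : |s-s_0| \sim 2^{-k}\}$ obeying $|\chi_k^{(\ell)}| \lesssim 2^{k\ell}$, and denote by $A_k$ the corresponding piece of $A_\gamma$.

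Next, I would rescale anisotropically near $s_0$. Pick indices $1 \le j_1 < \cdots < j_n \le d(s_0)$ so that $\gamma^{(j_1)}(s_0), \ldots, \gamma^{(j_n)}(s_0)$ form a basis of $\R^n$, let $M$ be the associated change-of-basis matrix, and set $D_k := \mathrm{diag}(2^{-k j_1}, \ldots, 2^{-k j_n})$. The substitutions $s = s_0 + 2^{-k} t$ and $x = \gamma(s_0) + M D_k y$ exhibit $A_k$ as a constant $\sim 2^{-k}$ multiple of an averaging operator along the rescaled curve
\begin{equation*}
\gamma_k(t) := D_k^{-1} M^{-1}\bigl(\gamma(s_0+2^{-k}t) - \gamma(s_0)\bigr), \qquad |t| \sim 1.
\end{equation*}
A Taylor expansion shows that $\gamma_k$ converges smoothly on $\{|t| \sim 1\}$ to a polynomial curve whose $j_i$-th derivative at $0$ is the $i$-th standard basis vector, so the $\gamma_k$ are uniformly non-degenerate for all large $k$ and Theorem~\ref{non-degenerate theorem} applies to $A_{\gamma_k}$ with a uniform constant.

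The final step is to transport this uniform $L^p \to L^p_{1/p}$ bound back to the original variables. The anisotropic change of variables rescales $L^p$ norms by the Jacobian $|\det(MD_k)| \sim 2^{-k(j_1 + \cdots + j_n)}$ and distorts the Sobolev multiplier $(1+|\xi|^2)^{1/(2p)}$ in a way that respects the Fourier support of $A_k$, which is concentrated in a conic slab normal to $\gamma'(s_0)$ and is effectively of size $\sim 2^{k j_n}$ in its longest direction. Careful bookkeeping analogous to that carried out in~\cite{PS2007} then yields
\begin{equation*}
\|A_k f\|_{L^p_{1/p}(\R^4)} \lesssim 2^{-k \kappa(p)} \|f\|_{L^p(\R^4)}
\end{equation*}
for some $\kappa(p) > 0$ valid precisely when $p > d(s_0)$, after which summation of the geometric series in $k$ and recombination with case (i) completes the proof.

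The main obstacle is this last step: verifying that, after accounting for the volume factor, the anisotropic distortion of the Sobolev multiplier, and the $\ud s$-size $\sim 2^{-k}$ of each dyadic piece, the resulting exponent $\kappa(p)$ is strictly positive throughout the range $p > d$. This is the same quantitative balance that determines the admissible range in~\cite{PS2007}, and the argument should go through essentially verbatim once Theorem~\ref{non-degenerate theorem} is in hand.
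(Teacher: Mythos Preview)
Your proposal is correct and is precisely the approach the paper indicates: the paper does not give an independent proof of this corollary but simply states that it follows from Theorem~\ref{non-degenerate theorem} ``using arguments from~\cite{PS2007}'' (see also the reference to \cite[pp.~81--82]{PS2007} in \S\ref{nec sec}). Your dyadic rescaling near a point of higher type, with the exponent balance $\kappa(p)=1-d(s_0)/p>0$ for $p>d(s_0)$ coming from the $2^{-k}$ measure factor against the $2^{kd(s_0)/p}$ Sobolev-weight distortion, is exactly the Pramanik--Seeger scheme; the constraint $p>6$ enters only through the appeal to Theorem~\ref{non-degenerate theorem} for the uniformly non-degenerate rescaled curves $\gamma_k$.
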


This result is sharp up to endpoints (for further discussion of endpoint cases, see \S\ref{Christ ex sec}) regarding the range of $p$ for which the regularity of order $1/p$ holds. In the range $2 \leq p \leq \max \{6,d\}$, the inequalities resulting from interpolation with the $L^2(\R^4) \to L^2_{1/d}(\R^4)$ estimates are also sharp, up to the regularity endpoint, for $d \geq 6$ and the non-degenerate $d=4$ case; for $d=5$ one expects, however, better bounds to hold in this range (see Figure \ref{finite type regularity range}). There are also natural extensions of Conjecture~\ref{main conjecture} and Proposition~\ref{intro necessity proposition} which deal with finite maximal type curves in higher dimensions: see \S\ref{nec sec} below.\medskip

%%%%%%%%%%%%%%%%%%%%%%%%%%%%%%%%%%%%%%%%%%%%%%%%%%%%%%%%%%%%%%%%%%%%%%%%%%%%%%%%%%%%%%%%%%%%%%%%

%            Endpoint lacunary maximal estimates

%%%%%%%%%%%%%%%%%%%%%%%%%%%%%%%%%%%%%%%%%%%%%%%%%%%%%%%%%%%%%%%%%%%%%%%%%%%%%%%%%%%%%%%%%%%%%%%%

\noindent\textit{Endpoint lacunary maximal estimates.} For the measure $\mu_{\gamma}$ introduced above, define the family of dyadic dilates $\mu_{\gamma}^k$ for $k \in \Z$ by
\begin{equation*}
\inn{\mu_{\gamma}^k}{f} = \inn{\mu_{\gamma}}{f(2^k\,\cdot\,)}  
\end{equation*}  
and consider the associated convolution operators $A_{\gamma}^kf := \mu_{\gamma}^k\ast f$. If $\gamma$ is of finite maximal type, then a well-known and classical result (see, for instance, \cite{DR1986}) states that the associated lacunary maximal function \begin{equation*}
    \mathcal{M}_{\gamma}f := \sup_{k \in \Z} |A_{\gamma}^k f|
\end{equation*}
is bounded on $L^p$ for all $1 < p \leq \infty$. A difficult problem is to understand the endpoint behaviour of these operators near $L^1$. By an off-the-shelf application of the main theorem from \cite{SW2011}, Corollary~\ref{finite type corollary} implies an endpoint bound for $\mathcal{M}_{\gamma}$ in the $n = 4$ case.

\begin{corollary}\label{dyadic maximal corollary} If $\gamma \colon I \to \R^4$ is of finite maximal type, then the lacunary maximal function $\mathcal{M}_{\gamma}$ maps the (standard isotropic) Hardy space $H^1(\R^4)$ to $L^{1, \infty}(\R^4)$. 
\end{corollary}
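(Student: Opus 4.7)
The plan is to invoke the black-box reduction from \cite{SW2011}, which converts a fixed-scale $L^p$-Sobolev improving estimate for a smooth compactly supported singular measure into an endpoint $H^1\to L^{1,\infty}$ bound for the associated lacunary maximal function. The only nontrivial input required is the Sobolev estimate itself, which is precisely what Corollary~\ref{finite type corollary} supplies.

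Concretely, I would proceed as follows. First, fix some $p$ with $\max\{6,d\} < p < \infty$. Corollary~\ref{finite type corollary} then yields
\begin{equation*}
\|A_\gamma f\|_{L^p_{1/p}(\R^4)} \lesssim \|f\|_{L^p(\R^4)},
\end{equation*}
that is, an $L^p \to L^p_{\alpha}$ estimate with regularity exponent $\alpha = 1/p > 0$. Second, I would transfer this to the dyadic dilates: by the scaling relation $\mu_\gamma^k = 2^{-4k}\mu_\gamma(2^{-k}\cdot\,)$ the estimate above is equivalent to a uniform-in-$k$ frequency-localised bound
\begin{equation*}
\|A_\gamma^k P_j f\|_{L^p(\R^4)} \lesssim 2^{-(j+k)/p}\|f\|_{L^p(\R^4)} \qquad \text{for } j+k \geq 0,
\end{equation*}
where $P_j$ denotes a Littlewood--Paley projection to the annulus $|\xi|\sim 2^j$. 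This quantitative decay, together with the compact support and smoothness of $\chi$ and the regularity of $\gamma$, places the family $\{\mu_\gamma^k\}$ squarely within the class of measures to which \cite{SW2011} applies.

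Finally, the conclusion of the cited theorem gives the weak-type $H^1(\R^4)\to L^{1,\infty}(\R^4)$ bound for $\mathcal{M}_\gamma$ directly. There is no genuine obstacle beyond invoking this reduction: the framework of \cite{SW2011} was designed precisely to deduce endpoint Hardy space bounds on lacunary maximal operators from Sobolev improving estimates on the underlying averaging operator, and with Corollary~\ref{finite type corollary} in hand its hypotheses are satisfied verbatim. The only conceptual content beyond the machinery of \cite{SW2011} is the Sobolev bound itself, which is the main result of this paper.
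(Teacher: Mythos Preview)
Your proposal is correct and matches the paper's approach exactly: the paper also simply invokes \cite[Theorem 1.1]{SW2011}, noting that Corollary~\ref{dyadic maximal corollary} follows from any $L^p \to L^p_{1/p}$ bound for $A_\gamma$ with $2 \le p < \infty$, which Corollary~\ref{finite type corollary} supplies. Your intermediate step unpacking the scaling to a uniform frequency-localised estimate is not needed---\cite{SW2011} takes the Sobolev bound itself as input---but it does no harm.
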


In particular, by \cite[Theorem 1.1]{SW2011}, Corollary~\ref{dyadic maximal corollary} follows from \textit{any} $L^p \to L^p_{1/p}$ bound for the associated averaging operator $A_{\gamma}$ for $2 \leq p < \infty$ (that is, one does not require $L^p \to L^p_{1/p}$ for the sharp range of $p$ for this application). Note that, prior to this paper, no such bounds $L^p$-Sobolev bounds were known for $n \geq 4$; thus the question of the $H^1(\R^n)$ to $L^{1, \infty}(\R^n)$ boundedness of lacunary maximal associated to finite maximal type (or even non-degenerate) curves remains open for $n \geq 5$.

\subsection*{Outline of the paper} This paper is structured as follows: 
\begin{itemize}
    \item In \S\ref{moment red sec} we discuss a simple reductions to a class of model curves. 
    \item In \S\ref{nec sec} we derive necessary conditions for $L^p$-Sobolev improving inequalities for our averaging operators. In particular, we establish Proposition~\ref{intro necessity proposition}. 
    \item In \S\S\ref{red aux sec}--\ref{sec:J=4} we present the proof of Theorem~\ref{non-degenerate theorem}.
    \item In \S\ref{sec:decoupling} we discuss certain decoupling inequalities used in the proof of Theorem~\ref{non-degenerate theorem}.
    \item There are three appendices which deal with various auxiliary results and technical lemmas used in the main argument. 
\end{itemize}

%%%%%%%%%%%%%%%%%%%%%%%%%%%%%%%%%%%%%%%%%%%%%%%%%%%%%%%%%%%%%%%%%%%%%%%%%%%%%%%%%%%%%%%%%%%%%%%%

%            Notational conventions

%%%%%%%%%%%%%%%%%%%%%%%%%%%%%%%%%%%%%%%%%%%%%%%%%%%%%%%%%%%%%%%%%%%%%%%%%%%%%%%%%%%%%%%%%%%%%%%%

\subsection*{Notational conventions} Given a (possibly empty) list of objects $L$, for real numbers $A_p, B_p \geq 0$ depending on some Lebesgue exponent $p$ or dimension parameter $n$ the notation $A_p \lesssim_L B_p$, $A_p = O_L(B_p)$ or $B_p \gtrsim_L A_p$ signifies that $A_p \leq CB_p$ for some constant $C = C_{L,p,n} \geq 0$ depending on the objects in the list, $p$ and $n$. In addition, $A_p \sim_L B_p$ is used to signify that both $A_p \lesssim_L B_p$ and $A_p \gtrsim_L B_p$ hold. Given $a$, $b \in \R$ we write $a \wedge b:= \min \{a, b\}$ and  $a \vee b:=\max \{a,b\}$. 
The length of a multiindex $\alpha\in \bbN_0^n$ is given by $|\alpha|=\sum_{i=1}^n{\alpha_i}$.

\subsection*{Acknowledgements}
{The authors thank the American Institute of Mathematics for funding their collaboration through the SQuaRE program, also  supported in part  by the National Science Foundation. D.B. was partially supported by NSF grant DMS-1954479. S.G. was partially supported by  NSF grant DMS-1800274. A.S. was partially supported by  NSF grant DMS-1764295 and by a Simons fellowship.  This material is partly based upon work supported by the National Science Foundation under Grant No. DMS-1440140 while the authors were in residence at the Mathematical Sciences Research Institute in Berkeley, California, during the Spring 2017 semester.}

%%%%%%%%%%%%%%%%%%%%%%%%%%%%%%%%%%%%%%%%%%%%%%%%%%%%%%%%%%%%%%%%%%%%%%%%%%%%%%%%%%%%%%%%%%%%%%%%

%            REDUCTION TO PERTURBATIONS MOMENT CURVE

%%%%%%%%%%%%%%%%%%%%%%%%%%%%%%%%%%%%%%%%%%%%%%%%%%%%%%%%%%%%%%%%%%%%%%%%%%%%%%%%%%%%%%%%%%%%%%%%

\section{Reduction to perturbations of the moment curve}\label{moment red sec}

A prototypical example of a smooth curve satisfying the non-degeneracy condition \eqref{eq:nondegenerate} is the \textit{moment curve} $\gamma_{\circ} \colon \R \to \R^n$, given by
\begin{equation*}
    \gamma_{\circ}(s) := \Big(s, \frac{s^2}{2}, \dots, \frac{s^n}{n!} \Big). 
\end{equation*}
Indeed, in this case the determinant appearing in \eqref{eq:nondegenerate} is everywhere equal to 1. Moreover, at small scales, any non-degenerate curve can be thought of as a perturbation of an affine image of $\gamma_{\circ}$. To see why this is so, fix a non-degenerate curve $\gamma \colon I \to  \R^n$ and $\sigma \in I$, $\lambda > 0$ such that $[\sigma - \lambda, \sigma+\lambda] \subseteq I$. Denote by $[\gamma]_{\sigma}$ the $n\times n$ matrix
\begin{equation*} 
    [\gamma]_{\sigma}:=
    \begin{bmatrix}
    \gamma^{(1)}(\sigma) & \cdots & \gamma^{(n)}(\sigma)
    \end{bmatrix},
\end{equation*}
where the vectors $\gamma^{(j)}(\sigma)$ are understood to be \textit{column} vectors. Note that this is precisely the matrix appearing in the definition of the non-degeneracy condition \eqref{eq:nondegenerate} and is therefore invertible by our hypothesis. It is also convenient to let $[\gamma]_{\sigma,\lambda}$ denote the $n \times n$ matrix
\begin{equation}\label{gamma transformation}
[\gamma]_{\sigma,\lambda} := [\gamma]_{\sigma} \cdot D_{\lambda},
\end{equation}
where $D_{\lambda}:=\text{diag}(\lambda, \dots, \lambda^n)$, the diagonal matrix with eigenvalues $\lambda$, $\lambda^2, \dots, \lambda^n$. Consider the portion of the curve $\gamma$ lying over the subinterval $[\sigma-\lambda, \sigma+\lambda]$. This is parametrised by the map  $s \mapsto \gamma(\sigma + \lambda s)$ for $s \in [-1,1]$.  The degree $n$ Taylor polynomial of $s \mapsto \gamma(\sigma + \lambda s)$ around $\sigma$ is given by
\begin{equation}\label{Taylor}
  s \mapsto  \gamma(\sigma) + [\gamma]_{\sigma,\lambda} \cdot \gamma_{\circ}(s),
\end{equation}
which is indeed an affine image of $\gamma_{\circ}$. Furthermore, by Taylor's theorem, the original curve $\gamma$ agrees with the polynomial curve \eqref{Taylor} to high order at $\sigma$. 

Inverting the affine transformation $x \mapsto  \gamma(\sigma) + [\gamma]_{\sigma,\lambda} \cdot x$ from \eqref{Taylor}, we can map the portion of $\gamma$ over $[\sigma - \lambda, \sigma + \lambda]$ to a small perturbation of the moment curve. 

\begin{definition}\label{rescaled curve def} Let $\gamma \in C^{n+1}(I;\R^{n})$ be a non-degenerate curve and $\sigma \in I, \lambda>0$ be such that $[\sigma-\lambda, \sigma+ \lambda] \subseteq I$. The \textit{$(\sigma,\lambda)$-rescaling of $\gamma$} is the curve $\gamma_{\sigma,\lambda} \in C^{n+1}([-1,1];\R^{n})$ given by
\begin{equation*}
    \gamma_{\sigma,\lambda}(s) := [\gamma]_{\sigma,\lambda}^{-1}\big( \gamma(\sigma+\lambda s) - \gamma(\sigma) \big).
\end{equation*}
\end{definition}

It follows from the preceding discussion that 
\begin{equation*}
   \gamma_{\sigma,\lambda}(s) = \gamma_{\circ}(s) + [\gamma]_{\sigma,\lambda}^{-1} \mathcal{E}_{\gamma,\sigma,\lambda}(s)  
\end{equation*}
where $\mathcal{E}_{\gamma,\sigma,\lambda}$ is the remainder term for the Taylor expansion \eqref{Taylor}. In particular, if $\gamma$ satisfies the non-degeneracy condition \eqref{eq:nondegenerate} with constant $c_0$, then
\begin{equation*}
    \|   \gamma_{\sigma,\lambda} - \gamma_{\circ} \|_{C^{n+1}([-1,1];\R^n)} \lesssim c_0^{-1} \lambda \,  \| \gamma \|_{C^{n+1}(I)}^n.
\end{equation*}
Thus, if $\lambda>0$ is chosen to be small enough, then the rescaled curve $\gamma_{\sigma,\lambda}$ is a minor perturbation of the moment curve. In particular, given any $0 < \delta < 1$, we can choose $\lambda$ so as to ensure that $\gamma_{\sigma,\lambda}$ belongs to the following class of \textit{model curves}.

\begin{definition} Given $n \geq 2$ and $0 < \delta < 1$, let $\mathfrak{G}_n(\delta)$ denote the class of all smooth curves $\gamma \colon [-1, 1] \to \R^n$ that satisfy the following conditions: 
\begin{enumerate}[i)]
    \item $\gamma(0) = 0$ and $\gamma^{(j)}(0) = \vec{e}_j$ for $1 \leq j \leq n$;
    \item $\|\gamma - \gamma_{\circ}\|_{C^{n+1}([-1,1])} \leq \delta$.
\end{enumerate}
Here $\vec{e}_j$ denotes the $j$th standard Euclidean basis vector and
\begin{equation*}
    \|\gamma\|_{C^{n+1}(I)} := \max_{1 \leq j \leq n + 1} \sup_{s \in I} |\gamma^{(j)}(s)| \qquad \textrm{for all $\gamma \in C^{n+1}(I;\R^n)$.}
\end{equation*}
\end{definition}

Given any $\gamma \in \mathfrak{G}_n(\delta)$, condition ii) and the multilinearity of the determinant ensures that $\det[\gamma]_s = \det[\gamma_{\circ}]_s + O(\delta) = 1 + O(\delta)$. Thus, there exists a dimensional constant $c_n > 0$ such that if $0 < \delta < c_n$, then any curve $\gamma \in \mathfrak{G}_n(\delta)$ is non-degenerate and, moreover, satisfies $\det [ \gamma]_s \geq 1/2$. Henceforth, it is always assumed that $\delta > 0$ satisfies this condition, which we express succinctly as $0 < \delta \ll 1$.

Turning back to the Sobolev improving problem for the averages $A_{\gamma}$, the above observations facilitate a reduction to the class of model curves. To precisely describe this reduction, it is useful to make the choice of cutoff function explicit in the notation by writing $A[\gamma,\chi]$ for the operator $A_{\gamma}$ as defined in \eqref{averaging operator}.

\begin{proposition}\label{quant nondeg prop} Let $\gamma \colon I \to \R^n$ be a non-degenerate curve, $\chi \in C^{\infty}_c(\R)$ be supported on the interior of $I$ and $0 < \delta \ll 1$. There exists some $\gamma^* \in \mathfrak{G}_n(\delta)$ and $\chi^* \in C^{\infty}_c(\R)$ such that 
\begin{equation*}
    \|A[\gamma,\chi]\|_{L^p(\R^n) \to L^p_\alpha(\R^n)} \sim_{\gamma, \chi, \delta, p, \alpha} \|A[\gamma^*,\chi^*]\|_{L^p(\R^n) \to L^p_\alpha(\R^n)}
\end{equation*}
for all $1 \leq p < \infty$ and $0 \leq \alpha \leq 1$. Furthermore, $\chi^*$ may be chosen to satisfy $\supp \chi^* \subseteq [-\delta,\delta]$.
\end{proposition}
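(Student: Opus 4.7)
The plan is to use a finite smooth partition of unity on $\chi$ to reduce to cutoffs supported in short intervals, and then to apply the $(\sigma,\lambda)$-rescaling of Definition~\ref{rescaled curve def} together with an affine change of coordinates on $\R^n$ to transfer each localized piece of $A[\gamma,\chi]$ to an averaging operator along a curve in $\mathfrak{G}_n(\delta)$ with cutoff in $C^\infty_c([-\delta,\delta])$.

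First, I would fix $\lambda \in (0,1)$ small, depending on $\gamma$ and $\delta$, so that $[\sigma - \lambda, \sigma + \lambda] \subset I$ for every $\sigma \in \supp\chi$ and $\gamma_{\sigma,\lambda} \in \mathfrak{G}_n(\delta)$ for all such $\sigma$; this is possible by the estimate $\|\gamma_{\sigma,\lambda} - \gamma_\circ\|_{C^{n+1}} \lesssim \lambda$ derived just before the statement. Then I introduce a finite smooth partition of unity $\chi = \sum_{\ell=1}^N \chi_\ell$ with each $\chi_\ell \in C^\infty_c(I)$ supported in $[\sigma_\ell - \lambda\delta, \sigma_\ell + \lambda\delta]$ for some $\sigma_\ell \in \supp\chi$.

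For each $\ell$, the substitution $s = \sigma_\ell + \lambda t$ in $A[\gamma,\chi_\ell]f(x) = \int f(x-\gamma(s))\chi_\ell(s)\,ds$, together with the identity $\gamma(\sigma_\ell + \lambda t) = \gamma(\sigma_\ell) + [\gamma]_{\sigma_\ell,\lambda}\gamma_{\sigma_\ell,\lambda}(t)$ and the affine change $x = [\gamma]_{\sigma_\ell,\lambda}y + \gamma(\sigma_\ell)$ on $\R^n$, yields the pointwise identity
\begin{equation*}
A[\gamma,\chi_\ell]f\bigl([\gamma]_{\sigma_\ell,\lambda}y + \gamma(\sigma_\ell)\bigr) \;=\; A[\gamma_{\sigma_\ell,\lambda},\chi^*_\ell]\tilde f(y),
\end{equation*}
where $\chi^*_\ell(t) := \lambda \chi_\ell(\sigma_\ell + \lambda t) \in C^\infty_c([-\delta,\delta])$ and $\tilde f(y) := f([\gamma]_{\sigma_\ell,\lambda}y + \gamma(\sigma_\ell))$. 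Invoking the standard fact that for $0 \le \alpha \le 1$ and $1 \le p < \infty$ the $L^p$ and $L^p_\alpha$ norms are equivalent under invertible affine changes of coordinates, with constants controlled by the norms of the linear part and its inverse (for integer $\alpha$ via the chain rule; for fractional $\alpha$ by complex interpolation with $\alpha=0$), this promotes the pointwise identity to the operator-norm equivalence
\begin{equation*}
\|A[\gamma,\chi_\ell]\|_{L^p \to L^p_\alpha} \;\sim_{\gamma,\chi,\delta,p,\alpha}\; \|A[\gamma_{\sigma_\ell,\lambda},\chi^*_\ell]\|_{L^p \to L^p_\alpha}.
\end{equation*}

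The upper bound $\|A[\gamma,\chi]\| \lesssim \|A[\gamma^*,\chi^*]\|$ then follows from the triangle inequality $\|A[\gamma,\chi]\| \le \sum_\ell\|A[\gamma,\chi_\ell]\|$ upon choosing $(\gamma^*,\chi^*) := (\gamma_{\sigma_{\ell^*},\lambda},\chi^*_{\ell^*})$ with $\ell^*$ maximizing $\|A[\gamma_{\sigma_\ell,\lambda},\chi^*_\ell]\|$ over $\ell$. The main obstacle will be the reverse inequality $\|A[\gamma^*,\chi^*]\| \lesssim \|A[\gamma,\chi]\|$: by the norm equivalence this reduces to the localization bound $\|A[\gamma,\chi_{\ell^*}]\| \lesssim \|A[\gamma,\chi]\|$, i.e.\ controlling a single bumped piece of the operator by the whole. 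I expect to handle this by arranging the partition so that $\chi_{\ell^*}$ coincides with $\chi \cdot \eta$ for a smooth compactly supported bump $\eta$, and then extracting the multiplier $\hat\mu_{\chi\eta}$ from $\hat\mu_\chi$ via a Fourier-analytic / transference argument on the parameter line, writing $\eta$ as a superposition of modulations and using that each modulated multiplier $\int e^{-i\xi\cdot\gamma(s) + i\tau s}\chi(s)\,ds$ is comparable to $\hat\mu_\chi$ in $L^p\to L^p_\alpha$ multiplier norm uniformly in $\tau$ on the support of $\hat\eta$.
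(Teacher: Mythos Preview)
Your upper-bound argument (partition of unity, rescaling, affine invariance, pigeonholing) matches the paper's exactly, and you have correctly identified that the lower bound reduces to the localisation estimate $\|A[\gamma,\chi_{\ell^*}]\|\lesssim\|A[\gamma,\chi]\|$ with $\chi_{\ell^*}=\chi\cdot\eta$ for a smooth bump $\eta$. The gap is in your proposed proof of that localisation estimate.

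Expanding $\eta$ on the $s$-line as $\eta(s)=\int\hat\eta(\tau)e^{i\tau s}\,d\tau$ produces the operators $A[\gamma,e^{i\tau\cdot}\chi]$, whose multipliers are $\int e^{-i\xi\cdot\gamma(s)+i\tau s}\chi(s)\,ds$. These are \emph{not} translates or modulations of $\hat\mu_\chi$; the phase $-\xi\cdot\gamma(s)+\tau s$ has a genuinely different critical-point structure, and there is no evident mechanism forcing $\|A[\gamma,e^{i\tau\cdot}\chi]\|_{L^p\to L^p_\alpha}$ to be controlled by $\|A[\gamma,\chi]\|_{L^p\to L^p_\alpha}$. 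Moreover $\hat\eta$ has full support, so ``uniformly in $\tau$ on $\supp\hat\eta$'' buys nothing. What the paper does instead is lift $\eta$ from the parameter line to $\R^n$: since $\gamma$ is a simple regular curve one can find $\tilde\eta\in C^\infty_c(\R^n)$ with $\tilde\eta\circ\gamma=\eta$, so that
\[
A[\gamma,\chi\eta]f(x)=\int f(x-\gamma(s))\,\tilde\eta(x-(x-\gamma(s)))\,\chi(s)\,ds.
\]
Now a Fourier series expansion of $\tilde\eta$ in $\R^n$ yields
\[
A[\gamma,\chi\eta]=\sum_{k\in\Z^n} a_k\,\mathrm{Mod}_k\circ A[\gamma,\chi]\circ\mathrm{Mod}_{-k},
\]
where $\mathrm{Mod}_k g(x)=e^{i\langle x,k\rangle}g(x)$ and the $a_k$ decay rapidly. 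The point is that conjugation by $\mathrm{Mod}_k$ merely shifts the multiplier of $A[\gamma,\chi]$, so each summand has $L^p\to L^p_\alpha$ norm bounded by $(1+|k|)^\alpha\|A[\gamma,\chi]\|_{L^p\to L^p_\alpha}$, and the rapid decay of $a_k$ closes the sum. Your $s$-line expansion misses this conjugation structure; lifting to $\R^n$ is the missing idea.
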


\begin{proof} The proof follows by decomposing the domain of $\gamma$ into small intervals and applying the rescaling described in Definition~\ref{rescaled curve def} on each interval. This decomposition in $s$ induces a decomposition of the derived operator $(1 - \Delta)^{\alpha/2} A[\gamma, \chi]$. The upper bound then follows from the triangle inequality and the stability of the estimates under affine transformation (together with a simple pigeonholing argument).\smallskip

The proof of the lower bound is more subtle since one must take into account possible cancellation between the different pieces of the decomposition. To get around this, we observe that 
\begin{equation}\label{smaller cutoff}
    \|A[\gamma, \chi_0]\|_{L^p(\R^n) \to L^p_{\alpha}(\R^n)} \lesssim_{\gamma, \chi_0}  \|A[\gamma, \chi_1]\|_{L^p(\R^n) \to L^p_{\alpha}(\R^n)}
\end{equation}
holds whenever $\chi_0$, $\chi_1 \in C^{\infty}_c(\R)$ are supported in $I$ and $\chi_1(s) = 1$ for all $s \in \supp \chi_0$. Once this is established, it is possible to localise in $s$ and rescale to deduce the desired bound. 

To prove \eqref{smaller cutoff} note, after possibly applying a translation and a dilation, one may write
\begin{equation*}
   A[\gamma, \chi_0]f(x) = \int_{\R} f(x-\gamma(s)) \tilde{\chi}_0\circ \gamma(s) \chi_1(s)\,\ud s 
\end{equation*}
where the function $\tilde{\chi}_0 \in C^{\infty}_c(\R^n)$ is supported in $[-\pi, \pi]^n$. Consequently, by performing a Fourier series decomposition,
\begin{equation*}
    \tilde{\chi}_0\circ \gamma(s) = \frac{1}{(2 \pi)^n} \sum_{k \in \Z^n} a_k e^{i \inn{x}{k}}  e^{-i\inn{x - \gamma(s)}{k}}
\end{equation*}
where the sequence $(a_k)_{k\in \Z^n}$ of Fourier coefficients is rapidly decaying. Thus, if $\mathrm{Mod}_k$ denotes the modulation operator $\mathrm{Mod}_k \,g(x) := e^{i \inn{x}{k}}g(x)$, then
\begin{equation*}
  A[\gamma, \chi_0]f(x) =  \frac{1}{(2 \pi)^n} \sum_{k \in \Z^n}  a_k \cdot \mathrm{Mod}_k \circ A[\gamma, \chi_1]\circ \mathrm{Mod}_{-k}\,f(x).
\end{equation*}
By analytic interpolation, it follows that 
\begin{equation*}
    \|\mathrm{Mod}_k\|_{L^p_{\alpha}(\R^n) \to L^p_{\alpha}(\R^n)} \lesssim (1 + |k|)^{\alpha} \qquad \textrm{for all $0 \leq \alpha \leq 1$}
\end{equation*}
and therefore
\begin{align*}
    \|A[\gamma, \chi_0]f\|_{L^p_{\alpha}(\R^n)} &\lesssim \sum_{k \in \Z^n} |a_k| (1 + |k|)^{\alpha} \cdot \|A[\gamma, \chi_1]\circ \mathrm{Mod}_{-k}\,f\|_{L^p_{\alpha}(\R^n)} \\
    &\lesssim_{\gamma, \chi_0} \|A[\gamma, \chi_1]\|_{L^p(\R^n) \to L^p_{\alpha}(\R^n)}\|f\|_{L^p(\R^n)},
\end{align*}
using the rapid decay of the Fourier coefficients. 
\end{proof}

As a consequence of Proposition~\ref{quant nondeg prop}, it suffices to fix $\delta_0 > 0$ and prove Theorem~\ref{non-degenerate theorem} and Proposition~\ref{intro necessity proposition} in the special case where $\gamma \in \mathfrak{G}_4(\delta_0)$ and $\supp \chi \subseteq I_0 := [-\delta_0,\delta_0]$. Thus, henceforth, we work with some fixed $\delta_0$, chosen to satisfy the forthcoming requirements of the proofs. For the sake of concreteness, the choice of $\delta_0 := 10^{-10^5}$ is more than enough for our purposes.

%%%%%%%%%%%%%%%%%%%%%%%%%%%%%%%%%%%%%%%%%%%%%%%%%%%%%%%%%%%%%%%%%%%%%%%%%%%%%%%%%%%%%%%%%%%%%%%%

%            Necessary conditions

%%%%%%%%%%%%%%%%%%%%%%%%%%%%%%%%%%%%%%%%%%%%%%%%%%%%%%%%%%%%%%%%%%%%%%%%%%%%%%%%%%%%%%%%%%%%%%%%

\section{Necessary conditions}\label{nec sec} 

\subsection{General \texorpdfstring{$L^p \to L^p_{\alpha}$}{} estimates} If $\gamma \colon I \to \R^n$ is of maximal type $d$, then the van der Corput lemma shows that the Fourier transform of any smooth density $\mu_{\gamma}$ on $\gamma$ satisfies 
\begin{equation}\label{Fourier decay}
    |\hat{\mu}_{\gamma}(\xi)| \lesssim_{\gamma} (1 + |\xi|)^{-1/d}.
\end{equation}
This readily implies that  
\begin{equation}\label{L2 Sobolev}
    \|A_{\gamma}f\|_{L^2_{1/d}(\R^n)} \lesssim_{\gamma} \|f\|_{L^2(\R^n)}.
\end{equation}
Consider the case where $\gamma$ is non-degenerate, so that $d = n$. By interpolating against \eqref{L2 Sobolev}, Conjecture~\ref{main conjecture} formally implies that $A_{\gamma}$ maps $L^p$ to $L^p_{\alpha}$ for all $p \geq 2$ and
\begin{equation}\label{conjectured regularity range}
 \alpha <   \alpha_{\mathrm{cr}}(p) := \min \Big\{ \frac{1}{n}\Big(\frac{1}{2} + \frac{1}{p} \Big), \frac{1}{p} \Big\},
\end{equation}
with the equality case also holding in the restricted range $p > 2n - 2$. It is an interesting question what happens at the endpoint in the range $2 < p \leq 2n - 2$.

%%%%%%%%%%%%%%%%%%%%%%%%%%%%
%%
%   figure non degenerate conjectured range
%%
%%%%%%%%%%%%%%%%%%%%%%%%%%%%%

\begin{figure}
\begin{tikzpicture}[scale=4] 

\begin{scope}[scale=2]
\draw[thick,->] (-.05,0) -- (1.05,0) node[below] {$ \frac{1}{p}$};
\draw[thick,->] (0,-.05) -- (0,0.4) node[left=0.15cm] {$\alpha$};

\draw (-0.025,0.33) -- (0.025,0.33) node[left=0.35cm] {$ \frac{1}{n}$};

\draw (-0.025, 0.25) -- (0.025, 0.25) node[left=0.35cm] {$\frac{1}{2n-2}$};

\draw (0.5,-0.025) -- (0.5,.025) node[below= 0.35cm] {$ \frac{1}{2}$}; 

\draw (1,-0.025) -- (1,.025); 

\draw (0,0) -- (0.5,0.33) -- (1,0) ; 

\draw (0,0) -- (0.25,0.25)  -- (0.5,0.33)-- (0.75,0.25) -- (1,0) ; 

\node[circle,draw=black, fill=black, inner sep=0pt,minimum size=3pt] at (0.5,0.33) {};

\node[circle,draw=black, fill=white, inner sep=0pt,minimum size=4pt] at (0.25,0.25) {};

\node[circle,draw=black, fill=white, inner sep=0pt,minimum size=4pt] at (0.75,0.25) {};

\draw (0.25, -0.025) -- (0.25, 0.025) node[below=0.35cm] {$\frac{1}{2n-2}$}; 

\draw (0.75, -0.025) -- (0.75, 0.025) node[below=0.35cm] {$\frac{2n-3}{2n-2}$};

\fill[pattern=north west lines, pattern color=blue] (0,0) -- (0.25,0.25)  -- (0.5,0.33)-- (0.75,0.25) -- (1,0) -- (0.5,0.33) --(0,0) ;

\fill[pattern=north west lines, pattern color=blue!40] (0,0) -- (0.5,0.33) -- (1,0) --(0,0) ;

\node[circle,draw=black, fill=black, inner sep=0pt,minimum size=3pt] at (0,0) {};

\node[circle,draw=black, fill=black, inner sep=0pt,minimum size=3pt] at (1,0) {};

\end{scope}

\end{tikzpicture}

\caption{Conjectured range of $A_{\gamma} \colon L^p(\R^n) \to L^p_{\alpha}(\R^n)$ boundedness for $\gamma$ non-degenerate. The inner triangle follows from the elementary $L^2$ estimate. The goal is to establish the $L^p(\R^n) \to L^p_{1/p}(\R^n)$ bound at the `kink' point $p_{\mathrm{cr}} = 2n-2$ (or, equivalently, $p_{\mathrm{cr}}' = \tfrac{2n-3}{2n-2}$). }
\label{conjectured range} 

\end{figure}
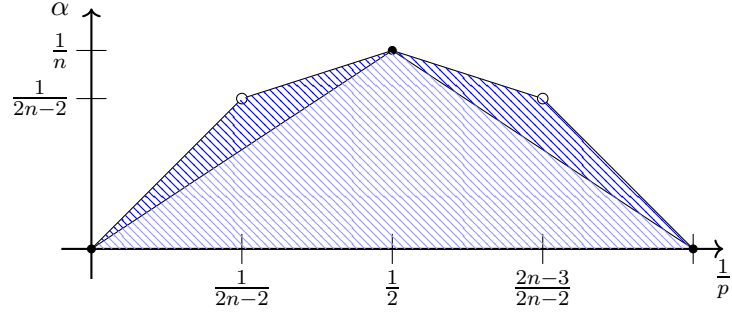

%%%%%%%%%%%%%%%%%%%%%%%%%%%%
%%
%   
%%
%%%%%%%%%%%%%%%%%%%%%%%%%%%%%

The range of conjectured bounds is represented in Figure~\ref{conjectured range}. The two constraints appearing in the definition of the critical regularity exponent $\alpha_{\mathrm{cr}}(p)$ agree precisely when $p$ corresponds to the critical Lebesgue exponent
\begin{equation*}
    p_{\mathrm{cr}} := 2n - 2,
\end{equation*}
which manifests as a `kink' in the $L^p$-Sobolev diagram.

By a simple scaling argument (see, for instance, \cite[pp.81-82]{PS2007}), Conjecture~\ref{main conjecture} further implies bounds for $A_{\gamma}$ under a finite type hypothesis. In view of Corollary~\ref{finite type corollary}, it is reasonable to conjecture the following. 

%%%%%%%%%%%%%%%%%%%%%%%%%%%%
%%
%   figure finite type conjectured range
%%
%%%%%%%%%%%%%%%%%%%%%%%%%%%%%

\begin{figure}
\begin{tikzpicture}[scale=4] 

\begin{scope}[scale=2]
\draw[thick,->] (-.05,0) -- (1.05,0) node[below] {$ \frac{1}{p}$};
\draw[thick,->] (0,-.05) -- (0,0.4) node[left=0.15cm] {$\alpha$};

\draw (-0.025,0.30) -- (0.025,0.30) node[left=0.35cm] {$ \frac{1}{d}$};

\draw (-0.025, 0.23) -- (0.025, 0.23) node[left=0.35cm] {$\frac{1}{2n-2}$};

\draw (0.5,-0.025) -- (0.5,.025) node[below= 0.35cm] {$ \frac{1}{2}$}; 

\draw (1,-0.025) -- (1,.025);

\draw (0,0) --(0.23,0.23)  ;
\draw[dashed] (0.23,0.23) -- (0.37,0.30) ;
\draw  (0.37,0.30) -- (0.63,0.30);
\draw[dashed]  (0.63,0.30) --(0.77,0.23) ;
\draw (0.77,0.23) -- (1,0) ; 

\node[circle,draw=black, fill=black, inner sep=0pt,minimum size=3pt] at (0.5, 0.3) {};

\node[circle,draw=black, fill=white, inner sep=0pt,minimum size=4pt] at (0.37, 0.30) {};

\node[circle,draw=black, fill=white, inner sep=0pt,minimum size=4pt] at (0.63,0.30) {};

\node[circle,draw=black, fill=white, inner sep=0pt,minimum size=4pt] at(0.23,0.23) {};

\node[circle,draw=black, fill=white, inner sep=0pt,minimum size=4pt] at(0.77,0.23) {};

\draw (0.23, -0.025) -- (0.23, 0.025) node[below=0.35cm] {$\frac{1}{2n-2}$}; 

\draw (0.37, -0.025) -- (0.37, 0.025) node[below=0.35cm] {$\frac{2n-d}{2d}$};

\draw (0.63, -0.025) -- (0.63, 0.025) node[below=0.35cm] {$\frac{3d-2n}{2d}$}; 

\draw (0.77, -0.025) -- (0.77, 0.025) node[below=0.35cm] {$\frac{2n-3}{2n-2}$}; 

\node[circle,draw=black, fill=black, inner sep=0pt,minimum size=3pt] at (0,0) {};

\node[circle,draw=black, fill=black, inner sep=0pt,minimum size=3pt] at (1,0) {};

\fill[pattern=north west lines, pattern color=blue] (0,0) --(0.23,0.23)  -- (0.37,0.30)-- (0.63,0.30) --(0.77,0.23) -- (1,0) ;

\end{scope}

\end{tikzpicture}

\begin{tikzpicture}[scale=4] 

\begin{scope}[scale=2]
\draw[thick,->] (-.05,0) -- (1.05,0) node[below] {$ \frac{1}{p}$};
\draw[thick,->] (0,-.05) -- (0,0.3) node[left=0.15cm] {$\alpha$};

\draw (-0.025, 0.15) -- (0.025, 0.15) node[left=0.35cm] {$\frac{1}{d}$};

\draw (0.15, -0.025) -- (0.15, 0.025) node[below=0.35cm] {$\frac{1}{d}$};

\draw (0.85, -0.025) -- (0.85, 0.025) node[below=0.35cm] {$\frac{d-1}{d}$};

\draw (0.5,-0.025) -- (0.5,.025) node[below= 0.35cm] {$ \frac{1}{2}$}; 

\draw (1,-0.025) -- (1,.025);

\draw (0,0) -- (0.15,0.15) ; 
\draw (0.15,0.15) -- (0.85,0.15) ; 
\draw (0.85,0.15) -- (1,0) ;

\fill[pattern=north west lines, pattern color=blue] (0,0) -- (0.15,0.15)  -- (0.85,0.15) -- (1,0)  ;

\node[circle,draw=black, fill=black, inner sep=0pt,minimum size=3pt] at (0.5, 0.15) {};

\node[circle,draw=black, fill=white, inner sep=0pt,minimum size=4pt] at (0.15,0.15) {};

\node[circle,draw=black, fill=white, inner sep=0pt,minimum size=4pt] at (0.85,0.15) {};

\node[circle,draw=black, fill=black, inner sep=0pt,minimum size=3pt] at (0,0) {};

\node[circle,draw=black, fill=black, inner sep=0pt,minimum size=3pt] at (1,0) {};

\end{scope}

\end{tikzpicture}

\caption{Conjectured range of $A_{\gamma} \colon L^p(\R^n) \to L^p_{\alpha}(\R^n)$ boundedness for $\gamma$ of maximal type $d$. The upper diagram corresponds to $d < 2n-2$ whilst the lower diagram corresponds to $d \geq 2n-2$.}
\label{finite type conjectured range} 

\end{figure}
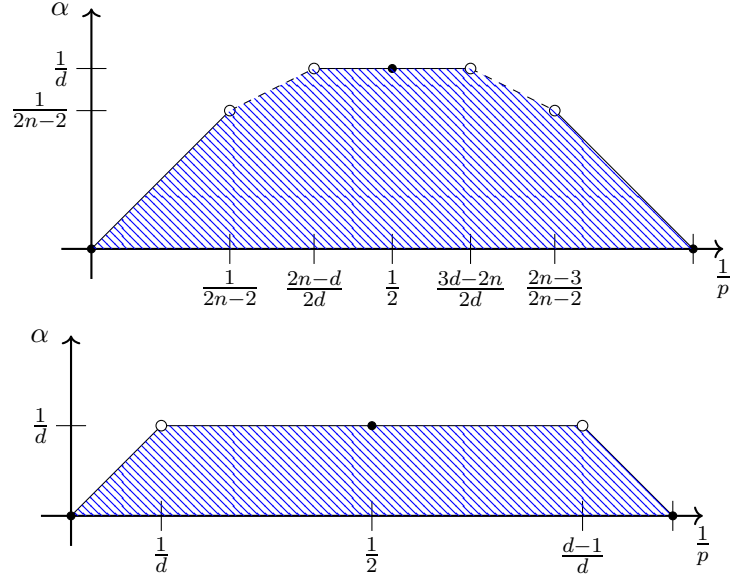

%%%%%%%%%%%%%%%%%%%%%%%%%%%%
%%
% 
%%
%%%%%%%%%%%%%%%%%%%%%%%%%%%%%

\begin{conjecture}\label{finite type conjecture} If $\gamma \colon I \to \R^n$ is of maximal type $d$, then the operator $A_{\gamma}$ maps $L^p$ to $L^p_{\alpha}$ for all $p \geq 2$ and 
\begin{equation}\label{finite type regularity range}
 \alpha \leq   \alpha_{\mathrm{cr}}(d;p) := \min \Big\{ \alpha_{\mathrm{cr}}(p), \frac{1}{d} \Big\}
\end{equation}
with strict inequality if $\min \{2n - 2, d\} \leq p \leq \max \{2n - 2, d\}$.
\end{conjecture}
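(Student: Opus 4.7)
The plan is to establish Conjecture~\ref{finite type conjecture} conditionally on Conjecture~\ref{main conjecture}, by an anisotropic scaling argument that generalises the one employed in \cite[pp.~81--82]{PS2007} for $n=3$. The governing heuristic is that even though a finite-type curve may be genuinely degenerate at individual points, on any dyadic annulus around such a point it becomes, after anisotropic rescaling, a controlled perturbation of a non-degenerate curve; one may then apply Conjecture~\ref{main conjecture} to the rescaled averaging operator and sum the contributions.

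After a smooth partition of unity and compactness, it suffices to consider a cutoff $\chi$ supported in a small neighbourhood of a single point $s_0 \in I$; by translation, take $s_0 = 0$ and let $d_0 \leq d$ be the type at $0$. Choose indices $1 = j_1 < j_2 < \cdots < j_n \leq d_0$ so that $\gamma^{(j_1)}(0), \ldots, \gamma^{(j_n)}(0)$ span $\R^n$; such a choice is possible by the finite-type condition \eqref{finite type}. Decompose $\chi = \sum_{k \geq 0} \chi_k$ with $\chi_k$ supported on the dyadic annulus $|s| \sim 2^{-k}$. On the $k$-th piece, apply the linear change of variables $y = M_k x$ with
\begin{equation*}
    M_k := \big[\gamma^{(j_1)}(0), \ldots, \gamma^{(j_n)}(0)\big] \cdot \mathrm{diag}\big(2^{-k j_1}, \ldots, 2^{-k j_n}\big).
\end{equation*}
A Taylor expansion shows that this conjugates $A[\gamma,\chi_k]$ into the averaging operator for a curve $\tilde\gamma_k$ which, for $k$ sufficiently large, is a controlled $C^{n+1}$-perturbation of the generalised moment curve $s \mapsto (s^{j_1}/j_1!, \ldots, s^{j_n}/j_n!)$. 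The latter is non-degenerate on the annulus $|s| \sim 1$ (though not at $s=0$), which is precisely where the rescaled cutoff is supported. Consequently, Conjecture~\ref{main conjecture} yields the sharp bound $L^p \to L^p_{1/p}$ for the rescaled operator, in the range $p > 2n-2$.

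Undoing the conjugation, one must track how the anisotropic stretching of $M_k$ interacts with the isotropic Sobolev norm. A direct computation on the Fourier side (changing variables $\xi \mapsto M_k^T \xi$) yields an estimate of the form $\|A[\gamma,\chi_k]\|_{L^p \to L^p_\alpha} \lesssim 2^{k\beta(\alpha,p,\mathbf{j})}$ for an explicit exponent $\beta$; the scaling numerology is arranged so that $\beta < 0$ precisely when $\alpha < \alpha_{\mathrm{cr}}(d_0;p)$. Summing the geometric series in $k$ and then taking the supremum over $s_0 \in I$ gives the bound of Conjecture~\ref{finite type conjecture} in the non-endpoint range. For $p$ close to $2$ one interpolates with the elementary $L^2 \to L^2_{1/d}$ estimate from \eqref{L2 Sobolev} (which itself follows from the van der Corput bound \eqref{Fourier decay}).

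The principal obstacle is the sharp interplay in the rescaling step between the anisotropic dilation $M_k$ and the Euclidean Sobolev norm. At the transitional exponents $p = 2n-2$ or $p = d$, the geometric sum in $k$ becomes borderline, and this is presumably why the conjecture is stated with strict inequality in the intermediate range $\min\{2n-2,d\} \leq p \leq \max\{2n-2,d\}$. Closing the argument at non-borderline $p$ should require only Conjecture~\ref{main conjecture} together with dyadic summation; extracting the full endpoint statements would demand further orthogonality between the pieces $A[\gamma,\chi_k]$ for distinct $k$, likely via an additional frequency decomposition adapted to the eigendirections of $M_k$, and appears genuinely delicate, in parallel with the still-open endpoint questions for the non-degenerate problem itself.
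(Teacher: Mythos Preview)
The statement in question is a \emph{conjecture} in the paper, not a theorem; consequently there is no proof in the paper to compare against. What the paper does provide is precisely the remark preceding the conjecture: ``By a simple scaling argument (see, for instance, \cite[pp.81-82]{PS2007}), Conjecture~\ref{main conjecture} further implies bounds for $A_{\gamma}$ under a finite type hypothesis.'' Your proposal is an elaboration of exactly this conditional reduction, and in that sense it is fully aligned with what the paper asserts (without details) is possible.

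Your outline follows the standard anisotropic-rescaling template from \cite{PS2007}: localise near a point of type $d_0$, decompose into dyadic annuli in $s$, rescale each annulus so that the curve becomes a controlled perturbation of a non-degenerate model, invoke Conjecture~\ref{main conjecture} on the rescaled pieces, and sum. This is the intended mechanism, and the paper uses the same scheme (again citing \cite{PS2007}) to deduce Corollary~\ref{finite type corollary} from Theorem~\ref{non-degenerate theorem} in the established $n=4$ case. Two cautions are worth recording. First, your proposal is a plan rather than a proof: the step ``a direct computation on the Fourier side yields an estimate of the form $\|A[\gamma,\chi_k]\|_{L^p \to L^p_\alpha} \lesssim 2^{k\beta}$ with $\beta<0$ precisely when $\alpha<\alpha_{\mathrm{cr}}(d_0;p)$'' is asserted rather than executed, and the interaction between the anisotropic dilation $M_k$ and the isotropic Bessel potential requires care (the worst stretching direction contributes a factor $2^{kd_0\alpha}$, which must be beaten by the factor $2^{-k}$ coming from $|\supp\chi_k|$ together with the gain from the $L^p_{1/p}$ bound on the rescaled operator). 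Second, and more fundamentally, since Conjecture~\ref{main conjecture} is itself open for $n\geq 5$, your argument does not yield an unconditional proof of Conjecture~\ref{finite type conjecture}; it only confirms the implication the paper already claims.
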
 

The range of conjectured bounds is represented in Figure~\ref{finite type conjectured range}.

\begin{remark}
Using the fact that $(I - \Delta)^{\alpha/2}
\colon L^p_{\alpha +\beta}(\R^n) \to L^p_{ \beta}(\R^n)$ is an isomorphism together with a duality argument, any $L^p \to L^p_{\alpha}$ estimate for $A_{\gamma}$ immediately implies a corresponding $L^{p'} \to L^{p'}_{\alpha}$ estimate. 
\end{remark}

The condition $\alpha \leq 1/d$ is clearly necessary. Indeed, by duality and interpolation, any $L^p \to L^p_{\alpha}$ estimate implies an $L^2 \to L^2_{\alpha}$ estimate for the same value of $\alpha$. However, a slight refinement of \eqref{Fourier decay} shows that the $L^2$ estimate \eqref{L2 Sobolev} is sharp in the sense that the regularity exponent on the left-hand side cannot be taken larger than $1/d$.

\subsection{Band-limited examples} The remainder of this section discusses the necessity of the conditions \eqref{conjectured regularity range} and \eqref{finite type regularity range}. To begin, given $\lambda > 0$, consider the family of band-limited Schwartz functions
\begin{equation*}
\cZ_\la : =\big\{f\in \cS(\bbR^n): \supp\hat{f}\subset\{\xi \in \hat{\R}^n :\la/2\le|\xi|\le 2\la\} \big\}.
\end{equation*}
By elementary Sobolev space theory, the desired necessary conditions are a consequence of the following proposition.

\begin{proposition}\label{necessity proposition} If $\gamma \colon I \to \R^n$ is a smooth curve satisfying the non-degeneracy hypothesis \eqref{eq:nondegenerate} and $p\ge 2$, then
\begin{equation*}
   \sup\big\{ \|A_{\gamma}f\|_{L^p(\R^n)}: f\in L^p\cap\cZ_\la, \quad\|f\|_{L^p(\R^n)} = 1\big \} \gtrsim_{p,\gamma} \la^{-\alpha_{\mathrm{cr}}(p)}. 
\end{equation*}
\end{proposition}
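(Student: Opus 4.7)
My plan is to exhibit two distinct band-limited families, each saturating one of the two bounds comprising $\alpha_{\mathrm{cr}}(p) = \min\{\tfrac{1}{n}(\tfrac{1}{2}+\tfrac{1}{p}),\tfrac{1}{p}\}$. Since $\lambda^{-\min\{A,B\}} = \max\{\lambda^{-A},\lambda^{-B}\}$ for $\lambda\ge 1$, it suffices to lower bound the supremum separately by each of $\lambda^{-1/p}$ and $\lambda^{-\frac{1}{n}(\frac{1}{2}+\frac{1}{p})}$.

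For the $\lambda^{-1/p}$ bound, I would use a Knapp-type bump, working at the dual index $p' \le 2$ and invoking the duality $L^p_\alpha \cong (L^{p'}_{-\alpha})^*$ (noting that the reflection of a non-degenerate curve is still non-degenerate, so any such $L^{p'}$ lower bound for $A_{\tilde\gamma}$ yields the desired $L^p$ lower bound for $A_\gamma$). The test function is $f_\lambda(x) := \lambda^n \psi(\lambda(x-x_0)) e^{i\lambda\xi_0\cdot x}$ for $\psi \in \mathcal{S}(\R^n)$ with $\hat\psi$ compactly supported near the origin, a unit vector $\xi_0$, and a basepoint $x_0$. One checks $f_\lambda \in \cZ_\lambda$ and $\|f_\lambda\|_{p'} \sim \lambda^{n/p}$. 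The key observation is that $A_\gamma f_\lambda$ is essentially supported on the $\lambda^{-1}$-tube around $x_0 + \gamma(I)$---of Lebesgue measure $\sim \lambda^{-(n-1)}$, by the regularity of $\gamma$---with magnitude $\sim \lambda^{n-1}$ there, since the $s$-integration runs over an interval of length $\lambda^{-1}$ and the modulating phase $e^{-i\lambda\xi_0\cdot\gamma(s)}$ varies by only $O(1)$ on this interval. Hence $\|A_\gamma f_\lambda\|_{p'} \sim \lambda^{(n-1)/p}$ and the ratio is $\lambda^{-1/p}$.

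For the $\lambda^{-\frac{1}{n}(\frac{1}{2}+\frac{1}{p})}$ bound, I would exhibit a wave-packet example adapted to the conormal cone of maximal stationary-phase degeneracy. Fix $s_0 \in \operatorname{int}(\supp\chi)$; by non-degeneracy there is a unit vector $\xi_0$ with $\inn{\xi_0}{\gamma^{(j)}(s_0)} = 0$ for $1 \le j \le n-1$ and $\inn{\xi_0}{\gamma^{(n)}(s_0)} \ne 0$, so that the phase $s\mapsto \xi_0\cdot\gamma(s)$ has a critical point of maximal order $n$ at $s_0$. Van der Corput then gives $|\hat\mu_\gamma(\lambda\xi_0)| \sim \lambda^{-1/n}$. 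The main task is to build $f_\lambda \in \cZ_\lambda$ whose Fourier transform is essentially the indicator of an anisotropic plate $B_\lambda$ centred at $\lambda\xi_0$, with sidelengths $\lambda^{(n-1)/n},\lambda^{(n-2)/n},\ldots,\lambda^{1/n},1$ in the coframe dual to $\{\gamma^{(j)}(s_0)\}_{j=1}^n$, tuned so that $A_\gamma f_\lambda$ exhibits an $L^\infty$ peak on a focal subset of the dual plate while retaining sufficient $L^2$ mass; interpolating between the resulting $L^\infty$ and $L^2$ estimates then yields the target $L^p$ ratio. The two endpoints furnish independent sanity checks: at $p=2$ one should recover the Plancherel-based $L^2\to L^2_{1/n}$ bound, and at $p=p_{\mathrm{cr}}=2n-2$ one should recover the ratio $\lambda^{-1/(2n-2)}$ furnished by the first family via duality.

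The main obstacle will be this second construction. The difficulty is verifying that a single anisotropic wave packet genuinely saturates the linearly interpolated Sobolev exponent for intermediate $p$, rather than only the weaker $\min\{1/n,1/p\}$ bound that a plain combination of a single plane wave and the Knapp bump above would yield. This will require a precise description of the phase of $\hat\mu_\gamma$ across the plate---obtained by iterating stationary phase in $s$, with the dimensions of $B_\lambda$ chosen so that $|\hat\mu_\gamma|\sim\lambda^{-1/n}$ remains uniform while the phase produces a focused output---and of the resulting focal geometry of $A_\gamma f_\lambda$ inside the dual plate $B_\lambda^*$. The endpoint calibrations above should be used to confirm the correct plate dimensions before attempting the intermediate interpolation.
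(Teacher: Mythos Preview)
Your treatment of the $\lambda^{-1/p}$ bound is correct and essentially matches the paper's Lemma~\ref{1/p nec lemma}: both use a bump of width $\lambda^{-1}$ (band-limited by convolution or modulation) and observe that $A_\gamma$ applied to it has size $\sim\lambda^{-1}$ on the $\lambda^{-1}$-tube around the curve, then dualise.

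The second construction, however, has a genuine gap---and it is precisely the one you flagged as the ``main obstacle''. A single wave packet with non-negative $\hat f_\lambda$ supported where $|\hat\mu_\gamma|\sim\lambda^{-1/n}$ can never beat the ratio $\lambda^{-1/n}$: one has $\|f_\lambda\|_\infty = \|\hat f_\lambda\|_1$ while $\|A_\gamma f_\lambda\|_\infty \le \|\hat\mu_\gamma\hat f_\lambda\|_1 \lesssim \lambda^{-1/n}\|\hat f_\lambda\|_1$, so the $L^\infty$ ratio is $\lesssim\lambda^{-1/n}$, not the $\lambda^{-1/(2n)}$ you would need for interpolation with the $L^2$ endpoint to produce $\lambda^{-\frac{1}{n}(\frac{1}{2}+\frac{1}{p})}$. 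Focusing does not help: since $\widehat{A_\gamma f_\lambda}$ is supported in $B_\lambda$, the output cannot concentrate on a set smaller than the dual plate $B_\lambda^*$, and combining this with the $L^2$ identity $\|A_\gamma f_\lambda\|_2\sim\lambda^{-1/n}\|f_\lambda\|_2$ forces the $L^p$ ratio to be $\lesssim\lambda^{-1/n}$ for every $p\ge 2$.

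The paper's route is genuinely different. It places $N\sim\lambda^{1/n}$ \emph{isotropic} balls of radius $\sim\lambda^{1/n}$ along the worst-decay cone, forms the randomised sum $g^\omega=\sum_\nu r_\nu(\omega)g_\nu$, and sets $f_\nu:=(\hat g_\nu/\hat\mu_\gamma)^\vee$ so that $A_\gamma f^\omega=g^\omega$. A stationary-phase analysis of $\hat\mu_\gamma$ near the cone shows each $f_\nu$ is concentrated on a ball of radius $\lambda^{-1/n}$ centred at $x^\nu=-\nabla\phi(\xi^\nu)$, and these centres are $\lambda^{-1/n}$-separated. Hence $\|f^\omega\|_p$ scales like $N^{1/p}$ (essentially disjoint supports), whereas Khinchine gives $\|g^\omega\|_p\sim N^{1/2}\|g_\nu\|_p$. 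The resulting gain of $N^{1/2-1/p}=\lambda^{\frac{1}{n}(\frac{1}{2}-\frac{1}{p})}$ over the single-packet ratio $\lambda^{-1/n}$ is exactly what produces $\lambda^{-\frac{1}{n}(\frac{1}{2}+\frac{1}{p})}$. The randomisation is not a technical convenience but the mechanism that supplies the missing $\frac{1}{2}-\frac{1}{p}$ in the exponent; your single-plate ansatz cannot recover it.
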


This directly implies Proposition~\ref{intro necessity proposition} and, moreover, shows that the $(p,\alpha)$-ranges in \eqref{conjectured regularity range} and Conjecture~\ref{finite type conjecture} are optimal up to endpoints.\footnote{If $\gamma$ is finite type curve, then the points $s$ for which the type of $\gamma$ at $s$ is strictly larger than $d$ are isolated. Consequently, any necessary condition for the non-degenerate problem is automatically a necessary condition for the finite type problem. The necessity of the additional constraint $\alpha \leq 1/d$ is discussed in the previous subsection.} 

Proposition~\ref{necessity proposition} is based on testing the estimate against two examples, corresponding to the two constraints inherent in the minimum appearing in the definition of $\alpha_{\mathrm{crit}}(p)$.

\subsection{Dimensional constraint: \texorpdfstring{$\alpha \leq 1/p$}{}}  The condition $\alpha \leq 1/p$ is well-known and appears to be folkloric; in lieu of a precise reference, the details are given presently. 

\begin{lemma}\label{1/p nec lemma} If $\gamma \colon I \to \R^n$ is a smooth curve and $p \geq 2$, then 
\begin{equation*}
    \sup\big\{ \|A_{\gamma} f\|_{L^p(\R^n)}: f\in L^p(\R^n)\cap\cZ_\la, \quad\|f\|_{L^p(\R^n)}=1\big \} \gtrsim_{p,\gamma} \la^{-1/p}.
\end{equation*}
\end{lemma}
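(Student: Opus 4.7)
The plan is to exhibit a bandlimited test function and use a duality argument to transfer its natural $L^{p'}$-estimate to the desired $L^p$ lower bound.

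First, I would fix a non-negative bump $\rho \in C_c^\infty(\R^n)$ supported in $\{1/2 \leq |\xi| \leq 2\}$ with $\rho \equiv 1$ on $\{3/4 \leq |\xi| \leq 3/2\}$, and set $\psi(x) := \la^n \check\rho(\la x)$. Since $\widehat\psi(\xi) = \rho(\xi/\la)$ is supported in $\{\la/2 \leq |\xi| \leq 2\la\}$, we have $\psi \in \cZ_\la$, and by rescaling $\|\psi\|_{L^q(\R^n)} \sim \la^{n/q'}$ for every $q \in [1,\infty]$. Using that $\check\rho(0) = \int \rho > 0$, one may choose $c_1, c_2 > 0$ with $|\check\rho(y)| \geq c_2$ whenever $|y| \leq c_1$. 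Regularity of $\gamma$ then ensures that for any $s_0 \in \supp \chi$ and any $x$ within distance $c_1/(2\la)$ of $\gamma(s_0)$, the set of $s$-values satisfying $|\la(x-\gamma(s))| \leq c_1$ has Lebesgue measure $\gtrsim \la^{-1}$. Consequently,
\[
|A_\gamma \psi(x)| = \la^n\Big|\int \check\rho\big(\la(x - \gamma(s))\big) \chi(s)\,\ud s\Big| \gtrsim \la^{n-1}
\]
for every $x$ in the $O(\la^{-1})$-tubular neighbourhood $T_\la$ of the curve $\gamma(\supp \chi)$, which has Lebesgue measure $|T_\la| \sim \la^{-(n-1)}$. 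Therefore
\[
\|A_\gamma \psi\|_{L^q(\R^n)} \gtrsim \la^{n-1}\cdot \la^{-(n-1)/q} = \la^{(n-1)/q'},
\]
so the $L^q$-ratio $\|A_\gamma \psi\|_{L^q}/\|\psi\|_{L^q}$ is $\gtrsim \la^{-1/q'}$ for every $q \in [1, \infty]$.

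Specialising to $q = p'$ produces a ratio $\gtrsim \la^{-1/p}$ in $L^{p'}$, and the identical computation applied to the reflected curve $\tilde\gamma(s) := -\gamma(s)$ yields the same $L^{p'}$-bound for $A_{\tilde\gamma}$. To pass from $L^{p'}$ to $L^p$, I would invoke a standard duality argument: the adjoint satisfies $A_\gamma^* = A_{\tilde\gamma}$, and inserting a Littlewood-Paley projection $P_\la$ adapted to $\{|\xi|\sim \la\}$ (which fixes $\cZ_\la$, commutes with $A_\gamma^*$, and is bounded on both $L^p$ and $L^{p'}$), one obtains, for $f \in \cZ_\la$,
\[
\|A_\gamma f\|_{L^p} = \sup_{\|g\|_{p'}\leq 1}\big|\langle f, A_{\tilde\gamma} P_\la g\rangle\big| \lesssim \|f\|_{L^p} \cdot \sup_{\substack{h \in \cZ_\la \\ \|h\|_{L^{p'}} \leq 1}}\|A_{\tilde\gamma} h\|_{L^{p'}}.
\]
A symmetric reverse inequality would show that the restricted $L^p \cap \cZ_\la \to L^p$ operator norm of $A_\gamma$ is comparable to the $L^{p'}\cap\cZ_\la \to L^{p'}$ operator norm of $A_{\tilde\gamma}$. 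The latter is $\gtrsim \la^{-1/p}$ by the preceding computation, and so there is some $f \in \cZ_\la$ realising $\|A_\gamma f\|_{L^p}/\|f\|_{L^p} \gtrsim \la^{-1/p}$, as desired.

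The argument is elementary and contains no serious obstacle; the only mildly subtle point will be the duality transfer, but this is a standard manoeuvre, handled cleanly by the Littlewood-Paley projection which resolves the slight mismatch between $\cZ_\la$ and the full $L^{p'}$-pairing partner. (For $p = 2$ the transfer step is unnecessary since $q = p' = p$ directly gives the bound.)
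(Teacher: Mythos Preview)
Your proposal is correct and follows essentially the same route as the paper: construct a bandlimited bump, show its average is pointwise large on an $O(\lambda^{-1})$-tube about the curve to get the $L^{p'}$ ratio $\gtrsim \lambda^{-1/p}$, then pass to $L^p$ by duality. Your duality step is in fact slightly more careful than the paper's (which invokes ``self-adjointness'' of $A_\gamma$ where strictly $A_\gamma^* = A_{-\gamma}$), and your test function $\lambda^n\check\rho(\lambda\,\cdot\,)$ differs only cosmetically from the paper's $\check\beta_{\lambda^{-1}}*\psi_\lambda$.
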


\begin{proof} Since the operator $A_{\gamma}$ is self-adjoint and commutes with frequency projections, given $1 \leq p \leq 2$ it suffices to show
\begin{equation*}
     \sup\big\{ \|A_{\gamma} f\|_{L^p(\R^n)}: f\in L^p(\R^n)\cap\cZ_\la, \quad\|f\|_{L^p(\R^n)}=1\big \} \gtrsim_{p,\gamma} \la^{-1/p'}.
\end{equation*}

Fix $\beta \in C^{\infty}_c(\hat{\R}^n)$ a real-valued even function with (inverse) Fourier transform $\check{\beta}$ satisfying $\check{\beta}(0) = 1$ and
\begin{equation}\label{1/p nec 1}
    \supp \beta \subseteq \{\xi \in \hat{\R}^n : 1/2 \leq |\xi| \leq 2 \}.
\end{equation}
In addition, let $\psi \in C^{\infty}_c(\R^n)$ be non-zero, non-negative and supported in a ball centred at the origin of radius $c$, where $0 < c < 1$ is a sufficiently small constant (independent of $\lambda$) chosen to satisfy the requirements of the forthcoming argument. With these bump functions define
\begin{equation*}
    f := (\beta_{\lambda^{-1}})\;\widecheck{}\; \ast \psi_{\lambda}
\end{equation*}
where $\beta_{\lambda^{-1}}(\xi) := \beta(\lambda^{-1} \xi)$ and $\psi_{\lambda}(x) := \psi(\lambda x)$. The condition \eqref{1/p nec 1} implies $f \in L^p(\R^n) \cap \mathcal{Z}_{\la}$, whilst direct calculation shows  that
\begin{equation}\label{1/p nec 2}
    \|f\|_{L^p(\R^n)} \sim \lambda^{-n/p}.
\end{equation}

By a simple computation, $A_{\gamma}f = K^{\lambda} \ast \psi_{\lambda}$ where
\begin{equation*}
    K^{\lambda}(x) := \lambda^n \int_{\R} \check{\beta}\big(\lambda(x - \gamma(s))\big) \,\chi(s)\ud s.
\end{equation*}
The key claim is that, provided $0 < c < 1$ is chosen sufficiently small (independently of $\lambda$), the pointwise inequality
\begin{equation}\label{1/p nec 3}
    K^{\lambda} \ast \psi_{\lambda}(x) \gtrsim \lambda^{-1} \qquad \textrm{for all $x \in \mathcal{N}_{c\lambda^{-1}}(\gamma)$}
\end{equation}
holds, where $\mathcal{N}_{c\lambda^{-1}}(\gamma)$ denotes the $c\lambda^{-1}$-neighbourhood of the curve 
\begin{equation*}
    \{\gamma(s) : s \in \supp \chi \}.
\end{equation*} 
To see this, choose $c$ sufficiently small so that $\check{\beta}$ is bounded away from zero on a ball of radius $10c$ centred at the origin. If $x \in \mathcal{N}_{c\lambda^{-1}}(\gamma)$, then there exists some $s_0 \in \supp \chi$ such that 
\begin{equation*}
    |x - y - \gamma(s)| < 10c\lambda^{-1} \qquad\textrm{whenever $|s - s_0| \lesssim_{\gamma} \lambda^{-1}$ and $|y| \leq c \lambda^{-1}$,}
\end{equation*}
from which \eqref{1/p nec 3} follows. 

Combining \eqref{1/p nec 2} and \eqref{1/p nec 3}, one concludes that
\begin{equation*}
    \sup_{f\in L^p(\R^n)\cap\cZ_\la} \frac{\|A_{\gamma}f\|_{L^p(\R^n)}}{\|f\|_{L^p(\R^n)}} \gtrsim \frac{\lambda^{-1} \lambda^{-(n-1)/p}}{\lambda^{-n/p}}= \la^{-1/p'},
\end{equation*}
as desired. 
\end{proof}

\begin{remark}  More generally, suppose $A_{\Sigma}$ is an averaging operator defined as in \eqref{averaging operator} but now with respect to $\Sigma$ a (regular parametrisation of a) surface in $\R^n$ of arbitrary dimension. Then 
\begin{equation*}
    \sup\big\{ \|A_{\Sigma} f\|_{L^p(\R^n)}: f\in L^p(\R^n) \cap\cZ_\la, \quad\|f\|_{L^p(\R^n)}=1\big \} \gtrsim_{p,\Sigma} \la^{-\dim \Sigma /p}.
\end{equation*}
This general necessary condition follows from the proof of Lemma~\ref{1/p nec lemma} \textit{mutatis mutandis}. Further generalisations hold for appropriate classes of variable coefficient averaging operators: see, for instance, \cite{Bentsen}. 
\end{remark}

\subsection{Fourier decay constraint: \texorpdfstring{$\alpha \leq \frac{1}{n}\big(\frac{1}{2} + \frac{1}{p}\big)$}{}.} Establishing the second condition is  more involved. Here, in contrast with Lemma~\ref{1/p nec lemma}, the non-degeneracy hypothesis \eqref{eq:nondegenerate} plays a r\^ole via certain refinements of the Fourier decay estimate \eqref{Fourier decay}.  

Recall the desired bound. 

\begin{proposition}\label{Fourier nec prop} If $\gamma \colon I \to \R^n$ is a smooth curve satisfying the non-degeneracy hypothesis \eqref{eq:nondegenerate} and $p\ge 2$, then
\begin{equation*}
   \sup\big\{ \|A_{\gamma}f\|_{L^p(\R^n)}: f\in L^p(\R^n) \cap\cZ_\la, \quad\|f\|_{L^p(\R^n)}=1\big \} \gtrsim_{p,\gamma} \la^{  -\frac 1n(\frac 12+\frac 1p)}. 
\end{equation*}
\end{proposition}

This conclusion was  shown in three dimensions by Oberlin and Smith 
\cite{OS1999} for the model example of the helix in $\bbR^3$, 
$t\mapsto (\cos t, \sin t, t)$, by using DeLeeuw's restriction theorem and an analysis of a Bessel multiplier in $\bbR^2$.
Here the more general statement in  Proposition~\ref{Fourier nec prop} is shown by combining a sharp example of Wolff \cite{Wolff2000} for $\ell^p$-decoupling inequalities with a stationary phase analysis of the Fourier multiplier $\hat{\mu}_{\gamma}$.

The proof of Proposition~\ref{Fourier nec prop} is broken into stages.

\subsubsection*{\it The worst decay cone} At any given large scale, the decay estimate \eqref{Fourier decay} is only sharp for $\xi$ belonging to a narrow region around a low-dimensional cone in the frequency space. To prove Proposition~\ref{Fourier nec prop}, it is natural to test the $L^p$-Sobolev estimate against functions which are Fourier supported in a neighbourhood of this `worst decay cone'.

By Proposition~\ref{quant nondeg prop} we may assume without loss of generality that $\gamma \in \mathfrak{G}_n(\delta_0)$ for some small $0 < \delta_0 \ll 1$ and that the cutoff $\chi$ in the definition of $A_{\gamma}$ is supported in $I_0 = [-\delta_0, \delta_0]$. In view of the van der Corput lemma, the worst decay cone should correspond to the $\xi$ for which the derivatives $\inn{\gamma^{(j)}(s)}{\xi}$, $1 \leq j \leq n-1$, all simultaneously vanish for some $s \in I_0$. In order to describe this region, first note that 
\begin{equation*}
    \inn{\gamma^{(n-1)}(s_0)}{\xi_0} = 0 \quad \textrm{and} \quad \frac{\partial}{\partial s}\inn{\gamma^{(n-1)}(s)}{\xi} \Big|_{\substack{s=s_0 \\ \xi = \xi_0}} = 1
\end{equation*}
for $(s_0, \xi_0) = (0, \vec{e}_n)$, by the reduction $\gamma^{(j)}(0) = \vec{e}_j$ for $1 \leq j \leq n$. Consequently, provided the support of $\chi$ is chosen sufficiently small, by the implicit function theorem and homogeneity there exists a constant $c > 0$ and a smooth mapping 
\begin{equation*}
    \theta \colon \Xi  \to I_0, \qquad \textrm{where} \quad \Xi := \big\{ \xi = (\xi', \xi_n) \in \hat{\R}^n \backslash \{0\}: |\xi'| \leq c|\xi_n| \big\},
\end{equation*}
such that $s = \theta(\xi)$ is the unique solution in $I$ to the equation $\inn{\gamma^{(n-1)}(s)}{\xi} = 0$ whenever $\xi \in \Xi$. Note that  $\theta$ is homogeneous of degree one.

Further consider the system of $n$ equations in $n+1$ variables given by 
\Be\label{bad cone}
\begin{cases} 
&\inn{\gamma^{(j)}(s)}{\xi}=0 \qquad \text{for $1 \leq j \leq n-1$,}
\\
& \xi_n=1.
\end{cases}
\Ee 
Again, by the reduction $\gamma^{(j)}(0) = \vec{e}_j$ for $1 \leq j \leq n$, this can be solved for suitably localised $\xi$ using the implicit function theorem, expressing $s$, $\xi_1$, ... $\xi_{n-2}$ as  functions of $\xi_{n-1}$. Thus \eqref{bad cone} holds if and only if
\begin{subequations}
\Be\label{solving on the bad cone} 
 \begin{aligned} 
\xi_i&=\Gamma_i(\xi_{n-1}), \qquad 1 \leq i \leq n-2, \\s&=\theta (\Gamma_1(\xi_{n-1}),\dots, \Gamma_{n-2}(\xi_{n-1}), \xi_{n-1},1),
\end{aligned}
\Ee for some smooth functions $\Gamma_i$, $i=1,\dots, n-2$  satisfying $\Gamma_i(0)=0$. 
On $I$ we form the  $\bbR^n$-valued  function $\tau\mapsto \Gamma(\tau)$
with the first $n-2$ components as defined in \eqref{solving on the bad cone} and 
\Be\label{klast-two-comp}\Gamma_{n-1}(\tau) := \tau, \qquad \Gamma_n(\tau) := 1.
\Ee \end{subequations}
With this definition, the formul\ae\, in \eqref{solving on the bad cone} can be succinctly expressed as
\begin{equation*}
    \xi = \Gamma(\xi_{n-1}), \qquad 
    s = \theta\circ \Gamma(\xi_{n-1}).
\end{equation*}
Moreover, the `worst decay cone' can then be defined as the cone generated by the curve $\Gamma$, given by
\begin{equation*}
    \mathcal{C} := \big\{ \lambda \Gamma(\tau) : \lambda > 0 \textrm{ and } \tau \in I \big\}.
\end{equation*}

\begin{remark} For the model case $\gamma(s) =\sum_{i=1}^n \frac{s^i}{i!} \vec{e}_i$ one may explicitly compute that \[\Gamma(\tau)= \sum_{i=1}^n \frac{(- \tau)^{n-i}}{(n-i)!}\vec{e}_i.\]
\end{remark}

\subsubsection*{\it The Wolff example revisited} In analogy with the example in \cite{Wolff2000}, here we consider functions with Fourier support on a union of balls with centres lying on the worst decay cone $\mathcal{C}$. To this end, let $\varepsilon > 0$ be a small dimensional constant, chosen to satisfy the forthcoming requirements of the argument, and 
\begin{equation*}
   \fN_\eps(\la) := \Z \cap \{s \in \R : |s| \le\eps\la^{1/n}\} .
\end{equation*}
The centres of the aforementioned balls are then given by
\Be\label{choice-xi-nu}\xi^\nu := \la \Gamma  (\nu\la^{-1/n}) \qquad \textrm{for all $\nu \in \fN_\eps(\la)$.}
\Ee

Fix $\eta \in C_c^{\infty}(\hat{\R}^n)$ satisfying $\eta(\xi) = 1$ if $|\xi| \leq 1/2$ and $\eta(\xi) = 0$ if $|\xi| \geq 1$. 
Let  $0 < \rho < 1$ be another dimensional constant, again chosen small enough to satisfy the forthcoming requirements of the argument, and define Schwartz functions $g_{\nu}$ for $\nu \in \fN_\eps(\la)$  via the Fourier transform by
\begin{equation}\label{nec 1.1}
    \hat{g}_{\nu}(\xi) := \eta\big( \la^{-1/n}\rho^{-1}(\xi - \xi^{\nu})\big).
\end{equation}
We consider randomised sums of the functions \eqref{nec 1.1}. In particular, set
\Be \label{grandom}
 g^\om(x) := \sum_{\nu \in \fN_\eps(\la)} r_\nu(\om) g_{\nu}(x) \qquad \textrm{for $\om\in [0,1]$,}
\Ee 
where $\{r_\nu\}_{\nu=1}^\infty$ is the  sequence of Rademacher functions. 
We claim
\Be\label{glowerbound} 
\Big(\int_0^1\|g^\om \|_{L^p(\bbR^n)}^p \,\ud\om \Big)^{1/p}
 \sim \la^{1-\frac 1p+\frac 1{2n}}.
\Ee
To prove  this we apply Fubini's theorem and Khinchine's inequality (see, for instance, \cite[Appendix D]{Stein1970}) to see that 
 the left hand side is
 \eqref{glowerbound} is equal to
 \[\Big\|\Big(\int_0^1|g^\om|^p \,\ud\om\Big)^{1/p} \Big\|_{L^p(\bbR^n)}\sim
\Big\|\Big(\sum_{\nu\in \fN_\eps(\la)} |g_\nu|^2 \Big)^{1/2}  \Big\|_{L^p(\bbR^n)}.\]
The right-hand side of the last display is equal to
\begin{align*}
\Big\|\Big(\sum_{\nu\in \fN_\eps(\la)} |\la \rho^n \check{\eta} (\la^{1/n}\rho\,\cdot\,)|^2 \Big)^{1/2}  \Big\|_{L^p(\bbR^n)}&= \big[\# \fN_\eps(\la)\big]^{1/2} \|\la\rho^n \check{\eta}(\la^{1/n}\rho\,\cdot\,)\|_{L^p(\bbR^n)} \\ &\sim
\la^{\frac {1}{2n}+1-\frac 1p},
\end{align*}
which yields \eqref{glowerbound}. The above estimates depend on $\rho$ , but since this parameter is chosen to be a dimensional constant (independently of $\la$) this dependence is suppressed.  Also note that so far the argument is independent of the choice of the $\xi_\nu$.

\subsubsection*{Asymptotics} The next step is to study the behaviour of the multiplier $\hat{\mu}_{\gamma}$ near the support of the $\hat{g}_\nu$. The key result is Lemma~\ref{asy-lemma} below, which relies on the asymptotics of $\hat{\mu}_{\gamma}$ near the worst decay cone and the observation that the functions $\hat{g}_\nu$ with the choice of $\xi^\nu$ as in \eqref{choice-xi-nu} are supported near that cone.

Set  $\hat{g}_{+,\nu}(\xi) := \eta_+(\la^{-1/n}\rho^{-1}(\xi-\xi^\nu))$ where $\eta_+\in C^\infty_c(\widehat \bbR^n)$ is such that  $\eta_+(\xi)=1$ for $|\xi|\le 1$ and $\eta_+(\xi)=0$ for $|\xi|>3/2$, so that  $\hat{g}_{\nu} = \hat{g}_{+,\nu} \cdot \hat{g}_{\nu}$. 
Let 
\begin{equation}\label{phidef}
    \phi(\xi) := \inn{\gamma \circ \theta(\xi)}{\xi}.
\end{equation}

\begin{lemma}\label{asy-lemma} If $\eps$, $\rho > 0$ are chosen sufficiently small, then for all $\la \geq 1$ and $\nu\in \fN_\eps(\la)$ the identity
\begin{equation*}
    \hat{\mu}_{\gamma}(\xi)= e^{-i\phi(\xi)} m(\xi)
\end{equation*} 
holds on $\supp \hat{g}_{+,\nu}$ where
\begin{enumerate}[i)]
    \item $|m(\xi)|\gc \la^{-1/n} $ for $\xi \in \supp\hat{g}_{+,\nu}$;
    \item The function $a_\nu := 
m^{-1} \cdot \hat{g}_{+,\nu}$ satisfies
\begin{equation*}
    |  \partial_\xi^\alpha a_\nu(\xi)|
%\Big[ \frac{\hat{g}_{+,\nu}(\xi)}{m(\xi)}\Big]
\le C_\alpha \la^{(1-|\alpha|)/n} \qquad \textrm{for all $\alpha \in \N_0^n$}.
\end{equation*}
\end{enumerate}
\end{lemma}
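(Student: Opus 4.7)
The plan is to express $\hat\mu_\gamma$ near the worst-decay cone as an oscillatory integral with a highly degenerate phase, factor out $e^{-i\phi(\xi)}$ by translating the integration variable so that the remaining phase vanishes to high order at $u=0$, and then analyse the resulting amplitude $m(\xi)$ via van der Corput and stationary phase after an anisotropic rescaling.

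First I would substitute $s=\theta(\xi)+u$ in $\hat\mu_\gamma(\xi)=\int e^{-i\langle\gamma(s),\xi\rangle}\chi(s)\,ds$. Taylor expansion gives
\[
\langle\gamma(\theta(\xi)+u),\xi\rangle=\phi(\xi)+\Psi(u,\xi),\qquad \Psi(u,\xi):=\sum_{j\ge 1}\tfrac{u^j}{j!}a_j(\xi),
\]
where $a_j(\xi):=\langle\gamma^{(j)}(\theta(\xi)),\xi\rangle$, so that $m(\xi)=\int e^{-i\Psi(u,\xi)}\chi(\theta(\xi)+u)\,du$. The defining equation of $\theta$ gives $a_{n-1}\equiv 0$. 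I would next estimate the remaining $a_j$ on $\supp\hat g_{+,\nu}$: since $\xi^\nu=\lambda\Gamma(\nu\lambda^{-1/n})$ lies on $\mathcal{C}$, the system \eqref{bad cone} forces $a_j(\xi^\nu)=0$ for $1\le j\le n-1$, while $\gamma\in\mathfrak{G}_n(\delta_0)$ gives $a_n(\xi^\nu)=\xi^\nu_n+O(\delta_0\lambda)\sim\lambda$. Combining the degree-zero homogeneity of $\theta$ (which forces $|\nabla_\xi a_j|=O(1)$) with a first-order Taylor expansion in $\xi-\xi^\nu$ yields
\[
|a_j(\xi)|\lesssim \rho\lambda^{1/n}\quad(1\le j\le n-2),\qquad a_n(\xi)=a_n(\xi^\nu)+O(\rho\lambda^{1/n})\sim\lambda
\]
uniformly on $\supp\hat g_{+,\nu}$.

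Rescaling $u=\lambda^{-1/n}v$ converts $m$ into $\lambda^{-1/n}$ times a bounded integral whose phase $\widetilde\Psi(v,\xi)=\sum_{j\ge 1}\lambda^{-j/n}v^j a_j(\xi)/j!$ decomposes as $c_n(\xi)v^n+R(v,\xi)$, with $c_n(\xi):=a_n(\xi)/(n!\lambda)\sim 1$ and a remainder whose coefficient of $v^j$ for $j\neq n$ is $O(\rho)$ or $O(\lambda^{-1/n})$. For the upper bound in (i) I would apply the classical van der Corput estimate of order $n$ directly to the unrescaled $m$: $|\partial_u^n\Psi(u,\xi)|=|a_n(\xi)+O(u)|\gtrsim\lambda$ on $\supp\chi(\theta(\xi)+\cdot)$, producing $|m(\xi)|\lesssim\lambda^{-1/n}$. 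For the lower bound I would split the rescaled $v$-integral at $|v|=K$ for an absolute $K$; on $|v|>K$ the inequality $|\partial_v\widetilde\Psi|\gtrsim|v|^{n-1}$ makes the tail negligible via repeated integration by parts, while on $|v|\le K$ the integrand converges, as $\rho,\lambda^{-1/n}\to 0$, uniformly to $e^{-ic_n v^n/n!}\chi(\theta(\xi))$, whose $v$-integral is an explicit non-vanishing Gamma-type constant. A continuity/perturbation argument, exploiting smallness of $\rho$ and $\epsilon$, yields the quantitative lower bound $|m(\xi)|\gtrsim\lambda^{-1/n}$.

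For (ii) the key is an inductive symbol estimate $|\partial_\xi^\beta m(\xi)|\lesssim\lambda^{(-1-|\beta|)/n}$ on $\supp\hat g_{+,\nu}$. Differentiating the rescaled integral in $\xi$ either produces a factor $\lambda^{-j/n}v^j\partial_\xi a_j$ inside the phase (handled by the same van der Corput/integration-by-parts split) or differentiates $\theta(\xi)$ in the amplitude (losing only $|\xi|^{-1}\sim\lambda^{-1}$, since $\theta$ is degree-$0$ homogeneous); tracking these gains systematically produces the claimed scaling. Combined with $|m|\sim\lambda^{-1/n}$ from (i), Leibniz's rule and the trivial bound $|\partial^\gamma\hat g_{+,\nu}|\lesssim\lambda^{-|\gamma|/n}$ give the desired estimate on $a_\nu=m^{-1}\hat g_{+,\nu}$. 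The hardest step, I anticipate, is the lower bound in (i) and the propagation of non-vanishing through (ii): both require the rescaled oscillatory integral to remain non-vanishing and uniformly controlled under small polynomial perturbations of the pure $c_n v^n$ phase, and quantifying this stability so that the smallness of $\rho$ and $\epsilon$ beats any loss from subdominant phase terms is the most delicate technical point.
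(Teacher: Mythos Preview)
Your approach is correct and essentially the same as the paper's: translate by $\theta(\xi)$, bound the Taylor coefficients $a_j$ on $\supp\hat g_{+,\nu}$ using $a_j(\xi^\nu)=0$ and $|\nabla_\xi a_j|=O(1)$, then for (i) view the rescaled integral as a small perturbation of the model $\int e^{-icv^n}\,dv$, and for (ii) differentiate and track the resulting powers of $u$ and degrees of homogeneity in $\xi$. The paper packages (i) by quoting an off-the-shelf asymptotic lemma from \cite{BGGIST2007} rather than arguing the perturbation directly, and organises (ii) via a dyadic decomposition of the $u$-integral at scale $\lambda^{-1/n}$ followed by integration by parts on each shell (yielding the sharp bound $\big|\int e^{-iu_n\Psi}u^\kappa b_d\,du\big|\lesssim\lambda^{-(1+\kappa)/n-d}$ for amplitudes $b_d$ homogeneous of degree $-d$), but these are precisely the quantitative versions of your outline.
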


The proof, which is based on the stationary phase method and, in particular, oscillatory integral estimates from \cite{BGGIST2007}, is postponed until \S\ref{proof of asymptotics} below.

\subsubsection*{Lower bounds for the operator norm}
For each $\nu \in \fN_\eps(\la)$ define $f_\nu$ by
 \[ \hat{f}_\nu(\xi) := \frac{\hat {g}_\nu(\xi)}{\hat{\mu}_{\gamma}(\xi)}\]
 and consider the randomised sums
 $$f^\om(x) := \sum_{\nu\in \fN_\eps(\la)}  r_\nu(\om) f_\nu(x) \qquad\textrm{for $\omega \in [0,1]$}.$$
 Note that, by Lemma~\ref{asy-lemma}, the $f^\om $ are  well-defined smooth function with compact support (and with  bounds depending on $\la$). Furthermore, if $g^{\om}$ is the function defined in \eqref{grandom}, then
 \Be\label{Af=g} g^\om= A_{\gamma}f^\om.\Ee
  We proceed to estimate the $L^p$ norm of $f^\om$,
  uniformly in $\om$.

  We have  $f_{\nu} = K_{\nu} \ast g_{\nu}$ where the kernel $K_{\nu}$ is given by
\begin{equation*}
    K_{\nu}(x) := \frac{1}{(2\pi)^n} \int_{\hat{\R}^n} e^{i\inn{x}{\xi}}  \hat{\mu}_{\gamma}(\xi)^{-1} \hat{g}_{+,\nu}(\xi)\,\ud \xi.
\end{equation*}
By Lemma~\ref{asy-lemma} 
%the decomposition of the multiplier from \eqref{nec 2.2}, 
one may write $\hat{\mu}_{\gamma}(\xi)^{-1}\hat{g}_{+,\nu}(\xi)= e^{i\phi(\xi)}a_\nu(\xi) $ where $a_{\nu}$ is supported where $|\xi-\xi^\nu|\le 2\rho\la^{1/n}$ and satisfies
  $\partial_\xi^\alpha a_\nu(\xi)= O(\la^{(1-|\alpha|)/n})$.
Setting 
\begin{align}
\nonumber
    \mathcal{E}_{\nu}(\xi) &:= \phi(\xi) - \phi(\xi^{\nu}) - \inn{\partial_{\xi}\phi(\xi^{\nu})}{\xi - \xi^{\nu}},
    \\
    \label{choice-x-nu}
     x^{\nu} &:= - \partial_{\xi}\phi(\xi^{\nu}),
\end{align} 
it follows that
\begin{equation*}
   \hat{\mu}_{\gamma}(\xi)^{-1} \hat{g}_{+,\nu}(\xi) = e^{i\phi(\xi^{\nu})} e^{-i \inn{x^{\nu}}{\xi - \xi^{\nu}}} e^{i\mathcal{E}_{\nu}(\xi)} a_{\nu}(\xi).
\end{equation*}
Applying a change of variable, 
\begin{equation*}
    K_{\nu}(x) = e^{i (\inn{x}{\xi^{\nu}} + \phi(\xi^{\nu}))} \frac{\la}{(2\pi)^n} \int_{\hat{\R}^n} e^{i\inn{\la^{1/n}(x -x^{\nu})}{\xi}}  e^{i\mathcal{E}_{\nu}(\xi^{\nu} +\la^{1/n}\xi)}a_{\nu}(\xi^{\nu} + \la^{1/n}\xi)\,\ud \xi.
\end{equation*}

By  the homogeneity of $\phi$, it follows that $|\partial_{\xi}^{\alpha}e^{i\mathcal{E}_{\nu}(\xi^{\nu} +\la^{1/n}\xi)}| \lesssim_{\alpha} 1$ for all multiindices $\alpha \in \N_0^n$. On the other hand,   
Lemma~\ref{asy-lemma} implies that $$\int_{\hat{\R}^n}|\partial_{\xi}^{\alpha}a_{\nu}(\xi^{\nu} +\la^{1/n} \xi)|\,\ud \xi \lesssim_{\alpha} \la^{1/n} \qquad \textrm{for all $\alpha \in \N_0^n$.}$$ 
Repeated integration-by-parts therefore yields 
\begin{equation*}
    |K_{\nu}(x)| \lesssim_N \la^{(n+1)/n}  (1 + \la^{1/n} |x - x^{\nu}|)^{-N} \qquad \textrm{for all $N \in \N_0^n$}. 
\end{equation*}
Consequently, the pointwise inequality
\[|f_\nu(x)|\lc \la^{(n+1)/n} (1+\la^{1/n}|x-x^\nu|)^{-N}
\lc \la^{(n+1)/n}\sum_{\ell\ge 0} 2^{-\ell N}\bbone_{B_{\ell,\la}^\nu}(x)
\] 
holds with $B^\nu_{\ell,\la} := \{x \in \R^n: |x-x^{\nu}|\le 2^\ell \la^{-1/n}\}$. Hence,
\Be \label{ell sum}\|f^{\om}\|_{L^p(\R^n)}
\lc \la^{(n+1)/n} \sum_{\ell\ge 0}2^{-\ell N}  \Big\|\sum_{\nu\in \fN_\eps(\la)} \bbone_{B_{\ell,\la} ^\nu}\Big\|_{L^p(\R^n)} \qquad \textrm{for all $\om\in [0,1]$.}
\Ee

To estimate the terms of \eqref{ell sum} for $2^\ell>\eps\la^{1/n}$ use the immediate bound  
$$\Big\|\sum_{\nu\in \fN_\eps(\la)} \bbone_{B_{\ell,\la}^\nu}\Big\|_{L^p(\R^n)} \lc \#\fN_\eps(\la) \cdot 2^{\ell n/p} \la^{-1/p}
\lc 2^{\ell n/p} \la^{1/n-1/p}.$$
For $2^\ell\le\eps \la^{1/n}$ this may be improved upon using a separation property of the $x^{\nu}$: namely, 
\Be \label{separation} |x^\nu-x^{\nu'}| \gc \la^{-1/n}|\nu-\nu'|,
\Ee
provided the parameter $\varepsilon > 0$ is chosen sufficiently small (independently of $\lambda$). The property \eqref{separation} implies that the balls $B_{\ell,\la}^\nu$,
$B_{\ell,\la }^{\nu'}$ are disjoint for 
$|\nu-\nu'| \gtrsim 2^\ell$.
Assuming \eqref{separation} for a moment and taking $2^\ell\le \eps\la^{1/n}$, we obtain 
\begin{align*} \Big\|\sum_{\nu\in \fN_\eps(\la)} \bbone_{B_{\ell,\la}^\nu}\Big\|_{L^p(\R^n)}
&\le
\sum_{i=0}^{2^\ell-1} 
\Big\|\sum_{\substack{m \in \Z \\ |m| \leq \eps 2^{-\ell}\la^{1/n}}}  \bbone_{B_{\ell,\la} ^{2^\ell m+i}}\Big\|_{L^p(\R^n)}
\\
&\lc
\sum_{i=0}^{2^\ell-1} \Big(\sum_{\substack{m \in \Z \\ |m| \leq \eps 2^{-\ell}\la^{1/n}}} \big\| \bbone_{B_{\ell,\la}^{2^\ell m+i}} \big\|_{L^p(\R^n)}^p\Big)^{1/p} \\
&\lc 2^\ell (\la^{1/n}2^{-\ell} )^{1/p} (2^\ell\la^{-1/n})^{n/p}.
\end{align*}

Applying the preceding bounds to estimate the terms in \eqref{ell sum} and choosing $N>n-n/p$, this leads to the uniform estimate 
\Be \label {ft-est}
\sup_{\om\in [0,1]}\|f^\om  \|_{L^p(\R^n)} \lc \la^{(n+1)/n-(n-1)/np}.
\Ee
Thus, one concludes from \eqref{Af=g}, \eqref{glowerbound} and \eqref{ft-est}, together with the fact that the $f^\om$ are Fourier supported where $|\xi|\sim \lambda$, that
\begin{align*}
\sup_{f\in L^p\cap\cZ_\la} \frac{\|A_{\gamma}f\|_{L^p(\R^n)}}{\|f\|_{L^p(\R^n)}} \ge \frac{\big(\int_0^1\|A_{\gamma}f^\om \|_{L^p(\R^n)}^p \,\ud \om\big)^{1/p} }{\sup_{\om\in [0,1]} \|f^\om\|_{L^p(\R^n)}}=
\frac{(\int_0^1\|g^\om \|_{L^p(\R^n)} ^p\,\ud \om\big)^{1/p}}{\sup_{\om\in [0,1]} \|f^\om\|_{L^p(\R^n)}}
\gc 
% \frac{R^{n(1-1/p)+1/2}}{R^{n+1-(n-1)/p}} = 
\la^{  -\frac 1n(\frac 12+\frac 1p)},
\end{align*}
which is the desired bound stated in Proposition~\ref{Fourier nec prop}.

It remains to verify the crucial separation property \eqref{separation}. Recall from \eqref{choice-x-nu} and \eqref{choice-xi-nu} that $x^\nu = -\partial_{\xi} \phi\big(\lambda \Gamma(\nu\la^{-1/n})\big)$. Thus, by homogeneity, one wishes to bound
\begin{equation}\label{separation 1}
    x^{\nu} - x^{\nu'} = - \big[(\partial_{\xi} \phi)\circ\Gamma(\nu\la^{-1/n})- (\partial_{\xi} \phi)\circ\Gamma(\nu'\la^{-1/n}) \big].
\end{equation}
In particular, it suffices to show that
\begin{equation}\label{separation 2}
    \frac{d}{d\tau} (\partial_{\xi}  \phi)\circ\Gamma(\tau)=-\vec{e}_1+ O(\tau).
\end{equation}
Indeed, applying Taylor's theorem to \eqref{separation 1} and using \eqref{separation 2} to bound the linear term yields
\[x^{\nu}-x^{\nu'} = \frac{\nu-\nu'}{\la^{1/n}} \cdot \vec{e}_1  +
O \Big(\tfrac{(|\nu|+|\nu'|)|\nu-\nu'|}{\la^{2/n}} \Big)
=\frac{\nu-\nu'}{\la^{1/n}} \cdot \vec{e}_1  +
O \Big(\eps \tfrac{|\nu-\nu'|}{\la^{1/n}} \Big)
\]
for all $\nu, \nu' \in  \fN_\eps(\la)$. Choosing $\varepsilon > 0$ sufficiently small so as to control the error term establishes \eqref{separation}.

Turning to the proof of \eqref{separation 2}, we have 
$\partial_{\xi}\phi(\xi) = \gamma\circ \theta(\xi)+\inn{\gamma'\circ\theta(\xi)}{\xi} 
\partial_{\xi} \theta(\xi)$ by the definition of $\phi$ from \eqref{phidef}. Since $\inn{\gamma'\circ\theta(\xi)}{\xi} =0$ when $\xi=\Gamma(\tau)$, this yields 
$$(\partial_{\xi}\phi)\circ\Ga(\tau)
= \gamma\circ\theta(\Gamma(\tau))$$
and, consequently, 
\[\frac{d}{d\tau} (\partial_{\xi}\phi)\circ\Ga(\tau) = \gamma'\circ\theta(\Gamma(\tau)) \cdot \inn {\partial_{\xi} \theta(\Gamma(\tau))} {\Gamma'(\tau)}.\]
By the initial reductions, $\ga^{(j)}(0) = \vec{e}_j$ for $1 \leq j \leq n$, and so
\begin{equation}\label{separation 3}
  \frac{d}{d\tau} (\partial_{\xi}\phi)\circ\Ga(\tau) = \inn {\partial_{\xi} \theta(\Gamma(0))} {\Gamma'(0)} \cdot \vec{e}_{1} + O(\tau).
\end{equation}
Thus, to prove \eqref{separation 2} it suffices to show that the inner product in the above display is equal to $-1$. Differentiating the defining equation $\inn{\ga^{(n-1)}\circ \theta(\xi)}{\xi} =0$, one deduces that
\begin{equation*}
    \partial_{\xi} \theta(\xi)= - \frac{1} { \inn{\ga^{(n)}\circ\theta(\xi)}{\xi}} \ga^{(n-1)}\circ\theta(\xi).
\end{equation*}
Since, by uniqueness in \eqref{solving on the bad cone} and \eqref{klast-two-comp} together with the initial reductions, $\Gamma(0)=\vec{e}_n$ and $\theta (\vec{e}_n)=0$, it follows that $(\partial_{\xi} \theta)\circ \Gamma(0) = - \vec{e}_{n-1}$. On the other hand, from \eqref{klast-two-comp} it is clear that $\inn{\vec{e}_{n-1}}{\Gamma'(0)} = 1$. Applying these observations to the formula in \eqref{separation 3} concludes the proof.

\subsection{Proof of Lemma~\ref{asy-lemma}}
 \label{proof of asymptotics} It remains to prove Lemma~\ref{asy-lemma}. To this end, we recall an asymptotic expansion from \cite{BGGIST2007}, based on the following formula: 
 \begin{equation}
\label{Jla}\int_{-\infty}^{\infty}e^{i\lambda s^{n}} \ud s= \alpha_{n}
\lambda^{-1/n} \qquad \textrm{for $n=2,3,\dots$ and $\la>0$,}
\end{equation}
where $\alpha_n$ is given by
\begin{equation}
\label{ak}\alpha_{n}:=%
\begin{cases}
\tfrac2n\Gamma(\tfrac1n)\sin(\tfrac{(n-1)\pi}{2n}) \, & \textrm{if $n$ is odd},\\
\tfrac2n\Gamma(\tfrac1n) \exp(i\tfrac{\pi}{2n}) \, & \textrm{if $n$ is even}.
\end{cases}
\end{equation}
The derivation of \eqref{Jla} relies on contour integration arguments, whilst the formula itself yields   asymptotic expansions for integrals $\int_{\R} e^{i\lambda s^{n}} \chi(s) \,\ud s$ with $\chi\in
C^{\infty}_{0}$: see, for instance, \cite[VIII.1.3]{Stein1993} or \cite[\S7.7]{HormanderIII}. Similar asymptotic expansions remain valid under slight perturbation of the phase function $s\mapsto s^n$, as demonstrated by the following lemma proved in \cite{BGGIST2007}.
 We use
 the notation $\|g\|_{C^{m}(I)}:=\max\limits_{0\le j\le m}\sup_{x\in{I}} |g^{(j)}(x)|$.
 
\begin{lemma}[\cite{BGGIST2007}, Lemma 5.1]
\label{asymptotics-lemma} Let $0<r\le 1$, $I=[-r,r]$, $I^*=[-2r,2r]$ 
 and let  $g\in C^{2}(I^*)$.
Suppose that
\begin{equation} \notag
%\label{hassumption}
r\le \frac{1}{10(1+\|g\|_{C^2(I^*)})}
\end{equation}
and
%and suppose
%that $\sum_{j=0}^2\|g^{(j)}\|_\infty\le 1/10$.
let $\eta\in C^{1}_c(\R)$ be supported in $I$ and satisfy the bounds
\begin{equation}
\notag
%\label{etaassumption}
\|\eta\|_{\infty}+\|\eta^{\prime}\|_{1} \le A_{0}, 
\text{ and } \|\eta^{\prime}\|_{\infty}\le A_{1}.
\end{equation}
Let $n\ge 2$, define
\begin{equation*}
I_{\lambda}(\eta,w) := \int_{\R} \eta(s) \exp\big(i\lambda(\sum_{j=1}^{n-2} w_{j}
s^{j}+ s^{n} + g(s) s^{n+1})\big) \ud s
\end{equation*}
and let $\alpha_{n}$ be as in \eqref{ak}. Suppose $|w_{j}|\le\delta\lambda
^{(j-n)/n}$, $j=1,\dots, n-2$.
%, and $1/(2k!)<x_k\le2$.
Then there is an absolute constant $C$ such that, for $\lambda>2$,
\[
|I_{\lambda}(\eta,w)-\eta(0) \alpha_{n} \lambda^{-1/n}| \le C[A_{0}
\delta\lambda^{-1/n}+A_{1}\lambda^{-2/n}(1+\beta_{n} \log\lambda)];
\]
here $\beta_{2} := 1$ and $\beta_{n} := 0$ for $n>2$.
\end{lemma}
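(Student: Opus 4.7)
The plan is to rescale the integral to dimensionless form, then compare to the model integral $\int_\R e^{it^n}\,dt$ via a bulk/tail decomposition with integration by parts. First, substitute $s = \lambda^{-1/n} t$ to obtain
$$I_\lambda(\eta, w) = \lambda^{-1/n} \int_\R \eta(\lambda^{-1/n} t) e^{i \Phi_\lambda(t)} \, dt, \qquad \Phi_\lambda(t) := t^n + \sum_{j=1}^{n-2} \tilde w_j t^j + \lambda^{-1/n} g(\lambda^{-1/n} t) t^{n+1},$$
where $\tilde w_j := \lambda^{(n-j)/n} w_j$. The hypothesis $|w_j| \le \delta \lambda^{(j-n)/n}$ gives $|\tilde w_j| \le \delta$, while the smallness condition $r \le (10(1+\|g\|_{C^2}))^{-1}$ controls all derivatives of the $g$-term uniformly in $\lambda$. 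By \eqref{Jla}, the lemma reduces to bounding
$$\cR := \int_\R \bigl[\eta(\lambda^{-1/n} t) e^{i\Phi_\lambda(t)} - \eta(0) e^{it^n} \bigr] dt$$
by $C[A_0 \delta + A_1 \lambda^{-1/n}(1 + \beta_n \log \lambda)]$.

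Second, decompose $\cR = \cR_\eta + \cR_\Phi$ where
$$\cR_\eta := \int_\R [\eta(\lambda^{-1/n} t) - \eta(0)] e^{it^n} dt, \quad \cR_\Phi := \int_\R \eta(\lambda^{-1/n} t) e^{it^n} (e^{i(\Phi_\lambda - t^n)} - 1) dt,$$
and fix a threshold $M$ large enough that $|\Phi_\lambda'(t)| \ge \tfrac{n}{2}|t|^{n-1}$ for $|t| \ge M$. On the bulk $\{|t| \le M\}$ use the elementary estimates $|\eta(\lambda^{-1/n} t) - \eta(0)| \le A_1 \lambda^{-1/n} |t|$ and $|\Phi_\lambda(t) - t^n| \lesssim \delta M^{n-2} + \lambda^{-1/n} M^{n+1}$; after Taylor expansion of $e^{i(\Phi_\lambda - t^n)} - 1$ to sufficient order and careful bookkeeping, these contribute $O(A_0 \delta + A_1 \lambda^{-1/n})$. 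On the tail $\{|t| > M\}$, apply integration by parts against $\partial_t/(int^{n-1})$ for $\cR_\eta$ and against $\partial_t/(i\Phi_\lambda'(t))$ for $\cR_\Phi$. The volume terms contain factors $\lambda^{-1/n}|\eta'(\lambda^{-1/n}t)|/|t|^{n-1}$; combined with $\|\eta'\|_\infty \le A_1$ these integrate to
$$A_1 \lambda^{-1/n} \int_M^{\lambda^{1/n} r} t^{1-n}\, dt \lesssim A_1 \lambda^{-1/n}(1 + \beta_n \log \lambda),$$
uniformly bounded for $n \ge 3$ and logarithmic for $n = 2$. The boundary terms at $|t| = M$ and the contributions paying derivatives on $1/\Phi_\lambda'$ yield $A_0$-dependent terms of order $A_0 M^{1-n}$, which combine with the $A_0 \delta$ bulk error after balancing $M$ appropriately.

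The main obstacle is the $n = 2$ case: the sum $\sum_{j=1}^{n-2}$ is empty and only $\lambda^{-1/2} g(\lambda^{-1/2} t) t^3$ perturbs the phase, and the tail integration by parts produces an integrand of size $\sim 1/t$ whose integral over $[M, \lambda^{1/2} r]$ is logarithmic in $\lambda$, unavoidably generating the $\log \lambda$ factor. It is therefore essential to distinguish the $L^1$-norm bound $\|\eta'\|_1 \le A_0$ from the $L^\infty$-norm bound $\|\eta'\|_\infty \le A_1$, so that the logarithm appears only in the coefficient of $A_1$ and not in the coefficient of $A_0$. Multiplying the rescaled estimate by the overall factor $\lambda^{-1/n}$ from the substitution then yields the stated bound.
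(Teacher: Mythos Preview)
The paper does not prove this lemma: it is quoted from \cite{BGGIST2007} (Lemma~5.1 there) and invoked as a black box in the proof of Lemma~\ref{asy-lemma}. So there is no argument in the present paper to compare your sketch against, and your rescale--bulk/tail--integrate-by-parts outline is indeed the standard route.

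There is, however, a gap in your bulk estimate. You bound the $g$-contribution to $\Phi_\lambda(t)-t^n$ on $|t|\le M$ by $\lambda^{-1/n}M^{n+1}$, but this silently absorbs a factor of $\|g\|_\infty$, and the hypothesis $r\le(10(1+\|g\|_{C^2}))^{-1}$ does \emph{not} bound $\|g\|_\infty$ by an absolute constant---only the product $r\|g\|_{C^2}$ is controlled. Even if one accepts a $g$-dependent constant, the resulting bulk error from $\cR_\Phi$ is of size $A_0\cdot O(\lambda^{-1/n})$, which does not fit the target bound $A_0\delta + A_1\lambda^{-1/n}(1+\beta_n\log\lambda)$: the two amplitude constants $A_0$ and $A_1$ are independent, and an $A_0\lambda^{-1/n}$ term cannot be absorbed into either summand. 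Your phrase ``careful bookkeeping'' does not address this. The way the hypothesis on $r$ is actually used in \cite{BGGIST2007} is to perform, at the very start, the change of variable $u=s(1+g(s)s)^{1/n}$, so that $u^n = s^n + g(s)s^{n+1}$ and the $g$-term is eliminated from the phase entirely; the bound $r(1+\|g\|_{C^2})\le 1/10$ is exactly what makes this a $C^1$ diffeomorphism with Jacobian uniformly close to $1$ and keeps the induced perturbations of $\eta$ and the $w_j$ under control with absolute constants. After that reduction your bulk/tail argument, including the $n=2$ logarithm, goes through as you describe.
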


Lemma~\ref{asy-lemma} is obtained via a fairly direct application of the above result. 

\begin{proof}[Proof (of Lemma~\ref{asy-lemma})] Taylor expand the phase $\inn{\gamma(s)}{\xi}$ with $s=\theta(\xi)+h$ to obtain 
 \begin{equation*}
    \inn{\gamma(\theta(\xi)+h)}{\xi} = \phi(\xi) + \sum_{j=1}^{n} u_j(\xi) \frac{h^j}{j!}+ u_{n+1}(\xi,h) \frac{h^{n+1}}{(n+1)!}
    \end{equation*}
    where
    \begin{align*}
    u_j(\xi) &:=  \inn{\gamma^{(j)} \circ \theta(\xi)}{\xi}, \text{  for $1 \leq j \leq n$,}\\
    u_{n+1}(\xi,h) &:=    
    \int_0^1 (n+1)(1-t)^n \inn{\gamma^{(n+1)}(\theta(\xi)+th)}{\xi} \,\ud t .
\end{align*}

Recall that  $u_{n-1}(\xi) \equiv 0$ by the definition of $\theta(\xi)$, whilst $u_n(\xi)\sim |\xi|\sim \la$ for $|\xi'|\leq c\xi_n$. Thus, writing
\[\hat{\mu}_{\gamma}(\xi) =e^{-i\phi(\xi)}m(\xi)\]
as in the statement of the lemma, it follows that the function $m$ is given by
\begin{align*}
m(\xi) := \int_{\R} e^{-i 
(\sum_{j=1}^{n} u_j(\xi) \frac{h^j}{j!}  +u_{n+1} ( \xi,h)\frac{ h^{n+1} }{(n+1)!} )} \chi(\theta(\xi)+h) \ud h.
\end{align*}
Thus, defining
\begin{equation}\label{Psi phase}
\Psi(\xi,h) := \sum_{j=1}^{n-2} w_j(\xi) h^j  + h^n + g(\xi,h) h^{n+1}
\end{equation}
where
\begin{equation*}
    w_j(\xi) := \frac{1}{j!} \cdot \frac{u_j(\xi)}{u_n(\xi)} \qquad \textrm{and} \qquad g(\xi,h) := \frac{1}{(n+1)!} \cdot \frac{u_{n+1}(\xi,h)}{u_n(\xi)},
\end{equation*}
one may succinctly express $m$ as
\begin{equation*}
    m(\xi) = \int_{\R} e^{-i u_n(\xi) \Psi(\xi,h)} \chi(\theta(\xi)+h) \ud h.
\end{equation*}

We now turn to proving the bounds on $m$ stated in Lemma~\ref{asy-lemma}.\medskip

\noindent \textit{i)} The desired pointwise lower bound on $m$ follows from a direct application of Lemma~\ref{asymptotics-lemma}. In particular, by the definition of the $\xi^\nu$ we have $w_j(\xi^\nu)=0$ for $1 \leq j \leq n-1$ and therefore 
\begin{equation}\label{wj bounds}
    |w_j(\xi)|\lesssim \rho \la^{(1-n)/n}  \qquad \textrm{and} \qquad |g(\xi,h)|\lesssim 1 \qquad \textrm{for $\xi \in \supp \hat{g}_{\nu, +}$.}
\end{equation}
Thus, provided $\rho$ and $\varepsilon$ are chosen small enough, Lemma~\ref{asymptotics-lemma} can be applied to show that \Be\label{m lower bound}
|m(\xi)|\gtrsim \la^{-1/n} \qquad \textrm{for all $\xi \in \supp\hat{g}_{+,\nu}$,}
\Ee
as desired.\medskip

\noindent \textit{ii)} It remains to show that 
\Be\label{m derivative 1}
|\partial_{\xi}^\alpha  \big[m^{-1} \cdot \hat{g}_{+,\nu} \big](\xi)\big|\lesssim_{\alpha} \la^{(1-|\alpha|)/n} \qquad \textrm{for all $\alpha\in \bbN_0^n$.}
\Ee
The derivative $\partial_\xi^\alpha m(\xi)$ can be expressed as a sum of functions of the form 
\begin{equation*}
    m^{\alpha}_{\kappa, d}(\xi) := \int_\bbR  e^{-i 
(\sum_{j=1}^{n} u_j(\xi) \frac{h^j}{j!}  +u_{n+1} ( \xi,h)\frac{ h^{n+1} }{(n+1)!} )} h^\ka b^{\alpha}_d(\xi,h)\, \ud h, \quad d+\kappa\ge |\alpha|,
\end{equation*}
where $b^{\alpha}_d \in C^{\infty}(\hat{\bbR}^n\setminus\{0\} \times \R)$ and homogeneous of degree $-d$ in the $\xi$-variable. The key claim is
\begin{equation}\label{m derivative 2} 
    |m^{\alpha}_{\kappa, d}(\xi)| \lesssim_{\alpha} \la^{-(1+\ka)/n- d} \qquad \textrm{for 
$\xi \in \supp\hat{g}_{+,\nu}$.}
\end{equation}
Indeed, once \eqref{m derivative 2} is established it can be combined with \eqref{m lower bound} and the Leibniz rule the deduce the desired bound \eqref{m derivative 1}. 

The asserted bound \eqref{m derivative 2} follows from
\Be\label{ska int}
\Big| 
\int_{\R} e^{-i u_n(\xi)
\Psi(\xi,h)} \chi_1(\xi,h) h^{\kappa} \ud h\Big| \lc \la^{-(1+\ka)/n},
\Ee
where $\chi_1 \in C^{\infty}(\hat \bbR^n\setminus \{0\}\times \bbR)$ is homogeneous of degree zero with respect to $\xi$ and vanishes unless $|h|\lesssim \eps$. To prove \eqref{ska int} we form a dyadic decomposition of the integral. Fix $\zeta_0\in C^\infty_0(\bbR)$ such that $\zeta_0(h)=1$ for $|h|<1/2$ with $\supp \zeta \subseteq [-1,1]$. For $\ell \in \N$ set $\zeta_\ell(h)= \zeta_0(2^{-\ell}h)-\zeta_0(2^{-\ell-1}h)$ and define
\begin{equation}\label{ska loc int}
J_{\ell,\la}(\xi) :=
\int_{\R} e^{-i u_n(\xi)\Psi(\xi,h)}
\zeta_\ell(\la^{1/n}h)\chi_1(\xi,h)) h^\kappa\, \ud h.
\end{equation}
By just a size estimate we have $|J_{\ell,\la}(\xi)|\lesssim (2^\ell \la^{-1/n} )^{1+\kappa}$, which we use for $\ell\le C$.
For larger $\ell$ we use integration-by-parts.

Recall that the assumption $\xi\in \supp \hat{g}_{+,\nu}$ implies the bounds \eqref{wj bounds}. Consequently, on the support of the integrand in \eqref{ska loc int}, the dominant term in the formula for $\Psi$ as given in \eqref{Psi phase} is $h^n$. Moreover,  
$$\Big|\frac{\partial}{\partial h} \Psi(\xi, h) \Big|\sim (2^\ell \la^{-1/n})^{n-1}$$
and, similarly, 
$$\Big|\frac{\partial^i}{\partial h^i} \Psi(\xi, h) \Big|\lc \min\{(2^\ell\la^{-1/n})^{n-i},1\}.$$ 
Also, it is not difficult to show that
$$\Big|\frac{\partial^i}{\partial h^i}\big[ \zeta_\ell(\la^{1/n}h)\chi_1(\xi,h)) h^\kappa \big]\Big| \lc (2^{\ell}\la^{-1/n} )^{\ka-i}.$$
Using these bounds we derive, by $N$-fold integration-by-parts, 
\begin{equation*}
    |J_{\ell,\la}(\xi) |\lc_N (2^\ell\la^{-1/n} )^{1+\ka} 2^{-\ell n N}
\end{equation*}
and, by summing in $\ell$, obtain \eqref{ska int}.
\end{proof}

\subsection{The Christ example}\label{Christ ex sec} We close this section by making an observation regarding an endpoint case. We may rule out $L^d(\R^n) \to L^d_{1/d}(\R^n)$ boundedness under the maximal type $d$ hypothesis for $d \geq 3$. Note that this corresponds to the critical vertices in the lower diagram in Figure~\ref{finite type conjectured range}. To show the failure of the estimate, suppose that for some $t_0$ with $\chi(t_0)\neq 0$, there is a unit vector $u$ with $\inn{u}{\gamma^{(k)}(t_0) }=0$ for $k=1,\dots, d-1$  and $\inn{u}{ \gamma^{(d)} (t_0)} \neq 0$. By a rotation we can assume that $\gamma'(t_0)=\vec{e}_1$ and $u=\vec{e}_2$, the standard coordinate vectors. The $L^d(\R^n) \to L^d_{1/d}(\R^n)$ boundedness is equivalent with the statement that the multiplier 
\[|\xi|^{1/d} \upsilon (\xi)  \int_{\R}  e^{i \inn{\gamma(t)}{\xi} }  \chi (t)\,\ud t \]
belongs to the multiplier class $M^d(\R^n)$; here $\upsilon \in C^\infty$, equal to $1$ for large $\xi$ and vanishing in a neighborhood of the origin. Since $(\xi_1^2+\xi_2^2)^{1/2d} |\xi|^{-1/d}$ belongs to  $M^p(\R^n)$ for $1<p<\infty$ we may replace in the display $|\xi|^{1/d}$ with $(\xi_1^2+\xi_2^2)^{1/2d} $. Now apply the theorem by de Leeuw \cite{deLeeuw1965} on the restriction of multipliers to subspaces  
to see that 
\[ (\xi_1^2+\xi_2^2)^{1/2d}  \upsilon (\xi_1,\xi_2,0,\dots,0)  \int \chi (t) e^{i (\gamma_1(t)\xi_1+\gamma_2(t) \xi_2) } \,\ud t \]
is a multiplier in $M^d(\bbR^2)$ which implies the $L^d(\bbR^2) \to L^d_{1/d}(\bbR^2) $ boundedness of the averaging operator associated to the plane curve $(\gamma_1(t), \gamma_2(t) )$. However the latter statement can be disproved   by using the argument of  Christ \cite{Christ1995}, who considered the curve $(t,t^d)$.

%%%%%%%%%%%%%%%%%%%%%%%%%%%%%%%%%%%%
%%%%%%%%%%%%%%%%%%%%%%%%%%%%%%%%%%%%
%%%%%%%%%%%%%%%%%%%%%%%%%%%%%%%%%%%%

%%. Initial reductions and auxiliary results

%%%%%%%%%%%%%%%%%%%%%%%%%%%%%%%%%%%%
%%%%%%%%%%%%%%%%%%%%%%%%%%%%%%%%%%%%
%%%%%%%%%%%%%%%%%%%%%%%%%%%%%%%%%%%%

\section{Initial reductions and auxiliary results}\label{red aux sec}

The remainder of the paper deals with the proof of Theorem~\ref{non-degenerate theorem}. This section contains some preliminary results, the most significant of which is the decoupling result in Theorem~\ref{Frenet decoupling theorem} which lies at the heart of the proof.

%%%%%%%%%%%%%%%%%%%%%%%%%%%%%%%%%%%%
%%%%%%%%%%%%%%%%%%%%%%%%%%%%%%%%%%%%
%%%%%%%%%%%%%%%%%%%%%%%%%%%%%%%%%%%%

%%. Multiplier notation

%%%%%%%%%%%%%%%%%%%%%%%%%%%%%%%%%%%%
%%%%%%%%%%%%%%%%%%%%%%%%%%%%%%%%%%%%
%%%%%%%%%%%%%%%%%%%%%%%%%%%%%%%%%%%%

\subsection{Multiplier notation} From the reduction described in Proposition~\ref{quant nondeg prop} it suffices to consider $\gamma \in \mathfrak{G}_4(\delta_0)$ where $\delta_0$ is a small parameter, as described at the end of \S\ref{moment red sec}. If $f$ belongs to a suitable \textit{a priori} class, then the Fourier transform of $A_{\gamma}f$ is the product of $\hat{f}$ and the multiplier
\begin{equation}\label{0419e_multiplier}
    \hat{\mu}_\gamma(\xi)=\int_{\R} e^{-i \inn{\gamma(s)}{\xi}}\, \chi(s)\, \ud s.
\end{equation}
Recall, again from the reduction described in Proposition~\ref{quant nondeg prop}, that we may assume $\chi \in C^{\infty}_c(\R)$ satisfies $\supp \chi \subseteq I_0 = [-\delta_0, \delta_0]$. 

Given $m \in L^{\infty}(\hat{\R}^4)$, define the associated multiplier operator $m(D)$ by
\begin{equation*}
    m(D)f (x) :=  \frac{1}{(2 \pi)^4} \int_{\hat{\R}^4} e^{i \inn{x}{\xi}} m(\xi) \hat{f}(\xi)\,\ud \xi
\end{equation*}
 so that, in this notation, $A_{\gamma} = \hat{\mu}_{\gamma}(D)$.
We also define the associated $L^p$ multiplier norms
\begin{equation*}
    \|m\|_{M^p(\R^4)} := \|m(D)\|_{L^p(\R^4) \to L^p(\R^4)} \qquad \textrm{for $1 \leq p \leq \infty$.}
\end{equation*}

To prove Theorem~\ref{non-degenerate theorem}, we analyse various multipliers obtained by decomposing \eqref{0419e_multiplier}. To this end, given $a \in C^{\infty}(\hat{\R}^4\setminus\{0\}\times \R )$, define
\begin{equation}\label{e0401_2.1}
    m[a](\xi) := \int_{\R} e^{-i \inn{\gamma(s)}{\xi}} a(\xi; s)\chi(s)\,\ud s.
\end{equation}
Any decomposition of the symbol $a$ results in a corresponding decomposition of the multiplier. We will also use the notation $\supp_\xi a$ to denote the projection of $\supp a \subseteq \hat{\R}^4 \backslash \{0\} \times \R$ into $\hat{\R}^4 \backslash \{0\}$.

%%%%%%%%%%%%%%%%%%%%%%%%%%%%%%%%%%%%
%%%%%%%%%%%%%%%%%%%%%%%%%%%%%%%%%%%%
%%%%%%%%%%%%%%%%%%%%%%%%%%%%%%%%%%%%

%%. REDUCTION TO BANDLIMITED FUNCTIONS

%%%%%%%%%%%%%%%%%%%%%%%%%%%%%%%%%%%%
%%%%%%%%%%%%%%%%%%%%%%%%%%%%%%%%%%%%
%%%%%%%%%%%%%%%%%%%%%%%%%%%%%%%%%%%%

\subsection{Reduction to band-limited functions}\label{sec:bandlimited}
Given a symbol $a\in C^{\infty}(\hat{\R}^4\setminus\{0\}\times \R)$ we perform a dyadic decomposition in the frequency variable $\xi$ as follows. Fix $\eta \in C^\infty_c(\R)$ non-negative and such that 
\begin{equation*}
\eta(r) = 1 \quad \textrm{if $r \in [-1,1]$} \quad \textrm{and} \quad \supp \eta \subseteq [-2,2]   
\end{equation*}
and define $\beta^k \in C^\infty_c(\R)$ by    
\begin{equation}\label{beta def}
    \beta^k(r):=\eta(2^{-k}r) - \eta(2^{-k+1}r)
\end{equation} 
for each $k \in \Z$. By a slight abuse of notation we also let $\eta$, $\beta^k \in C^{\infty}_c(\hat{\R}^4)$ denote the functions $\eta(\xi) := \eta(|\xi|)$ and $\beta^k(\xi):=\beta^k(|\xi|)$. One may then decompose
\begin{equation}\label{symbol dec}
    a = \sum_{k = 0}^{\infty} a_k \qquad \textrm{where} \qquad  a_k(\xi; s) :=   \left\{ \begin{array}{ll}
        a(\xi; s) \cdot \beta^k(\xi) & \textrm{for $k \geq 1$} \\
         a(\xi; s) \cdot \eta(\xi) & \textrm{for $k =0$}
     \end{array} \right. .
\end{equation}

Theorem~\ref{non-degenerate theorem} is a direct consequence of the following result for multipliers localised to some dyadic frequency band. Here we work with additional absolute constants $0 < \delta_j \leq \delta_0$ for $1 \leq j \leq 3$, chosen sufficiently small for the purposes of the forthcoming arguments. In practice, we may simply take $\delta_j := \delta_0$ for $j = 1, 3$ and $\delta_2 := \delta_0^3$. It is also convenient to define $\delta_4 := 9/10$.

\begin{theorem}\label{Sobolev theorem} Let $\gamma \in \mathfrak{G}_4(\delta_0)$ and $1 \leq J \leq 4$. 
% Set $\delta_J:=(\delta_0)^{3^{-J}}$. 
Suppose that $a \in C^{\infty}(\hat{\R}^4\setminus \{0\} \times \R)$ satisfies
\begin{equation}\label{LS symbol}
    |\partial_{\xi}^{\alpha}\partial_s^N a(\xi;s)| \lesssim_{\alpha, N} |\xi|^{-|\alpha|} \qquad \textrm{for all $\alpha \in \N_0^4$ and $N \in \N_0$}
\end{equation}
and
\begin{equation}\label{J derivatives}
   \left\{
   \begin{array}{ll}
   \displaystyle\inf_{s \in I_0}|\inn{\gamma^{(J)}(s)}{\xi}| \geq \delta_J |\xi| & \\[5pt]
\displaystyle \inf_{s \in I_0}|\inn{\gamma^{(j)}(s)}{\xi}| \leq 4\delta_j |\xi| & \textrm{for $1 \leq j \leq J-1$} 
\end{array}
\right. \qquad \textrm{for  all $\xi \in \xisupp a$.}
\end{equation}
 If $a_k$ is defined as in \eqref{symbol dec}, then
\begin{equation}\label{Sobolev ineq}
    \|m[a_k]\|_{M^p(\R^4)} \lesssim_p 2^{-k/p}
\end{equation}
for $k\ge 1$ and $p>\max\{2(J-1), 1\}$. 
\end{theorem}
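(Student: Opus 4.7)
The argument proceeds in stages according to the value of $J$. When $J=1$, the support condition \eqref{J derivatives} guarantees $|\inn{\gamma'(s)}{\xi}|\gtrsim 2^k$ on $\supp a_k$, so repeated integration by parts in $s$ in \eqref{e0401_2.1} yields $|\partial_\xi^\alpha m[a_k](\xi)|\lesssim_{\alpha,N} 2^{-Nk}\jp{\xi}^{-|\alpha|}$ for every multiindex $\alpha$ and every integer $N$. These symbol estimates fall under Mikhlin--H\"ormander and give $\|m[a_k]\|_{M^p(\R^4)}\lesssim_N 2^{-Nk}$ for every $p\in(1,\infty)$, far stronger than claimed.

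For $J\geq 2$, the natural scale for decomposition in $s$ is $2^{-k/J}$: at this scale the dominant $J$-th order term $\inn{\gamma^{(J)}(s_\ell)}{\xi}(s-s_\ell)^J/J!$ in the Taylor expansion of the phase changes by $O(1)$. Fix a smooth partition of unity $\{\chi_\ell\}$ with $\supp\chi_\ell\subset[s_\ell-2^{-k/J},s_\ell+2^{-k/J}]$ and write $a_k=\sum_\ell a_{k,\ell}$, producing $M\sim 2^{k/J}$ pieces. Each piece $m[a_{k,\ell}]$ is, up to rapidly decaying tails, concentrated on an anisotropic plate $\Pi_\ell\subset\hat\R^4$ with side lengths $\sim 2^{kj/J}$ along $\gamma^{(j)}(s_\ell)$ for $1\leq j\leq J$. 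Two basic bounds on each piece follow from stationary phase and a direct kernel computation: $\|m[a_{k,\ell}]\|_{L^\infty(\hat\R^4)}\lesssim 2^{-k/J}$, yielding $\|m[a_{k,\ell}]\|_{M^2}\lesssim 2^{-k/J}$, and $\|\F^{-1}m[a_{k,\ell}]\|_{L^1(\R^4)}\lesssim 2^{-k/J}$, yielding $\|m[a_{k,\ell}]\|_{M^\infty}\lesssim 2^{-k/J}$.

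For $J=2$ it already suffices to apply these two estimates to the full symbol $a_k$ without decomposition: stationary phase gives $\|m[a_k]\|_{L^\infty}\lesssim 2^{-k/2}$, and interpolation with the trivial kernel bound $\|\F^{-1}m[a_k]\|_{L^1}\lesssim 1$ produces $\|m[a_k]\|_{M^p}\lesssim 2^{-k/p}$ for every $p\geq 2$. For $J=3,4$ the analogous interpolation yields only $2^{-2k/(Jp)}$, strictly weaker than the claimed $2^{-k/p}$, and the improvement must come from a decoupling inequality applied to the plates $\{\Pi_\ell\}$. The appropriate decoupling is a Frenet-adapted one developed in Section~\ref{sec:decoupling}, tailored to the cone structure imposed by \eqref{J derivatives}, and valid precisely in the range $p>2(J-1)$. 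Combined with the piece bounds above and an $\ell^p$-aggregation step, it produces the required $M^p$ estimate on $m[a_k]$.

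The main obstacle lies in establishing a decoupling inequality with a constant sharp enough to absorb the $M^{1/p}\sim 2^{k/(Jp)}$-factor arising from aggregating the pieces. A naive application of the Bourgain--Demeter--Guth decoupling for non-degenerate curves in $\R^4$ is far from sufficient: it would require $p>20$. Instead one exploits the fact that the support condition \eqref{J derivatives} forces $\xi$ to lie in a thin neighborhood of a lower-dimensional cone, so the plates $\Pi_\ell$ are not arbitrary tubes around a non-degenerate curve in $\hat\R^4$ but rather sit inside a refined cone structure adapted to the Frenet frame. The $J=4$ case, which yields the main theorem and is treated in Section~\ref{sec:J=4}, requires decoupling sharp at $p=6$ and is matched by the worst-decay cone analysis of Section~\ref{nec sec}; it forms the technical core of the paper, while $J=3$ reduces to a cone-type decoupling in a lower-dimensional section closely related to the Bourgain--Demeter light-cone theorem.
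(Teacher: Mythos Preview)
Your treatment of $J=1$ and $J=2$ is correct and matches the paper. For $J\geq 3$, however, the argument contains a genuine gap: the claim that decomposing $s$ at scale $2^{-k/J}$ yields multiplier pieces $m[a_{k,\ell}]$ which are, up to rapidly decaying tails, Fourier-supported on anisotropic plates $\Pi_\ell$ is false as stated. Consider $J=3$ and a frequency $\xi\in\xisupp a_k$ with $|\inn{\be_1(s_\ell)}{\xi}|\sim 2^{k/2}$ (outside your plate) and $|\inn{\be_2(s_\ell)}{\xi}|\sim\delta_0 2^k$. Then on the interval $|s-s_\ell|\lesssim 2^{-k/3}$ the phase derivative $\inn{\gamma'(s)}{\xi}$ need not be bounded away from zero (the linear term $\inn{\gamma''(s_\ell)}{\xi}(s-s_\ell)$ can cancel the constant term), so no non-stationary phase argument applies and $m[a_{k,\ell}](\xi)$ is not rapidly decaying there. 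Without genuine Fourier support on the plates, the decoupling inequality of Theorem~\ref{Frenet decoupling theorem} cannot be invoked, and the crucial $\ell^2$-orthogonality at $p=2$ (needed for the aggregation step) is absent.

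The paper's remedy is a Whitney decomposition of $\xisupp a_k$ with respect to the \emph{slow decay cone} $\Gamma=\{\inn{\gamma'(s)}{\xi}=\inn{\gamma''(s)}{\xi}=0\}$, using the function $u(\xi)=\inn{\gamma'\circ\theta(\xi)}{\xi}$ (where $\theta(\xi)$ solves $\inn{\gamma''\circ\theta(\xi)}{\xi}=0$). This produces symbols $a_{k,\ell}$ with $|u(\xi)|\sim 2^{k-2\ell}$ for $0\leq\ell\leq\floor{k/3}$. Only then is a further angular decomposition in $\theta(\xi)$ at the $\ell$-dependent scale $2^{-\ell}$ performed, and the point is that the resulting pieces genuinely have Fourier support in Frenet boxes $2^k\cdot\pi_1(s_\mu;2^{-\ell})$ (Lemma~\ref{J=3 supp lem}), precisely because the $u(\xi)$ localisation controls the $\be_1$-component. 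The $L^2$ and $L^\infty$ bounds then depend on $\ell$ in opposite directions ($2^{-k/2+\ell/2}$ versus $2^{-\ell}$), and it is this trade-off, combined with decoupling at the variable scale $2^{-\ell}$, that closes the argument. For $J=4$ the geometry is richer still: there are two nested slow-decay cones $\Gamma_2\subset\Gamma_1$, a two-parameter Whitney decomposition (indices $\ell_1,\ell_2$), and an iterated decoupling---first with respect to $\Gamma_2$ using $\pi_2$-boxes, then after rescaling with respect to $\Gamma_1$ using $\pi_1$-boxes. Your single-scale, single-decoupling sketch does not capture this structure.
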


The hypothesis \eqref{J derivatives} implies that
\begin{equation}\label{e200727e3.4}
|\inn{\gamma^{(j)}(s)}{\xi}| \leq 8\delta_{0} |\xi| \qquad \textrm{for  all $\xi \in \xisupp a$, $s\in I_0$ and $1 \leq j \leq J-1$.}
\end{equation}
Indeed, suppose $s_0 \in I_0$ realises the infimum in \eqref{J derivatives} and let $s \in I_0$. Then the mean value theorem implies
\begin{equation}\label{J derivatives 1}
  |\inn{\gamma^{(j)}(s)}{\xi}| \leq |\inn{\gamma^{(j)}(s_0)}{\xi}| +  \sup_{t \in I_0}|\gamma^{(j+1)}(t)||s-s_0| |\xi| \leq 8\delta_{0}|\xi|,
\end{equation}
using the fact that $\delta_j \leq \delta_0$ and the uniform derivative bounds for $\gamma \in \mathfrak{G}_4(\delta_0)$. 

\begin{proof}[Proof of Theorem~\ref{non-degenerate theorem} given Theorem~\ref{Sobolev theorem}] By the reduction from \S\ref{moment red sec} it suffices to consider $\gamma \in \mathfrak{G}_4(\delta_0)$ and $\chi \in C_c^{\infty}(\R)$ with $\supp \chi \subseteq I_0 = [-\delta_0, \delta_0]$. For $1 \leq j \leq 4$ define the sets
\begin{equation*}
    \mathscr{E}_j := \Big\{ \xi \in S^3 : \inf_{s \in I_0} |\inn{\gamma^{(j)}(s)}{\xi}| < \delta_j \Big\} \quad \textrm{and} \quad U_j := 
    \begin{cases}
    N_{\delta_j} \mathscr{E}_j \cap S^3 & \textrm{if $1 \leq j \leq 3$} \\
    N_{\delta_0} \mathscr{E}_j \cap S^3 & \textrm{if $j = 4$} 
    \end{cases},
\end{equation*}
where $N_{\delta_j} \mathscr{E}_j$ denotes the $\delta_j$-neighbourhood of $\mathscr{E}_j$ and $S^3$ denotes the unit sphere in $\hat{\R}^4$. Since $U_j$ is an open subset of $S^3$ containing the compact subset $\mathrm{clos}\,\mathscr{E}_j$, there exists a smooth function $\rho_j \colon S^3 \to [0,\infty)$ such that
\begin{equation*}
    \rho_j(\omega) = 1 \quad \textrm{for $\omega \in \mathrm{clos}\,\mathscr{E}_j$} \qquad \textrm{and} \qquad \supp \rho_j \subseteq U_j. 
\end{equation*}

For $1 \leq J \leq 4$ define $\chi_J \in C^{\infty}(S^3)$ by
\begin{equation*}
    \chi_J := \Big(\prod_{j = 1}^{J-1} \rho_j \Big) \cdot ( 1 - \rho_J)
\end{equation*}
These functions satisfy the following properties:
\begin{enumerate}[i)]
    \item If $\xi \in S^3$ and $\xi \in \supp \chi_J$, then \eqref{J derivatives} holds;
    \item $\sum_{J=1}^4 \chi_J \equiv 1$, as functions on $S^3$.
\end{enumerate}
Indeed, to see property i), note that if $\xi \in \supp \chi_J$, then $\xi \notin \mathrm{clos}\,\mathscr{E}_J$ which implies the first bound in \eqref{J derivatives}.  On the other hand, for $1 \leq j \leq J-1 \leq 3$ it follows that $\xi \in U_j$ and so there exists some $\xi_0 \in \mathscr{E}_j$ with $|\xi - \xi_0| < \delta_j$. Consequently, there exists some $s_0 \in I_0$ such that
\begin{equation}\label{J derivatives 2}
    |\inn{\gamma^{(j)}(s_0)}{\xi}| \leq |\inn{\gamma^{(j)}(s_0)}{\xi_0}| + |\gamma^{(j)}(s_0)||\xi - \xi_0| \leq 4\delta_j,
\end{equation}
which is the second bound in \eqref{J derivatives}. For property ii), note that \eqref{J derivatives 1} can be combined with the argument in \eqref{J derivatives 2} to conclude that
\begin{align*}
    \sup_{s \in I_0}|\inn{\gamma^{(j)}(s)}{\xi}| \leq 8\delta_0 \quad \textrm{for $\xi \in U_j$,  $1 \leq j \leq 3$,} \quad \textrm{and} \quad
    \sup_{s \in I_0}|\inn{\gamma^{(4)}(s)}{\xi}| \leq \tfrac{9}{10} + 4\delta_0 \quad \textrm{for $\xi \in U_4$.}
\end{align*}
Provided $\delta_0$ is sufficiently small, the non-degeneracy of $\gamma \in \mathfrak{G}_4(\delta_0)$ implies $\bigcap_{j=1}^4 U_j = \emptyset$. Since $\sum_{J=1}^4 \chi_J = 1 - \prod_{j=1}^4 \rho_j$, property ii) follows from the support conditions of the $\rho_j$.

In view of the above, we may apply Theorem~\ref{Sobolev theorem} with $a(\xi) := \chi_J(\xi/|\xi|)$ for $J = 1, 2, 3, 4$ and sum in $J$ to conclude that
\begin{equation}\label{freq loc est}
    \|\beta^k(D)A_{\gamma} f\|_{L^p(\R^4)} \lc 2^{-k/p} \|f\|_{L^p(\R^4)} \qquad \text{for all $k \geq 0$}
\end{equation}
and all $p > 6$; note that the case $k=0$ trivially follows as $A_\gamma$ is an averaging operator. To pass from the frequency localised estimates \eqref{freq loc est} to genuine $L^p$-Sobolev bounds, one may apply a Calder\'on--Zygmund estimate from \cite{PRS2011}. This argument is described in the Appendix~\ref{Sob-from-dy appendix}: see Proposition~\ref{prop:Sobolev-from-dyadic}. 
\end{proof}

The hypothesis \eqref{LS symbol} implies that $\|\mathcal{F}_{\xi}^{-1}a_k(\,\cdot\,;s)\|_{L^1(\R^4)} \lesssim 1$, where $\mathcal{F}_{\xi}^{-1}$ denotes the inverse Fourier transform in the $\xi$ variable. Consequently, it is not difficult to show that the $p = \infty$ case of \eqref{Sobolev ineq} holds for all $1 \leq J \leq 4$ (see also Lemma~\ref{gen Linfty lem}). The problem is therefore to deduce the estimate for $p$ near to $\max\{ 2(J-1), 1\}$.\medskip

Note that the proof of the $J=1$ case of Theorem~\ref{Sobolev theorem} is trivial. Indeed, here the phase function of \eqref{e0401_2.1} does not admit a critical point and the desired result follows by repeated integration-by-parts.\medskip

The proof of the $J=2$ case of Theorem~\ref{Sobolev theorem} is also straightforward. Suppose $\gamma \in \mathfrak{G}_4(\delta_0)$ and $a \in C^{\infty}(\R^4\setminus \{0\} \times \R)$ satisfies the hypotheses Theorem~\ref{Sobolev theorem} for $J = 2$, with $\delta_1 := \delta_0$ and $\delta_2 := \delta_0^3$.\footnote{The choice of $\delta_1$, $\delta_2$ is not important for the argument in the $J=2$ case, but is kept for consistency.}
Note, in particular, that
\begin{equation*}
    |\inn{\gamma''(s)}{\xi}| \ge \delta_0^3 |\xi|  \qquad \textrm{for all $(\xi; s)\in \xisupp a \times I_0$.} 
\end{equation*}
Thus, the van der Corput lemma (see, for instance, \cite[Chapter VIII, Proposition 2]{Stein1993}) implies
\begin{equation*}
    \|m[a_k]\|_{M^2(\R^4)} = \|m[a_k]\|_{L^{\infty}(\R^4)} \lesssim 2^{-k/2}.
\end{equation*}
On the other hand, by the triangle inequality, Fubini's theorem, translation-invariance and integration-by-parts (see Lemma~\ref{gen Linfty lem}),
\begin{equation*}
    \|m[a_k]\|_{M^{\infty}(\R^4)} \leq \|\mathcal{F}^{-1}a_k\|_{L^1(\R^4)} \lesssim 1.
\end{equation*}
Interpolation yields
\begin{equation*}
    \|m[a_k]\|_{M^p(\R^4)} \lesssim 2^{-k/p} \qquad \textrm{for all $2 \leq p \leq \infty$,}
\end{equation*} 
which concludes the proof for the $J=2$ case.
\medskip

From now on, we focus on the $J=3$ and $J=4$ cases of Theorem~\ref{Sobolev theorem}. These are proved in Sections \ref{sec:J=3} and \ref{sec:J=4} respectively. Of these, the $J=4$ is the heart of the matter, and its proof is the main contribution of this paper. Before turning to the proofs, we state some auxiliary results.

%%%%%%%%%%%%%%%%%%%%%%%%%%%%%%%%%%%%%%%%%%

%       Frenet frame

%%%%%%%%%%%%%%%%%%%%%%%%%%%%%%%%%%%%%%%%%%%%%%%%%%%%%%

\subsection{The Frenet frame.}\label{Frenet subsection} At this juncture it is convenient to recall some elementary concepts from differential geometry which feature in our proof. 
Given a smooth non-denegenate curve $\gamma:I \to \R^n$, the Frenet frame is the orthonormal basis resulting from applying the Gram--Schmidt process to  the vectors
\begin{equation*}
    \{ \gamma'(s), \dots, \gamma^{(n)}(s)\},
\end{equation*}
which are linearly independent in view of the condition \eqref{eq:nondegenerate}. Defining the functions\footnote{Note that the $\tilde{\kappa}_j$ depend on the choice of parametrisation and only agree with the (geometric) curvature functions 
\begin{equation*}
    \kappa_j(s) := \frac{\langle \be_j'(s), \be_{j+1}(s) \rangle}{|\gamma'(s)|}
\end{equation*}
if $\gamma$ is unit speed parametrised. Here we do not assume unit speed parametrisation.} 
\begin{equation*}
    \tilde{\kappa}_j(s) := \langle \be_j'(s), \be_{j+1}(s) \rangle  \qquad \text{for } j=1, \dots, n-1,
\end{equation*}
one has the classical Frenet formul\ae
\begin{align*}\be_1'(s)&=  \tilde{\kappa}_1(s) \be_2(s),
    \\ \be_i'(s)&= -\tilde{\kappa}_{i-1}(s)\be_{i-1}(s) + \tilde{\kappa}_{i}(s)\be_{i+1}(s),\,\,i=2,\dots, n-1,
    \\
    \be_n'(s)&=-\tilde{\kappa}_{n-1}(s)\be_{n-1}(s).
\end{align*}
Repeated application of these formul\ae\ shows that 
\begin{equation*}
    \be^{(k)}_{i}(s) \perp \be_{j}(s) \qquad \textrm{whenever} \qquad 0 \leq  k < |i - j|.
\end{equation*}
Consequently, by Taylor's theorem 
\begin{equation}\label{Frenet bound}
    |\inn{\be_{i}(s_1)}{\be_{j}(s_2)}| \lesssim_{\gamma} |s_1 - s_2|^{|i-j|} \qquad \textrm{for $1 \leq i,j \leq n$ and $s_1, s_2 \in I$.}
\end{equation}
Furthermore, one may deduce from the definition of $\{\be_j(s)\}_{j=1}^n$ that
\begin{equation}\label{Frenet bound alt 1}
    |\inn{\gamma^{(i)}(s_1)}{\be_{j}(s_2)}| \lesssim_{\gamma} |s_1 - s_2|^{(j-i)\vee 0} \qquad \textrm{for $1 \leq i,j \leq n$ and $s_1, s_2 \in I$.}
\end{equation}

In this paper, much of the microlocal geometry of the operator $A_{\gamma}$ is expressed in terms of the Frenet frame.

%%%%%%%%%%%%%%%%%%%%%%%%%%%%%%%%%%%%%%%%%%%%%%%%%%%%%%%%%%%%%%%%%%%%%%%%%%%%%%%%%%%%%%%%%%%%%%%%

%       A decoupling for regions defined

%%%%%%%%%%%%%%%%%%%%%%%%%%%%%%%%%%%%%%%%%%%%%%%%%%%%%%%%%%%%%%%%%%%%%%%%%%%%%%%%%%%%%%%%%%%%%%%%

\subsection{A decoupling inequality for regions defined by the Frenet frame} Let $\gamma \colon I \to \R^n$ be a non-degenerate curve.

\begin{definition}\label{def Frenet box}  Given $2 \leq d \leq n-1$ and $0 < r \leq 1$, for each $s \in I$ let $\pi_{d-1}(s;r)$ denote the set of all $\xi \in \hat{\R}^n$ satisfying the following conditions:
\begin{subequations} 
\begin{align}\label{neighbourhood 1}
     |\inn{\be_j(s)}{\xi}| &\leq r^{d+1-j} \qquad \textrm{for $1 \leq j \leq d$,} \\
     \label{neighbourhood 2}
    1/2\leq |\inn{\be_{d+1}(s)}{\xi}| &\leq 1 \\
    \label{neighbourhood 3}
    |\inn{\be_j(s)}{\xi}| &\leq 1 \qquad \textrm{for $d+2 \leq j \leq n$.}
\end{align}
\end{subequations} 
Such sets $\pi_{d-1}(s;r)$ are referred to as $(d-1,r)$-\textit{Frenet boxes}. 
\end{definition} 

\begin{definition} A collection $\mathcal{P}_{d-1}(r)$ of $(d-1,r)$-Frenet boxes is a \textit{Frenet box decomposition for $\gamma$} if it consists of precisely the $(d-1,r)$-Frenet boxes $\pi_{d-1}(s;r)$ for $s$ varying over a $r$-net in $I$. 
\end{definition}

In some instances it is useful to highlight the underlying curve and write $\pi_{d-1,\gamma}(s;r)$ for $\pi_{d-1}(s;r)$. The relevance of the $d-1$ index is made apparent in Definition~\ref{cone gen g def}. \smallskip

Central to the proof of Theorem~\ref{non-degenerate theorem} is the following decoupling inequality.

\begin{theorem}\label{Frenet decoupling theorem} Let $2 \leq d \leq n-1$, $0 \leq \delta \ll 1$, $0 < r \leq 1$ and $\mathcal{P}_{d-1}(r)$ be a $(d-1,r)$-Frenet box decomposition for $\gamma \in \mathfrak{G}_n(\delta)$. For all $2 \leq p \leq \infty$ and $\varepsilon > 0$ the inequality
\begin{equation*}
    \big\|\sum_{\pi \in \mathcal{P}_{d-1}(r)} f_{\pi} \big\|_{L^p(\R^{n})} \lesssim_{n,\gamma,\varepsilon} r^{-\alpha(p) -\varepsilon} \Big(\sum_{\pi \in \mathcal{P}_{d-1}(r)} \|f_{\pi}\|_{L^p(\R^{n})}^p\Big)^{1/p}
\end{equation*}
holds with exponent
\begin{equation*}
    \alpha(p) := \begin{cases}
    \frac{1}{2}-\frac{1}{p} & \text{ if } \quad 2 \leq p \leq d(d+1) \\[5pt]
    1-\frac{d(d+1) +2}{2p} & \text{ if } \quad d(d+1) \leq p \leq \infty
    \end{cases}
\end{equation*}
for any tuple of functions $(f_{\pi})_{\pi \in \mathcal{P}_{d-1}(r)}$ satisfying $\supp \hat{f}_{\pi} \subseteq \pi$.
\end{theorem}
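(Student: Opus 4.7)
The plan is to reduce Theorem~\ref{Frenet decoupling theorem} to the sharp $\ell^p$-decoupling theorem of Bourgain--Demeter--Guth \cite{BDG2016} for the moment curve in $\mathbb{R}^d$. Indeed, the exponent $\alpha(p)$ is precisely the $\ell^p L^p$ decoupling exponent for that curve, with critical exponent $p_{\mathrm{cr}} = d(d+1)$. Structurally, each Frenet box $\pi_{d-1}(s;r)$ has the anisotropic shape $r^d \times r^{d-1} \times \cdots \times r$ in the directions $\be_1(s), \ldots, \be_d(s)$, together with unit widths in the ``principal'' direction $\be_{d+1}(s)$ and in the $n-d-1$ ``free'' directions $\be_{d+2}(s), \ldots, \be_n(s)$. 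Thus $\bigcup_{s} \pi_{d-1}(s;r)$ has the shape of a generalised cone over a $d$-dimensional non-degenerate curve, fattened by unit scales in the free directions.

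The first step is to reduce to $\gamma = \gamma_\circ$. Since $\|\gamma - \gamma_\circ\|_{C^{n+1}} \leq \delta$ with $0 < \delta \ll 1$, the Frenet frames of $\gamma$ and $\gamma_\circ$ are pointwise $O(\delta)$-close, so each $(d-1,r)$-Frenet box for $\gamma$ is contained in and contains an $O(1)$-dilate of a $(d-1,r)$-Frenet box for $\gamma_\circ$. This reduces matters to the canonical moment curve, where we may work in the standard basis (since $\be_j(0) = \vec{e}_j$ after the normalisation of $\mathfrak{G}_n(\delta)$).

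The heart of the proof is a slicing argument combined with an application of Bourgain--Demeter--Guth. Foliate the common support by level sets $\{\inn{\vec{e}_{d+1}}{\xi} = t\}$ for $t \in [1/2,1]$. On each slice, the intersection with $\pi_{d-1}(s;r)$ is an anisotropic box of dimensions comparable to $r^d \times r^{d-1} \times \cdots \times r$ in the directions $\be_1(s), \ldots, \be_d(s)$, modulo unit-scale free directions $\be_{d+2}(s), \ldots, \be_n(s)$. Decomposing the free directions via a trivial unit-scale partition and applying Fubini/Minkowski (which contributes no $r$-loss) reduces each slice to an honest $d$-dimensional decoupling problem: namely, the $\ell^p$ decoupling of the anisotropic $r$-neighbourhood of a non-degenerate $d$-dimensional auxiliary curve into its standard BDG caps of dimensions $r \times r^2 \times \cdots \times r^d$. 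Applying \cite{BDG2016} on each slice with exponent $\alpha(p)$ and integrating over $t$ yields the desired inequality.

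The principal obstacle is the variability of the Frenet frame: the free and principal subspaces attached to $\pi_{d-1}(s;r)$ rotate with $s$, so slicing along a fixed direction $\vec{e}_{d+1} = \be_{d+1}(0)$ does not exactly respect the individual boxes for $s$ away from $0$. The non-degeneracy of the auxiliary $d$-dimensional curve arising after slicing must be verified uniformly in $s$ and $t$, and the discrepancies caused by frame rotation must be absorbed using the Taylor-type inequalities $|\inn{\be_i(s_1)}{\be_j(s_2)}| \lesssim |s_1 - s_2|^{|i-j|}$ recorded in \eqref{Frenet bound}. These ensure that the BDG caps on neighbouring slices align with the Frenet boxes up to acceptable $O(r)$-errors, preserving the stated decoupling exponent.
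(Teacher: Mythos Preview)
Your proposal has the right target (BDG for the moment curve in $\R^d$) and correctly identifies the key geometric obstacle, but the proposed resolution has a genuine gap. The slicing-plus-Fubini scheme you describe does not actually reduce to a single BDG problem. Decoupling is a global $L^p(\R^n)$ statement, not a fibre-wise one, so ``apply BDG on each slice and integrate over $t$'' only makes sense if the Fourier supports are genuine products in a \emph{fixed} splitting $\R^d\times\R^{n-d}$. The Frenet boxes $\pi_{d-1}(s;r)$ are products only in the \emph{rotating} frame $(\be_1(s),\dots,\be_n(s))$; in the fixed basis $\vec e_{d+1}$ the projections onto a slice $\{\xi_{d+1}=t\}$ are caps for an auxiliary curve that varies with $t$ (a family of dilates), and the deviation of this family over $t\in[1/2,1]$ is of order $1$, far larger than the smallest cap scale $r^d$. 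So on a full-width slab the slices do \emph{not} sit inside a single $r$-anisotropic BDG neighbourhood, and the appeal to \cite{BDG2016} as written is vacuous. Your closing paragraph acknowledges this but offers no mechanism to absorb the $O(1)$ discrepancy; the Taylor bounds \eqref{Frenet bound} control frame rotation by powers of $|s_1-s_2|$, not by powers of $r$, and cannot repair the issue at fine scales.

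What is missing is an induction-on-scales step with a rescaling lemma, which is exactly how the paper proceeds. One first localises to a thin slab $|\xi_j-a_j|\le K^{-1}$ in the principal and free directions (for $K$ depending only on $\varepsilon$); on such a slab the cone \emph{is} $K^{-1}$-close to a cylinder over a single non-degenerate curve $g_{\ba}$, so your Fubini-plus-BDG argument is valid, but only at the coarse scale $\rho=K^{-1/d}$. To descend from scale $\rho$ to scale $r$ one proves a generalised Lorentz rescaling lemma $\mathfrak D(r)\lesssim\mathfrak D(\rho)\,\mathfrak D(r/\rho)$: an affine map built from $[g_{\ba}]_{b,\rho}$ sends each $\rho$-plate on the cone to a full-height slab on a \emph{rescaled} cone of the same type, and the $r$-plates inside it to $(r/\rho)$-plates for that rescaled cone. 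Iterating this $O_\varepsilon(\log(1/r))$ times closes the induction. This Pramanik--Seeger/Bourgain--Demeter iteration is the substantive content of the proof, and your proposal omits it.
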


This theorem corresponds to a conic version of the Bourgain--Guth--Demeter decoupling inequality for the moment curve \cite{BDG2016}. Theorem~\ref{Frenet decoupling theorem} can be deduced from the moment curve decoupling via rescaling and induction-on-scale arguments, following a scheme originating in \cite{PS2007}. The details of this argument are presented in \S\ref{sec:decoupling}.

%%%%%%%%%%%%%%%%%%%%%%%%%%%%%%%%%%%%
%%%%%%%%%%%%%%%%%%%%%%%%%%%%%%%%%%%%
%%%%%%%%%%%%%%%%%%%%%%%%%%%%%%%%%%%%

%%. CASE J=3

%%%%%%%%%%%%%%%%%%%%%%%%%%%%%%%%%%%%
%%%%%%%%%%%%%%%%%%%%%%%%%%%%%%%%%%%%
%%%%%%%%%%%%%%%%%%%%%%%%%%%%%%%%%%%%

\section{The proof of Theorem~\ref{Sobolev theorem}: The \texorpdfstring{$J=3$}{} case}\label{sec:J=3}

We now turn to the proof of Theorem~\ref{non-degenerate theorem} proper. Recall, it remains to prove the $J=3$ and $J=4$ cases of Theorem~\ref{Sobolev theorem}. Here we present the analysis of the $J=3$ case, which essentially mirrors that of \cite{PS2007}. The present section can therefore be thought of as a warm up for the significantly more involved argument used to treat $J=4$ in \S\ref{sec:J=4}.

%%%%%%%%%%%%%%%%%%%%%%%%%%%%%%%%%%%%
%%%%%%%%%%%%%%%%%%%%%%%%%%%%%%%%%%%%
%%%%%%%%%%%%%%%%%%%%%%%%%%%%%%%%%%%%

%%. Preliminaries

%%%%%%%%%%%%%%%%%%%%%%%%%%%%%%%%%%%%
%%%%%%%%%%%%%%%%%%%%%%%%%%%%%%%%%%%%
%%%%%%%%%%%%%%%%%%%%%%%%%%%%%%%%%%%%

\subsection{Preliminaries} Suppose $\gamma \in \mathfrak{G}_4(\delta_0)$ and $a \in C^{\infty}(\hat{\R}^4\setminus \{0\} \times \R)$ satisfies the hypotheses Theorem~\ref{Sobolev theorem} for $J = 3$, with $\delta_1 :=: \delta_3 := \delta_0$ and $\delta_2 := \delta_0^3$. Note, in particular, that 
\begin{equation}\label{J3 hyp}
 \left\{\begin{array}{ll}
    |\inn{\gamma^{(3)}(s)}{\xi}| \ge \delta_0 |\xi| \\[5pt]
|\inn{\gamma^{(j)}(s)}{\xi}| \le 8\delta_0 |\xi| & \textrm{for $j = 1,2$}
\end{array}
\right. \qquad \textrm{for all $(\xi; s)\in \xisupp a \times I_0$,} 
\end{equation}
as a consequence of \eqref{e200727e3.4}. If $a_k:= a \cdot \beta^k$, as introduced in \S\ref{sec:bandlimited}, this implies, via van der Corput's lemma with third order derivatives, that 
\begin{equation}\label{J3 trivial decay rate}
    \|m[a_k](\xi)\| \lesssim 2^{-k/3}.
\end{equation}
Arguing as for $J=2$, Plancherel's theorem and interpolation with a trivial $L^\infty$ estimate yields
\begin{equation*}
\| m[a_k] \|_{M^p(\R^4)}\lesssim 2^{-2k/3p} \qquad \textrm{for all $2 \leq p \leq \infty$.}
\end{equation*}
In order to obtain the improved bound $\| m[a_k]\|_{M^p(\R^4)} \lesssim 2^{-k/p}$, we decompose the symbol $a_k$ into localised pieces which admit more refined decay rates than \eqref{J3 trivial decay rate}.

%%%%%%%%%%%%%%%%%%%%%%%%%%%%%%%%%%%%
%%%%%%%%%%%%%%%%%%%%%%%%%%%%%%%%%%%%
%%%%%%%%%%%%%%%%%%%%%%%%%%%%%%%%%%%%

%%. Geometry of the slow decay cone

%%%%%%%%%%%%%%%%%%%%%%%%%%%%%%%%%%%%
%%%%%%%%%%%%%%%%%%%%%%%%%%%%%%%%%%%%
%%%%%%%%%%%%%%%%%%%%%%%%%%%%%%%%%%%%

\subsection{Geometry of the slow decay cone} The first step is to isolate regions of the frequency space where the multiplier $m[a]$ decays relatively slowly. Owing to stationary phase considerations, this corresponds to a region around the cone
\begin{equation*}
    \Gamma := \set{\xi\in \xisupp a : \inn{\gamma^{(j)}(s)}{\xi}=0,\: j = 1, 2, \text{ for some } s\in I_0}. 
\end{equation*}
To analyse this region, and the corresponding decay rates for $m[a]$, we make the following simple observation. 

\begin{lemma}\label{200801lem5_1} If $\xi\in \xisupp a$, then the equation $\inn{\gamma''(s)}{\xi}=0$ has a unique solution in $\tfrac{5}{4} \cdot I_0$. 
\end{lemma}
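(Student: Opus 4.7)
The plan is to establish existence and uniqueness separately, each as a direct consequence of the hypotheses \eqref{J derivatives} for $J=3$ together with the uniform control on derivatives of $\gamma \in \mathfrak{G}_4(\delta_0)$. The key quantitative inputs are: \emph{(a)} the lower bound $|\inn{\gamma'''(s)}{\xi}| \geq \delta_0 |\xi|$ for all $s \in I_0$, and \emph{(b)} the existence of some $s_0 \in I_0$ with $|\inn{\gamma''(s_0)}{\xi}| \leq 4\delta_2|\xi| = 4\delta_0^3|\xi|$, each of which follows directly from \eqref{J derivatives}.

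First I would promote the derivative bound from $I_0$ to the slightly larger interval $\tfrac{5}{4} I_0$. By the fundamental theorem of calculus, for $s \in \tfrac{5}{4}I_0$ and any $s_1 \in I_0$,
\begin{equation*}
\inn{\gamma'''(s)}{\xi} = \inn{\gamma'''(s_1)}{\xi} + \int_{s_1}^{s} \inn{\gamma^{(4)}(t)}{\xi}\, \ud t,
\end{equation*}
and since $\|\gamma^{(4)}\|_\infty \lesssim 1$ on $[-1,1]$ (because $\gamma \in \mathfrak{G}_4(\delta_0)$) and $|s - s_1| \leq \tfrac{1}{4}\delta_0 + \delta_0 \lesssim \delta_0$, the error term is $O(\delta_0 |\xi|)$. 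For $\delta_0$ sufficiently small, this yields $|\inn{\gamma'''(s)}{\xi}| \geq \tfrac{1}{2}\delta_0|\xi|$ for all $s \in \tfrac{5}{4} I_0$, and moreover $\inn{\gamma'''(\cdot)}{\xi}$ retains constant sign throughout $\tfrac{5}{4} I_0$.

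Uniqueness is then immediate: $s \mapsto \inn{\gamma''(s)}{\xi}$ has derivative of constant sign and magnitude $\gtrsim \delta_0 |\xi|$ on $\tfrac{5}{4} I_0$, hence is strictly monotone there and admits at most one zero.

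For existence, pick $s_0 \in I_0$ realising $|\inn{\gamma''(s_0)}{\xi}| \leq 4\delta_0^3|\xi|$. Expanding
\begin{equation*}
\inn{\gamma''(s_0 + h)}{\xi} = \inn{\gamma''(s_0)}{\xi} + \int_0^h \inn{\gamma'''(s_0 + t)}{\xi}\, \ud t,
\end{equation*}
and using that the integrand has constant sign with $|\inn{\gamma'''(s_0+t)}{\xi}| \geq \tfrac{1}{2}\delta_0|\xi|$ on $\tfrac{5}{4} I_0$, the right-hand side takes both signs as $h$ ranges over $[-C\delta_0^2, C\delta_0^2]$ for a suitable absolute constant $C$. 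The intermediate value theorem produces a zero in $[s_0 - C\delta_0^2, s_0 + C\delta_0^2]$, which for $\delta_0$ sufficiently small is contained in $\tfrac{5}{4} I_0$.

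The argument is routine; the only point requiring care is keeping track of the chain of constants so that the constructed zero lies inside $\tfrac{5}{4}I_0$ rather than merely in some larger neighbourhood. This is ensured by the hierarchy $\delta_2 = \delta_0^3 \ll \delta_0$, which forces the zero to lie within distance $O(\delta_0^2) \ll \tfrac{1}{4}\delta_0$ of the interior point $s_0 \in I_0$.
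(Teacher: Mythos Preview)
Your proof is correct and follows exactly the approach the paper indicates: the paper only sketches the argument, saying the lemma ``follows from the localisation of the symbol in \eqref{J3 hyp} and \eqref{J derivatives} via the mean value theorem,'' and refers the reader to \cite[Lemma 6.1]{BGHS-helical} for a closely analogous computation. Your writeup supplies precisely those details---extending the third-derivative lower bound to $\tfrac{5}{4}I_0$, obtaining uniqueness from monotonicity, and existence from the intermediate value theorem using the hierarchy $\delta_2=\delta_0^3\ll\delta_0$---and the constants work out as you indicate.
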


The above lemma follows from the localisation of the symbol in \eqref{J3 hyp} and \eqref{J derivatives} via the mean value theorem. The details are left to the interested reader (see \cite[Lemma 6.1]{BGHS-helical} for a proof using similar arguments).

%\begin{proof}
%Given $\xi \in \xisupp a$, let 
%\begin{equation*}
%    \phi \colon [-1,1] \to \R, \quad\phi \colon s \mapsto \inn{\gamma''(s)}{\xi}.
%\end{equation*}
%For each $s \in  \tfrac{5}{4}  \cdot I_0$ there is a point $s^* \in I_0$ which minimises the distance $|s-s^*|$ and so, in particular, $|s - s^*| \leq \delta_0/4$. By the mean value theorem,
%\begin{equation*}
%    |\phi'(s)|\ge |\phi'(s^*)| - |\omega(\xi;s, s^*)||s-s^*|,
%\end{equation*}
%where $|\omega(\xi;s, s^*)|\leq 2 |\xi|$ since $\gamma \in \mathfrak{G}_4(\delta_0)$. Using \eqref{J3 hyp}, one concludes that
%\begin{equation}\label{eq:3rd ord der ismall}
%    |\phi'(s)|\ge \delta_0|\xi|/2>0,
%\end{equation}
%and thus the equation $\phi(s)=0$ has at most one solution on $\tfrac{5}{4}  \cdot I_0$. 
%
%It remains to show that a solution exists in $\tfrac{5}{4}  \cdot I_0$. By \eqref{J derivatives}, there exists $s_0\in I_0$ such that $|\phi(s_0)|\le 4\delta_0^3|\xi|$. For $s\in \tfrac{5}{4}  \cdot I_0$, by the mean value theorem,
%\begin{equation*}
%    \phi(s)= \phi(s_0) + \phi'(\tilde{s}) (s-s_0)
%\end{equation*}
%where $\tilde{s} \in \tfrac{5}{4}  \cdot I_0$. Recall that $|\phi(s_0)| \leq 4 \delta_0^3|\xi|$, whereas $|\phi'(\tilde{s})| \geq \delta_0 |\xi|/2$ by \eqref{eq:3rd ord der ismall}. Thus, provided $\delta_0$ is sufficiently small, the equation $\phi(s)=0$ has a solution in 
%$[s_0-8\delta_0^2, s_0+8\delta_0^2] \subseteq \tfrac{5}{4} \cdot I_0$.
%\end{proof}
\medskip

Using Lemma~\ref{200801lem5_1}, we construct a smooth mapping $\theta \colon \xisupp a  \to [-1,1]$ such that
\begin{equation*}
    \inn{\gamma'' \circ \theta(\xi)}{\xi} = 0 \qquad \textrm{for all $\xi \in \xisupp a$.}
\end{equation*}
It is easy to see that $\theta$ is homogeneous of degree 0. This function can be used to construct a natural Whitney decomposition with respect to the cone $\Gamma$ defined above. In particular, let 
\begin{equation}\label{J=3 u function}
    \bu(\xi) := \inn{\gamma' \circ \theta(\xi)}{\xi} \qquad \textrm{for all $\xi \in \xisupp a$;}
\end{equation}
this quantity plays a central r\^ole in our analysis. If $u(\xi)=0$, then $\xi \in \Gamma$ and so, roughly speaking, $u(\xi)$ measures the distance of $\xi$ from $\Gamma$.

%%%%%%%%%%%%%%%%%%%%%%%%%%%%%%%%%%%%
%%%%%%%%%%%%%%%%%%%%%%%%%%%%%%%%%%%%
%%%%%%%%%%%%%%%%%%%%%%%%%%%%%%%%%%%%

%%. Decomposition of the symbols

%%%%%%%%%%%%%%%%%%%%%%%%%%%%%%%%%%%%
%%%%%%%%%%%%%%%%%%%%%%%%%%%%%%%%%%%%
%%%%%%%%%%%%%%%%%%%%%%%%%%%%%%%%%%%%

\subsection{Decomposition of the symbols}\label{subsec:J3 dec symbols}  Consider the frequency localised symbols $a_k:=a \cdot \beta^k$, as introduced in \S\ref{sec:bandlimited}. We decompose each $a_k$ with respect to the size of $|\bu(\xi)|$. In particular, write 
    \begin{equation}\label{J3 dec}
       a_k = \sum_{\ell = 0}^{\floor{k/3}}  a_{k,\ell}
       \end{equation}
where $\floor{k/3}$ denotes the greatest integer less than or equal to $k/3$ and\footnote{The $\beta$ function should be defined slightly differently compared with \eqref{beta def} and, in particular, here $\beta(r) := \eta(2^{-2}r) - \eta(r)$. Such minor changes are ignored in the notation.}
\begin{equation}\label{J3 akell def}
    a_{k,\ell}(\xi;s) := 
    \left\{\begin{array}{ll}
       \displaystyle a_k(\xi;s)\beta\big(2^{-k+ 2\ell}\bu(\xi)\big)   & \textrm{if $0 \leq \ell < \floor{k/3}$}  \\[8pt]
       \displaystyle a_{k}(\xi;s)\eta\big(2^{-k + 2\floor{k/3}}\bu(\xi)\big) & \textrm{if $\ell = \floor{k/3}$}
    \end{array}\right. .
    \end{equation}

The $J=3$ case of Theorem~\ref{Sobolev theorem} is a consequence of the following bound for the localised pieces of the multiplier. 

\begin{proposition}\label{J3 Lp proposition} Let $4 \leq p \leq  6$, $k \in \N$ and $\varepsilon > 0$. For all $0 \leq \ell \leq \floor{k/3}$,
    \begin{equation*}
        \|m[a_{k,\ell}]\|_{M^p(\R^4)} \lesssim_{\varepsilon, p} 2^{-k/p - \ell(1/2 - 2/p - \varepsilon)}.
    \end{equation*}
\end{proposition}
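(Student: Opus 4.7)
The plan is to follow the scheme of \cite{PS2007}: perform a Frenet-box decomposition of $a_{k,\ell}$ in the $s$-variable, apply Theorem~\ref{Frenet decoupling theorem}, and close the argument by interpolating elementary $L^2$ and $L^\infty$ bounds on each piece.

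Fix a maximal $2^{-\ell}$-separated net $\{s_\nu\}\subset I_0$ together with a subordinate smooth partition of unity $\{\chi_\nu\}$, and decompose $a_{k,\ell}=\sum_\nu a_{k,\ell,\nu}$ with $a_{k,\ell,\nu}(\xi;s):=a_{k,\ell}(\xi;s)\chi_\nu(s)$. Using $\inn{\gamma''\circ\theta(\xi)}{\xi}\equiv 0$, $|\bu(\xi)|\sim 2^{k-2\ell}$, and the Frenet estimates of Subsection~\ref{Frenet subsection}, one verifies that $\xisupp a_{k,\ell,\nu}$ lies in a fixed dilate of the $(1,2^{-\ell})$-Frenet box $\pi_\nu:=\pi_{1,\gamma}(s_\nu;2^{-\ell})$ rescaled to the annulus $|\xi|\sim 2^k$. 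Applying Theorem~\ref{Frenet decoupling theorem} with $n=4$, $d=2$, $r=2^{-\ell}$ then yields, for $p\in[4,6]$ and any $\varepsilon>0$,
\begin{equation*}
    \|m[a_{k,\ell}](D)f\|_{L^p(\R^4)}\lesssim_\varepsilon 2^{\ell(1/2-1/p)+\varepsilon\ell}\Big(\sum_\nu\|m[a_{k,\ell,\nu}](D)f_\nu\|_{L^p(\R^4)}^p\Big)^{1/p},
\end{equation*}
where $f_\nu$ denotes a smooth Fourier localisation of $f$ to a mild fattening of $\pi_\nu$. The almost-disjointness of the $\pi_\nu$, combined with standard (one-sided) Littlewood--Paley theory for $p\ge 2$, gives $\bigl(\sum_\nu\|f_\nu\|_{L^p}^p\bigr)^{1/p}\lesssim\|f\|_{L^p}$.

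For each piece I obtain two elementary estimates. On $\xisupp a_{k,\ell,\nu}$ the phase derivative $\phi'(s)=\inn{\gamma'(s)}{\xi}$ attains its minimum value $\bu(\xi)\sim 2^{k-2\ell}$ at $s=\theta(\xi)$ and hence satisfies $|\phi'(s)|\gtrsim 2^{k-2\ell}$ throughout the $s$-support; integration by parts in $s$ (controlling amplitude derivatives using the $2^{-\ell}$-localisation of $\chi_\nu$) yields $\|m[a_{k,\ell,\nu}]\|_{L^\infty(\hat\R^4)}\lesssim 2^{-(k-\ell)/2}$, and hence $\|m[a_{k,\ell,\nu}](D)\|_{L^2\to L^2}\lesssim 2^{-(k-\ell)/2}$. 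For the $L^\infty\to L^\infty$ bound, the triangle inequality together with the uniform estimate $\|\mathcal{F}_\xi^{-1}a_{k,\ell,\nu}(\,\cdot\,;s)\|_{L^1(\R^4)}\lesssim 1$ gives $\|m[a_{k,\ell,\nu}](D)\|_{L^\infty\to L^\infty}\lesssim\int|\chi_\nu(s)|\,\ud s\lesssim 2^{-\ell}$. Interpolating,
\begin{equation*}
    \|m[a_{k,\ell,\nu}]\|_{M^p(\R^4)}\lesssim 2^{-(k-\ell)\cdot 2/p/2}\cdot 2^{-\ell(1-2/p)}=2^{-k/p-\ell(1-3/p)}\qquad (2\le p\le\infty).
\end{equation*}

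Combining the three factors,
\begin{equation*}
    \|m[a_{k,\ell}](D)f\|_{L^p}\lesssim 2^{\ell(1/2-1/p)+\varepsilon\ell}\cdot 2^{-k/p-\ell(1-3/p)}\|f\|_{L^p}=2^{-k/p-\ell(1/2-2/p-\varepsilon)}\|f\|_{L^p},
\end{equation*}
which is the desired inequality. The main technical hurdle is the Frenet-box containment $\xisupp a_{k,\ell,\nu}\subseteq$ a dilate of $\pi_\nu$: this requires a careful Taylor expansion of $\inn{\be_j(s_\nu)}{\xi}$ in $s-s_\nu$ around $s=\theta(\xi)$, together with the constraints $\inn{\gamma''\circ\theta(\xi)}{\xi}=0$ and $|\bu(\xi)|\sim 2^{k-2\ell}$, to deduce the anisotropic decay $|\inn{\be_j(s_\nu)}{\xi}|\lesssim 2^k\cdot 2^{-\ell(3-j)}$ for $j=1,2$ on the support. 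Once this containment is verified, the decoupling, Littlewood--Paley and per-piece estimates assemble in a routine manner.
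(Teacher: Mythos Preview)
There is a fundamental gap in your decomposition. You localise the symbol in the \emph{physical} variable $s$ via $a_{k,\ell,\nu}(\xi;s)=a_{k,\ell}(\xi;s)\chi_\nu(s)$ and then assert that $\xisupp a_{k,\ell,\nu}$ is contained in a single Frenet box $\pi_\nu$. This is false: the cutoff $\chi_\nu(s)$ imposes no constraint whatsoever on $\xi$, so $\xisupp a_{k,\ell,\nu}=\xisupp a_{k,\ell}$ is the full annular neighbourhood $\{|u(\xi)|\sim 2^{k-2\ell},\,|\xi|\sim 2^k\}$ of the cone $\Gamma$, not a single plate. Consequently the Fourier supports of the $m[a_{k,\ell,\nu}]$ are not contained in disjoint Frenet boxes and Theorem~\ref{Frenet decoupling theorem} does not apply. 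The paper (following \cite{PS2007}) proceeds in the opposite order: it first localises in the \emph{frequency} variable through the function $\theta(\xi)$, setting $a_{k,\ell}^\mu(\xi;s):=a_{k,\ell}(\xi;s)\zeta(2^\ell\theta(\xi)-\mu)$. This directly forces $\xisupp a_{k,\ell}^\mu\subseteq 2^k\cdot\pi_1(s_\mu;2^{-\ell})$ (Lemma~\ref{J=3 supp lem}), after which decoupling applies. One then shows by non-stationary phase that the $\theta(\xi)$-localisation \emph{induces} an essential $s$-localisation to $|s-s_\mu|\lesssim 2^{-\ell(1-\varepsilon)}$, up to a rapidly decaying error (Lemma~\ref{J3 localisation lemma}); this, rather than an a priori cutoff in $s$, is what ultimately delivers the $L^\infty$ gain $2^{-\ell}$.

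Your $L^2$ argument is also incorrect. You claim that $|\phi'(s)|=|\inn{\gamma'(s)}{\xi}|\gtrsim 2^{k-2\ell}$ on the $s$-support, reasoning that $\phi'$ attains its minimum $u(\xi)$ at $s=\theta(\xi)$. But $\theta(\xi)$ is merely a critical point of $\phi'$ (since $\phi''(\theta(\xi))=0$), not a minimum of $|\phi'|$: the Taylor expansion $\phi'(s)=u(\xi)+\omega(\xi;s)(s-\theta(\xi))^2$ with $|\omega|\sim 2^k$ shows that when $u(\xi)$ and $\omega$ have opposite signs, $\phi'$ vanishes at some $s$ with $|s-\theta(\xi)|\sim 2^{-\ell}$. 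Since your $s$-cutoff does not control $|s-\theta(\xi)|$, such a zero may well lie in $\supp\chi_\nu$. The correct estimate (Lemma~\ref{J3 L2 lemma}) instead controls the combination $|\inn{\gamma'(s)}{\xi}|+2^{-\ell}|\inn{\gamma''(s)}{\xi}|\gtrsim 2^{k-2\ell}$ and invokes van der Corput with first or second derivatives as appropriate.
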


\begin{proof}[Proof of $J=3$ case of Theorem~\ref{Sobolev theorem}, assuming Proposition~\ref{J3 Lp proposition}] Let $4 < p \leq 6$ and define $\varepsilon_p := \tfrac{1}{2}\big(\tfrac{1}{2} - \tfrac{2}{p}\big) > 0$. Apply the decomposition \eqref{J3 dec} and Proposition~\ref{J3 Lp proposition} to deduce that 
\begin{equation*}
    \|m[a_k]\|_{M^p(\R^4)} \leq \sum_{\ell = 0}^{\floor{k/3}} \|m[a_{k,\ell}]\|_{M^p(\R^4)} \lesssim_p 2^{-k/p} \sum_{\ell = 0}^{\infty} 2^{-\ell(1/2 - 2/p - \varepsilon_p)} \lesssim_p 2^{-k/p}. 
\end{equation*}
This establishes the desired result for $4 < p \leq 6$. The remaining range $6< p \leq \infty$ follows by interpolation with a trivial $L^\infty$ estimate. 
\end{proof}

The rest of \S\ref{sec:J=3} is devoted to establishing Proposition~\ref{J3 Lp proposition}. Before proceeding, it is instructive to reflect on the rationale behind the decomposition \eqref{J3 dec}. A lower bound on $|u(\xi)|$ ensures that the functions $s \mapsto \inn{\gamma'(s)}{\xi}$ and $s \mapsto \inn{\gamma''(s)}{\xi}$ do not vanish simultaneously. Quantifying this observation, one obtains, via the van der Corput lemma, the decay estimate
\begin{equation}\label{J3 decay a}
    |m[a_{k,\ell}](\xi)| \lesssim 2^{-k/2+\ell/2};
\end{equation}
see Lemma~\ref{J3 L2 lemma} below. This improves upon the trivial decay rate \eqref{J3 trivial decay rate} since $\ell$ varies over the range $0 \leq \ell \leq \floor{k/3}$. Note that $\ell = k/3$ corresponds to the critical value where \eqref{J3 trivial decay rate} and \eqref{J3 decay a} agree. 

By Plancherel's theorem, \eqref{J3 decay a} implies
\begin{equation*}
    \|m[a_{k,\ell}]\|_{M^2(\R^4)} \lesssim 2^{-k/2 + \ell/2}. 
\end{equation*}
As $\ell$ increases this estimate becomes weaker. To compensate for this, we attempt to establish stronger estimates for the $M^{\infty}(\R^4)$ norm. This is not possible, however, for the entire multiplier and a further decomposition is required. The $u(\xi)$ localisation means that $m[a_{k,\ell}]$ is supported in a neighbourhood of the cone $\Gamma$. Consequently, one may apply a decoupling theorem for this cone (in particular, an instance of Theorem~\ref{Frenet decoupling theorem}) to \textit{radially} decompose the multipliers. It transpires that each radially localised piece is automatically localised along the curve in the physical space, and this leads to favourable $M^{\infty}(\R^4)$ bounds: see Lemma~\ref{J3 localisation lemma} and Lemma~\ref{J3 Linfty lemma} below.

  %%%%%%%%%%%%%%%%%%%%%%%%%%%%%%%%%%%%%%%%%%%%%%%%%%%%%%%%%%%%%%%%%%%%%%%%%%%%%%%%%%%%%%%%%%%%%%%%

%          Fourier localisation and decoupling

%%%%%%%%%%%%%%%%%%%%%%%%%%%%%%%%%%%%%%%%%%%%%%%%%%%%%%%%%%%%%%%%%%%%%%%%%%%%%%%%%%%%%%%%%%%%%%%%
    
\subsection{Fourier localisation and decoupling}

The first step towards Proposition~\ref{J3 Lp proposition} is to radially decompose the symbols in terms of $\theta(\xi)$. Fix a smooth cutoff $\zeta \in C^{\infty}(\R)$ with $\supp \zeta \subseteq [-1,1]$ such that $\sum_{k \in \Z} \zeta(\,\cdot\, - k) \equiv 1$ and write 
\begin{equation}\label{J=3 s theta loc}
    a_{k,\ell}(\xi;s)  = \sum_{\mu \in \Z} a_{k,\ell}^{\mu}(\xi; s)
\qquad \textrm{where} \qquad
    a_{k,\ell}^{\mu}(\xi;s) := a_{k,\ell}(\xi;s) \zeta(2^{\ell}\theta(\xi) - \mu).
\end{equation}

Given $0 < r \leq 1$ and $s \in I$, recall the definition of the $(1,r)$-\textit{Frenet boxes} $\pi_{1}(s;r)$ introduced in Definition~\ref{def Frenet box}:
\begin{equation}\label{J=3 symbol supp}
    \pi_1(s;r) := \big\{\xi \in \hat{\R}^4: |\inn{\be_j(s)}{\xi}| \lesssim r^{3-j} \quad \textrm{for $j = 1$, $2$,} \quad |\inn{\be_{3}(s)}{\xi}| \sim 1,  \quad |\inn{\be_{4}(s)}{\xi}| \lesssim 1\big\}.
\end{equation}
Here $(\be_j)_{j=1}^4$ denotes the Frenet frame, as introduced in \S\ref{Frenet subsection}. The multipliers $a_{k,\ell}^\mu$ satisfy the following support properties.

\begin{lemma}\label{J=3 supp lem} With the above definitions,
\begin{equation*}
    \xisupp a_{k, \ell}^{\mu} \subseteq 2^k \cdot \pi_1(s_{\mu}; 2^{-\ell})
\end{equation*}
for all $0 \leq \ell \leq \floor{k/3}$ and $\mu \in \Z$, where $s_{\mu}:=2^{-\ell }\mu$.
\end{lemma}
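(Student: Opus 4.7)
The plan is to verify the four defining conditions of $2^k \cdot \pi_1(s_\mu; 2^{-\ell})$ for each $\xi \in \xisupp a_{k,\ell}^\mu$. First I would fix such a $\xi$ and let $s := \theta(\xi)$. The various cutoffs composing $a_{k,\ell}^\mu$ then imply $|\xi| \sim 2^k$ from the frequency localisation, $\inn{\gamma''(s)}{\xi} = 0$ by the defining equation for $\theta$, $|\bu(\xi)| = |\inn{\gamma'(s)}{\xi}| \lesssim 2^{k - 2\ell}$ from the cutoff in \eqref{J3 akell def}, and $|s - s_\mu| \leq 2^{-\ell}$ from the cutoff in \eqref{J=3 s theta loc}.

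The first step is to control the coefficients of $\xi$ in the Frenet frame at $s$. Since $\be_1(s) = \gamma'(s)/|\gamma'(s)|$ with $|\gamma'(s)| \sim 1$, the bound on $|\bu(\xi)|$ immediately yields $|\inn{\be_1(s)}{\xi}| \lesssim 2^{k - 2\ell}$. Gram--Schmidt gives $\gamma''(s) = c_1(s) \be_1(s) + c_2(s) \be_2(s)$ with $|c_1(s)| \lesssim 1$ and $c_2(s) \sim 1$ (valid since $\gamma$ is a perturbation of the moment curve), so the relation $\inn{\gamma''(s)}{\xi} = 0$ upgrades this to $|\inn{\be_2(s)}{\xi}| \lesssim 2^{k - 2\ell}$. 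Expanding $\gamma^{(3)}(s)$ similarly in $\be_1(s), \be_2(s), \be_3(s)$ and combining the $J = 3$ lower bound \eqref{J3 hyp} with the small bounds just obtained on the $\be_1, \be_2$ components of $\xi$ produces $|\inn{\be_3(s)}{\xi}| \sim 2^k$. The bound $|\inn{\be_4(s)}{\xi}| \leq |\xi| \lesssim 2^k$ is trivial.

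The second step is to transfer these estimates from $s$ to the base point $s_\mu$. Writing $\xi = \sum_{i=1}^{4} \inn{\be_i(s)}{\xi}\, \be_i(s)$ and pairing with $\be_j(s_\mu)$, the Frenet frame bound \eqref{Frenet bound} gives $|\inn{\be_i(s)}{\be_j(s_\mu)}| \lesssim |s - s_\mu|^{|i-j|} \lesssim 2^{-\ell |i-j|}$, so that
\[
|\inn{\be_j(s_\mu)}{\xi}| \;\lesssim\; \sum_{i=1}^{4} |\inn{\be_i(s)}{\xi}| \cdot 2^{-\ell |i-j|}.
\]
Substituting the Step~1 bounds and inspecting each $j \in \{1,2,3,4\}$ individually recovers the upper bounds $\lesssim 2^{k-2\ell}, 2^{k-\ell}, 2^k, 2^k$ that together with the $\be_3$ lower bound pin $\xi$ inside $2^k \cdot \pi_1(s_\mu; 2^{-\ell})$ as required by \eqref{J=3 symbol supp}.

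The only subtle point is preserving the two-sided estimate $|\inn{\be_3(s_\mu)}{\xi}| \sim 2^k$ under the change of basepoint: the $i = 3$ term supplies the main contribution of size $\sim 2^k$, while the $i = 1, 2, 4$ contributions are bounded by $2^{k-\ell}$ or better and are swallowed by the main term as soon as $\ell \geq 1$. For $\ell = 0$ the cutoff support together with $s, s_\mu \in \tfrac{5}{4}I_0$ forces $|s - s_\mu| \lesssim \delta_0$ rather than merely $\lesssim 1$, which again makes the error terms negligible once $\delta_0$ is fixed sufficiently small. Apart from this bookkeeping at small $\ell$, the argument is routine.
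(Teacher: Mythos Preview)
Your proof is correct and follows essentially the same two-step approach as the paper: first establish bounds on $\inn{\be_i \circ \theta(\xi)}{\xi}$ from the symbol localisation (via the lower-triangular Gram--Schmidt change of basis between $(\gamma^{(i)})$ and $(\be_i)$), then transfer to the basepoint $s_\mu$ by expanding $\xi$ in the orthonormal frame $(\be_i \circ \theta(\xi))$ and invoking \eqref{Frenet bound}. One minor slip: your assertion that $s_\mu \in \tfrac54 I_0$ when $\ell = 0$ is not literally true for $\mu = \pm 1$; the paper sidesteps this entirely by handling the $\be_3$ lower bound uniformly in $\ell$ via a single ``provided $\delta$ is sufficiently small'' clause rather than a case split.
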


\begin{proof} For $\xi \in \xisupp a_{k, \ell}^{\mu}$ observe that 
\begin{equation*}
    |\inn{\gamma^{(i)}\circ \theta(\xi)}{\xi}| \lesssim 2^{k - (3 - i)\ell \vee 0 } \qquad \textrm{for $1 \leq i \leq 4$,} \qquad  |\inn{\gamma^{(3)}\circ \theta(\xi)}{\xi}| \sim 2^k  .
\end{equation*}
Since the Frenet vectors $\be_i \circ \theta(\xi)$ are obtained from the $\gamma^{(i)}\circ \theta(\xi)$ via the Gram--Schmidt process, the matrix corresponding to change of basis from $\big(\be_i \circ \theta(\xi)\big)_{i=1}^4$ to $\big(\gamma^{(i)} \circ \theta(\xi)\big)_{i=1}^4$ is lower triangular. Furthermore, the initial localisation implies that this matrix is an $O(\delta)$ perturbation of the identity. Consequently, provided $\delta >0$ is chosen sufficiently small,
\begin{equation*}
   |\inn{\be_i\circ \theta(\xi)}{\xi}| \lesssim 2^{k - (3 - i)\ell \vee 0} \qquad \textrm{for $1 \leq i \leq 4$}, \qquad  |\inn{\be_3\circ \theta(\xi)}{\xi}| \sim 2^k.
   \end{equation*}
On the other hand, by \eqref{J=3 s theta loc} we also have $|\theta(\xi) - s_{\mu}| \lesssim 2^{-\ell}$ and so \eqref{Frenet bound} implies that
\begin{equation*}
    |\inn{\be_{i}\circ \theta(\xi)}{\be_{j}(s_\mu)}| \lesssim |\theta(\xi)-s_{\mu}|^{|i-j|} \lesssim 2^{-(i-j)\ell}.
\end{equation*}
Writing $\xi$ with respect to the orthonormal basis $\big(\be_j \circ \theta(\xi)\big)_{j=1}^4$, it follows that
\begin{equation*}
    |\inn{\be_j(s_{\mu})}{\xi}| \leq \sum_{i=1}^4 |\inn{\be_i\circ \theta(\xi)}{\xi}| |\inn{\be_i\circ\theta(\xi)}{\be_j (s_{\mu})}| \lesssim 2^{k - (3-j)\ell \vee 0}.
\end{equation*}
Thus, $\xi$ satisfies all the required upper bounds appearing in \eqref{J=3 symbol supp}.  Provided the parameter $\delta > 0$ is sufficiently small, the argument can easily be adapted to prove the remaining lower bound for $\inn{\be_3(s_\mu)}{\xi}$.
\end{proof}

In view of the Fourier localisation described above, we have the following decoupling inequality. 
  
\begin{proposition}\label{J=3 dec prop}
For all $2 \leq p \leq 6$ and $\varepsilon > 0$ one has
   \begin{equation*}
       \Big\|\sum_{\mu \in \Z} m[a_{k,\ell}^{\mu}](D)f\Big\|_{L^p(\R^4)} \lesssim_{\varepsilon} 2^{\ell(1/2 - 1/p) + \varepsilon \ell} \Big(\sum_{\mu \in \Z} \|m[a_{k,\ell}^{\mu}](D)f\|_{L^p(\R^4)}^p\Big)^{1/p}.
   \end{equation*}
\end{proposition}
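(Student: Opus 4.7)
The plan is to recognise this as an immediate consequence of Theorem~\ref{Frenet decoupling theorem} (with $n=4$ and $d=2$), after a single parabolic rescaling in the frequency variable to convert Frenet boxes at scale $2^k$ into Frenet boxes at unit scale.

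\textbf{Step 1 (Fourier support).} Let $f_\mu := m[a_{k,\ell}^\mu](D) f$. Since $\widehat{f}_\mu = m[a_{k,\ell}^\mu] \cdot \widehat{f}$ and $m[a_{k,\ell}^\mu]$ is supported in $\supp_\xi a_{k,\ell}^\mu$, Lemma~\ref{J=3 supp lem} yields
\[
  \supp \widehat{f}_\mu \subseteq 2^k \cdot \pi_1(s_\mu; 2^{-\ell}), \qquad s_\mu = 2^{-\ell}\mu.
\]
The points $\{s_\mu\}$ form a $2^{-\ell}$-net of $\mathbb{R}$; only those $\mu$ with $s_\mu$ lying in a fixed small neighbourhood of $\supp \chi \subseteq I_0$ contribute, so the family $\{\pi_1(s_\mu; 2^{-\ell})\}_\mu$ (restricted to contributing indices) constitutes a Frenet box decomposition $\mathcal{P}_1(2^{-\ell})$ for $\gamma$ in the sense of \S\ref{Frenet subsection}.

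\textbf{Step 2 (Rescaling).} Define $g_\mu(x) := f_\mu(2^{-k} x)$, so that $\widehat{g}_\mu(\xi) = 2^{4k} \widehat{f}_\mu(2^k \xi)$ is supported in the \emph{unit-scale} Frenet box $\pi_1(s_\mu; 2^{-\ell})$. A change of variables gives
\[
  \Big\| \sum_\mu f_\mu \Big\|_{L^p(\R^4)} = 2^{-4k/p} \Big\| \sum_\mu g_\mu \Big\|_{L^p(\R^4)}, \qquad \|f_\mu\|_{L^p(\R^4)} = 2^{-4k/p} \|g_\mu\|_{L^p(\R^4)}.
\]
The common factor $2^{-4k/p}$ will cancel out.

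\textbf{Step 3 (Apply the Frenet decoupling).} Apply Theorem~\ref{Frenet decoupling theorem} with $n=4$, $d=2$, $r=2^{-\ell}$ to the functions $(g_\mu)$. Since $d(d+1) = 6$, we have $\alpha(p) = \tfrac{1}{2} - \tfrac{1}{p}$ throughout the range $2 \leq p \leq 6$, so
\[
  \Big\|\sum_\mu g_\mu \Big\|_{L^p(\R^4)} \lesssim_\varepsilon 2^{\ell(1/2 - 1/p) + \varepsilon \ell} \Big( \sum_\mu \|g_\mu\|_{L^p(\R^4)}^p \Big)^{1/p}.
\]
Undoing the rescaling from Step 2 gives exactly the claimed inequality.

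\textbf{Main point.} There is no genuine obstacle here: the proposition is essentially a direct application of Theorem~\ref{Frenet decoupling theorem} once the Fourier support of each piece has been identified via Lemma~\ref{J=3 supp lem}. The only care needed is to check the parameter alignment ($d=2$, $n=4$, so the exponent threshold $d(d+1)=6$ matches the range of $p$ for which $\alpha(p) = 1/2 - 1/p$) and to perform the trivial rescaling to pass from boxes at frequency scale $2^k$ to the unit-scale boxes appearing in the statement of the decoupling theorem.
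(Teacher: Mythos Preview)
Your proposal is correct and follows essentially the same approach as the paper: identify the Fourier supports via Lemma~\ref{J=3 supp lem}, rescale by $2^k$, and apply Theorem~\ref{Frenet decoupling theorem} with $n=4$, $d=2$ (so $d-1=1$ and $d(d+1)=6$), $r=2^{-\ell}$. The paper's proof is a one-line version of exactly this argument.
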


\begin{proof}
In view of the support conditions from Lemma~\ref{J=3 supp lem}, after a simple rescaling, the desired result follows from Theorem~\ref{Frenet decoupling theorem} with $d - 1 = 1$, $n=4$ and $r= 2^{-\ell}$.
\end{proof}

  %%%%%%%%%%%%%%%%%%%%%%%%%%%%%%%%%%%%%%%%%%%%%%%%%%%%%%%%%%%%%%%%%%%%%%%%%%%%%%%%%%%%%%%%%%%%%%%%

%          Localisation along the curve

%%%%%%%%%%%%%%%%%%%%%%%%%%%%%%%%%%%%%%%%%%%%%%%%%%%%%%%%%%%%%%%%%%%%%%%%%%%%%%%%%%%%%%%%%%%%%%%%
    
\subsection{Localisation along the curve}

The $\theta(\xi)$ localisation introduced in the previous subsection induces a corresponding localisation along the curve in the physical space. In particular, the main contribution to $m[a_{k,\ell}^{\mu}]$ arises from the portion of the curve defined over the interval $|s - s_{\mu}| \leq  2^{-\ell}$. This is made precise in Lemma~\ref{J3 localisation lemma} below.

Here it is convenient to introduce a `fine tuning' constant $\rho > 0$. This is a small (but absolute) constant which plays a minor technical r\^ole in the forthcoming arguments: taking $\rho := 10^{-6}$ more than suffices for our purposes.

For $0 \leq \ell \leq \floor{k/3}$, $\mu \in \Z$ and $\varepsilon>0$, define
\begin{equation}\label{eq:dec J=3 along curve}
    a_{k,\ell}^{\mu, (\varepsilon)}(\xi;s) := a_{k,\ell}(\xi;s) \zeta(2^{\ell}\theta(\xi) - \mu)\eta(\rho 2^{\ell(1-\varepsilon)}(s - s_\mu)).
\end{equation}
The key contribution to the multiplier comes from the symbol $a_{k,\ell}^{\mu, (\varepsilon)}$.

 \begin{lemma}\label{J3 localisation lemma} Let $2 \le p < \infty$ and $\varepsilon > 0$. For all $0 \leq  \ell \leq  \floor{k/3}$,
    \begin{equation*}
    \|m[a_{k,\ell}^{\mu}-a_{k,\ell}^{\mu,(\varepsilon)}]\|_{M^p(\R^4)} \lesssim_{N, \varepsilon, p} 2^{-kN} \qquad \textrm{for all $N \in \N$.}
    \end{equation*}

  \end{lemma}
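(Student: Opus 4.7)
The plan is to prove pointwise rapid decay of the multiplier $m[b]$ for $b := a_{k,\ell}^\mu - a_{k,\ell}^{\mu,(\varepsilon)}$, together with corresponding bounds on its $\xi$-derivatives, and then deduce the $M^p$-estimate from standard Fourier-multiplier considerations. First, I will unpack the support of $b$. By definition, $b$ carries the factor $\zeta(2^\ell\theta(\xi)-\mu)\,\bigl[1-\eta(\rho 2^{\ell(1-\varepsilon)}(s-s_\mu))\bigr]$, so on $\supp b$ one has $|\theta(\xi)-s_\mu|\lesssim 2^{-\ell}$ and $|s-s_\mu|\ge \rho^{-1}2^{-\ell(1-\varepsilon)}$. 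For $\rho$ sufficiently small (relative to $\delta_0$ and $\varepsilon$) and $\ell$ large enough, the triangle inequality yields $|s-\theta(\xi)|\gtrsim 2^{-\ell(1-\varepsilon)}$; for the bounded remaining values of $\ell$ the factor $1-\eta(\rho 2^{\ell(1-\varepsilon)}(s-s_\mu))$ vanishes identically on $I_0$, so $b\equiv 0$ and the lemma is trivial.

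Next I will extract a lower bound for the $s$-derivative of the phase $\Phi(s,\xi):=\inn{\gamma(s)}{\xi}$. Taylor expanding around $s=\theta(\xi)$ and using the defining relation $\inn{\gamma''\circ\theta(\xi)}{\xi}=0$ together with the hypothesis $|\inn{\gamma'''(s)}{\xi}|\gtrsim 2^k$ from \eqref{J3 hyp}, I obtain
\[
\inn{\gamma'(s)}{\xi}=u(\xi)+\tfrac{(s-\theta(\xi))^2}{2}\inn{\gamma'''(\tilde s)}{\xi}.
\]
On $\supp a_{k,\ell}$ one has $|u(\xi)|\lesssim 2^{k-2\ell}$, whilst the quadratic term has size $\gtrsim 2^{k-2\ell(1-\varepsilon)}$, which dominates provided $\ell$ is beyond an $\varepsilon$-dependent threshold. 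Hence $|\partial_s\Phi|\gtrsim 2^{k-2\ell(1-\varepsilon)}$. A second Taylor expansion also provides the sharper bound $|\partial_s^2\Phi|=|\inn{\gamma''(s)}{\xi}|\lesssim |s-\theta(\xi)|\cdot 2^k$, which is essential to obtain good \emph{ratios} $|\partial_s^2\Phi|/|\partial_s\Phi|^2\lesssim 2^{-k+3\ell(1-\varepsilon)}$.

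With these ingredients, $N$-fold integration by parts in $s$, using the operator $L^*g=\partial_s(g/(i\partial_s\Phi))$, yields
\[
|m[b](\xi)|\lesssim_N \bigl(2^{-k+3\ell(1-\varepsilon)}\bigr)^N\lesssim 2^{-N\varepsilon k},
\]
where the final inequality uses $\ell\le\lfloor k/3\rfloor$, so $3\ell(1-\varepsilon)\le k(1-\varepsilon)$. Since $\varepsilon>0$ is fixed and $N$ is arbitrary, this gives pointwise rapid decay in $k$. Differentiating in $\xi$ before integrating in $s$ brings down factors of $\gamma(s)=O(1)$ from the phase and derivatives of $\theta(\xi)$ of size $O(2^{-k|\alpha|})$ (since $\theta$ is homogeneous of degree~$0$) from the $\zeta$ cutoff, all compatible with the Leibniz rule, so the same IBP yields $|\partial_\xi^\alpha m[b](\xi)|\lesssim_{\alpha,N} 2^{-k|\alpha|}\cdot 2^{-Nk}$ for all $\alpha\in\N_0^4$, $N\in\N$.

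Finally, since $m[b]$ is supported in a frequency shell $|\xi|\sim 2^k$ and satisfies Mikhlin--Hörmander-type derivative bounds with an arbitrarily small constant $2^{-Nk}$, its inverse Fourier transform is a Schwartz function with $L^1$-norm $\lesssim_N 2^{-kN}$, whence $\|m[b]\|_{M^p(\R^4)}\lesssim_N 2^{-kN}$ uniformly in $1\le p\le\infty$. The main obstacle lies in the IBP bookkeeping: without the refined bound $|\partial_s^2\Phi|\lesssim |s-\theta(\xi)|\cdot 2^k$, the naive pairing $|\partial_s^2\Phi|/|\partial_s\Phi|^2\lesssim 2^{-k+4\ell(1-\varepsilon)}$ would exceed $1$ when $\ell$ is near $\lfloor k/3\rfloor$ and $\varepsilon$ is small, destroying the gain; the sharper second-order Taylor bound is what makes the argument go through uniformly in $\ell$.
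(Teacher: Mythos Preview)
Your argument is correct and follows the paper's strategy for the oscillatory-integral core: Taylor expand the phase around $\theta(\xi)$, observe that on $\supp b$ the quadratic remainder dominates $u(\xi)$, extract the ratio $|\partial_s^2\Phi|/|\partial_s\Phi|^2\lesssim 2^{-k+3\ell(1-\varepsilon)}$ (crucially via the refined bound $|\partial_s^2\Phi|\lesssim 2^k|s-\theta(\xi)|$), and integrate by parts. The paper packages the conclusion slightly differently: rather than controlling $\xi$-derivatives of $m[b]$ and bounding the kernel in $L^1$, it simply notes that a trivial estimate $\|m[b]\|_{M^\infty}=O(2^{Ck})$ holds, so that once the pointwise bound $\|m[b]\|_{L^\infty}\lesssim_N 2^{-kN}$ (equivalently, the $M^2$ bound) is established, interpolation yields all $M^p$. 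This sidesteps any discussion of $\partial_\xi^\alpha m[b]$.

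Two minor points. First, the case split for small $\ell$ is unnecessary: since $2^{-\ell}\le\tfrac12\rho^{-1}2^{-\ell(1-\varepsilon)}$ whenever $\rho\le\tfrac12$, the triangle inequality gives $|s-\theta(\xi)|\gtrsim\rho^{-1}2^{-\ell(1-\varepsilon)}$ for every $\ell\ge 0$ on $\supp b$. Second, the claimed derivative size $|\partial_\xi^\alpha m[b]|\lesssim 2^{-k|\alpha|}2^{-Nk}$ is slightly optimistic: differentiating the cut-off $\beta(2^{-k+2\ell}u(\xi))$ contributes only $O(2^{-k+2\ell})=O(2^{-k/3})$ per derivative (note $\partial_\xi u=\gamma'\circ\theta(\xi)=O(1)$, since the $\inn{\gamma''\circ\theta}{\xi}\partial_\xi\theta$ term vanishes), not $O(2^{-k})$. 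This is harmless because $N$ is arbitrary and absorbs the polynomial loss, but the statement should be adjusted.
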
  
  
 \begin{proof}  It is clear that the multipliers satisfy a trivial $L^{\infty}$-estimate with operator norm $O(2^{Ck})$ for some absolute constant $C \geq 1$. Thus, by interpolation, it suffices to prove the rapid decay estimate for $p = 2$ only. This amounts to showing that, under the hypotheses of the lemma, \begin{equation*}
     \|m[a_{k,\ell}^{\mu}-a_{k,\ell}^{\mu,(\varepsilon)}]\|_{L^{\infty}(\hat{\R}^4)} \lesssim_{N,\varepsilon} 2^{-kN} \qquad \textrm{for all $N \in \N$}.
 \end{equation*}
 
 Here the localisation of the $a_{k,\ell}$ symbols ensures that
\begin{equation}\label{J=3 curve loc 1}
     |\bu(\xi)| \lesssim 2^{k-2\ell}  \qquad \textrm{for all $(\xi;s) \in \supp (a_{k,\ell}^{\mu}-a_{k,\ell}^{\mu, (\varepsilon)})$,}
\end{equation}
where $u$ is the function introduced in \eqref{J=3 u function}. On the other hand, provided $\rho$ is sufficiently small, the additional localisation in \eqref{J=3 s theta loc} and \eqref{eq:dec J=3 along curve} implies, via the triangle inequality,
\begin{equation}\label{J=3 curve loc 2}
|s-\theta(\xi)| \gtrsim \rho^{-1} 2^{-\ell(1 - \varepsilon)} \quad \textrm{for all $(\xi;s) \in \supp (a_{k,\ell}^{\mu}-a_{k,\ell}^{\mu,(\varepsilon)})$}.
\end{equation}

Fix $\xi \in \xisupp (a_{k,\ell}^{\mu}-a_{k,\ell}^{\mu,(\varepsilon)})$ and consider the oscillatory integral $m[a_{k,\ell}^{\mu}-a_{k,\ell}^{\mu,(\varepsilon)}](\xi)$, which has phase $s \mapsto \inn{\gamma(s)}{\xi}$. Taylor expansion around $\theta(\xi)$ yields
\begin{align}\label{J=3 curve loc 3}
    \inn{\gamma'(s)}{\xi} &= \bu(\xi) + \omega_1(\xi;s) \cdot (s-\theta(\xi))^2,  \\
\label{J=3 curve loc 4}
    \inn{\gamma''(s)}{\xi} &=\omega_2(\xi;s) \cdot (s-\theta(\xi))
\end{align}
where the $\omega_i$ arise from the remainder terms and satisfy $|\omega_i(\xi;s)| \sim  2^k$. Provided $\rho$ is sufficiently small, \eqref{J=3 curve loc 1} and \eqref{J=3 curve loc 2} imply that the $\omega_1 (\xi;s) \cdot (s-\theta(\xi))^2$ term dominates the right-hand side of \eqref{J=3 curve loc 3} and therefore 
 \begin{equation}\label{J=3 curve loc 5}
     |\inn{\gamma'(s)}{\xi}| \gtrsim  2^{k} |s-\theta(\xi)|^2\qquad  \textrm{for all $(\xi;s) \in \supp (a_{k,\ell}^{\mu}-a_{k,\ell}^{\mu,(\varepsilon)})$.}
 \end{equation}
Furthermore, \eqref{J=3 curve loc 4}, \eqref{J=3 curve loc 5} and the localisation \eqref{J=3 curve loc 2} immediately imply
\begin{align*}
    |\inn{\gamma''(s)}{\xi}| &  \lesssim  2^{-k+3\ell(1-\varepsilon)}|\inn{\gamma'(s)}{\xi}|^2, \\
    |\inn{\gamma^{(j)}(s)}{\xi}| & \lesssim 2^{k} \lesssim_j 2^{-(k-3\ell(1-\varepsilon))(j-1)} |\inn{\gamma'(s)}{\xi}|^j
     \qquad \text{for all $j \geq 3$}
\end{align*}
for all $(\xi;s) \in \supp (a_{k,\ell}^{\mu}-a_{k,\ell}^{\mu,(\varepsilon)})$.

 On the other hand, by the definition of the symbols, \eqref{J=3 curve loc 5} and the localisation \eqref{J=3 curve loc 2},
  \begin{equation*}
 |\partial_s^N (a_{k,\ell}^{\mu}-a_{k,\ell}^{\mu,(\varepsilon)})(\xi;s)| \lesssim_N 2^{\ell N} \lesssim 2^{-(k-3\ell)N - 2\varepsilon \ell N}|\inn{\gamma'(s)}{\xi}|^N \qquad \textrm{for all $N \in \N$.}
 \end{equation*}
 Thus, by repeated integration-by-parts (via Lemma \ref{non-stationary lem}, with $R=2^{k-3\ell+ 2\varepsilon \ell } \geq 1$), 
\begin{equation*}
    |m[a_{k,\ell}^{\mu}-a_{k,\ell}^{\mu,(\varepsilon)}](\xi)| \lesssim_N 2^{-(k-3\ell)N - 2\varepsilon \ell N} \qquad \textrm{for all $N \in \N$.}
\end{equation*}
 Since $0 \leq \ell \leq \floor{k/3} \leq k/3$, the desired bound follows. 
 \end{proof}

  %%%%%%%%%%%%%%%%%%%%%%%%%%%%%%%%%%%%%%%%%%%%%%%%%%%%%%%%%%%%%%%%%%%%%%%%%%%%%%%%%%%%%%%%%%%%%%%%

%                   Estimating the localised pieces

%%%%%%%%%%%%%%%%%%%%%%%%%%%%%%%%%%%%%%%%%%%%%%%%%%%%%%%%%%%%%%%%%%%%%%%%%%%%%%%%%%%%%%%%%%%%%%%%
    
\subsection{Estimating the localised pieces}

The multiplier operators $m[a_{k,\ell}^{\mu, (\varepsilon)}](D)$ satisfy favourable $L^2$ and $L^{\infty}$ bounds, owing to the $u(\xi)$ and $s$ localisation, respectively. 

 \begin{lemma}\label{J3 L2 lemma} For all $0 \leq \ell \leq \floor{k/3}$, $\mu \in \Z$ and $\varepsilon > 0$,  we have
    \begin{equation*}
    \|m[a_{k,\ell}^{\mu, (\varepsilon)}]\|_{M^2(\R^4)} \lesssim  2^{-k/2 + \ell/2}.  
    \end{equation*}
  \end{lemma}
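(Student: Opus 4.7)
By Plancherel's theorem, $\|m[a_{k,\ell}^{\mu,(\varepsilon)}]\|_{M^2(\R^4)} = \|m[a_{k,\ell}^{\mu,(\varepsilon)}]\|_{L^\infty(\hat{\R}^4)}$, so it suffices to establish the pointwise bound $|m[a_{k,\ell}^{\mu,(\varepsilon)}](\xi)|\lesssim 2^{-k/2+\ell/2}$ uniformly in $\xi\in\xisupp a_{k,\ell}^{\mu,(\varepsilon)}$. For such $\xi$, set $\Phi(s) := \inn{\gamma(s)}{\xi}$ and consider the oscillatory integral $\int_{\R} e^{-i\Phi(s)}\psi(s)\,\ud s$ with amplitude $\psi(s) := a_{k,\ell}^{\mu,(\varepsilon)}(\xi;s)\chi(s)$. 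Under the hypothesis \eqref{J3 hyp}, $|\Phi'''(s)| = |\inn{\gamma'''(s)}{\xi}|\sim 2^k$, and since $\theta(\xi)$ is the unique zero of $\Phi''$ in $\tfrac{5}{4}I_0$ (Lemma~\ref{200801lem5_1}), Taylor's theorem yields $|\Phi''(s)|\sim 2^k|s-\theta(\xi)|$ and, using the vanishing $\Phi''(\theta(\xi))=0$, the expansion $\Phi'(s) = \bu(\xi) + O(2^k(s-\theta(\xi))^2)$. The amplitude $\psi$ is supported where $|s-s_{\mu}|\lesssim 2^{-\ell(1-\varepsilon)}$, whence also $|s-\theta(\xi)|\lesssim 2^{-\ell(1-\varepsilon)}$ (since $|\theta(\xi)-s_\mu|\lesssim 2^{-\ell}$ from \eqref{J=3 s theta loc}), with $\|\psi\|_{L^\infty(\R)} + \|\partial_s\psi\|_{L^1(\R)}\lesssim 1$.

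Fix a small absolute constant $c>0$ and decompose the integral into the outer region $E_{\mathrm{out}} := \{s\in\supp\psi : |s-\theta(\xi)|\ge c\cdot 2^{-\ell}\}$ and the inner region $E_{\mathrm{in}} := \{s\in\supp\psi : |s-\theta(\xi)|\le c\cdot 2^{-\ell}\}$. On each of the (at most two) connected components of $E_{\mathrm{out}}$ one has $|\Phi''(s)|\gtrsim 2^{k-\ell}$, so the standard second-derivative van der Corput lemma (see, for instance, \cite[Chapter VIII, Proposition 2]{Stein1993}) contributes $O\big((2^{k-\ell})^{-1/2}\big) = O(2^{-k/2+\ell/2})$ to the integral.

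For the inner region we distinguish two cases. If $\ell = \floor{k/3}$, then one simply uses the trivial bound $|E_{\mathrm{in}}|\lesssim 2^{-\ell}$, which is itself $\lesssim 2^{-k/2+\ell/2}$ in this range (with equality up to constants when $k = 3\ell$). If instead $0 \le \ell < \floor{k/3}$, then the cutoff $\beta(2^{-k+2\ell}\bu(\xi))$ appearing in \eqref{J3 akell def} forces $|\bu(\xi)|\sim 2^{k-2\ell}$; choosing $c$ small enough, the quadratic error $O(2^k(s-\theta(\xi))^2) = O(c^2 2^{k-2\ell})$ in the Taylor expansion of $\Phi'$ is dominated by $\bu(\xi)$ on $E_{\mathrm{in}}$, so that $|\Phi'(s)|\sim 2^{k-2\ell}$ there. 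A single integration-by-parts on $E_{\mathrm{in}}$ then gives a contribution of size $O\big(2^{-k+2\ell}(\|\psi\|_{L^\infty(\R)} + \|\partial_s\psi\|_{L^1(\R)})\big)\lesssim 2^{-k+2\ell}$, which is bounded by $2^{-k/2+\ell/2}$ precisely because $\ell \le k/3$.

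The main obstacle is the transition at $\ell = \floor{k/3}$: when $\ell$ is strictly smaller, the $\bu$-localization provides a lower bound on $|\Phi'|$ that makes non-stationary phase effective on $E_{\mathrm{in}}$, whereas when $\ell = \floor{k/3}$ no such lower bound is available and one must instead rely on the size of $|E_{\mathrm{in}}|$; fortunately the two estimates meet precisely at the boundary $\ell = k/3$. Once this dichotomy is set up, the remaining work is a routine application of standard one-dimensional oscillatory integral techniques.
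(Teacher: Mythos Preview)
Your proof is correct and follows essentially the same approach as the paper: a split according to whether $|s-\theta(\xi)|$ is small or large (the paper's Cases~1 and~2 are exactly your inner and outer regions), leading respectively to a lower bound on $|\Phi'|$ (using $|\bu(\xi)|\sim 2^{k-2\ell}$ when $\ell<\floor{k/3}$) and on $|\Phi''|$. The only cosmetic differences are that the paper packages the two cases into the single pointwise inequality $|\inn{\gamma'(s)}{\xi}| + 2^{-\ell}|\inn{\gamma''(s)}{\xi}| \gtrsim 2^{k-2\ell}$ and leaves the ensuing oscillatory integral estimate implicit, and that for $\ell=\floor{k/3}$ the paper invokes third-order van der Corput directly rather than your trivial bound on $E_{\mathrm{in}}$; both routes give the same $2^{-k/3}$.

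One small remark: for the integration-by-parts on $E_{\mathrm{in}}$, the standard first-order van der Corput lemma assumes monotone $\Phi'$, which fails here since $\Phi''$ vanishes at $\theta(\xi)\in E_{\mathrm{in}}$. This is harmless---either split $E_{\mathrm{in}}$ at $\theta(\xi)$ into two subintervals where $\Phi'$ is monotone, or check directly that $\int_{E_{\mathrm{in}}}|\Phi''|/|\Phi'|^2\,\ud s\lesssim 2^{k-\ell}\cdot 2^{-\ell}/(2^{k-2\ell})^2 = 2^{-k+2\ell}$---but it is worth making explicit.
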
      
  
\begin{proof}  
If $\ell = \floor{k/3}$, then the desired estimate follows from Plancherel's theorem and van der Corput lemma with third order derivatives, as the localisation \eqref{J3 hyp} implies
\begin{equation*}
|\inn{\gamma^{(3)}(s)}{\xi}| \gtrsim 2^k \qquad \textrm{for all $(\xi, s) \in \supp a_{k,\ell}$.}
\end{equation*}
For the remaining case, it suffices to show that
\begin{equation}\label{J3 L2 1}
    |\inn{\gamma'(s)}{\xi}| + 2^{-\ell}|\inn{\gamma''(s)}{\xi}| \gtrsim 2^{k-2\ell} \qquad \textrm{for all $(\xi;s) \in \supp a_{k,\ell}^{\mu, (\varepsilon)}$.}
\end{equation}
Here the localisation of the symbol ensures the key property
\begin{equation}\label{J3 L2 2}
|u(\xi)| \sim 2^{k -2\ell}  \qquad \text{for all $(\xi;s) \in \supp a_{k,\ell}^{\mu, (\varepsilon)}$.}
\end{equation}
Indeed, this follows from \eqref{J3 akell def} together with the hypothesis $0 \leq \ell < \floor{k/3}$.

By Taylor expansion around $\theta(\xi)$, one has
\begin{align}
\label{J3 L2 3}
    \inn{\gamma'(s)}{\xi} &= \bu(\xi) + \omega_1(\xi;s) \cdot (s - \theta(\xi))^2, \\
    \label{J3 L2 4}
    \inn{\gamma''(s)}{\xi} &= \omega_2(\xi;s) \cdot (s - \theta(\xi)),
\end{align}
where the functions $\omega_i$ arise from the remainder terms and satisfy $|\omega_i(\xi;s)| \sim  2^k$ for $i= 1$, $2$.

 The analysis now splits into two cases.
 \medskip
 
 \noindent\textbf{Case 1:} $|s - \theta(\xi)| < \rho 2^{-\ell}$. Provided $\rho$ is sufficiently small, \eqref{J3 L2 2} implies that the $u(\xi)$ term dominates in the right-hand side of \eqref{J3 L2 3} and therefore $|\inn{\gamma'(s)}{\xi}| \gtrsim  2^{k-2\ell}$.

\medskip

\noindent\textbf{Case 2:} $|s - \theta(\xi)| \geq \rho 2^{-\ell}$. In this case, \eqref{J3 L2 4} implies that $|\inn{\gamma''(s)}{\xi}| \gtrsim \rho 2^{k-\ell}$.
\medskip

In either case, the desired bound \eqref{J3 L2 1} holds.
\end{proof}

\begin{lemma}\label{J3 Linfty lemma}
For all $0 \leq \ell \leq \floor{k/3}$, $\mu \in \Z$ and $\varepsilon > 0$,  
    \begin{equation*}
   \|m[a_{k,\ell}^{\mu, (\varepsilon)}]\|_{M^{\infty}(\R^4)} \lesssim  2^{-\ell(1 - \varepsilon)}.
    \end{equation*}
  \end{lemma}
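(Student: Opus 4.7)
The plan is to bound $\|m[a_{k,\ell}^{\mu,(\varepsilon)}]\|_{M^\infty(\R^4)}$ by the $L^1$ norm of the convolution kernel, exploit the short $s$-support coming from the cutoff $\eta(\rho 2^{\ell(1-\varepsilon)}(s-s_\mu))$ to extract the factor $2^{-\ell(1-\varepsilon)}$, and then reduce the remaining estimate to a uniform $L^1$ bound for the $\xi$-Fourier transform of the symbol after an anisotropic rescaling adapted to the Frenet box.

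Since the operator is a Fourier multiplier (hence convolution), its $M^\infty(\R^4)$ norm equals the $L^1(\R^4)$ norm of the kernel
\begin{equation*}
K(x) = (2\pi)^{-4}\int_{\R} \phi_s(x-\gamma(s))\,\chi(s)\, \ud s, \qquad \phi_s(z) := \int_{\hat{\R}^4} e^{i\langle z,\xi\rangle}\, a_{k,\ell}^{\mu,(\varepsilon)}(\xi;s)\, \ud\xi.
\end{equation*}
By Minkowski's inequality and the fact that $\chi(s)\, a_{k,\ell}^{\mu,(\varepsilon)}(\xi;s) = 0$ unless $|s-s_\mu|\lesssim 2^{-\ell(1-\varepsilon)}$ (see \eqref{eq:dec J=3 along curve}), it follows that
\begin{equation*}
\|K\|_{L^1(\R^4)} \lesssim 2^{-\ell(1-\varepsilon)} \sup_{s}\|\phi_s\|_{L^1(\R^4)}.
\end{equation*}
Thus the lemma reduces to the uniform estimate $\sup_s \|\phi_s\|_{L^1(\R^4)} \lesssim 1$.

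By Lemma~\ref{J=3 supp lem}, the symbol $a_{k,\ell}^{\mu,(\varepsilon)}(\,\cdot\,;s)$ is supported in the anisotropic Frenet box $2^k\pi_1(s_\mu;2^{-\ell})$, whose side lengths in the Frenet basis $(\be_j(s_\mu))_{j=1}^4$ are $2^{k-2\ell},\,2^{k-\ell},\,2^k,\,2^k$. Let $T=T_{\mu,\ell,k}$ be the linear map whose diagonal representation in this basis has the corresponding anisotropic entries, and set $b(\eta;s):=a_{k,\ell}^{\mu,(\varepsilon)}(T\eta;s)$. Since $L^1$ norms of inverse Fourier transforms are preserved by linear changes of variables,
\begin{equation*}
\|\phi_s\|_{L^1(\R^4)} = \|\mathcal{F}^{-1}_{\eta} b(\,\cdot\,;s)\|_{L^1(\R^4)}.
\end{equation*}
Because $b(\,\cdot\,;s)$ is supported in a set of unit size, once every derivative $\partial_\eta^\beta b$ is uniformly bounded (independently of $k,\ell,\mu,s$), repeated integration by parts yields $|\mathcal{F}^{-1}_\eta b(z;s)|\lesssim_N (1+|z|)^{-N}$, giving the desired $O(1)$ bound.

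The main obstacle is therefore to prove the anisotropic derivative estimate
\begin{equation*}
|\partial^{\alpha_1}_{\be_1(s_\mu)}\partial^{\alpha_2}_{\be_2(s_\mu)}\partial^{\alpha_3}_{\be_3(s_\mu)}\partial^{\alpha_4}_{\be_4(s_\mu)}\, a_{k,\ell}^{\mu,(\varepsilon)}(\xi;s)|
\lesssim_\alpha 2^{-(k-2\ell)\alpha_1}\,2^{-(k-\ell)\alpha_2}\,2^{-k\alpha_3}\,2^{-k\alpha_4}
\end{equation*}
on the support. The dyadic cutoff $\beta^k(\xi)$ satisfies the stronger isotropic bound $|\partial^\alpha|\lesssim 2^{-k|\alpha|}$ trivially from \eqref{LS symbol}. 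The nontrivial content is contained in the cutoffs $\beta(2^{-k+2\ell}u(\xi))$ and $\zeta(2^\ell\theta(\xi)-\mu)$, and reduces to controlling the Frenet-directional derivatives of the scalar functions $u$ and $\theta$. Here one uses the identities $\partial_\xi u(\xi) = \gamma'\!\circ\!\theta(\xi)$ (the $\partial_\xi\theta$ contribution vanishes by the defining equation $\langle\gamma''\!\circ\!\theta(\xi),\xi\rangle\equiv 0$) and $\partial_\xi\theta(\xi) = -\gamma''\!\circ\!\theta(\xi)/\langle\gamma'''\!\circ\!\theta(\xi),\xi\rangle$, with the denominator of size $\sim 2^k$ by \eqref{J3 hyp}. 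Together with the Frenet bounds \eqref{Frenet bound} and \eqref{Frenet bound alt 1}, which give $|\langle \gamma^{(i)}\!\circ\!\theta(\xi),\be_j(s_\mu)\rangle|\lesssim |\theta(\xi)-s_\mu|^{(j-i)\vee 0}\lesssim 2^{-((j-i)\vee 0)\ell}$ on the support, iterative differentiation of the Faà di Bruno expansions for $\beta\circ(2^{-k+2\ell}u)$ and $\zeta\circ(2^\ell\theta-\mu)$ produces exactly the anisotropic bounds displayed above, completing the proof.
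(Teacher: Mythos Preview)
Your proposal is correct and follows essentially the same approach as the paper. The paper packages the argument into two auxiliary results---Lemma~\ref{gen Linfty lem} (which carries out exactly your Minkowski-in-$s$ plus integration-by-parts-in-$\xi$ step) and Lemma~\ref{imp deriv lem} (which systematises the Fa\`a di Bruno induction for the higher derivatives of $\theta$ and $u$)---whereas you unpack both inline; the key inputs (the Frenet box support from Lemma~\ref{J=3 supp lem}, the first-order formulas $\partial_\xi u=\gamma'\!\circ\theta$ and $\partial_\xi\theta=-\gamma''\!\circ\theta/\langle\gamma'''\!\circ\theta,\xi\rangle$, and the Frenet pairing bounds \eqref{Frenet bound alt 1}) are identical.
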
      
  
  \begin{proof} By Lemma~\ref{J=3 supp lem}, we have $\xisupp a_{k, \ell}^{\mu} \subseteq 2^k \cdot \pi_1(s_{\mu}; 2^{-\ell})$. Consequently, an integration-by-parts argument (see Lemma~\ref{gen Linfty lem}) reduces the problem to showing
\begin{equation}\label{J3 Linfty 1}
    |\nabla_{\bm{v}_j}^{N} a_{k,\ell}^{\mu}(\xi)| \lesssim_N 2^{-(k - (3-j)\ell \vee 0) N} \qquad \textrm{for all $1 \leq j \leq 4$ and all $N \in \N_0$,}
\end{equation}
 where $\nabla_{\bm{v}_j}$ denotes the directional derivative in the direction of the vector $\bm{v}_j := \be_j(s_\mu)$.\medskip 

Given $\xi \in \xisupp a_{k, \ell}^{\mu}$, we claim that
\begin{equation}\label{J3 Linfty 2}
    2^{\ell}|\nabla_{\bm{v}_j}^N \theta(\xi)| \lesssim_N 2^{-(k-(3-j)\ell \vee 0)N}\qquad \text{ and } \qquad  2^{-k+2\ell}|\nabla_{\bm{v}_j}^N u(\xi)| \lesssim_N 2^{-(k-(3-j)\ell \vee 0)N}
\end{equation}
for all $N \in \N$. Assuming that this is so, the derivative bounds \eqref{J3 Linfty 1} follow directly from the chain and Leibniz rule, applying \eqref{J3 Linfty 2}. 

The claimed bounds in \eqref{J3 Linfty 2} follow from repeated application of the chain rule, provided 
\begin{subequations}
\begin{align}\label{J3 Linfty 3a}
    |\inn{\gamma^{(3)}\circ \theta(\xi)}{\xi}| &\gtrsim 2^k, \\
    \label{J3 Linfty 3b}
    |\inn{\gamma^{(K)}\circ \theta(\xi)}{\xi}| &\lesssim_K 2^{k +\ell(K-3)},  \\
    \label{J3 Linfty 3c}
    |\inn{\gamma^{(K)}\circ \theta(\xi)}{\bm{v}_j}| &\lesssim_K 2^{(3-j)\ell \vee 0 +\ell(K-3)}
\end{align}
\end{subequations}
hold for all $K \geq 2$ and all $\xi \in \xisupp a_{k,\ell}^{\mu}$. In particular, assuming \eqref{J3 Linfty 3a}, \eqref{J3 Linfty 3b} and \eqref{J3 Linfty 3c}, the bounds in \eqref{J3 Linfty 2} are then a consequence of Lemma~\ref{imp deriv lem} in the appendix: \eqref{J3 Linfty 2} corresponds to \eqref{multi imp der bound} and \eqref{multi Faa di Bruno eq 2} whilst the hypotheses in the above display correspond to \eqref{multi imp deriv 2} and \eqref{multi imp deriv 3}. Here the parameters featured in the appendix are chosen as follows: 
\begin{center}
    \begin{tabular}{|c|c|c|c|c|c|c|} 
\hline
 & & & & & & \\[-0.8em]
$g$ & $h$ & $A$ & $B$ & $M_1$ & $M_2$ & $\be$ \\
 & & & & & & \\[-0.8em]
\hline
  & & & & & & \\[-0.8em]
$\gamma''$ & $\gamma'$ & 
$2^{k-\ell}$ & $2^{k-2\ell}$ & $2^{-k+(3-j)\ell \vee 0}$  &  $2^{\ell}$  & $\bm{v}_j$ \\
 & & & & & & \\[-0.8em]
 \hline
\end{tabular}
\end{center}
See Example~\ref{deriv ex}.

The conditions \eqref{J3 Linfty 3a}, \eqref{J3 Linfty 3b} and \eqref{J3 Linfty 3c} are direct consequences of the support properties of the $a_{k,\ell}^{\mu}$. 
Indeed, \eqref{J3 Linfty 3a} and the $K \geq 3$ case of \eqref{J3 Linfty 3b} are trivial consequences of the localisation of the symbol $a_k$. The remaining $K = 2$ case of \eqref{J3 Linfty 3b} follows immediately since $\inn{\gamma'' \circ \theta(\xi)}{\xi}=0$. Finally, \eqref{Frenet bound alt 1} together with the $\theta$ localisation imply
\begin{equation*}
      |\inn{\gamma^{(K)}\circ \theta(\xi)}{\bm{v}_j}| \lesssim_K |\theta(\xi) - s_{\mu}|^{(j-K) \vee 0} \lesssim 2^{-((j-K) \vee 0)\ell}
\end{equation*}
and this is easily seen to imply \eqref{J3 Linfty 3c}.
\end{proof}

Lemma~\ref{J3 L2 lemma} and Lemma~\ref{J3 Linfty lemma} can be combined to obtain the following $L^p$ bounds.

\begin{corollary}\label{J3 corollary} Let $0 \leq \ell \leq \floor{k/3}$ and $\varepsilon > 0$. For all $2 \leq p \leq \infty$,
    \begin{equation*}
   \Big(\sum_{\mu \in \Z} \|m[a_{k,\ell}^{\mu, (\varepsilon)}](D)f\|_{L^{p}(\R^4)}^p \Big)^{1/p} \lesssim 2^{-k/p - \ell(1-3/p) + \varepsilon\ell} \|f\|_{L^p(\R^4)}.
    \end{equation*}
    When $p = \infty$ the left-hand $\ell^p$-sum is interpreted as a supremum in the usual manner. 
  \end{corollary}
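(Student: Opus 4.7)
The plan is to view $T \colon f \mapsto \bigl(m[a_{k,\ell}^{\mu,(\varepsilon)}](D)f\bigr)_{\mu \in \Z}$ as a vector-valued operator and establish the two endpoint bounds $L^2(\R^4) \to \ell^2(L^2(\R^4))$ and $L^\infty(\R^4) \to \ell^\infty(L^\infty(\R^4))$ directly, then interpolate. Since both endpoint estimates already sit in the record (Lemmas~\ref{J3 L2 lemma} and~\ref{J3 Linfty lemma}) up to the orthogonality step at $p=2$, the whole argument is short.

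The $p=\infty$ case is immediate: the left-hand side reduces to $\sup_\mu \|m[a_{k,\ell}^{\mu,(\varepsilon)}](D)f\|_{L^\infty(\R^4)}$, which by Lemma~\ref{J3 Linfty lemma} is bounded by $2^{-\ell(1-\varepsilon)}\|f\|_{L^\infty(\R^4)}$, matching the claimed exponent $-\ell(1-3/\infty) + \varepsilon \ell$.

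For $p=2$ I would use Plancherel together with an orthogonality argument. Lemma~\ref{J=3 supp lem} (applied to $a_{k,\ell}^\mu$, since the extra $s$-cutoff introduced in \eqref{eq:dec J=3 along curve} does not enlarge the Fourier support) puts $\xisupp a_{k,\ell}^{\mu,(\varepsilon)} \subseteq 2^k \cdot \pi_1(s_\mu; 2^{-\ell})$. Since the $s_\mu = 2^{-\ell}\mu$ form a $2^{-\ell}$-net in $I_0$, expanding an arbitrary $\xi$ in the frame $\bigl(\be_j(s_\mu)\bigr)_{j=1}^4$ and invoking the Frenet bounds \eqref{Frenet bound} to control pairwise inner products shows that the anisotropic plates $2^k \cdot \pi_1(s_\mu; 2^{-\ell})$ have bounded overlap as $\mu$ varies over $\Z$. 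Combining this overlap property with Lemma~\ref{J3 L2 lemma} and Plancherel yields
\[
\sum_\mu \|m[a_{k,\ell}^{\mu,(\varepsilon)}](D)f\|_{L^2(\R^4)}^2 \lesssim 2^{-k+\ell} \sum_\mu \|\hat f \cdot \bbone_{2^k \cdot \pi_1(s_\mu;2^{-\ell})}\|_{L^2(\hat\R^4)}^2 \lesssim 2^{-k+\ell} \|f\|_{L^2(\R^4)}^2,
\]
which is precisely the $p = 2$ case of the corollary.

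Applying Riesz--Thorin vector-valued interpolation between these two endpoints produces a bound from $L^p(\R^4)$ to $\ell^p(L^p(\R^4))$ with operator norm at most
\[
\bigl(2^{-k/2+\ell/2}\bigr)^{2/p}\bigl(2^{-\ell(1-\varepsilon)}\bigr)^{1-2/p} = 2^{-k/p - \ell(1-3/p) + \varepsilon\ell(1-2/p)},
\]
which is the desired bound after relabelling $\varepsilon$. The only step requiring genuine care is the bounded-overlap claim underlying the $p=2$ orthogonality; this is a routine consequence of the Frenet inner-product controls \eqref{Frenet bound} applied across the $2^{-\ell}$-separated sample points $s_\mu$, and I do not expect any further obstacle.
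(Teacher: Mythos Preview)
Your proposal is correct and follows essentially the same approach as the paper: establish the $p=2$ endpoint via Lemma~\ref{J3 L2 lemma} combined with the bounded overlap of the Frenet boxes, the $p=\infty$ endpoint via Lemma~\ref{J3 Linfty lemma}, and then interpolate (the paper cites mixed-norm interpolation from Triebel, which is precisely the vector-valued Riesz--Thorin you invoke). The only cosmetic difference is that the paper leaves the orthogonality step as a one-line remark, whereas you spell out that it follows from the Frenet inner-product bounds~\eqref{Frenet bound}.
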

  
  \begin{proof} For $p = 2$ the estimate follows by combining the $L^2$ bounds from Lemma~\ref{J3 L2 lemma} with a simple orthogonality argument. For $p = \infty$ the estimate is a restatement of the $L^{\infty}$ bound from Lemma~\ref{J3 Linfty lemma}. Interpolating these two endpoint cases, using mixed norm interpolation (see, for instance, \cite[\S 1.18.4]{Triebel1978}), concludes the proof. 
  \end{proof}
  
  %%%%%%%%%%%%%%%%%%%%%%%%%%%%%%%%%%%%%%%%%%%%%%%%%%%%%%%%%%%%%%%%%%%%%%%%%%%%%%%%%%%%%%%%%%%%%%%%

%                   Putting everything together

%%%%%%%%%%%%%%%%%%%%%%%%%%%%%%%%%%%%%%%%%%%%%%%%%%%%%%%%%%%%%%%%%%%%%%%%%%%%%%%%%%%%%%%%%%%%%%%%
    
\subsection{Putting everything together} We are now ready to combine the ingredients to conclude the proof of Proposition~\ref{J3 Lp proposition}. 

\begin{proof}[Proof of Proposition~\ref{J3 Lp proposition}]  
  By Proposition~\ref{J=3 dec prop}, for all $2 \leq p \leq 6$ and all $\varepsilon > 0$ one has
   \begin{equation*}
       \| m[a_{k,\ell}](D)f \|_{L^p(\R^4)}=\Big\|\sum_{\mu \in \Z} m[a_{k,\ell}^{\mu}](D)f\Big\|_{L^p(\R^4)} \lesssim_{\varepsilon} 2^{\ell(1/2 - 1/p) + \varepsilon \ell} \Big(\sum_{\mu \in \Z} \|m[a_{k,\ell}^{\mu}](D)f\|_{L^p(\R^4)}^p\Big)^{1/p}.
   \end{equation*}
Moreover, for all $2 \leq p < \infty$, $\mu \in \Z$ and all $\varepsilon > 0$, Lemma~\ref{J3 localisation lemma} implies that
  \begin{equation*}
    \|m[a_{k,\ell}^\mu]\|_{M^p(\R^4)}   \lesssim_{N, \varepsilon, p} \Big\|\sum_{\mu \in \Z} m[a_{k,\ell}^{\mu, (\varepsilon)}]\Big\|_{M^p(\R^4)} + 2^{-kN} \quad \text{ for all $N \in \N$}.
  \end{equation*}
  Combining the above, we obtain that for all $2 \leq p \leq 6$ and all $\varepsilon>0$,
  \begin{equation*}
      \| m[a_{k,\ell}](D)f \|_{L^p(\R^4)} \lesssim_{\varepsilon, p} 2^{\ell(1/2 - 1/p) + \varepsilon \ell} \Big(\sum_{\mu \in \Z} \|m[a_{k,\ell}^{\mu}](D)f\|_{L^p(\R^4)}^p\Big)^{1/p} + 2^{-kN} \| f \|_{L^p(\R^4)}
  \end{equation*}
which together with Corollary~\ref{J3 corollary} yields
     \begin{equation*}
       \| m[a_{k,\ell}](D)f \|_{L^p(\R^4)} \lesssim_{\varepsilon} 2^{-k/p-\ell(1/2 - 2/p- 2\varepsilon) }  \|f\|_{L^p(\R^4)}.
   \end{equation*}
   Since $\varepsilon > 0$ was chosen arbitrarily, this is the required bound.
  \end{proof}

We have established Proposition~\ref{J3 Lp proposition} and therefore completed the proof of the $J=3$ case of Theorem~\ref{Sobolev theorem}.

%%%%%%%%%%%%%%%%%%%%%%%%%%%%%%%%%%%%
%%%%%%%%%%%%%%%%%%%%%%%%%%%%%%%%%%%%
%%%%%%%%%%%%%%%%%%%%%%%%%%%%%%%%%%%%

%%. CASE J=4

%%%%%%%%%%%%%%%%%%%%%%%%%%%%%%%%%%%%
%%%%%%%%%%%%%%%%%%%%%%%%%%%%%%%%%%%%
%%%%%%%%%%%%%%%%%%%%%%%%%%%%%%%%%%%%

\section{The proof of Theorem~\ref{Sobolev theorem}: The  \texorpdfstring{$J=4$}{} case} \label{sec:J=4}

The analysis used to prove the $J=4$ case of Theorem~\ref{Sobolev theorem} is much more involved than that for $J=3$. This case constitutes to the main content of Theorem~\ref{Sobolev theorem}. 

%%%%%%%%%%%%%%%%%%%%%%%%%%%%%%%%%%%%
%%%%%%%%%%%%%%%%%%%%%%%%%%%%%%%%%%%%
%%%%%%%%%%%%%%%%%%%%%%%%%%%%%%%%%%%%

%%. Preliminaries

%%%%%%%%%%%%%%%%%%%%%%%%%%%%%%%%%%%%
%%%%%%%%%%%%%%%%%%%%%%%%%%%%%%%%%%%%
%%%%%%%%%%%%%%%%%%%%%%%%%%%%%%%%%%%%

\subsection{Preliminaries} Suppose $\gamma \in \mathfrak{G}_4(\delta_0)$ and $a \in C^{\infty}(\hat{\R}^4\setminus \{0\} \times \R)$ satisfies the hypotheses of Theorem~\ref{Sobolev theorem} for $J = 4$ with $\delta_1 :=: \delta_3 := \delta_0$, $\delta_2 := \delta_0^3$ and $\delta_4 := 9/10$.\footnote{The choice $\delta_2 := \delta_0^3$ is not relevant to this part of the argument (we may simply take $\delta_2 := \delta_0$) but is used for consistency with the previous section.} Note, in particular, that 
\begin{equation}\label{4 derivative bound_old}
 \left\{\begin{array}{ll}
    |\inn{\gamma^{(4)}(s)}{\xi}| \ge \frac{9}{10}\cdot |\xi| \\[5pt]
|\inn{\gamma^{(j)}(s)}{\xi}| \le 8\delta_0 |\xi| & \textrm{for $j = 1,2,3$}
\end{array}
\right. \qquad \textrm{for all $(\xi; s)\in \xisupp a \times I_0$,} 
\end{equation}
as a consequence of \eqref{e200727e3.4}. We note two further consequences of this technical reduction:
\begin{itemize}
    \item Recall $\gamma^{(j)}(0)=\vec{e}_j$ for $1 \leq j \leq 4$ and so \eqref{4 derivative bound_old} immediately implies that 
    \begin{equation*}
      |\xi_4|\ge \tfrac{9}{10} \cdot |\xi|    \quad \textrm{and} \quad |\xi_j|\le 8 \delta_0 |\xi| \quad \textrm{for $j=1, 2, 3$,}  \qquad \textrm{for all $\xi \in \xisupp a$.}
    \end{equation*} 
    \item Since $\gamma \in \mathfrak{G}_4(\delta_0)$, we have $\|\gamma^{(5)}\|_{\infty} \leq \delta_0$. Thus, provided $\delta_0$ is sufficiently small,
\begin{equation}\label{4 derivative bound}
    |\inn{\gamma^{(4)}(s)}{\xi}| \geq \tfrac{1}{2} \cdot |\xi| \qquad \textrm{for all $(\xi; s ) \in \xisupp a \times [-1,1]$}.
\end{equation}
Observe that this inequality holds on the large interval $[-1,1]$, rather than just $I_0$. 
\end{itemize}
Henceforth, we also assume that $\xi_4>0$ for all $\xi \in \xisupp a$. In particular,
\begin{equation}\label{convex}
    \inn{\gamma^{(4)}(s)}{\xi} > 0 \qquad \textrm{for all $(\xi; s ) \in \xisupp a \times [-1,1]$}
\end{equation}
and thus, for each $\xi \in \xisupp a$, the function $s \mapsto \inn{\gamma''(s)}{\xi}$ is strictly convex on $[-1,1]$. The analysis for the portion of the symbol supported on the set $\{\xi_4<0\}$ follows by symmetry.\medskip

 If $a_k:= a \cdot \beta^k$, as introduced in \S\ref{sec:bandlimited}, the derivative bound \eqref{4 derivative bound_old} implies, via the van der Corput lemma, that 
\begin{equation}\label{J4 trivial decay rate}
    |m[a_k](\xi)| \lesssim 2^{-k/4}.
\end{equation}
Thus, Plancherel's theorem and interpolation with a trivial $L^\infty$ estimate, as in the $J=2$ case, yields
\begin{equation*}
    \| m[a_k] \|_{M^p(\R^4)}\lesssim 2^{-k/2p} \qquad \textrm{for all $2 \leq p \leq \infty$.}
\end{equation*}
As in the $J=3$ case, to obtain the improved bound $\| m[a_k]\|_{M^p(\R^4)} \lesssim 2^{-k/p}$, we decompose the symbol $a_k$ into localised pieces which admit more refined decay rates than \eqref{J4 trivial decay rate}. This decomposition is, however, significantly more involved than that used in the previous section.

\subsection{Geometry of the slow decay cones} The first step is to isolate regions of the frequency space where the multiplier $m[a]$ decays relatively slowly. Owing to stationary phase considerations, this corresponds to the regions around the conic varieties
\begin{equation*}
    \Gamma_{d-1}:=\{\xi \in \xisupp a:  \inn{\gamma^{(j)}(s)}{\xi}=0, \,\, 1 \leq j \leq d, \text{ for some } s\in I_0 \}, \qquad 2 \leq d \leq 3.
\end{equation*}
Note that $\Gamma_{d-1}$ has codimension $d-1$, which motivates the choice of index. Since $\Gamma_2 \subseteq \Gamma_1$, the decay rate for the multiplier $m[a]$ depends on the relative position with respect to both cones. To analyse this, we begin with the following observation, which helps us to understand the geometry of $\Gamma_2$.

\begin{lemma}\label{lemma:2cone}
If $\xi\in \xisupp a$, then the equation $\inn{\gamma^{(3)}(s)}{\xi}= 0$ has a unique solution in $s  \in [-1,1]$, which corresponds to the unique global minimum of the function $s \mapsto \inn{\gamma''(s)}{\xi}$. Furthermore, the solution has absolute value $O(\delta_0)$.
\end{lemma}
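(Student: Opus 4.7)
The plan is to exploit the strict convexity of $s \mapsto \inn{\gamma''(s)}{\xi}$ on the enlarged interval $[-1,1]$, rather than just on $I_0$. Indeed, its second derivative is $\inn{\gamma^{(4)}(s)}{\xi}$, which is strictly positive on $[-1,1]$ by \eqref{convex}. Equivalently, $s \mapsto \inn{\gamma^{(3)}(s)}{\xi}$ is strictly increasing on $[-1,1]$, so it can have at most one zero there, and any such zero must coincide with the unique global minimum of $s \mapsto \inn{\gamma''(s)}{\xi}$ on $[-1,1]$. Thus the uniqueness clause and the characterisation as a minimum will both follow immediately once existence of a zero is established; the enlargement from $I_0$ to $[-1,1]$ is the only reason this convexity argument is global rather than local.

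For existence I would apply the intermediate value theorem on $[-1,1]$. The starting point is the identity $\inn{\gamma^{(3)}(0)}{\xi} = \xi_3$, which follows from $\gamma^{(3)}(0) = \vec{e}_3$ in the definition of $\mathfrak{G}_4(\delta_0)$; then \eqref{4 derivative bound_old} applied at $s = 0 \in I_0$ yields $|\inn{\gamma^{(3)}(0)}{\xi}| \leq 8\delta_0 |\xi|$. Next, I would Taylor expand $\inn{\gamma^{(3)}(\pm 1)}{\xi} = \inn{\gamma^{(3)}(0)}{\xi} \pm \inn{\gamma^{(4)}(\tilde s_\pm)}{\xi}$ with $\tilde s_\pm \in (0, \pm 1)$, using the lower bound $\inn{\gamma^{(4)}(s)}{\xi} \geq \tfrac{1}{2}|\xi|$ from \eqref{4 derivative bound}, which holds on all of $[-1,1]$. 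Provided $\delta_0$ is small enough this gives $\inn{\gamma^{(3)}(1)}{\xi} > 0 > \inn{\gamma^{(3)}(-1)}{\xi}$, producing the required zero $s_0 \in (-1,1)$.

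Finally, the size bound $|s_0| = O(\delta_0)$ is a quantitative implicit function statement. The plan is to write
\begin{equation*}
0 = \inn{\gamma^{(3)}(s_0)}{\xi} = \inn{\gamma^{(3)}(0)}{\xi} + s_0 \inn{\gamma^{(4)}(\tilde s)}{\xi}
\end{equation*}
for some $\tilde s$ between $0$ and $s_0$, and then to combine the upper bound $|\inn{\gamma^{(3)}(0)}{\xi}| \leq 8\delta_0|\xi|$ with $|\inn{\gamma^{(4)}(\tilde s)}{\xi}| \geq \tfrac{1}{2}|\xi|$ from \eqref{4 derivative bound} to conclude $|s_0| \leq 16\delta_0$.

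There is no substantial obstacle here; the entire lemma is an elementary convexity-plus-Taylor argument. The only conceptual point worth emphasising is that the two lower bounds \eqref{convex} and \eqref{4 derivative bound} are precisely what upgrade the merely infinitesimal control available from $\gamma \in \mathfrak{G}_4(\delta_0)$ on $I_0$ to the full interval $[-1,1]$, which in turn is what makes uniqueness global and allows us to identify the zero with a genuine global minimum rather than just a local one.
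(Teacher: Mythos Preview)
Your proposal is correct and matches the paper's approach: the paper does not give a detailed argument, merely noting that the lemma ``quickly follows from \eqref{convex} and the localisation of the symbol via the mean value theorem'' and referring to \cite{BGHS-helical} for a closely related computation. Your convexity-plus-Taylor/MVT argument is precisely this, carried out in full.
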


The above lemma quickly follows from \eqref{convex} and the localisation of the symbol via the mean value theorem. A detailed proof (of a very similar result) can be found in \cite[Lemma 6.1]{BGHS-helical}.
\medskip

By Lemma~\ref{lemma:2cone}, there exists a unique smooth mapping $\theta_2: \xisupp a  \to [-1,1]$ such that
\begin{equation*}
    \inn{\gamma^{(3)} \circ \theta_2 (\xi)}{\xi}= 0 \quad \text{ for all $\xi \in \xisupp a$}.
\end{equation*}
It is easy to see that $\theta_2$ is homogeneous of degree 0. Define the quantities 
\begin{equation*}
   u_{1,2}(\xi):=\inn{\gamma' \circ \theta_2(\xi)}{\xi} \quad \textrm{and} \quad u_2(\xi):=\inn{\gamma'' \circ \theta_2(\xi)}{\xi} \quad  \textrm{ for all $\xi \in \xisupp a$.}
\end{equation*}
 Note that $\xi \in \Gamma_{2}$ if and only if $u_{1,2}(\xi)=u_2(\xi)=0$ and thus, roughly speaking, together the quantities $|u_2(\xi)|$ and $|u_{1,2}(\xi)|$ measure the distance of $\xi$ to $\Gamma_2$.\smallskip

The next observation helps us to understand the geometry of the cone $\Gamma_1$.

\begin{lemma}\label{lemma:1cone}
Let $\xi \in \xisupp a$ and consider the equation \begin{equation}\label{0404e3.29}
    \inn{\gamma''(s)}{\xi}=0.
\end{equation}
\begin{enumerate}[i)]
    \item If $u_2(\xi)>0$, then the equation \eqref{0404e3.29} has no solution on $[-1, 1]$.
\item If $u_2(\xi)=0$, then the equation \eqref{0404e3.29} has only the solution $s=\theta_2(\xi)$ on $[-1,1]$.
\item If $u_2(\xi)<0$, then the equation \eqref{0404e3.29} has precisely two solutions on $[-1,1]$. Both solutions have absolute value $O(\delta_0^{1/2})$.
\end{enumerate}
\end{lemma}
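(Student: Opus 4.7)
The plan rests on the fact that, by \eqref{convex}, the function $s \mapsto \inn{\gamma''(s)}{\xi}$ is strictly convex on $[-1,1]$ for each $\xi \in \xisupp a$. Consequently, its derivative $s \mapsto \inn{\gamma^{(3)}(s)}{\xi}$ is strictly increasing, and so $\theta_2(\xi)$ is the unique critical point of $s \mapsto \inn{\gamma''(s)}{\xi}$ on $[-1,1]$, realising a global minimum equal to $u_2(\xi)$. With this in hand, parts (i) and (ii) are immediate: if $u_2(\xi) > 0$ the whole function is strictly positive, while if $u_2(\xi) = 0$ strict convexity rules out any zero other than $s = \theta_2(\xi)$.

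For part (iii), the strict convexity already forces at most two solutions—one on each side of $\theta_2(\xi)$. The first step is to check that both solutions actually lie inside $[-1,1]$; this I would verify by evaluating $\inn{\gamma''(\pm 1)}{\xi}$ via the Taylor identity
\begin{equation*}
\inn{\gamma''(\pm 1)}{\xi} = u_2(\xi) + \int_{\theta_2(\xi)}^{\pm 1} (\pm 1 - t)\inn{\gamma^{(4)}(t)}{\xi}\,\ud t,
\end{equation*}
where the linear term has dropped out by the definition of $\theta_2(\xi)$. Since $|\theta_2(\xi)| = O(\delta_0)$ (Lemma~\ref{lemma:2cone}) and $\inn{\gamma^{(4)}(t)}{\xi} \geq \tfrac{1}{2}|\xi|$ on $[-1,1]$ by \eqref{4 derivative bound}, the integral is bounded below by a fixed positive multiple of $|\xi|$, while $|u_2(\xi)| = |\inn{\gamma''(\theta_2(\xi))}{\xi}| \leq 8\delta_0 |\xi|$ by the symbol localisation \eqref{4 derivative bound_old}. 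Taking $\delta_0$ small therefore ensures $\inn{\gamma''(\pm 1)}{\xi} > 0$, so that the intermediate value theorem produces one zero in each of the intervals $(\theta_2(\xi),1)$ and $(-1,\theta_2(\xi))$.

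For the size estimate on the two solutions, I would Taylor expand around $\theta_2(\xi)$ in integral form:
\begin{equation*}
0 = \inn{\gamma''(s)}{\xi} = u_2(\xi) + (s-\theta_2(\xi))^2 \int_0^1 (1-t)\inn{\gamma^{(4)}(\theta_2(\xi)+t(s-\theta_2(\xi)))}{\xi}\,\ud t,
\end{equation*}
the linear term vanishing by the defining property of $\theta_2(\xi)$. Using again the two-sided bound $|u_2(\xi)| \lesssim \delta_0 |\xi|$ and the lower bound $\inn{\gamma^{(4)}}{\xi} \gtrsim |\xi|$ on $[-1,1]$, this rearranges to $|s - \theta_2(\xi)|^2 \lesssim \delta_0$, i.e.\ $|s - \theta_2(\xi)| \lesssim \delta_0^{1/2}$. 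Combining with $|\theta_2(\xi)| = O(\delta_0)$ yields $|s| = O(\delta_0^{1/2})$.

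The main technical point—and the only place where care is needed—is the input $|u_2(\xi)| \lesssim \delta_0 |\xi|$, which hinges on $\theta_2(\xi) \in I_0$. This in turn requires the smallness of $\delta_0$ so that the $O(\delta_0)$ bound of Lemma~\ref{lemma:2cone} puts $\theta_2(\xi)$ into the interval $[-\delta_0,\delta_0]$ where the symbol localisation \eqref{4 derivative bound_old} is available; everything else is a straightforward consequence of strict convexity and Taylor's theorem.
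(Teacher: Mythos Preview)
Your proposal is correct and follows essentially the same approach as the paper, which only sketches the argument (strict convexity from \eqref{convex}, Lemma~\ref{lemma:2cone}, symbol localisation, and Taylor expansion) and defers the details to \cite{BGHS-helical}. One small remark on your closing paragraph: the bound $|u_2(\xi)| \lesssim \delta_0|\xi|$ does not literally require $\theta_2(\xi)\in I_0$ (the $O(\delta_0)$ conclusion of Lemma~\ref{lemma:2cone} gives $|\theta_2(\xi)|\le C\delta_0$ with some absolute $C$, and shrinking $\delta_0$ does not force $C\le 1$); rather, one extends \eqref{4 derivative bound_old} to the slightly larger interval $|s|\le C\delta_0$ via the mean value theorem exactly as in \eqref{J derivatives 1}, which yields the needed bound with a harmless change of constant.
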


Again, this lemma quickly follows using the information in Lemma \ref{lemma:2cone}, the localisation of the symbol and Taylor expansion. The relevant details can be found in \cite[Lemma 6.2]{BGHS-helical}.
\medskip

Using Lemma~\ref{lemma:1cone}, we construct a (unique) pair of smooth mappings
\begin{equation*}
    \theta_1^{\pm} \colon \{ \xi \in \xisupp a :  u_2(\xi) <0 \} \to [-1, 1]
\end{equation*}
with $\theta_1^-(\xi)\le \theta_1^+(\xi)$ which satisfies
\begin{equation*}
    \inn{\gamma'' \circ \theta_1^{\pm}(\xi)}{\xi}= 0 \quad \text{ for all $\xi \in \xisupp a$ with  $u_2(\xi) <0$.}
\end{equation*}
Define the functions 
\begin{equation*}
        u_{1}^{\pm}(\xi):=\inn{\gamma' \circ \theta_1^\pm(\xi)}{\xi} \quad \textrm{and} \quad
    u_{3,1}^{\pm}(\xi):=\inn{\gamma^{(3)} \circ \theta_1^{\pm}(\xi)}{\xi} \qquad \text{ for all $\xi \in \xisupp a$ with  $u_2(\xi) <0$}
\end{equation*}
and note that $\xi \in \Gamma_{1}$ if and only if $u_{1}^{+}(\xi)=0$ or $u_1^-(\xi) = 0$. For this reason, we introduce 
\begin{equation*}
    u_1(\xi):=
    \begin{cases}
    u_{1}^{+}(\xi) &  \text{ if }  |u_{1}^{+}(\xi)|= \displaystyle\min_{\pm} |u_{1}^{\pm}(\xi)|\\
    u_{1}^{-}(\xi) &  \text{ if }  |u_{1}^{-}(\xi)|= \displaystyle \min_{\pm} |u_{1}^{\pm}(\xi)|
    \end{cases}
\quad \textrm{and}\quad 
 \theta_1(\xi):=
     \begin{cases}
     \theta_1^+(\xi) &  \text{ if } u_{1}(\xi)=u_{1}^{+}(\xi)\\
     \theta_1^-(\xi) &  \text{ if } u_{1}(\xi)=u_{1}^{-}(\xi)
     \end{cases},
 \end{equation*}
 which clearly satisfy
 \begin{equation*}
     u_1(\xi) = \inn{\gamma'\circ \theta_1(\xi)}{\xi}.
 \end{equation*}
Roughly speaking, the quantity $|u_1(\xi)|$ measures the distance of $\xi$ from $\Gamma_1$. Furthermore, if $\xi \in \Gamma_1$ satisfies $u_{3,1}(\xi)=0$ where
\begin{equation*}
    \uthree(\xi) := \inn{\gamma^{(3)} \circ \theta_{1}(\xi)}{\xi},
\end{equation*}
then $\xi \in \Gamma_2$. Thus, again, $|u_{3,1}(\xi)|$ may be interpreted as measuring the distance of $ \xi \in \Gamma_1$ to $\Gamma_2$.\medskip

The following lemma relates important information regarding the functions $\theta_2(\xi)$, $\theta_1^{\pm}(\xi)$ and the associated quantities $u_2(\xi)$, $u_{1,2}(\xi)$, $u_{1}^{\pm}(\xi)$, $u_{3,1}^{\pm}(\xi)$. 

\begin{lemma}\label{lemma:size of quantities}
Let $\xi \in \xisupp a$ with $u_2(\xi)<0$. Then the following hold:
\begin{enumerate}[i)]
    \item $\big|u_{3,1}^{\pm}\big(\xip\big)\big| \sim |\theta_1^{\pm}(\xi) - \theta_2(\xi)| \sim  |\theta_1^+(\xi)-\theta_1^-(\xi)|\sim \big|u_2\big(\xip\big)\big|^{1/2}$,\smallskip
    \item $\big|u_{1,2}\big(\xip\big) - u_{1}^{\pm}\big(\xip\big)\big| \lesssim \big|u_2\big(\xip\big)\big|^{3/2}$, \smallskip
    \item $\big|u_{1}^{+}\big(\xip\big) - u_{1}^{-}\big(\xip\big)\big| \sim \big|u_2\big(\xip\big)\big|^{3/2}$.
\end{enumerate}
\end{lemma}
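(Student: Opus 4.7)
The plan is to reduce, by homogeneity of the $u$-functions (degree $1$) and of the $\theta$-functions (degree $0$), to the case $|\xi|=1$, and then to run a quantitative Taylor analysis of the two auxiliary one-variable functions
\[
F(s) := \inn{\gamma''(s)}{\xi}, \qquad G(s) := \inn{\gamma'(s)}{\xi},
\]
which satisfy $G'=F$, $F'(s)=\inn{\gamma^{(3)}(s)}{\xi}$ and $F''(s)=\inn{\gamma^{(4)}(s)}{\xi}$. The key input is the lower bound \eqref{4 derivative bound}, valid on the whole interval $[-1,1]$: combined with the $C^{n+1}$ bound on $\gamma\in \mathfrak{G}_4(\delta_0)$ it gives $F''(s)\sim 1$ uniformly there, so $F$ is a smooth strictly convex function on $[-1,1]$ with second derivative of bounded oscillation. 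Writing $s_0:=\theta_2(\xi)$ and $s_\pm:=\theta_1^\pm(\xi)$, one has $F'(s_0)=0$, $F(s_0)=u_2(\xi)<0$ and $F(s_\pm)=0$, and by Lemma~\ref{lemma:1cone} the three points $s_0, s_\pm$ all lie within $O(\delta_0^{1/2})$ of $0$.

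I would first calibrate the scale $|s_\pm-s_0|\sim |u_2|^{1/2}$. Taylor expansion around $s_0$ gives
\[
F(s)=u_2+\tfrac12 F''(s_0)(s-s_0)^2+R(s), \qquad |R(s)|\lesssim |s-s_0|^3,
\]
and evaluating at $s=s_\pm$ yields a perturbed quadratic whose solution is $(s_\pm-s_0)^2=-2u_2/F''(s_0)+O(|s_\pm-s_0|^3)$, hence $|s_\pm-s_0|\sim |u_2|^{1/2}$. By convexity $s_+-s_0$ and $s_--s_0$ have opposite signs, so $|s_+-s_-|\sim |u_2|^{1/2}$ as well. For the remaining half of (i), Taylor expand $F'$ around $s_0$ using $F'(s_0)=0$:
\[
u_{3,1}^{\pm}(\xi)=F'(s_\pm)=F''(s_0)(s_\pm-s_0)+O(|s_\pm-s_0|^2),
\]
which gives $|u_{3,1}^{\pm}(\xi)|\sim |u_2|^{1/2}$, completing (i).

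For (ii) and (iii) I would use the identity
\[
u_1^{\pm}(\xi)-u_{1,2}(\xi)=G(s_\pm)-G(s_0)=\int_{s_0}^{s_\pm} F(t)\,\ud t,
\]
substitute the quadratic Taylor expansion of $F$ and integrate to obtain
\[
u_1^{\pm}-u_{1,2}=u_2(s_\pm-s_0)+\tfrac16 F''(s_0)(s_\pm-s_0)^3+O(|s_\pm-s_0|^4).
\]
Eliminating $u_2$ via the identity $u_2=-\tfrac12 F''(s_0)(s_\pm-s_0)^2+O(|s_\pm-s_0|^3)$ from the previous paragraph makes the two cubic terms combine, yielding
\[
u_1^{\pm}-u_{1,2}=-\tfrac13 F''(s_0)(s_\pm-s_0)^3+O(|u_2|^2),
\]
which directly gives (ii). For (iii) take the difference; since $(s_+-s_0)^3$ and $(s_--s_0)^3$ have opposite signs and common magnitude $\sim |u_2|^{3/2}$, their difference has size $\sim |u_2|^{3/2}$, dominating the error $O(|u_2|^2)$ once $|u_2|\ll 1$.

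The main obstacle is not conceptual but book-keeping: one must ensure that the remainder in the quadratic expansion of $F$ (and hence in the derived expansion of $u_1^\pm-u_{1,2}$) is genuinely of lower order than the cubic main term. This reduces to the smallness $|u_2|\ll 1$, which is already guaranteed by the standing reduction $\gamma\in\mathfrak{G}_4(\delta_0)$ with $\delta_0$ small together with the localisation of $\supp_\xi a$.
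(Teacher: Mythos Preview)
Your argument is correct. For part (i) you proceed exactly as the paper does (Taylor expansion of $F$ and $F'$ around $\theta_2$). The differences lie in parts (ii) and (iii). For (ii) the paper expands $G(s)=\inn{\gamma'(s)}{\xi}$ around $\theta_1^{\pm}$ rather than around $\theta_2$: since $G'(\theta_1^{\pm})=F(\theta_1^{\pm})=0$, the linear term drops out and one gets directly $u_{1,2}=u_1^{\pm}+\tfrac12 u_{3,1}^{\pm}(\theta_2-\theta_1^{\pm})^2+O(|\theta_2-\theta_1^{\pm}|^3)$, which is a one-line estimate after (i) with no elimination step. Your integral identity followed by the substitution for $u_2$ is slightly longer but yields the more precise formula $u_1^{\pm}-u_{1,2}=-\tfrac13 F''(s_0)(s_\pm-s_0)^3+O(|u_2|^2)$, in particular a two-sided bound. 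This extra precision is what you then exploit for (iii): you subtract and use the opposite signs of $(s_+-s_0)^3$ and $(s_--s_0)^3$. The paper instead proves (iii) independently via a convexity argument: writing $u_1^+-u_1^-=\int_{\theta_1^-}^{\theta_1^+}F(t)\,\ud t$ with $F\le 0$ on this interval, the upper bound is immediate from $|F|\le |u_2|$, and the lower bound comes from the elementary fact that the area between a convex arc and its chord is at least half the triangle with the same base and height $|u_2|$. Your route is more quantitative and reuses the machinery of (ii); the paper's is more geometric and robust (no need for the precise leading coefficient or for $|u_2|$ to absorb the $O(|u_2|^2)$ error). Both are perfectly valid.
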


\begin{proof}
i) This is almost immediate from Taylor expansion around $\theta_2(\xi)$, and around $\theta_1^-(\xi)$ in the last display. The interested reader is referred to \cite[Lemma 6.3]{BGHS-helical} for details of a closely related calculation.
%\begin{align*}
%    u_{3,1}^{\pm}(\xi)& = \omega_{1}^{\pm}(\xi) \cdot (\theta_1^{\pm}(\xi)-\theta_2(\xi)),\\
%    0&=\inn{\gamma'' \circ \theta_1^{\pm}(\xi)}{\xi} =u_2(\xi) + \omega_2(\xi) \cdot (\theta_1^{\pm}(\xi)-\theta_2(\xi))^2 \\
%    0& = \inn{\gamma'' \circ \theta_1^{+}(\xi)}{\xi} = u_{3,1}^-(\xi)\cdot (\theta_1^{+}(\xi) - \theta_1^{-}(\xi)) + \omega_3 (\xi) \cdot (\theta_1^{+}(\xi) - \theta_1^{-}(\xi))^2
%\end{align*}
%where $|\omega_1^{\pm}(\xi)|  \sim |\omega_{2}(\xi)| \sim |\omega_3(\xi)| \sim |\xi|$ by \eqref{4 derivative bound}. As $\theta_1^+(\xi) \neq \theta_1^-(\xi)$, one can combine the identities above to obtain part i).
\medskip

\noindent ii) By Taylor expansion around $\theta_1^\pm(\xi)$,
\begin{equation*}
    u_{1,2}(\xi)=u_{1}^{\pm}(\xi) + u_{3,1}^{\pm}(\xi) \cdot \frac{(\theta_2(\xi) - \theta_1^{\pm}(\xi))^2}{2} + \omega_4(\xi) \cdot (\theta_2(\xi) - \theta_1^{\pm}(\xi))^3,
\end{equation*}
where $|\omega_4(\xi)| \sim |\xi|$. The desired estimate follows from the above expansion and part i).
\medskip

\noindent iii) By part i), it suffices to show
\begin{equation}\label{eq:distance u1 proof}
    |u_{1}^{+}(\xi)-u_{1}^{-}(\xi)| \sim |u_2(\xi)| | \theta_1^+(\xi) - \theta_1^-(\xi)|.
\end{equation}
To this end, note that
\begin{equation*}
    u_{1}^{+}(\xi)-u_{1}^{-}(\xi) = \int_{\theta_1^-(\xi)}^{\theta_1^+(\xi)} \inn{\gamma''(s)}{\xi}\,  \ud s. 
\end{equation*}
By Lemma~\ref{lemma:2cone}, $u_2(\xi) \leq \inn{\gamma''(s)}{\xi}\leq 0$ for $\theta_1^-(\xi) \leq s \leq \theta_1^+(\xi)$. Thus, the upper bound in \eqref{eq:distance u1 proof} immediately follows from the above identity and the triangle inequality. To see the lower bound in \eqref{eq:distance u1 proof}, recall from \eqref{convex}  that the function $s \mapsto \phi(s):=\inn{\gamma''(s)}{\xi}$ is strictly convex in $[-1,1]$ and that $\phi \circ \theta_1^+(\xi)=\phi \circ \theta_1^-(\xi)=0$. % and that $E_2(s)$ is a convex function, as $E''_2(s)\ge |\xi|/2$ for every $s\in [-1,1]$ (see \eqref{4 derivative bound}). 
As $\theta_1^-(\xi) \leq \theta_2(\xi) \leq \theta_1^+(\xi)$ and $\phi \circ \theta_2(\xi)= u_2(\xi)$, the convexity of $\phi$ implies
$$
\int_{\theta_1^-}^{\theta_1^+} |\inn{\gamma''(s)}{\xi}| \, \ud s \geq \frac{1}{2} |u_2(\xi)| | \theta_1^+(\xi) - \theta_1^-(\xi)|,
$$
and thus \eqref{eq:distance u1 proof} follows from the constant sign of $\phi(s)$ on $[\theta^-_1(\xi), \theta_1^+(\xi)]$.
\end{proof}

\subsection{Decomposition of the symbols}\label{J=4 decomp sec} For $k\ge 1$ consider the frequency localised symbols $a_k:=a \cdot \beta^k$ as defined in \S\ref{sec:bandlimited}. We begin by decomposing each $a_k$ in relation to the codimension $2$ cone $\Gamma_2$ corresponding to the directions of slowest decay for $\hat{\mu}$. In order to measure the distance to this cone, we consider the two quantities $u_{1,2}$ and $u_2$ introduced in the previous subsection and, in particular, form a simultaneous dyadic decomposition according to the relative sizes of each.\smallskip

Here it is convenient to introduce a `fine tuning' constant $\rho > 0$. This is a small (but absolute) constant which plays a minor technical r\^ole in the forthcoming arguments: taking $\rho := 10^{-6}$ more than suffices for our purposes.

\medskip

\noindent \underline{\textit{Decomposition with respect to $\Gamma_2$}}. Let $\beta, \eta \in C_c^\infty(\hat{\R})$ be the functions used to perform a Littlewood--Paley decomposition in \S\ref{sec:bandlimited}. Let $\beta_+$, $\beta_- \in C_c^\infty(\R)$ with $\supp \beta_+ \subset (0,\infty)$ and $\supp \beta_- \subset (-\infty, 0)$ be such that $\beta = \beta_+ +\beta_-$. For each $m \in \N$, write
\begin{equation*}
    \eta(2^{-1}r_1) \eta(r_2) = \sum_{\ell=0}^{m} \beta(2^{\ell-1}r_1)\eta(2^\ell r_2) + \sum_{\ell=0}^{m-1} \eta(2^\ell r_1)\big(\beta_+(2^{\ell} r_2) + \beta_-(2^{\ell}r_2) \big) + \eta(2^m r_1) \eta(2^m r_2).
\end{equation*}
The above formula corresponds to a smooth decomposition of $[-2,2] \times [-1,1]$ into  axis-parallel dyadic rectangles: see Figure~\ref{figure:dyad 2d}. 
We apply this decomposition\footnote{Here the $\beta$ function should be defined slightly differently compared with \eqref{beta def}. In particular, when acting on $r_1$ we have $\beta(r_1) := \eta(2^{-2}r_1) - \eta(r_1)$ and when acting on $r_2$ we have $\beta(r_2) := \eta(2^{-3}r_2) - \eta(r_2)$. Such minor changes are ignored in the notation.} with $r_1=2^{-k}u_{1,2}(\xi)$ and  $r_2=\rho^{-1}2^{-k}u_{2}(\xi)$. This is then used to split the symbol $a_k$ as a sum
\begin{equation*}
    a_k = \sum_{\ell=0}^{\floor{k/4}} a_{k,\ell,1} + a_{k,\ell,2} + \sum_{\ell = 0}^{\floor{k/4}-1} b_{k,\ell}
\end{equation*}
where $\floor{k/4}$ denotes the greatest integer less than or equal to $k/4$ and
\begin{align*}
    a_{k,\ell,1}(\xi; s)&:= a_{k}(\xi; s) \beta(2^{-k+3\ell} u_{1,2}(\xi))\eta(\rho^{-1}2^{-k+2\ell}u_2(\xi)) \quad \text{ $0 \leq \ell \leq \floor{k/4}$}, \\[5pt]
    a_{k,\ell,2}(\xi; s)&:=\begin{cases} a_{k}(\xi; s)\eta(2^{-k+3\ell} u_{1,2}(\xi))
    \beta_+(\rho^{-1}2^{-k+2\ell} u_2(\xi)) & \text{ if } 0 \leq \ell< \floor{k/4}\\
    a_{k}(\xi; s) \eta(2^{-k + 3\floor{k/4}} u_{1,2}(\xi)) \eta(\rho^{-1}2^{-k+2\floor{k/4}} u_2(\xi))  & \text{ if } \ell= \floor{k/4}
    \end{cases},
\\[5pt]
    b_{k,\ell}(\xi; s)&:= a_{k}(\xi; s) \eta(2^{-k+3\ell} u_{1,2}(\xi))\beta_-(\rho^{-1}2^{-k+2\ell}u_2(\xi)) \quad \text{ $0 \leq \ell < \floor{k/4}$}.
\end{align*}

\begin{figure}
\begin{tikzpicture}[scale=1.5] 

\begin{scope}[scale=2]
\draw[thick,->] (-0.05,0) -- (3,0);
\draw (3,-0.05) node[below] {$r_1$};
\draw[thick,->] (0,-0.05) -- (0,1.6);
\draw (-0.05,1.55) node[left] {$r_2$};

\draw (0.3,-0.05) -- (0.3, 0.3) -- (-0.05, 0.3);
\draw (0.3,-0.05) node[below] {$2^{-m}$};
\draw (-0.05,0.3) node[left] {$2^{-m}$};
%\draw (0.15,0.15) node {$\beta_m$};

\draw (0.3, 0.3) -- (0.6, 0.3) -- (0.6,-0.05);
\draw (0.6,-0.05) node[below] {$\quad 2^{-m+1}$};
%\draw (0.45,0.15) node {$\alpha_m$};

\draw (-0.05, 0.6) -- (0.6,0.6) -- (0.6, 0.3);
\draw (-0.05,0.6) node[left] {$2^{-m+1}$};
%\draw (0.3,0.45) node {$\beta_{m-1}$};

\draw (0.6, 0.6) -- (1.2, 0.6) -- (1.2,-0.05);
\draw (1.2,-0.05) node[below] {$2^{-m+2}$};
%\draw (0.9,0.3) node {$\alpha_{m-1}$};

\draw (-0.05, 1.2) -- (1.2,1.2) -- (1.2, 0.6);
\draw (-0.05,1.2) node[left] {$2^{-m+2}$};
%\draw (0.6,0.9) node {$\beta_{m-2}$};

\draw (1.2, 1.2) -- (2.4, 1.2) -- (2.4,-0.05);
\draw (2.4,-0.05) node[below] {$2^{-m+3}$};
%\draw (1.8,0.6) node {$\alpha_{m-2}$};

\end{scope}

\end{tikzpicture}
\caption{Two parameter dyadic decomposition in the upper-left quadrant.}
\label{figure:dyad 2d}
\end{figure}
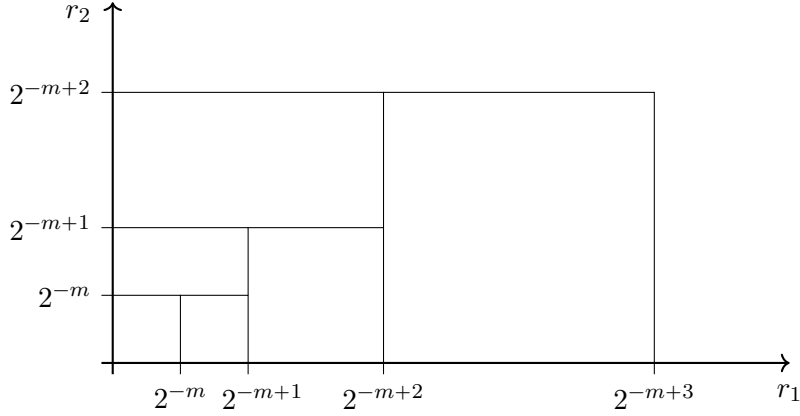

The following remarks help to motive the above decomposition:\medskip

For $\xi \in \xisupp a_{k,\ell,1}$, the functions $s \mapsto \inn{\gamma'(s)}{\xi}$ and $s \mapsto  \inn{\gamma''(s)}{\xi}$ do not vanish simultaneously. This is due, in part, to the lower bound on $|u_{2,1}(\xi)|$. On the other hand, for $\xi \in \xisupp a_{k,\ell, 2}$ we have $u_2(\xi) > 0$ and therefore $s \mapsto \inn{\gamma''(s)}{\xi}$ is non-vanishing by Lemma~\ref{lemma:1cone}. Quantifying these observations, one obtains the decay estimate
\begin{equation}\label{J4 decay a}
    |m[a_{k,\ell, \iota}](\xi)| \lesssim 2^{-k/2+\ell} \qquad \textrm{for $\iota = 1$, $2$}
\end{equation}
via the van der Corput lemma. See Lemma~\ref{L2 bounds J=4 lem} a) for details. This improves upon the trivial decay rate \eqref{J4 trivial decay rate} since $\ell$ varies over the range $0 \leq \ell \leq \floor{k/4}$. Note that $\ell = k/4$ corresponds to the critical value where \eqref{J4 trivial decay rate} and \eqref{J4 decay a} agree. \medskip

For $\xi \in \xisupp b_{k,\ell}$, as $u_2(\xi)<0$, the function $s \mapsto  \inn{\gamma''(s)}{\xi}$ vanishes at $s=\theta_1^\pm(\xi)$ by Lemma~\ref{lemma:1cone}. Moreover, the lack of a lower bound for $|u_{1,2}(\xi)|$ allows for simultaneous vanishing of $s \mapsto \inn{\gamma'(s)}{\xi}$ and $s \mapsto  \inn{\gamma''(s)}{\xi}$, in contrast with the situation considered above. However, the lower bound on $|u_2(\xi)|$ implies that the functions $s \mapsto \inn{\gamma''(s)}{\xi}$ and $s \mapsto  \inn{\gamma^{(3)}(s)}{\xi}$ do not vanish simultaneously. Quantifying these observations, one obtains, via the van der Corput lemma, the decay estimate
\begin{equation}\label{J4 trivial decay b's}
    |m[b_{k,\ell}](\xi)| \lesssim 2^{-k/3+\ell/3}.
\end{equation}
Again, this improves upon the trivial decay rate \eqref{J4 trivial decay rate} since $0 \leq \ell < \floor{k/4}$ and, furthermore, $\ell = k/4$ corresponds to the critical value where \eqref{J4 trivial decay rate} and \eqref{J4 trivial decay b's} agree. However, the estimate \eqref{J4 trivial decay b's} can be further improved by decomposing each $b_{k,\ell}$ with respect to the codimension 1 cone $\Gamma_1$. Recall that this cone corresponds to directions of slow (but not necessarily minimal) decay for $\hat{\mu}$. We proceed by performing a secondary dyadic decomposition with respect to the function $u_1$, which measures the distance to $\Gamma_1$.

\medskip

\noindent\textit{\underline{Decomposition with respect to $\Gamma_1$}}. If $\xi \in \xisupp b_{k,\ell}$, then $u_2(\xi) < 0$ and therefore the roots $\theta_1^{\pm}(\xi) \in [-1, 1]$ are well-defined by Lemma~\ref{lemma:1cone}. Observe that
\begin{equation*}
    |u_2(\xi)|\sim \rho 2^{k-2\ell}  \qquad \text{and} \qquad |u_{1,2}(\xi)|\lesssim 2^{k-3\ell} \quad \text{ for all $\xi \in \xisupp b_{k,\ell}$},
\end{equation*}
and so it follows from Lemma~\ref{lemma:size of quantities} ii) that
\begin{equation*}
    |u_{1}(\xi)|\lesssim 2^{k-3\ell} \quad \text{ for all $\xi \in \xisupp b_{k,\ell}$}.
\end{equation*}
Consequently, provided $\rho$ is chosen sufficiently small,
\begin{equation}\label{Gamma1 dec 1}
    b_{k,\ell_2}(\xi;s)= b_{k,\ell_2}(\xi;s) \eta(\rho 2^{-k+3\ell_2}u_1(\xi)).
\end{equation}

For every $k \in \N$ define the indexing set
\begin{equation*}
    \Lambda(k) := \Big\{ \bm{\ell}=(\ell_1,\ell_2) \in \Z^2 : 0 \leq \ell_2 < \floor{k/4}, \,\, \ell_2 \leq \ell_1 \leq \big\lfloor\tfrac{2k+\ell_2}{9}\big\rfloor \Big\}
\end{equation*}
and, for each $0 \leq \ell_2 < \floor{k/4}$, consider the fibre associated to its projection in the $\ell_2$-variable,
\begin{equation*}
    \Lambda(k, \ell_2):= \big\{ \bm{\ell} \in \Lambda(k) : \bm{\ell} = (\ell_1, \ell_2) \textrm{ for some $\ell_1 \in \Z$} \big\}.
\end{equation*}
In view of \eqref{Gamma1 dec 1}, we may decompose
\begin{equation*}
    b_{k,\ell_2}(\xi;s)= b_{k,\ell_2}(\xi;s) \eta(\rho 2^{-k+3\ell_2} u_{1}(\xi))= a_{k,\ell_2,3}(\xi;s) + a_{k,\ell_2,4}(\xi;s) +  \sum_{\bm{\ell} \in \Lambda(k, \ell_2)} b_{k,\bm{\ell}\,}(\xi;s)
\end{equation*}
where 
\begin{align*}
    a_{k,\ell_2,3}(\xi;s)&:=b_{k,\ell_2}(\xi;s)   \big(\eta(\rho 2^{-k+3\ell_2} u_{1}(\xi)) - \eta(\rho^{-4}2^{-k+3\ell_2} u_{1}(\xi))\big),
\\
    a_{k,\ell_2,4}(\xi;s)&:=b_{k,\ell_2}(\xi;s)   \eta(\rho^{-4}2^{-k+3\ell_2} u_{1}(\xi))\big( 1 - \eta(\rho^{-1} 2^{\ell_2}(s - \theta_1(\xi))) \big)
\end{align*}
and
\begin{equation*}
    b_{k, \bm{\ell}}(\xi;s):=
    \begin{cases}
    b_{k, \ell_2}(\xi;s) \beta(\rho^{-4}2^{-k+3\ell_1} u_{1}(\xi))\eta(\rho^{-1} 2^{\ell_2}(s - \theta_1(\xi))) & \text{ if }   \ell_1< \floor{(2k+\ell_2)/9} \\
    b_{k, \ell_2}(\xi;s) \eta(\rho^{-4}2^{-k+3\ell_1} u_{1}(\xi))\eta(\rho^{-1} 2^{\ell_2}(s - \theta_1(\xi))) & \text{ if } \ell_1= \floor{(2k+\ell_2)/9}
    \end{cases}
\end{equation*}
for $\bm{\ell} = (\ell_1, \ell_2) \in \Lambda(k)$.\medskip

\noindent\textit{\underline{The final decomposition}}. Combining the preceding definitions, we have 
\begin{equation}\label{J=4 symbol dec}
    a_{k} =   \sum_{\ell=0}^{\floor{k/4}} \sum_{\iota=1}^4 a_{k,\ell,\iota} + \sum_{\bm{\ell} \in \Lambda(k)} b_{k,\bm{\ell}}
\end{equation}
where for $\iota=3,4$ it is understood that $a_{k,\ell,\iota} \equiv 0$ for $\ell = \floor{k/4}$. This concludes the initial frequency decomposition.\medskip

The following remarks help to motive the above decomposition:\medskip

For $\xi  \in \xisupp a_{k,\ell, 3}$ or $\xi \in \xisupp a_{k, \ell, 4}$ it transpires that the functions $s \mapsto \inn{\gamma'(s)}{\xi}$ and $s \mapsto  \inn{\gamma''(s)}{\xi}$ do not vanish simultaneously. Quantifying these observations, one obtains the decay estimate
\begin{equation*}
    |m[a_{k,\ell, \iota}](\xi)| \lesssim 2^{-k/2+\ell} \qquad \textrm{for $\iota = 3$, $4$},
\end{equation*}
exactly as in \eqref{J4 decay a}. See Lemma~\ref{L2 bounds J=4 lem} a) for details. Here, however, the attendant stationary phase arguments are a little more delicate than those used to prove \eqref{J4 decay a} and, in particular, they rely on a careful analysis involving both $\Gamma_1$ and $\Gamma_2$. The lower bounds on $|u_1(\xi)|$ and $|s-\theta_1(\xi)|$ are fundamental in each case. \medskip

Turning to the $b_{k,\bm{\ell}}$ symbols, the localisation $|s-\theta_1(\xi)|\lesssim \rho 2^{-\ell_2}$ leads to the following key observation.

\begin{lemma}\label{J4 lower bound lemma}
Let $k \in \N$ and $\bm{\ell}=(\ell_1,\ell_2) \in \Lambda(k)$. Then
\begin{equation}\label{J=4 lower bound 3rd order bs}
    |\inn{\gamma^{(3)}(s)}{\xi}| \sim \rho^{1/2} 2^{k-\ell_2} \qquad \textrm{for all $(\xi;s) \in \supp b_{k,\bm{\ell}}$.}
\end{equation}
\end{lemma}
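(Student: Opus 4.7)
The plan is to Taylor expand $\inn{\gamma^{(3)}(s)}{\xi}$ around $s=\theta_1(\xi)$, where by construction
\begin{equation*}
\inn{\gamma^{(3)}\circ\theta_1(\xi)}{\xi} = u_{3,1}(\xi).
\end{equation*}
The main term therefore equals $u_{3,1}(\xi)$, and we are reduced to showing that $|u_{3,1}(\xi)| \sim \rho^{1/2} 2^{k-\ell_2}$ on $\supp b_{k,\bm{\ell}}$ and that the Taylor remainder is dominated by this main term.

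For the main term: the factor $\beta_-(\rho^{-1}2^{-k+2\ell_2}u_2(\xi))$ in the definition of $b_{k,\ell_2}$ forces $u_2(\xi) < 0$ and $|u_2(\xi)| \sim \rho \cdot 2^{k-2\ell_2}$, so $\theta_1^\pm$ (and hence $\theta_1$) are well-defined there. Since $|\xi| \sim 2^k$ and $u_{3,1}$, $u_2$ are homogeneous of degree one in $\xi$, Lemma~\ref{lemma:size of quantities} i) applied at $\xi/|\xi|$ yields
\begin{equation*}
|u_{3,1}(\xi)| \sim |\xi|^{1/2}|u_2(\xi)|^{1/2} \sim 2^{k/2}\bigl(\rho\cdot 2^{k-2\ell_2}\bigr)^{1/2} \sim \rho^{1/2}\,2^{k-\ell_2}.
\end{equation*}

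For the remainder: the factor $\eta(\rho^{-1} 2^{\ell_2}(s-\theta_1(\xi)))$ in the definition of $b_{k,\bm{\ell}}$ forces $|s-\theta_1(\xi)| \lesssim \rho\, 2^{-\ell_2}$. Since $\theta_1(\xi) \in [-1,1]$ and $|s|\le \delta_0 \le 1$, the path of integration in
\begin{equation*}
\inn{\gamma^{(3)}(s)}{\xi} - u_{3,1}(\xi) = \int_{\theta_1(\xi)}^{s} \inn{\gamma^{(4)}(t)}{\xi}\,\ud t
\end{equation*}
lies inside $[-1,1]$, where the uniform bound $|\inn{\gamma^{(4)}(t)}{\xi}| \lesssim 2^k$ (a consequence of \eqref{4 derivative bound} together with $\|\gamma^{(4)}\|_\infty \lesssim 1$) applies. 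Therefore the remainder is bounded by $\rho\,2^{-\ell_2}\cdot 2^k = \rho\cdot 2^{k-\ell_2}$, which is $\rho^{1/2}$ times the main term. Choosing $\rho$ sufficiently small (as already fixed, $\rho := 10^{-6}$ suffices) ensures that the remainder is strictly dominated, so that
\begin{equation*}
|\inn{\gamma^{(3)}(s)}{\xi}| \sim \rho^{1/2}\,2^{k-\ell_2},
\end{equation*}
as claimed. The only mild subtlety is the careful matching of powers of $\rho$ between the main term (which is of size $\rho^{1/2}$) and the remainder (which is of size $\rho$); this is precisely what forces the definition of $b_{k,\ell_2}$ to use the parameter $\rho$ in the $u_2$-localisation rather than an absolute constant.
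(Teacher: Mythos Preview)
Your proof is correct and follows essentially the same route as the paper: Taylor expand (equivalently, apply the mean value theorem) around $\theta_1(\xi)$, use the $u_2$-localisation together with Lemma~\ref{lemma:size of quantities} i) to get $|u_{3,1}(\xi)|\sim \rho^{1/2}2^{k-\ell_2}$, and then use the $s$-localisation $|s-\theta_1(\xi)|\lesssim \rho\,2^{-\ell_2}$ to see that the remainder is $O(\rho\,2^{k-\ell_2})$ and hence negligible for small $\rho$. One minor comment: the upper bound $|\inn{\gamma^{(4)}(t)}{\xi}|\lesssim 2^k$ comes simply from $\|\gamma^{(4)}\|_\infty\lesssim 1$ and $|\xi|\sim 2^k$; the reference to \eqref{4 derivative bound} (which is a lower bound) is not needed here.
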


\begin{proof}
The localisation of the symbol ensures the key properties 
\begin{equation}\label{J=4 lower bound b 1}
    |u_2(\xi)| \sim \rho 2^{k-2\ell_2}, \quad |s - \theta_1(\xi)| \lesssim \rho 2^{-\ell_2} \qquad \textrm{for all $(\xi;s) \in \supp b_{k,\bm{\ell}}$}.
\end{equation}

By the mean value theorem we obtain
\begin{equation}\label{J=4 lower bound b 2}
    \inn{\gamma^{(3)}(s)}{\xi} = u_{3,1}(\xi) + \omega(\xi;s) \cdot (s-\theta_1(\xi)),
\end{equation}
where $\omega$ satisfies $|\omega(\xi;s)| \sim 2^k$. 
Observe that \eqref{J=4 lower bound b 1} and Lemma~\ref{lemma:size of quantities} i) imply $|u_{3,1}(\xi)| \sim \rho^{1/2} 2^{k - \ell_2}$. Consequently, provided $\rho$ is sufficiently small, the second inequality in \eqref{J=4 lower bound b 1} implies that the $u_{3,1}$ term dominates the right-hand side of \eqref{J=4 lower bound b 2} and therefore the desired bound \eqref{J=4 lower bound 3rd order bs} holds.
\end{proof}

The condition \eqref{J=4 lower bound 3rd order bs} reveals that the symbol $b_{k,\bm{\ell}}$ essentially corresponds to a scaled version of the multiplier $a_{k,\ell}$ from the $J=3$ case, for a suitable choice of $\ell$ and $k$. Of course, the condition \eqref{J=4 lower bound 3rd order bs} immediately implies
\begin{equation}\label{J4 trivial decay b's revisited}
    |m[b_{k,\bm{\ell}}(\xi)]| \lesssim 2^{-k/3 + \ell_2/3},
\end{equation}
as in \eqref{J4 trivial decay b's}. However, arguing as in Lemma~\ref{J3 L2 lemma}, one may improve the decay rate to
\begin{equation}\label{J4 decay b's}
    |m[b_{k,\bm{\ell}}(\xi)]| \lesssim 2^{-k/2 + (3\ell_1+\ell_2)/4};
\end{equation}
see Lemma~\ref{L2 bounds J=4 lem} b). Indeed, for each $0 \leq \ell_2 < \floor{k/4}$, the decomposition of the $a_{k,\ell}$ for the $J=3$ case in \S\ref{subsec:J3 dec symbols} matches that of the $b_{k,\bm{\ell}}$ above, with the identification 
\begin{equation*}
 k \longleftrightarrow k-\ell_2 \qquad \textrm{and} \qquad  \ell \longleftrightarrow \tfrac{3\ell_1-\ell_2}{2}.
\end{equation*}
The bound \eqref{J4 decay b's} corresponds to the conclusion of Lemma~\ref{J3 L2 lemma} once we substitute in these indices. Observe that \eqref{J4 decay b's} is indeed an improvement over the trivial decay rate \eqref{J4 trivial decay b's revisited} since, for $\ell_2$ fixed, $\ell_1$ varies over the range $0 \leq \ell_1 \leq \floor{\tfrac{2k + \ell_2}{9}}$. Note that $\ell_1 = \tfrac{2k + \ell_2}{9}$ corresponds to the critical index where \eqref{J4 trivial decay b's revisited} and \eqref{J4 decay b's} agree. 

%%%%%%%%%%%%%%%%%%%%%%%%%%
%%%  figure
%%%%%%%%%%%%%%%%%%%%%%%%%%

\begin{figure}
\begin{tikzpicture}[scale=1.5] 

\begin{scope}[scale=1.5]
\draw[thick,->] (-.1,0) -- (2.4,0) node[below] {$ \ell_2$};
\draw[thick,->] (0,-.1) -- (0,2.4) node[left] {$ \ell_1$};

\draw (2,-0.025) -- (2,.025) node[below= 0.25cm] {$ k/4$}; 
\draw (-0.025,2) -- (.025,2) node[left= 0.25cm] {$ k/4$}; 

\draw[dashed] (0,0) -- (2,2);

\draw (2,2) -- (2,0);

\draw (0,1.75) -- (2,2);
\draw (-0.025,1.75) -- (.025,1.75) node[left= 0.25cm] {$ 2k/9$}; 

\draw (0.1,0) -- (2,1.9);

\draw (0,0.1) -- (1.87, 1.97);

%\draw (1.75,1.65) -- (1.75,1.97);
\end{scope}
\end{tikzpicture}
\caption{Setting $\ell=\ell_1$ for $a_{k,\ell,1}$ and $\ell=\ell_2$ for $a_{k,\ell,\iota}$, $2 \leq \iota \leq 4$, one can interpret the decomposition \eqref{J=4 symbol dec} in the $(\ell_2,\ell_1)$-plane as follows. The symbols $a_{k,\ell,1}$ correspond to horizontal lines in the lower triangle, whilst the symbols $a_{k,\ell,2}$ correspond to vertical lines in the diagonal and upper triangle whenever $u_2(\xi)>0$. If $u_2(\xi)<0$, the symbols $a_{k,\ell,3}$ correspond to vertical lines in the fattened diagonal, the symbols $a_{k,\ell,4}$ correspond to vertical lines in the upper triangle (under the additional condition that $|s-\theta_1(\xi)|\gtrsim 2^{-\ell_2}$) and the symbols $b_{k,\bm{\ell}}$ correspond to integer points in the upper triangle (under the additional condition that $|s-\theta_1(\xi)|\lesssim  2^{-\ell_2}$).}
%\label{figure:regions}
\end{figure}
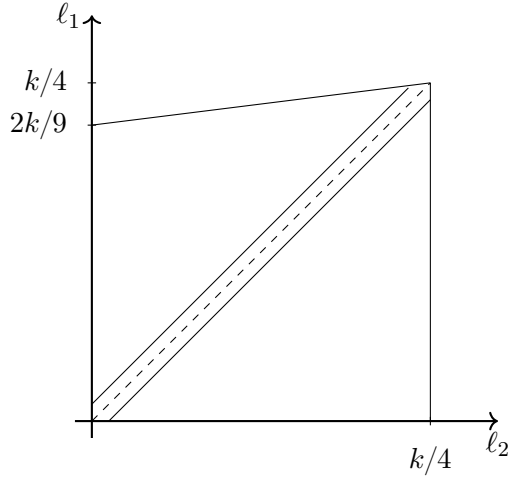

%%%%%%%%%%%%%%%%%%%%%%%
%% end figure
%%%%%%%%%%%%%%%%%%%%%%%

\begin{remark} The symbols in the above decomposition are in fact smooth. This is not entirely obvious, since the function $u_1$ is defined pointwise as the minimum of $|u_1^-|$ and $|u_1^+|$. Thus, $u_1$ fails to be smooth whenever $u_1^-(\xi) = \pm u_1^+(\xi)$. However, the decomposition ensures that $|u_2(\xi)| \sim \rho 2^{k-2\ell_2}$ and $|u_1(\xi)| \lesssim \rho^4 2^{k-3\ell_2}$ for all $\xi \in \xisupp a_{k,\ell_2, 4}$ or $\xi \in \xisupp b_{k,\bm{\ell}}$. Combining these facts with Lemma~\ref{lemma:size of quantities}, one easily deduces that
\begin{equation*}
 |u_1^-(\xi) \pm  u_1^+(\xi)| \gtrsim \rho^{3/2} 2^{k-3\ell_2} 
\end{equation*}
and so $u_1$ is smooth on the $\xi$-support of either $a_{k,\ell_2, 4}$ or $b_{k,\ell_2}$. Furthermore, these observations also imply that the function $\theta_1(\xi)$ is smooth on the supports. The symbol $a_{k, \ell_2, 3}$ can be treated in a similar manner, by writing it as a difference of the symbols \begin{equation*}
    b_{k,\ell_2}(\xi;s) \eta(\rho 2^{-k+3\ell_2}u_1(\xi))=b_{k,\ell_2}(\xi;s) \qquad \textrm{and} \qquad b_{k,\ell_2}(\xi;s)\eta(\rho^{-4} 2^{-k+3\ell_2}u_1(\xi)) 
\end{equation*}
and showing that both are smooth. 
\end{remark}

Given the above decomposition, in order to prove the $J=4$ case of Theorem~\ref{Sobolev theorem}, it suffices to establish the following.

\begin{proposition}\label{J4 Lp proposition} 
Let $6 \leq p \leq  12$, $k \in \N$ and $\varepsilon > 0$.
\begin{enumerate}[a)]
\item For all $0 \leq \ell \leq \floor{k/4}$ and $1 \leq \iota \leq 4$,
\begin{equation*}
    \|   m[a_{k,\ell,\iota}]\|_{M^p(\R^4)} \lesssim_{p, \varepsilon} 2^{-k/p-\ell(1/2 - 3/p - \varepsilon)}.
\end{equation*}
    \item For all $\bm{\ell}=(\ell_1,\ell_2) \in \Lambda(k)$, 
\begin{equation*}
    \| m[b_{k,\bm{\ell}}]\|_{M^p(\R^4)} \lesssim_{p, \varepsilon} 2^{-3(\ell_1 - \ell_2)(1/2p - \varepsilon) -\ell_2(1/2 - 3/p - \varepsilon)}.
\end{equation*}
\end{enumerate}
\end{proposition}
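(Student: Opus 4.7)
The plan is to adapt the decompose--decouple--interpolate scheme from Section~\ref{sec:J=3}. For each symbol in both families I would introduce a secondary cutoff localizing the parameter defining the relevant ``slow decay'' point (either $\theta_2(\xi)$ or $\theta_1(\xi)$), verify via Frenet-frame Taylor analysis that each piece is Fourier supported in an appropriate Frenet box, apply Theorem~\ref{Frenet decoupling theorem} at the correct $d$ and $r$ to pass from the $L^p$-norm of the sum to an $\ell^p$-aggregation of $L^p$-norms, further localize $s$ to a small window so that the complement contributes negligibly (as in Lemma~\ref{J3 localisation lemma}), and finally interpolate a van der Corput-based $L^2$ bound with an integration-by-parts-based $L^\infty$ bound on each fully localized piece.

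For part~a), set $i(\iota) := 2$ when $\iota \in \{1, 2\}$ and $i(\iota) := 1$ when $\iota \in \{3, 4\}$, and write $a_{k,\ell,\iota}(\xi;s) = \sum_{\mu \in \Z} a_{k,\ell,\iota}(\xi;s)\,\zeta(2^\ell \theta_{i(\iota)}(\xi) - \mu)$ with $s_\mu := 2^{-\ell}\mu$. Combining the size bounds on $u_{1,2}, u_2, u_1, u_{3,1}$ built into $a_{k,\ell,\iota}$ with \eqref{Frenet bound}, an argument analogous to Lemma~\ref{J=3 supp lem} would show that each $\mu$-piece is Fourier supported in $2^k \cdot \pi_2(s_\mu; 2^{-\ell})$, a $(2, 2^{-\ell})$-Frenet box. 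Theorem~\ref{Frenet decoupling theorem} with $d = 3$ (whose critical exponent is $d(d+1) = 12$) would then contribute a decoupling factor $2^{\ell(1/2 - 1/p) + \varepsilon \ell}$ for $p \in [6, 12]$. Further $s$-localization to $|s - s_\mu| \lesssim \rho\, 2^{-\ell(1-\varepsilon)}$, followed by van der Corput on the multiplier (giving the $L^2$ bound $\lesssim 2^{-k/2 + \ell}$ via the lower bound among $|\inn{\gamma'(s)}{\xi}|$, $|\inn{\gamma''(s)}{\xi}|$ available from the $u$-localization) and an integration-by-parts argument against the Frenet basis at $s_\mu$ (giving the $L^\infty$ bound $\lesssim 2^{-\ell(1-\varepsilon)}$), and mixed-norm interpolation, produce the $\ell^p(L^p)$ aggregate $\lesssim 2^{-k/p - \ell(1 - 4/p - \varepsilon)} \|f\|_{L^p}$. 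Combining with the decoupling cost yields the desired $2^{-k/p - \ell(1/2 - 3/p - \varepsilon)}$.

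For part~b), Lemma~\ref{J4 lower bound lemma} reveals that $|\inn{\gamma^{(3)}(s)}{\xi}| \sim 2^{k - \ell_2}$ on $\supp b_{k,\bm{\ell}}$, so $b_{k,\bm{\ell}}$ behaves like a $J=3$ symbol at effective frequency $2^{k-\ell_2}$ with effective scale parameter $(3\ell_1 - \ell_2)/2$, restricted to the $s$-window $|s - \theta_1(\xi)| \lesssim 2^{-\ell_2}$. I would therefore localize $\theta_1(\xi)$ at scale $2^{-(3\ell_1-\ell_2)/2}$; Frenet-frame/Taylor bookkeeping---taking account of the anisotropy between the $\be_4$-direction (of scale $2^k$) and the $\be_3$-direction (of scale $2^{k-\ell_2}$)---would identify each piece as Fourier supported in a suitably rescaled $(1, 2^{-(3\ell_1-\ell_2)/2})$-Frenet box. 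Theorem~\ref{Frenet decoupling theorem} with $d = 2$ then contributes decoupling cost $2^{(3\ell_1-\ell_2)(1/2 - 2/p) + \varepsilon (3\ell_1-\ell_2)/2}$ for $p \in [6, 12]$ (where $\alpha(p) = 1 - 4/p$, since $d(d+1) = 6$). Van der Corput at the rescaled level gives $L^2$ bound $\lesssim 2^{-(k-\ell_2)/2 + (3\ell_1-\ell_2)/4}$, integration by parts gives $L^\infty$ bound $\lesssim 2^{-(3\ell_1-\ell_2)(1-\varepsilon)/2}$, and mixed-norm interpolation combined with the decoupling cost produces $\lesssim 2^{-k/p - 3(\ell_1-\ell_2)/(2p) + O(\varepsilon)}$; since $\ell_2 \leq k/4$ this implies the stated proposition bound.

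The hard part will be the Frenet-frame/Taylor bookkeeping required to identify the Fourier support of each localized piece as the correct (possibly anisotropic) Frenet box. In part~a) for $\iota \in \{3, 4\}$ the natural center is $\theta_1(\xi)$ rather than $\theta_2(\xi)$, and the $u_1^\pm$ branch ambiguity must be controlled using Lemma~\ref{lemma:size of quantities}. In part~b) the two-scale geometry is more delicate since the $\be_3$- and $\be_4$-directions are not comparable; one must factor out the $\be_4$-direction scale $2^{\ell_2}$ before the unit-scale $d = 2$ decoupling theorem can be invoked for the $(1, 2^{-(3\ell_1-\ell_2)/2})$-Frenet box. A secondary delicate point is the van der Corput $L^2$ estimate for $a_{k,\ell,3}$ and $a_{k,\ell,4}$: since $s \mapsto \inn{\gamma'(s)}{\xi}$ may admit critical points at $\theta_1^\pm(\xi)$, one must split the $s$-integration into regions of proximity to $\theta_1^\pm$ and combine first- and second-derivative lower bounds, the $u_1$ lower bound being essential to preclude simultaneous vanishing of the first two phase derivatives.
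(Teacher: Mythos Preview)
Your plan for part a) is essentially the paper's proof: the paper also localises at scale $2^{-\ell}$ (it uses $\theta_2$ for all four values of $\iota$ rather than switching to $\theta_1$ for $\iota\in\{3,4\}$ as you propose, but since $|\theta_1(\xi)-\theta_2(\xi)|\sim 2^{-\ell}$ on the relevant supports this is immaterial), shows the $\mu$-pieces are supported in $(2,2^{-\ell})$-Frenet boxes, applies the $d=3$ case of Theorem~\ref{Frenet decoupling theorem}, and then interpolates the $L^2$ and $L^\infty$ bounds you describe. Your identification of the delicate points in the $L^2$ estimate for $\iota\in\{3,4\}$ is accurate.

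Part b), however, has a real gap. You propose a single application of Theorem~\ref{Frenet decoupling theorem} with $d=2$ at scale $r=2^{-(3\ell_1-\ell_2)/2}$. But the $\nu$-pieces $b_{k,\bm{\ell}}^{\nu}$ are \emph{not} supported in $(1,r)$-Frenet boxes: after normalising so that $|\inn{\be_3(s_\nu)}{\xi}|\sim 1$, one finds $|\inn{\be_4(s_\nu)}{\xi}|\lesssim 2^{\ell_2}$ rather than $\lesssim 1$ (this is exactly the anisotropy you flag). Theorem~\ref{Frenet decoupling theorem} as stated does not apply to such stretched boxes, and there is no \emph{global} linear map that fixes this---the direction $\be_4(s_\nu)$ varies with $\nu$, and a crude coordinate dilation $\xi_4\mapsto 2^{-\ell_2}\xi_4$ takes the curve out of the model class $\mathfrak{G}_4(\delta_0)$. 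Tracing through the proof in \S\ref{sec:decoupling}, the enlarged $\xi_4$-range would force the partition over the parameter $\ba$ to have $\sim 2^{\ell_2}$ pieces, and the triangle-inequality loss there is fatal.

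The paper's remedy is an \emph{iterated} decoupling. First group the $\nu$-pieces into coarse blocks $b_{k,\bm{\ell}}^{*,\mu}:=\sum_{\nu\in\mathfrak{N}_{\bm{\ell}}(\mu)}b_{k,\bm{\ell}}^{\nu}$ according to $|\theta_1(\xi)-s_\mu|\lesssim 2^{-\ell_2}$; these blocks \emph{are} supported in $(2,2^{-\ell_2})$-Frenet boxes, so the $d=3$ case of Theorem~\ref{Frenet decoupling theorem} decouples them at cost $2^{\ell_2(1/2-1/p+\varepsilon)}$. Then, \emph{within each block}, apply the affine rescaling $[\gamma]_{s_\mu,2^{-\ell_2}}$; after this the $\nu$-pieces land in genuine $(1,2^{-3(\ell_1-\ell_2)/2})$-Frenet boxes for the rescaled curve $\tilde{\gamma}\in\mathfrak{G}_4(\delta_0)$, and the $d=2$ case of Theorem~\ref{Frenet decoupling theorem} decouples them at cost $2^{3(\ell_1-\ell_2)(1-4/p+\varepsilon)/2}$. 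Combining the two stages with your $L^2$ and $L^\infty$ bounds (which are correct) gives the proposition directly, without needing to invoke $\ell_2\le k/4$.
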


\begin{proof}[Proof of $J=4$ case of Theorem~\ref{Sobolev theorem}, assuming Proposition~\ref{J4 Lp proposition}] Let $6 < p \leq 12$ and define
\begin{equation*}
    \varepsilon_p := \frac{1}{2} \cdot \min\Big\{ \frac{1}{2} - \frac{3}{p}, \frac{1}{2p}\Big\} > 0.
\end{equation*}
Apply the decomposition \eqref{J=4 symbol dec} to deduce that
\begin{equation*}
        \|m[a_{k}]\|_{M^p(\R^4)} \leq   \sum_{\iota=1}^4 \sum_{\ell=0}^{\floor{k/4}}  \|m[a_{k,\ell,\iota}]\|_{M^p(\R^4)} + \sum_{\bm{\ell} \in \Lambda(k)} \|m[b_{k,\bm{\ell}}]\|_{M^p(\R^4)}.
\end{equation*}

By Proposition~\ref{J4 Lp proposition} a), we have
\begin{equation*}
    \sum_{\iota=1}^4 \sum_{\ell=0}^{\floor{k/4}}  \|m[a_{k,\ell,\iota}]\|_{M^p(\R^4)} \lesssim_p 2^{-k/p}\sum_{\ell = 0}^{\infty} 2^{-\ell(1/2 - 3/p - \varepsilon_p)} \lesssim_p 2^{-k/p},
\end{equation*}
Similarly, by Proposition~\ref{J4 Lp proposition} b), we have
\begin{equation*}
    \sum_{\bm{\ell} \in \Lambda(k)} \|m[b_{k,\bm{\ell}}]\|_{M^p(\R^4)} \lesssim_p 2^{-k/p}\sum_{\ell_2 = 0}^{\infty} 2^{-\ell_2(1/2 - 3/p - \varepsilon_p)} \sum_{\ell_1 = \ell_2}^{\infty} 2^{-3(\ell_1-\ell_2)(1/2p - \varepsilon_p)} \lesssim_p 2^{-k/p}.
\end{equation*}
Combining these observations establishes the desired result for $6 < p \leq 12$. The remaining range $12< p \leq \infty$ follows by interpolation with a trivial $L^\infty$ estimate.
\end{proof}

The rest of \S\ref{sec:J=4} is devoted to establishing Proposition \ref{J4 Lp proposition}. Before proceeding, it is instructive to describe the general strategy.\medskip

By Plancherel's theorem, \eqref{J4 decay a} and \eqref{J4 decay b's} imply
\begin{equation*}
    \|m[a_{k,\ell, \iota}]\|_{M^2(\R^4)} \lesssim 2^{-k/2 + \ell/2} \qquad \quad \textrm{and} \qquad 
    \|m[b_{k,\bm{\ell}}]\|_{M^2(\R^4)} \lesssim 2^{-k/2 + (3\ell_1+\ell_2)/4}.
\end{equation*}
As $\ell$, $\ell_1$ and $\ell_2$ increase, these estimates become weaker. To compensate for this, we attempt to establish stronger estimates for the $M^{\infty}(\R^4)$ norm. This is not possible, however, for the entire multipliers and a further decomposition is required. The $u_2(\xi)$ localisation means that $m[a_{k,\ell,\iota}]$ and $m[b_{k,\bm{\ell}}]$ are supported in a neighbourhood of the cone $\Gamma_2$. Furthermore, the $u_1(\xi)$ localisation means that $m[b_{k,\bm{\ell}}]$ is localised in a neighbourhood of the cone $\Gamma_1$. Consequently, one may apply a decoupling theorem for such cones (in particular, instances of Theorem~\ref{Frenet decoupling theorem}) to \textit{radially} decompose the multipliers. In the case of the $m[b_{k,\bm{\ell}}]$, we first decouple with respect to the cone $\Gamma_2$. After rescaling, the localised pieces can be treated in a similar manner to the multipliers from the $J=3$ case. In particular, we apply a second decoupling to each rescaled piece with respect to the cone $\Gamma_1$ to further decompose into smaller pieces. For both the $a_{k,\ell,\iota}$ and $b_{k,\bm{\ell}}$,  it transpires that each radially localised piece is automatically localised along the curve in the physical space, and this leads to favourable $M^{\infty}(\R^4)$ bounds: see Lemma~\ref{J=4 curve loc lem} and Lemma~\ref{J4 Linfty bounds lem} below.

%%%%%%%%%%%%%%%%%%%%%%%%%%%%%%%%%%%%%%%%%%
%%%%%%%%%%%%%%%%%%%%%%%%%%%%%%%%%%%%%%%%%%
%%%  FOURIER LOCALISATION AND DECOUPLING
%%%%%%%%%%%%%%%%%%%%%%%%%%%%%%%%%%%%%%%%%%
%%%%%%%%%%%%%%%%%%%%%%%%%%%%%%%%%%%%%%%%%%

\subsection{Fourier localisation and decoupling} \label{subsec:Fourier loc J=4}

%%%%%%%%%%%%%%%%%%%%%%%%%%%%%%%%%%%%%%%%%%
%%%  FOURIER LOCALISATION AND DECOUPLING
%%%%%%%%%%%%%%%%%%%%%%%%%%%%%%%%%%%%%%%%%%

The first step towards proving Proposition~\ref{J4 Lp proposition} is to further decompose the symbols $a_{k,\ell,\iota}$ and $b_{k,\bm{\ell}}$ in terms of $\theta_2(\xi)$ and $\theta_1(\xi)$ respectively. Fix $\zeta \in C^{\infty}(\R)$ with $\supp \zeta \subseteq [-1,1]$ such that $\sum_{l \in \Z} \zeta(\,\cdot\, - l) \equiv 1$. For $0 \leq \ell \leq \floor{k/4}$, $1 \leq \iota \leq 4$ and $\bm{\ell}=(\ell_1,\ell_2) \in \Lambda(k)$, write
\begin{equation*}
    a_{k, \ell,\iota}=\sum_{\mu\in \Z}a_{k, \ell, \iota}^{\mu} \qquad \text{ and } \qquad b_{k, \bm{\ell}}=\sum_{\nu\in \Z}b_{k, \bm{\ell}}^{\nu}
\end{equation*}
where
\begin{align}\label{eq:dec theta_2 lower triangle}
    a_{k,\ell,\iota}^{\mu}(\xi;s) &:= a_{k,\ell,\iota}(\xi;s) \zeta(2^{\ell}\theta_2(\xi) - \mu),\\
\label{eq:b freq dec theta}
    b_{k,\bm{\ell}}^{\nu}(\xi;s) &:= b_{k,\bm{\ell}}(\xi;s) \zeta(2^{(3\ell_1-\ell_2)/2}\theta_1(\xi) - \nu).
\end{align}
In the case of the $b_{k,\bm{\ell}}$, we also consider symbols formed by grouping the $b_{k,\bm{\ell}}^{\nu}$ into pieces at the larger scale $2^{-\ell_2}$. Given $\bm{\ell}=(\ell_1,\ell_2) \in \Lambda(k)$ we write $\Z = \bigcup_{\mu \in \Z} \mathfrak{N}_{\bm{\ell}}(\mu)$, where the sets $\mathfrak{N}_{\bm{\ell}}(\mu)$ are disjoint and satisfy
\begin{equation*}
    \mathfrak{N}_{\bm{\ell}}(\mu)\subseteq \{\nu \in \Z: |\nu-2^{3(\ell_1-\ell_2)/2} \mu| \leq 2^{3(\ell_1-\ell_2)/2} \}.
\end{equation*}
For each $\mu \in \Z$, we then define
\begin{equation*}
    b_{k,\bm{\ell}}^{*,\mu}:=\sum_{\nu \in \mathfrak{N}_{\bm{\ell}}(\mu)} b_{k,\bm{\ell}}^\nu
\end{equation*}
and note that $|\theta_1(\xi)-s_{\mu}|\lesssim 2^{-\ell_2}$ on $\supp b_{k,\bm{\ell}}^{*,\mu}$, where $s_{\mu}:=2^{-\ell_2}\mu$. Of course, by the definition of the sets $\mathfrak{N}_{\bm{\ell}}(\mu)$,
\begin{equation*}
    b_{k,\bm{\ell}}=\sum_{\mu \in \Z} b_{k,\bm{\ell}}^{*,\mu} = \sum_{\mu \in \Z} \sum_{\nu \in \mathfrak{N}_{\bm{\ell}}(\mu)} b_{k,\bm{\ell}}^\nu.
\end{equation*}

%%%%%%%%%%%%%%%%%%%%%%%%%%%%%%%%%%%%%%
%%%%%% SUPPORT LEMMAS
%%%%%%%%%%%%%%%%%%%%%%%%%%%%%%%%%%%%%

Given $0 < r \leq 1$ and $s \in I$, recall the definition of the $(2,r)$-\textit{Frenet boxes} $\pi_{2}(s;r)$ introduced in Definition~\ref{def Frenet box}:
\begin{equation}\label{J=4 symbol supp}
    \pi_2(s;r):= \big\{ \xi \in \hat{\R}^4: |\inn{\be_j(s)}{\xi}| \lesssim r^{4-j} \,\, \textrm{for $1 \leq j \leq 3$}, \quad 
    |\inn{\be_{4}(s)}{\xi}| \sim 1\big\}. 
\end{equation}
The symbols $a_{k,\ell,\iota}^\mu$ and $b_{k,\bm{\ell}}^{\ast, \mu}$ satisfy the following support properties.

\begin{lemma}\label{J=4 supp lem} With the above definitions,
\begin{enumerate}[a)]
    \item $\xisupp a_{k, \ell,\iota}^{\mu} \subseteq 2^k \cdot \pi_2(s_{\mu}; 2^{-\ell})$ for all $0 \leq \ell \leq \floor{k/4}$, $1 \leq \iota \leq 4$ and $\mu \in \Z$, where $s_{\mu}:=2^{-\ell }\mu$;\smallskip
    \item $\xisupp b_{k,\bm{\ell}}^{*,\mu} \subseteq 2^k \cdot \pi_2(s_{\mu}; 2^{-\ell_2})$ for all $\bm{\ell} = (\ell_1, \ell_2) \in \Lambda(k)$ and $\mu \in \Z$, where $s_{\mu} := 2^{-\ell_2}\mu$.
\end{enumerate}
\end{lemma}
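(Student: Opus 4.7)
The proof will closely mirror the strategy of Lemma~\ref{J=3 supp lem}, adapting it to the four-dimensional setting. In both parts the plan is: (i) extract from the support conditions of the symbol quantitative bounds on the inner products $\inn{\gamma^{(i)}\circ\theta_\star(\xi)}{\xi}$ at the appropriate reference point $\theta_\star(\xi) \in \{\theta_1(\xi),\theta_2(\xi)\}$; (ii) translate these into bounds on $\inn{\be_i\circ\theta_\star(\xi)}{\xi}$, using that the change of basis between $\{\gamma^{(i)}\circ\theta_\star(\xi)\}_{i=1}^4$ and the Frenet frame $\{\be_i\circ\theta_\star(\xi)\}_{i=1}^4$ is lower triangular with diagonal entries close to~$1$; (iii) use the Frenet frame estimate \eqref{Frenet bound} together with the $\theta_\star$-localisation $|\theta_\star(\xi)-s_\mu|\lesssim 2^{-\ell}$ (or $2^{-\ell_2}$) to bound $|\inn{\be_i\circ\theta_\star(\xi)}{\be_j(s_\mu)}|\lesssim 2^{-|i-j|\ell}$; (iv) expand $\xi$ in the orthonormal basis $\{\be_i\circ\theta_\star(\xi)\}_{i=1}^4$ and take inner products with $\be_j(s_\mu)$ to obtain the required estimates defining $\pi_2(s_\mu;\,\cdot\,)$.

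For part a), on $\xisupp a_{k,\ell,\iota}^\mu$ the localisations imposed in \S\ref{J=4 decomp sec} give $|u_{1,2}(\xi)|\lesssim 2^{k-3\ell}$, $|u_2(\xi)|\lesssim 2^{k-2\ell}$, the exact identity $\inn{\gamma^{(3)}\circ\theta_2(\xi)}{\xi}=0$ coming from the definition of $\theta_2$, and $|\inn{\gamma^{(4)}\circ\theta_2(\xi)}{\xi}|\sim 2^k$ from \eqref{4 derivative bound}; these hold uniformly for $\iota=1,2,3,4$ since the additional $u_1$- and $\theta_1$-localisations in the cases $\iota=3,4$ only refine (rather than enlarge) the $\theta_2$-picture. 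Step (ii) then yields $|\inn{\be_i\circ\theta_2(\xi)}{\xi}|\lesssim 2^{k-3\ell},\ 2^{k-2\ell},\ 2^{k-2\ell},\ 2^k$ for $i=1,2,3,4$, and step (iv) combined with $|\theta_2(\xi)-s_\mu|\lesssim 2^{-\ell}$ produces the inequalities $|\inn{\be_j(s_\mu)}{\xi}|\lesssim 2^{k-(4-j)\ell}$ for $j=1,2,3$ and $|\inn{\be_4(s_\mu)}{\xi}|\sim 2^k$, as required. The only care needed is to check that the $\be_4\circ\theta_2(\xi)$ term dominates in the expansion of $\inn{\be_4(s_\mu)}{\xi}$, which follows since $\delta_0$ is sufficiently small.

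For part b), the setup is analogous but now the reference point is $\theta_1(\xi)$ and the separation scale is $2^{-\ell_2}$. A preliminary step is to verify that on $\xisupp b_{k,\bm{\ell}}^{*,\mu}$ one has $|\theta_1(\xi)-s_\mu|\lesssim 2^{-\ell_2}$; this follows from the combination of the $\nu$-localisation at scale $2^{-(3\ell_1-\ell_2)/2}$ in \eqref{eq:b freq dec theta} and the definition of $\mathfrak{N}_{\bm{\ell}}(\mu)$, since $2^{-(3\ell_1-\ell_2)/2} \cdot 2^{3(\ell_1-\ell_2)/2} = 2^{-\ell_2}$. The key bounds at $\theta_1(\xi)$ are $|u_1(\xi)|\lesssim 2^{k-3\ell_1}$, $\inn{\gamma''\circ\theta_1(\xi)}{\xi}=0$ by definition of $\theta_1$, $|u_{3,1}(\xi)|\sim \rho^{1/2}2^{k-\ell_2}$ from Lemma~\ref{J4 lower bound lemma} (or Lemma~\ref{lemma:size of quantities}(i)), and $|\inn{\gamma^{(4)}\circ\theta_1(\xi)}{\xi}|\sim 2^k$. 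Converting to the Frenet frame gives $|\inn{\be_i\circ\theta_1(\xi)}{\xi}|\lesssim 2^{k-3\ell_1},\ 2^{k-3\ell_1},\ 2^{k-\ell_2},\ 2^k$ for $i=1,2,3,4$, and plugging into the expansion for $\inn{\be_j(s_\mu)}{\xi}$ with $|\theta_1(\xi)-s_\mu|\lesssim 2^{-\ell_2}$ produces the claimed bounds (the terms with $\ell_1$-gain are always dominated by the corresponding $\ell_2$-terms, using $\ell_1\geq\ell_2$).

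There is no serious obstacle: the argument is essentially bookkeeping of many inner products. The step that most warrants care is verifying that $b_{k,\bm{\ell}}^{*,\mu}$ is indeed localised near $\theta_1(\xi)=s_\mu$ at the coarser scale $2^{-\ell_2}$, which is the whole point of the grouping procedure $\mathfrak{N}_{\bm{\ell}}(\mu)$ — it is this that allows part b) to produce a $(2,2^{-\ell_2})$-Frenet box (rather than a finer $(2,2^{-(3\ell_1-\ell_2)/2})$-box) and matches the decoupling scale used later when $\Gamma_2$ decoupling is applied to the $b_{k,\bm{\ell}}$.
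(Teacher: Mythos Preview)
Your proposal is correct and essentially matches the paper's approach for part a), where both you and the paper use $\theta_2(\xi)$ as the reference point and run the four-step Gram--Schmidt / Frenet computation inherited from Lemma~\ref{J=3 supp lem}.

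For part b) there is a genuine, if minor, difference in route. The paper introduces a unified set
\[
\Xi_2(k,n;s):=\Big\{\xi\in\xisupp a_k:\ |\theta_2(\xi)-s|\lesssim 2^{-n},\ |u_{1,2}(\xi)|\lesssim 2^{k-3n},\ |u_2(\xi)|\lesssim 2^{k-2n}\Big\}
\]
and shows in one stroke that $\Xi_2(k,n;s)\subseteq 2^k\cdot\pi_2(s;2^{-n})$, covering both a) and b) simultaneously via $\theta_2$. For the $b$-symbols the paper therefore needs $|\theta_2(\xi)-s_\mu|\lesssim 2^{-\ell_2}$, which it obtains from Lemma~\ref{lemma:size of quantities} i) (relating $\theta_1$ and $\theta_2$) together with the $\mathfrak{N}_{\bm{\ell}}(\mu)$ grouping. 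You instead stay at $\theta_1(\xi)$ throughout part b), using $u_1$, $u_{3,1}$ directly and exploiting $\ell_1\ge\ell_2$ to absorb the sharper $\ell_1$-gains into the required $\ell_2$-bounds. This is perfectly valid --- indeed it is essentially the $\Xi_1$-computation the paper later carries out in Lemma~\ref{J=4 b_nu supp lem}, run at the coarser scale $2^{-\ell_2}$. The paper's choice buys a single unified argument for both parts; your choice avoids transferring back to $\theta_2$ at the cost of handling a) and b) separately. No gap either way.
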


It is convenient to set up a unified framework in order to treat parts a) and b) of Lemma~\ref{J=4 supp lem} simultaneously. Given $n$, $s \in \R$, let $\Xi_2(k, n; s)$ denote the set of all $\xi \in \xisupp a_k$ which lie in the domain of $\theta_2$ and satisfy
\begin{equation}\label{Xi_2 def}
    |\theta_2(\xi) - s| \lesssim 2^{-n}, \qquad |u_{1,2}(\xi)| \lesssim 2^{k-3n}, \qquad |u_2(\xi)| \lesssim 2^{k-2n}.
\end{equation}
Note in particular that:
\begin{enumerate}[a)]
    \item $\xisupp a_{k,\ell, \iota}^{\mu} \subseteq \Xi_2(k, \ell ;s_{\mu})$ for $1 \leq \iota \leq 4$.
    \item $\xisupp b_{k,\bm{\ell}}^{\nu} \subseteq \Xi_2(k, \ell_2 ;s_{\nu})$ and $\xisupp b_{k,\bm{\ell}}^{\nu} \subseteq \Xi_2(k, \ell_2 ;s_{\mu})$ for all $\nu \in \mathfrak{N}_{\bm{\ell}}(\mu)$.
 \end{enumerate}   
Indeed, for the respective parameter values, all the desired properties stated in \eqref{Xi_2 def} hold as an immediate consequence of the definition of the symbols, with the exception of the bounds $|\theta_2(\xi) - s_{\nu}| \lesssim 2^{-\ell_2}$ for $\xi \in \xisupp b_{k,\ell}^{\nu}$ and $|\theta_2(\xi) - s_{\mu}| \lesssim 2^{-\ell_2}$ for $\xi \in \xisupp b_{k,\ell}^{\nu}$ and $\nu \in \mathfrak{N}_{\bm{\ell}}(\mu)$. However, by Lemma~\ref{lemma:size of quantities}, it follows from the localisation of the symbol that
    \begin{equation*}
        |\theta_2(\xi) - s_{\nu}| \lesssim \big|u_2\big(\tfrac{\xi}{|\xi|}\big)\big|^{1/2} + |\theta_1(\xi) - s_{\nu}| \lesssim 2^{-\ell_2} \quad \textrm{for all $\xi \in \xisupp b_{k,\ell}^{\nu}$}, 
    \end{equation*}
    which further implies
    \begin{equation*}
        |\theta_2(\xi) - s_{\mu}| \lesssim |\theta_2(\xi) - s_{\nu}| + |s_\nu-s_\mu| \lesssim 2^{-\ell_2} \quad \textrm{for all $\xi \in \xisupp b_{k,\ell}^{\nu}$, $\,\,\nu \in \mathfrak{N}_{\bm{\ell}}(\mu)$,}
    \end{equation*}
    by the condition $|s_{\mu} - s_{\nu}| \lesssim 2^{-\ell_2}$ for $s_{\nu} := 2^{-(3\ell_1 - \ell_2)/2}\nu$. Thus, all the required bounds hold.
    
    Note that the support property in b) immediately implies that $\xisupp b_{k,\bm{\ell}}^{\ast,\mu} \subseteq \Xi_2(k,\ell_2;s_\mu)$.

\begin{proof}[Proof of Lemma~\ref{J=4 supp lem}] Let $n$, $s \in \R$. As a consequence of the preceding discussion, it suffices to show that 
\begin{equation*}
    \Xi_2(k, n; s) \subseteq 2^k \cdot \pi_2(s; 2^{-n}).
\end{equation*} 
Let $\xi \in \Xi_2(k, n;s)$ and observe that the localisation of $a_k$, the implicit definition of $\theta_2$ and the latter two conditions in \eqref{Xi_2 def} imply
\begin{equation*}
    |\inn{\gamma^{(i)}\circ \theta_2(\xi)}{\xi}| \lesssim 2^{k - (4 - i)n} \qquad \textrm{for $1 \leq i \leq 4$.} 
\end{equation*}
Since the Frenet vectors $\be_i \circ \theta_2(\xi)$ are obtained from the $\gamma^{(i)}\circ \theta_2(\xi)$ via the Gram--Schmidt process, 
\begin{equation*}
   |\inn{\be_i\circ \theta_2(\xi)}{\xi}| \lesssim 2^{k - (4 - i)n} \qquad \textrm{for $1 \leq i \leq 4$.}  
\end{equation*}
On the other hand, the first condition in \eqref{Xi_2 def}, together with \eqref{Frenet bound}, implies
\begin{equation*}
    |\inn{\be_{i}\circ \theta_2(\xi)}{\be_{j}(s)}| \lesssim |\theta_2(\xi)-s|^{|i-j|} \lesssim 2^{-(i-j)n}.
\end{equation*}
Writing $\xi$ with respect to the orthonormal basis $\big(\be_j \circ \theta_2(\xi)\big)_{j=1}^4$, it follows that
\begin{equation*}
    |\inn{\be_j(s)}{\xi}| \leq \sum_{i=1}^4 |\inn{\be_i\circ \theta_2(\xi)}{\xi}| |\inn{\be_i\circ\theta_2(\xi)}{\be_j (s_{\mu})}| \lesssim 2^{k - (4-j)n}.
\end{equation*}
Thus, $\xi$ satisfies all the required upper bounds arising from \eqref{J=4 symbol supp}. The remaining condition $|\inn{\be_4(s)}{\xi}| \gtrsim 2^k$ holds as an immediate consequence of the initial localisation of $a_k$.
\end{proof}

The argument used in the proof of Lemma~\ref{J=4 supp lem} can be applied to analyse the support properties of the individual $b_{k,\bm{\ell}}^{\nu}$, although in this case the geometric significance of the supporting set is only apparent after rescaling (see Lemma~\ref{J=4 resc b_nu supp lem} below). Given $0 < r_1, r_2 \leq 1$ and $s \in I$, define the set
\begin{equation}\label{J=4 b_nu sym supp}
     \pi_1(s;r_1, r_2)\!:=\! \big\{\xi \in \hat{\R}^4: |\inn{\be_j(s)}{\xi}| \lesssim r_1^{3-j} \,\, \textrm{for $j = 1$, $2$}, \,\,\, |\inn{\be_3(s)}{\xi}| \sim 1, \,\,
    |\inn{\be_4(s)}{\xi}| \lesssim r_2\big\}.
\end{equation}
 The multipliers $b_{k,\bm{\ell}}^\nu$ satisfy the following support property.
\begin{lemma}\label{J=4 b_nu supp lem} With the above definitions, 
\begin{equation*}
    \xisupp b_{k,\bm{\ell}}^{\nu} \subseteq 2^{k-\ell_2} \cdot \pi_1(s_{\nu}; 2^{-(3\ell_1 - \ell_2)/2}, 2^{\ell_2})
\end{equation*}
for all $\bm{\ell} = (\ell_1, \ell_2) \in \Lambda(k)$ and $\nu \in \Z$, where $s_{\nu} := 2^{-(3\ell_1-\ell_2)/2 } \nu$. 
\end{lemma}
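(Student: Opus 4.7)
The proof will follow the scheme used in Lemma~\ref{J=4 supp lem}, but anchoring the Frenet analysis at $\theta_1(\xi)$ (rather than $\theta_2(\xi)$) and exploiting the key cancellation $\inn{\gamma'' \circ \theta_1(\xi)}{\xi} = 0$ built into the definition of $\theta_1$.

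First I would record the basic size information available on $\xisupp b_{k,\bm{\ell}}^{\nu}$. From the localisations defining $b_{k,\bm{\ell}}$ together with \eqref{eq:b freq dec theta}, combined with \eqref{4 derivative bound} and Lemma~\ref{J4 lower bound lemma}, one has, for $\theta := \theta_1(\xi)$,
\begin{equation*}
|\inn{\gamma' \circ \theta}{\xi}| \lesssim 2^{k-3\ell_1}, \quad \inn{\gamma'' \circ \theta}{\xi} = 0, \quad |\inn{\gamma^{(3)} \circ \theta}{\xi}| \sim 2^{k-\ell_2}, \quad |\inn{\gamma^{(4)} \circ \theta}{\xi}| \sim 2^k,
\end{equation*}
together with the Frenet base-point proximity $|\theta - s_{\nu}| \lesssim 2^{-(3\ell_1 - \ell_2)/2}$ coming from \eqref{eq:b freq dec theta}.

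Next, expand $\xi = \sum_{i=1}^4 c_i \, \be_i \circ \theta$ in the Frenet frame at $\theta$. Since $\be_i \circ \theta \perp \gamma^{(j)} \circ \theta$ for $j < i$, the Gram--Schmidt change of basis matrix $M_{ji} := \inn{\gamma^{(j)} \circ \theta}{\be_i \circ \theta}$ is lower triangular. For $\gamma \in \mathfrak{G}_4(\delta_0)$ with $\delta_0$ small, the diagonal entries of $M$ are uniformly bounded below and the off-diagonal entries are $O(1)$. Solving the triangular system from the four identities above gives, successively,
\begin{equation*}
|c_1| \lesssim 2^{k-3\ell_1}, \qquad |c_2| \lesssim |c_1| \lesssim 2^{k-3\ell_1}, \qquad |c_3| \sim 2^{k-\ell_2}, \qquad |c_4| \sim 2^k;
\end{equation*}
the lower bound on $|c_3|$ uses that $|c_1|, |c_2| \ll 2^{k-\ell_2}$ under the constraint $\ell_1 \geq \ell_2$ coming from $\bm{\ell} \in \Lambda(k)$.

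Then I would transfer to the Frenet frame at $s_\nu$ via \eqref{Frenet bound}, which yields $|\inn{\be_i \circ \theta}{\be_j(s_\nu)}| \lesssim |\theta - s_\nu|^{|i-j|} \lesssim 2^{-(3\ell_1 - \ell_2)|i-j|/2}$. Expanding $\inn{\be_j(s_\nu)}{\xi} = \sum_i c_i \inn{\be_i \circ \theta}{\be_j(s_\nu)}$ and substituting the bounds on $|c_i|$, one obtains four terms per coordinate which can be compared exponent-by-exponent. Using $\ell_1 \geq \ell_2$ to eliminate the dominated terms yields
\begin{align*}
|\inn{\be_1(s_\nu)}{\xi}| &\lesssim 2^{k-3\ell_1}, & |\inn{\be_2(s_\nu)}{\xi}| &\lesssim 2^{k-(3\ell_1+\ell_2)/2}, \\
|\inn{\be_3(s_\nu)}{\xi}| &\sim 2^{k-\ell_2}, & |\inn{\be_4(s_\nu)}{\xi}| &\lesssim 2^k,
\end{align*}
which are exactly the defining conditions for $\xi \in 2^{k-\ell_2} \cdot \pi_1(s_\nu; 2^{-(3\ell_1-\ell_2)/2}, 2^{\ell_2})$ per \eqref{J=4 b_nu sym supp}. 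The lower bound on $|\inn{\be_3(s_\nu)}{\xi}|$ requires extracting the dominant $c_3$ contribution and checking that all cross-terms coming from $c_1, c_2, c_4$ are strictly dominated, again using $\ell_1 \geq \ell_2$.

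The main (mild) obstacle is purely bookkeeping: verifying that every one of the sixteen cross-terms obtained in the frame transfer step is absorbed by the targeted exponent. The structural mechanism is that the constraint $\ell_1 \geq \ell_2$ inherent in $\Lambda(k)$ forces $|\theta - s_\nu|$ to be small enough for the Taylor-type estimate \eqref{Frenet bound} to overcome the size of $|c_4| \sim 2^k$ at each level; the upper constraint $\ell_1 \leq \lfloor (2k+\ell_2)/9 \rfloor$ is not needed for the support statement itself but ensures that the Frenet boxes in question are well-defined (i.e.\ have anisotropy factor $2^{-(3\ell_1-\ell_2)/2} \geq 2^{-k/3}$ in keeping with the admissible scales).
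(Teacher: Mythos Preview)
Your proposal is correct and follows essentially the same approach as the paper's proof: establish the bounds on $\inn{\gamma^{(i)}\circ\theta_1(\xi)}{\xi}$ for $1\le i\le 4$, pass to Frenet coordinates at $\theta_1(\xi)$ via the lower-triangular Gram--Schmidt matrix (which is an $O(\delta_0)$ perturbation of the identity), and then transfer to the Frenet frame at $s_\nu$ using \eqref{Frenet bound}. The paper packages this via an abstract set $\Xi_1(k,\bm{n};s)$ so that the same computation simultaneously covers $\xisupp a_{k,\ell,\iota}^\mu$ for $\iota=3,4$ and is reused later in Lemma~\ref{J4 Linfty bounds lem}, but the substance of the argument is identical to yours.

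One small correction: for the lower bound on $|c_3|$ (and subsequently on $|\inn{\be_3(s_\nu)}{\xi}|$), the constraint $\ell_1\ge\ell_2$ alone is not quite enough when $\ell_1=\ell_2$, since then $|c_4|\cdot|\theta-s_\nu|$ is comparable in exponent to $|c_3|$. What saves the day is that the off-diagonal Gram--Schmidt entries are $O(\delta_0)$ (not merely $O(1)$) and the $u_2$-localisation carries the fine-tuning constant $\rho$; the paper flags this as a ``minor technicality'' resolved by taking $\rho$ sufficiently small, and you should invoke the same mechanism rather than $\ell_1\ge\ell_2$ alone.
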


As with the proof of Lemma~\ref{J=4 supp lem}, here and in Lemma~\ref{J4 Linfty bounds lem}, we will work with a more general setup. This abstraction is not particularly useful at this stage, but it will help to unify some of the later arguments. Given $\bm{n} = (n_1, n_2) \in (0,\infty)^2$ and $s \in \R$, let $\Xi_1(k, \bm{n}; s)$ denote the set of all $\xi \in \xisupp a_k$ which lie in the domain of $\theta_1$ and satisfy 
\begin{equation}\label{Xi_1 def}
    |\theta_1(\xi) - s| \lesssim 2^{-n_1}, \qquad |u_1(\xi)| \lesssim 2^{k- 2n_1 - n_2}, \qquad |u_{3,1}(\xi)| \sim 2^{k-n_2}. 
\end{equation}
Note in particular that:
\begin{enumerate}[a)]
    \item $\xisupp a_{k,\ell, \iota}^{\mu} \subseteq \Xi_1(k, \ell, \ell ; s_{\mu})$ for $\iota = 3$ or $\iota = 4$.
    \item $\xisupp b_{k,\bm{\ell}}^{\nu} \subseteq \Xi_1\big(k,\tfrac{3\ell_1 - \ell_2}{2}, \ell_2; s_{\nu}\big)$.
 \end{enumerate}   
 Indeed, the definition of the symbols implies $|u_2(\xi)| \sim 2^{k-2\ell}$, $|u_1(\xi)| \lesssim 2^{k-3\ell}$ and $|\theta_2(\xi) - s_{\mu}| \lesssim 2^{-\ell}$ for all $\xi \in \xisupp a_{k,\ell, \iota}^{\mu}$ when $\iota \in \{3,4\}$. Consequently, by Lemma~\ref{lemma:size of quantities}, it follows that
    \begin{equation*}
        |\theta_1(\xi) - s_{\mu}| \lesssim \big|u_2\big(\tfrac{\xi}{|\xi|}\big)\big|^{1/2} + |\theta_2(\xi) - s_{\mu}| \lesssim 2^{-\ell}, \qquad \big|u_{3,1}\big(\tfrac{\xi}{|\xi|}\big)\big| \sim \big|u_2\big(\tfrac{\xi}{|\xi|}\big)\big|^{1/2} \sim 2^{\ell}
    \end{equation*}
    for all $\xi \in \xisupp a_{k,\ell, \iota}^{\mu}$, which covers the required bounds for a). Turning to b), all the desired properties hold as an immediate consequence of the definition of the symbols, with the exception of the bound $|u_{3,1}(\xi)| \sim 2^{k - \ell_2}$. However, as in a), the function $u_{3,1}$ can be estimated via Lemma~\ref{lemma:size of quantities} using the $u_2$ localisation.

\begin{proof}[Proof of Lemma~\ref{J=4 b_nu supp lem}] Let $\bm{n} = (n_1, n_2) \in (0,\infty)^2$ and $s \in I_0$.  As a consequence of the preceding discussion, it suffices to show that \begin{equation*}
    \Xi_1(k,\bm{n}; s) \subseteq 2^{k - n_2} \cdot \pi_1(s;2^{-n_1}, 2^{-n_2}).
\end{equation*}
The argument in fact depends on the implicit constants in \eqref{Xi_1 def} satisfying certain size relations, but we shall ignore this minor technicality. In the case in question (namely, on the support of $b_{k,\ell}^{\nu}$), the required size relations follow provided $\rho$ is chosen sufficiently small.

Let $\xi \in \Xi_1(k,\bm{n};s)$ and observe that the localisation of $a_k$, the implicit definition of $\theta_1$ and the latter two conditions in \eqref{Xi_1 def} imply
\begin{equation*}
    |\inn{\gamma^{(i)}\circ \theta_1(\xi)}{\xi}| \lesssim 2^{k - (3 - i)n_1 - n_2} \quad \textrm{for $i = 1$, $2$}, \quad |\inn{\gamma^{(3)}\circ \theta_1(\xi)}{\xi}| \sim 2^{k - n_2}, \quad |\inn{\gamma^{(4)}\circ \theta_1(\xi)}{\xi}| \sim 2^k.
\end{equation*}
Since the Frenet vectors $\be_i \circ \theta_2(\xi)$ are obtained from the $\gamma^{(i)}\circ \theta_2(\xi)$ via the Gram--Schmidt process, the matrix corresponding to change of basis from $\big(\be_i \circ \theta_1(\xi)\big)_{i=1}^4$ to $\big(\gamma^{(i)} \circ \theta_1(\xi)\big)_{i=1}^4$ is lower triangular. Furthermore, the initial localisations imply that this matrix is an $O(\delta)$ perturbation of the identity. Consequently, provided $\delta >0$ is chosen sufficiently small, 
\begin{gather*}
   |\inn{\be_i\circ \theta_1(\xi)}{\xi}| \lesssim 2^{k - (3 - i)n_1 - n_2} \quad \textrm{for $i = 1$, $2$}, \quad
   |\inn{\be_3\circ \theta_1(\xi)}{\xi}| \sim 2^{k - n_2}, \quad |\inn{\be_4\circ \theta_1(\xi)}{\xi}| \sim 2^{k}.
\end{gather*}
On the other hand, the first condition in \eqref{Xi_1 def} together with \eqref{Frenet bound} imply
\begin{equation*}
    |\inn{\be_{i}\circ \theta_1(\xi)}{\be_{j}(s)}| \lesssim |s - \theta_1(\xi)|^{|i-j|} \lesssim 2^{-(i-j)n_1}.
\end{equation*}
Writing $\xi$ with respect to the orthonormal basis $\big(\be_j \circ \theta_1(\xi)\big)_{j=1}^4$, it follows that
\begin{equation*}
    |\inn{\xi}{\be_j(s)}| \leq \sum_{i=1}^4 |\inn{\be_i\circ \theta_1(\xi)}{\xi}| |\inn{\be_i\circ\theta_1(\xi)}{\be_j (s)}| \lesssim 2^{k - ((3-j)n_1 + n_2)\vee 0}.
\end{equation*}
Thus, $\xi$ satisfies all the required upper bounds arising from \eqref{J=4 b_nu sym supp}. The above argument can easily be adapted to give the required lower bounds, provided the implied constant in the the hypothesis $|u_{3,1}(\xi)| \sim 2^{k-n_2}$ is large compared to that in the hypothesis $|\theta_1(\xi) - s| \lesssim 2^{-n_1}$. 
\end{proof}

Fix some $\bm{\ell} = (\ell_1, \ell_2) \in \Lambda(k)$ and $\mu \in \Z$ with $s_{\mu} := 2^{-\ell_2} \mu \in [-1,1]$. To simplify notation, let $\sigma := s_{\mu}$, $\lambda := 2^{-\ell_2}$ and let $\tilde{\gamma} := \gamma_{\sigma, \lambda}$ denote the rescaled curve, as defined in Definition~\ref{rescaled curve def}, so that
\begin{equation}\label{J4 gamma resc}
    \tilde{\gamma}(s) := \big([\gamma]_{\sigma, \lambda}\big)^{-1} \big( \gamma(\sigma + \lambda s) - \gamma(\sigma)\big). 
\end{equation}
Given a symbol $b \in C^{\infty}_c(\hat{\R}^4 \times I_0)$, let $\tilde{b}$ be the rescaled symbol defined by the relation
\begin{equation}\label{J4 b resc}
    \tilde{b}(\tilde{\xi};\tilde{s}) = b(\xi;s) \qquad \textrm{for} \quad \tilde{\xi} := \big([\gamma]_{\sigma, \lambda}\big)^{\top} \xi \quad \textrm{and} \quad \tilde{s} := \lambda^{-1}(s - \sigma). 
\end{equation}
Given $f \in \mathscr{S}(\R^4)$, it follows by a simple changes of the variables that 
\begin{equation}\label{J4 m resc}
    m[b](D)f(x) = \lambda \cdot \tilde{m}[\tilde{b}](D)\tilde{f}(\tilde{x}) 
\end{equation}
where:
\begin{itemize}
    \item The multiplier $\tilde{m}[\tilde{b}]$ is defined in the same manner as $m[\tilde{b}]$ but with the curve $\gamma$ replaced with $\tilde{\gamma}$ and the cut-off $\chi_{\circ}$ replaced with $\chi_{\circ}(\sigma + \lambda \,\cdot\,)$;
    \item $\tilde{f} := f \circ [\gamma]_{\sigma, \lambda}$;
    \item $\tilde{x} := \big([\gamma]_{\sigma, \lambda}\big)^{-1} \big( x - \gamma(\sigma)\big)$. 
\end{itemize}

Let $(\tilde{\be}_j)_{j=1}^4$ denote the Frenet frame defined with respect to $\tilde{\gamma}$. Given $0 < r \leq 1$ and $s \in I$, recall the definition of the $(1,r)$-\textit{Frenet boxes} (with respect to $(\tilde{\be}_j)_{j=1}^4$) introduced in Definition~\ref{def Frenet box}: %$\tilde{\pi}_1(s;r)$ denote the set of all $\xi \in \hat{\R}^4$ satisfying the following conditions:
\begin{equation*}
     \tilde{\pi}_1(s;r):= \big\{\xi \in \hat{\R}^4: |\inn{\tilde{\be}_j(s)}{\xi}| \lesssim r^{3-j} \quad \textrm{for $j = 1$, $2$,}\quad |\inn{\tilde{\be}_{3}(s)}{\xi}| \sim 1,  \quad |\inn{\tilde{\be}_{4}(s)}{\xi}| \lesssim 1\big\}.
\end{equation*}
 Note that all these definitions depend of the choice of $\mu$ and $\bm{\ell}$, but it is typographically convenient to suppress this dependence. 

The rescaled symbols $\tilde{b}_{k,\bm{\ell}}^\nu$ satisfy the following support properties.

\begin{lemma}\label{J=4 resc b_nu supp lem} With the above definitions,
\begin{equation*}
    \xisupp \tilde{b}_{k,\bm{\ell}}^{\nu} \subseteq 2^{k-4\ell_2} \cdot \tilde{\pi}_1(\tilde{s}_{\nu}; 2^{-3(\ell_1 - \ell_2)/2})
\end{equation*}
for all $\bm{\ell} = (\ell_1, \ell_2) \in \Lambda(k)$ and $\nu \in \mathfrak{N}_{\bm{\ell}}(\mu)$, where $\tilde{s}_{\nu} := 2^{\ell_2} (s_{\nu} - s_{\mu})$ for $s_{\nu} := 2^{-(3\ell_1 - \ell_2)/2}\nu$. 
\end{lemma}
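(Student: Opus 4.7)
The plan is to apply the argument behind Lemma~\ref{J=4 b_nu supp lem} in the rescaled frame, where the curve $\gamma$ is replaced by the rescaled curve $\tilde{\gamma}$ and the frequency variable by $\tilde{\xi} = ([\gamma]_{\sigma,\lambda})^\top \xi$ (the natural dual companion to $\tilde{s} = \lambda^{-1}(s - \sigma)$). By the reductions in \S\ref{moment red sec} (with $\lambda = 2^{-\ell_2} \leq 1$), the curve $\tilde{\gamma}$ lies in $\mathfrak{G}_4(\delta)$ for $\delta$ sufficiently small, so the analogues $\tilde{\theta}_1$, $\tilde{u}_1$, $\tilde{u}_{3,1}$ of the functions from \S\ref{J=4 decomp sec} are well-defined on $\supp \tilde{b}_{k,\bm{\ell}}^{\nu}$.

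The essential computation is the scaling identity
$$\inn{\tilde{\gamma}^{(i)}(\tilde{s})}{\tilde{\xi}} = \lambda^{i} \inn{\gamma^{(i)}(s)}{\xi}, \qquad 1 \leq i \leq 4,$$
which will follow by differentiating $\tilde{\gamma}(\tilde{s}) = [\gamma]_{\sigma,\lambda}^{-1}(\gamma(\sigma + \lambda \tilde{s}) - \gamma(\sigma))$ and passing $[\gamma]_{\sigma,\lambda}$ through the inner product. Setting $i=2$ and invoking the uniqueness in Lemma~\ref{lemma:1cone} applied to $\tilde{\gamma}$ gives $\tilde{\theta}_1(\tilde{\xi}) = \lambda^{-1}(\theta_1(\xi) - \sigma)$, while the $i=1,3$ cases yield $\tilde{u}_1(\tilde{\xi}) = \lambda\, u_1(\xi)$ and $\tilde{u}_{3,1}(\tilde{\xi}) = \lambda^{3}\, u_{3,1}(\xi)$ respectively.

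Feeding these identities into the original localisations $|\theta_1(\xi) - s_\nu| \lesssim 2^{-(3\ell_1 - \ell_2)/2}$, $|u_1(\xi)| \lesssim 2^{k - 3\ell_1}$ and $|u_{3,1}(\xi)| \sim 2^{k - \ell_2}$ valid on $\xisupp b_{k,\bm{\ell}}^{\nu}$ (the bounds used within the proof of Lemma~\ref{J=4 b_nu supp lem} for $n_1 = (3\ell_1-\ell_2)/2$, $n_2 = \ell_2$), I will obtain
\begin{align*}
|\tilde{\theta}_1(\tilde{\xi}) - \tilde{s}_\nu| &\lesssim 2^{-3(\ell_1 - \ell_2)/2},\\
|\tilde{u}_1(\tilde{\xi})| &\lesssim 2^{(k - 4\ell_2) - 3(\ell_1 - \ell_2)},\\
|\tilde{u}_{3,1}(\tilde{\xi})| &\sim 2^{k - 4\ell_2}.
\end{align*}
This places $\tilde{\xi}$ in the rescaled analogue of $\Xi_1(k - 4\ell_2, \bm{\tilde{n}}; \tilde{s}_\nu)$ with $\bm{\tilde{n}} = (3(\ell_1 - \ell_2)/2, 0)$. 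Reapplying the inclusion $\Xi_1(k, \bm{n}; s) \subseteq 2^{k - n_2} \cdot \pi_1(s; 2^{-n_1}, 2^{-n_2})$ from inside the proof of Lemma~\ref{J=4 b_nu supp lem}, but now with respect to the Frenet frame $(\tilde{\be}_j)$ of $\tilde{\gamma}$, yields the sought inclusion, since the $r_2 = 1$ case of the two-parameter box coincides with the single-parameter box $\tilde{\pi}_1(\,\cdot\,;\,\cdot\,)$ defined in \S\ref{subsec:Fourier loc J=4}.

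The main obstacle will be ensuring the Gram--Schmidt perturbation step used in the proof of Lemma~\ref{J=4 b_nu supp lem}, in which the change of basis between $(\gamma^{(i)})$ and $(\be_i)$ is shown to be an $O(\delta)$ perturbation of a lower triangular matrix, remains available for $\tilde{\gamma}$; this is precisely guaranteed by $\tilde{\gamma} \in \mathfrak{G}_4(\delta)$ for $\delta$ small. One must also verify that $\tilde{s}$ remains in $[-1,1]$ throughout $\supp \tilde{b}_{k,\bm{\ell}}^{\nu}$, which will follow from the chain $|s - \theta_1(\xi)| \lesssim \rho\, 2^{-\ell_2}$, $|\theta_1(\xi) - s_\nu| \lesssim 2^{-(3\ell_1 - \ell_2)/2} \leq 2^{-\ell_2}$ (the last inequality uses $\ell_1 \geq \ell_2$), and $|s_\nu - s_\mu| \lesssim 2^{-\ell_2}$, giving $|\tilde{s}| = 2^{\ell_2} |s - s_\mu| \lesssim 1$.
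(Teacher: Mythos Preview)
Your approach is correct but follows a different route from the paper. You work intrinsically in the rescaled frame: you define the implicit functions $\tilde{\theta}_1$, $\tilde{u}_1$, $\tilde{u}_{3,1}$ for the rescaled curve $\tilde{\gamma}$, transfer the original localisations through the scaling identity $\inn{\tilde{\gamma}^{(i)}(\tilde{s})}{\tilde{\xi}} = \lambda^i \inn{\gamma^{(i)}(s)}{\xi}$, and then invoke the general inclusion $\Xi_1(k,\bm{n};s) \subseteq 2^{k-n_2}\cdot\pi_1(s;2^{-n_1},2^{-n_2})$ from the proof of Lemma~\ref{J=4 b_nu supp lem} with $n_2=0$. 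The paper instead stays at the single fixed parameter $s_\nu$: it pulls the Frenet-box bounds on $\xi$ from Lemma~\ref{J=4 b_nu supp lem}, passes to bounds on $\inn{\gamma^{(j)}(s_\nu)}{\xi}$ via the lower-triangular change of basis, applies the scaling identity only at $\tilde{s}_\nu$ to obtain bounds on $\inn{\tilde{\gamma}^{(j)}(\tilde{s}_\nu)}{\tilde{\xi}}$, and then converts back to the rescaled Frenet frame $(\tilde{\be}_j(\tilde{s}_\nu))$. The paper's route is more pedestrian but sidesteps having to set up $\tilde{\theta}_1$, $\tilde{u}_1$, $\tilde{u}_{3,1}$ on a suitable domain; your route is cleaner conceptually but carries the small overhead of checking that $\tilde{\gamma}\in\mathfrak{G}_4(C\delta_0)$ uniformly in $\ell_2$ (this is not quite what \S\ref{moment red sec} states for general $\lambda\le 1$, though it does follow from the same Taylor-remainder computation using $\|\gamma^{(5)}\|_\infty\le\delta_0$) and that the rescaled roots $\tilde{\theta}_1^\pm(\tilde{\xi})$ remain in a bounded interval.
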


\begin{proof} For $\tilde{\xi} \in \xisupp \tilde{b}_{k,\bm{\ell}}^{\nu}$, it follows from Lemma~\ref{J=4 b_nu supp lem} and the definition of the rescaling in \eqref{J4 b resc} that $\xi := \big([\gamma]_{\sigma, \lambda}\big)^{-\top} \tilde{\xi}$ satisfies 
\begin{equation*}
     |\inn{\be_j(s_{\nu})}{\xi}| \lesssim 2^{k-(3-j)(3 \ell_1 - \ell_2)/2-\ell_2} \quad \textrm{for $j = 1$, $2$}, \quad |\inn{\be_3(s_{\nu})}{\xi}| \sim 2^{k-\ell_2}, \quad
    |\inn{\be_4(s_{\nu})}{\xi}| \sim 2^k.
\end{equation*}
Since the matrix corresponding to the change of basis from $\big(\be_j(s_{\nu})\big)_{j=1}^4$ to $\big(\gamma^{(j)}(s_{\nu})\big)_{j=1}^4$ is lower triangular and an $O(\delta_0)$ perturbation of the identity, provided $\delta_0$ is sufficiently small,
\begin{equation*}
     |\inn{\gamma^{(j)}(s_{\nu})}{\xi}| \lesssim 2^{k-(3-j)(3 \ell_1 - \ell_2)/2-\ell_2} \quad \textrm{for $j = 1$, $2$}, \quad |\inn{\gamma^{(3)}(s_{\nu})}{\xi}| \sim 2^{k-\ell_2}, \quad
    |\inn{\gamma^{(4)}(s_{\nu})}{\xi}| \sim 2^k.
\end{equation*}
On the other hand, recalling that $\lambda := 2^{-\ell_2}$, it follows from the definition of $\tilde{\gamma}$ from \eqref{J4 gamma resc} that
\begin{equation*}
   \inn{\tilde{\gamma}^{(j)}(\tilde{s}_{\nu})}{\tilde{\xi}\,} = 2^{-j\ell_2}\inn{\gamma^{(j)}(s_{\nu})}{\xi}  \qquad \textrm{for $j \geq 1$}.
\end{equation*}
Combining the above observations,
\begin{gather*}
     |\inn{\tilde{\gamma}^{(j)}(\tilde{s}_{\nu})}{\tilde{\xi}\,}| \lesssim 2^{k-(3-j)(3 \ell_1 - \ell_2)/2-(j+1)\ell_2} \quad \textrm{for $j = 1$, $2$}, \\ |\inn{\tilde{\gamma}^{(3)}(\tilde{s}_{\nu})}{\tilde{\xi}\,}| \sim 2^{k-4\ell_2}, \quad
    |\inn{\tilde{\gamma}^{(4)}(\tilde{s}_{\nu})}{\tilde{\xi}\,}| \sim 2^{k-4\ell_2}.
\end{gather*}
Provided $\delta_0$ is sufficiently small, the desired result now follows since the matrix corresponding to the change of basis from $\big(\tilde{\be}_i(\tilde{s}_{\nu})\big)_{i=1}^4$ to $\big(\tilde{\gamma}^{(i)}(\tilde{s}_{\nu})\big)_{i=1}^4$ is also lower triangular and an $O(\delta_0)$ perturbation of the identity. 
\end{proof}

%%%%%%%%%%%%%%%%%%%%%%%%%%%%%%%%%%%%%%%%%%
%%%  DECOUPLING
%%%%%%%%%%%%%%%%%%%%%%%%%%%%%%%%%%%%%%%%%%

 In view of the support conditions from Lemma~\ref{J=4 supp lem} and Lemma~\ref{J=4 resc b_nu supp lem}, the multipliers can be effectively decoupled using Theorem~\ref{Frenet decoupling theorem}.

\begin{proposition}\label{J4 dec prop}
For all $2\le p\le 12$ and all $\varepsilon>0$, the following inequalities hold:
\begin{enumerate}[a)]
    \item For all $0\le \ell \leq  \floor{k/4}$, $1 \leq \iota \leq 4$,
\begin{equation*}
     \Big \|\sum_{\mu \in \Z} m[a^{ \mu}_{k,\ell, \iota}](D)f\Big\|_{L^p(\R^4)}  \lesim_{\varepsilon} 2^{\ell(1/2-1/p) + \varepsilon\ell} \Big(\sum_{\mu\in \Z} \| m[a^{ \mu}_{k,\ell,\iota}](D)f\|_{L^p(\R^4)}^p\Big)^{1/p}.
\end{equation*}
\item For all $\bm{\ell} = (\ell_1, \ell_2) \in \Lambda(k)$,
\begin{equation*}
    \Big\| \sum_{\mu \in \Z} m[ b_{k, \bm{\ell}}^{*,\mu}](D) f \Big\|_{L^p(\R^4)}  \lesssim_{\varepsilon} 2^{\ell_2(1/2-1/p) + \varepsilon\ell_2} \Big(\sum_{\mu \in \Z} \|  m[ b_{k,\bm{\ell}}^{*,\mu}](D) f\|_{L^p(\R^4)}^p\Big)^{1/p}.
\end{equation*}
\end{enumerate}
\end{proposition}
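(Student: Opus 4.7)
Both parts a) and b) will follow from essentially the same direct application of Theorem~\ref{Frenet decoupling theorem} (with $d=3$, $n=4$), together with the support information supplied by Lemma~\ref{J=4 supp lem}. The strategy is to package the given pieces as a Fourier decomposition into $(2,r)$-Frenet boxes (at scale $r = 2^{-\ell}$ for part a), $r = 2^{-\ell_2}$ for part b)) and then quote the Frenet decoupling inequality. The loss appearing in both statements is precisely the exponent $\alpha(p)=\tfrac{1}{2}-\tfrac{1}{p}$ of Theorem~\ref{Frenet decoupling theorem} in the relevant range $2\le p\le d(d+1)=12$, which matches the claim exactly (up to the arbitrary $\varepsilon$).

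More concretely: for part a), Lemma~\ref{J=4 supp lem} a) gives $\xisupp a^{\mu}_{k,\ell,\iota}\subseteq 2^k\cdot \pi_2(s_{\mu};2^{-\ell})$ with $s_{\mu}=2^{-\ell}\mu$. Since $\{s_\mu : s_\mu\in I_0\}$ is a $2^{-\ell}$-net in $I_0$, the associated family $\{\pi_2(s_\mu;2^{-\ell})\}_{\mu}$ is a $(2,2^{-\ell})$-Frenet box decomposition in the sense of Definition~\ref{def Frenet box}. After an isotropic frequency rescaling $\xi\mapsto 2^{-k}\xi$ (equivalently, the physical rescaling $x\mapsto 2^k x$, which preserves the normalised $L^p$-decoupling inequality), we may apply Theorem~\ref{Frenet decoupling theorem} with $d-1=2$, $n=4$, $r=2^{-\ell}$ to the tuple $(m[a^{\mu}_{k,\ell,\iota}](D)f)_\mu$. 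For $2\le p\le 12$ this yields the desired decoupling constant $\lesssim_\varepsilon 2^{\ell(1/2-1/p)+\varepsilon\ell}$.

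For part b), the argument is identical with the parameter $\ell$ replaced by $\ell_2$: Lemma~\ref{J=4 supp lem} b) gives $\xisupp b^{*,\mu}_{k,\bm{\ell}}\subseteq 2^k\cdot\pi_2(s_\mu;2^{-\ell_2})$ with $s_\mu=2^{-\ell_2}\mu$, so the family $\{\pi_2(s_\mu;2^{-\ell_2})\}_\mu$ forms a $(2,2^{-\ell_2})$-Frenet box decomposition, and Theorem~\ref{Frenet decoupling theorem} with $r=2^{-\ell_2}$ produces the stated bound. Note that the finer secondary decomposition into pieces $b^\nu_{k,\bm{\ell}}$ is irrelevant at this stage; it is the coarser grouping at scale $2^{-\ell_2}$ that is well-adapted to $(2,2^{-\ell_2})$-Frenet boxes, and the exploitation of the finer scale $2^{-(3\ell_1-\ell_2)/2}$ is deferred to later rescaling/decoupling arguments (cf.\ Lemma~\ref{J=4 resc b_nu supp lem}).

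The only real content here beyond invoking Theorem~\ref{Frenet decoupling theorem} is the verification that the supports lie in $2^k$-dilates of genuine Frenet boxes, but this has already been carried out in Lemma~\ref{J=4 supp lem}. Consequently, there is no serious obstacle in the proof of Proposition~\ref{J4 dec prop}; the main work is entirely contained in the preceding support lemma and in the decoupling theorem itself. The only minor technical point worth recording is the scale-invariance of $L^p$-decoupling under frequency dilation, which ensures that the factor of $2^k$ in the support conditions plays no role in the decoupling constant.
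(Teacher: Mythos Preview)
Your proposal is correct and follows precisely the paper's approach: invoke the support conditions from Lemma~\ref{J=4 supp lem}, rescale away the factor $2^k$, and apply Theorem~\ref{Frenet decoupling theorem} with $d-1=2$, $n=4$ and $r=2^{-\ell}$ (respectively $r=2^{-\ell_2}$). The paper's own proof is the same argument compressed into a single sentence.
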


\begin{proof}
In view of the support conditions from Lemma~\ref{J=4 supp lem}, after a simple rescaling, the desired result follows from Theorem~\ref{Frenet decoupling theorem} with $d - 1 = 2$, $n=4$ and $r= 2^{-\ell}$, $2^{-\ell_2}$ for parts a) and b), respectively. 
\end{proof}

\begin{proposition}\label{J=4 b_nu dec prop}
For all $\bm{\ell} = (\ell_1,\ell_2) \in \Lambda(k)$, $6 \leq p \leq 12$ and $\varepsilon>0$, 
\begin{equation*}
    \Big\|\sum_{\nu \in \Z} m[b_{k, \bm{\ell}}^{\nu}](D)f\Big\|_{L^p(\R^4)} \lesim_{\varepsilon}2^{\ell_2(1/2-1/p + \varepsilon)} 2^{3(\ell_1-\ell_2)(1-4/p + \varepsilon)/2} \Big(\sum_{\nu\in \Z} \| m[b_{k, \bm{\ell}}^{\nu}](D)f \|_{L^p(\R^4)}^p \Big)^{1/p}.
\end{equation*}
\end{proposition}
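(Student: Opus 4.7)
The strategy is to iterate two decouplings: first at scale $2^{-\ell_2}$ against the cone $\Gamma_2$, then (after rescaling to normalise the scale $2^{-\ell_2}$ piece to unit scale) at scale $2^{-3(\ell_1-\ell_2)/2}$ against the image of $\Gamma_1$. Concretely, I will begin by regrouping
\[
\sum_{\nu\in\Z} m[b_{k,\bm{\ell}}^{\nu}](D)f \;=\; \sum_{\mu\in\Z} m[b_{k,\bm{\ell}}^{*,\mu}](D)f,
\]
recalling that $b_{k,\bm{\ell}}^{*,\mu}=\sum_{\nu\in\mathfrak{N}_{\bm{\ell}}(\mu)} b_{k,\bm{\ell}}^{\nu}$ and that $\mathrm{supp}_{\xi}\,b_{k,\bm{\ell}}^{*,\mu}\subset 2^{k}\cdot\pi_{2}(s_{\mu};2^{-\ell_2})$ by Lemma~\ref{J=4 supp lem}(b). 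Applying Proposition~\ref{J4 dec prop}(b) (valid for $2\le p\le 12$, since for $d-1=2$ the critical exponent is $d(d+1)=12$) yields
\[
\Big\|\sum_{\mu\in\Z} m[b_{k,\bm{\ell}}^{*,\mu}](D)f\Big\|_{L^p(\R^4)} \;\lesssim_{\varepsilon}\; 2^{\ell_2(1/2-1/p+\varepsilon)} \Big(\sum_{\mu\in\Z}\|m[b_{k,\bm{\ell}}^{*,\mu}](D)f\|_{L^p(\R^4)}^{p}\Big)^{1/p}.
\]

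For each fixed $\mu$ with $s_{\mu}\in\tfrac{5}{4}I_0$, I will invoke the rescaling scheme \eqref{J4 gamma resc}--\eqref{J4 m resc} with $\sigma=s_{\mu}$ and $\lambda=2^{-\ell_{2}}$. By the discussion in \S\ref{moment red sec}, $\tilde{\gamma}=\gamma_{\sigma,\lambda}$ again lies in $\mathfrak{G}_{4}(\delta)$ for arbitrarily small $\delta$, so Theorem~\ref{Frenet decoupling theorem} applies to its Frenet frame $(\tilde{\be}_j)_{j=1}^{4}$. The identity \eqref{J4 m resc} gives
\[
\|m[b_{k,\bm{\ell}}^{*,\mu}](D)f\|_{L^p(\R^4)} \;=\; 2^{-\ell_{2}(1-4/p)\cdot?\,}\cdot\big\|\tilde{m}[\tilde{b}_{k,\bm{\ell}}^{*,\mu}](D)\tilde{f}\big\|_{L^p(\R^4)},
\]
up to harmless Jacobian factors of $|\det[\gamma]_{\sigma,\lambda}|^{1/p}$ which come out even on the two sides of the subsequent decoupling inequality and therefore cancel; the point is only that decoupling is preserved under the affine change of variables $x\mapsto\tilde{x}$.

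Now for the second step, Lemma~\ref{J=4 resc b_nu supp lem} shows that, after the rescaling,
\[
\mathrm{supp}_{\xi}\,\tilde{b}_{k,\bm{\ell}}^{\nu} \;\subseteq\; 2^{k-4\ell_{2}}\cdot \tilde{\pi}_{1}(\tilde{s}_{\nu};\,2^{-3(\ell_{1}-\ell_{2})/2})
\]
for $\nu\in\mathfrak{N}_{\bm{\ell}}(\mu)$. After isotropically dilating by $2^{-(k-4\ell_{2})}$, these supports are genuine $(1,r)$-Frenet boxes for $\tilde{\gamma}$ with $r=2^{-3(\ell_{1}-\ell_{2})/2}$, and the centres $\tilde{s}_{\nu}$ form an $r$-net in their relevant interval. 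Thus Theorem~\ref{Frenet decoupling theorem} applies with $n=4$, $d-1=1$ (so $d=2$ and $d(d+1)=6$). For $6\le p\le 12$ we land in the regime $p\ge d(d+1)$, where $\alpha(p)=1-\tfrac{4}{p}$, giving
\[
\big\|\tilde{m}[\tilde{b}_{k,\bm{\ell}}^{*,\mu}](D)\tilde{f}\big\|_{L^p(\R^4)} \;\lesssim_{\varepsilon}\; 2^{3(\ell_{1}-\ell_{2})(1-4/p+\varepsilon)/2} \Big(\sum_{\nu\in\mathfrak{N}_{\bm{\ell}}(\mu)}\big\|\tilde{m}[\tilde{b}_{k,\bm{\ell}}^{\nu}](D)\tilde{f}\big\|_{L^p(\R^4)}^{p}\Big)^{1/p}.
\]
Undoing the rescaling (which again merely contributes the same Jacobian factors on both sides) produces the analogous inequality for the un-rescaled $b_{k,\bm{\ell}}^{\nu}$; substituting back into the first decoupling and using $\bigsqcup_{\mu}\mathfrak{N}_{\bm{\ell}}(\mu)=\Z$ yields the claim.

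The routine parts are the book-keeping with Jacobians in the rescaling identity \eqref{J4 m resc} and the verification that the rescaled collection of Fourier supports really does form a Frenet-box decomposition at scale $r=2^{-3(\ell_{1}-\ell_{2})/2}$ for $\tilde{\gamma}$. The only subtle point (and the main obstacle) is confirming that Theorem~\ref{Frenet decoupling theorem} may be invoked in the second step with the smooth perturbation $\tilde{\gamma}$ in place of $\gamma_{\circ}$, which is guaranteed by the closure of $\mathfrak{G}_{4}(\delta)$ under the rescaling of Definition~\ref{rescaled curve def} once $\delta_{0}$ is chosen small enough; the index constraint $\ell_{1}\le\lfloor(2k+\ell_{2})/9\rfloor$ built into $\Lambda(k)$ guarantees that $r\le 1$ so that the decoupling theorem applies at a genuine sub-unit scale.
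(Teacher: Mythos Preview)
Your argument is correct and follows essentially the same route as the paper: first apply Proposition~\ref{J4 dec prop}(b) to decouple the $b_{k,\bm{\ell}}^{*,\mu}$ at scale $2^{-\ell_2}$, then for each fixed $\mu$ pass to the rescaled curve $\tilde{\gamma}$ via \eqref{J4 gamma resc}--\eqref{J4 m resc} and invoke Theorem~\ref{Frenet decoupling theorem} with $d-1=1$, $r=2^{-3(\ell_1-\ell_2)/2}$, using Lemma~\ref{J=4 resc b_nu supp lem} for the support condition and the exponent $\alpha(p)=1-4/p$ in the range $p\ge 6$. Two cosmetic remarks: the ``$?$'' in your displayed Jacobian factor should simply be replaced by the correct power (from \eqref{J4 m resc} one gets $\lambda\,|\det[\gamma]_{\sigma,\lambda}|^{1/p}$), though as you note it cancels; and the fact that $r\le 1$ follows already from $\ell_1\ge\ell_2$ in $\Lambda(k)$, not from the upper bound $\ell_1\le\lfloor(2k+\ell_2)/9\rfloor$.
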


\begin{proof} It suffices to show that, under the hypotheses of the proposition, for all $\mu \in \Z$ one has
\begin{equation}\label{J=4 b_nu dec 1}
    \Big\| \sum_{\nu \in \mathfrak{N}_{\bm{\ell}} (\mu)} m[b_{k,\bm{\ell}}^{\nu}] (D)f  \Big\|_{L^p(\R^4)} \\
    \lesssim_{\varepsilon} 2^{3(\ell_1-\ell_2)(1-4/p+\varepsilon)/2} \Big( \sum_{\nu \in \mathfrak{N}_{\bm{\ell}} (\mu)}   \| m[b_{k,\bm{\ell}}^{\nu}] (D)f \|_{L^p(\R^4)}^p \Big)^{1/p}.
\end{equation}
Indeed, one may then combine the above inequality with Proposition~\ref{J4 dec prop} b) to obtain the desired decoupling result. However, by applying a linear change of variables, \eqref{J=4 b_nu dec 1} is equivalent to the same inequality but with each $m[b_{k,\bm{\ell}}^\nu]$ replaced with the rescaled multiplier $\tilde{m}[\tilde{b}_{k,\bm{\ell}}^\nu]$ as defined in \eqref{J4 m resc}. In view of the support conditions from Lemma~\ref{J=4 resc b_nu supp lem}, after a simple rescaling, the desired result follows from Theorem~\ref{Frenet decoupling theorem} with $d-1 = 1$, $n=4$ and $r= 2^{-3(\ell_1 - \ell_2)/2}$.
\end{proof}

%%%%%%%%%%%%%%%%%%%%%%%%%%%%%%%%%%%%%%%%%
%%%%%%%%%%%%%%%%%%%%%%%%%%%%%%%%%%%%%%%%%
%%%%%%  Localisation ALONG THE CURVE
%%%%%%%%%%%%%%%%%%%%%%%%%%%%%%%%%%%%%%%%%
%%%%%%%%%%%%%%%%%%%%%%%%%%%%%%%%%%%%%%%%%

\subsection{Localisation along the curve} The localisation in $\theta_2(\xi)$ and $\theta_1(\xi)$ introduced in the previous subsection induces a corresponding localisation along the curve in the physical space. In particular, the main contribution to $m[a_{k,\ell,\iota}^{\mu}]$ and $m[b_{k,\bm{\ell}}^{\nu}]$ arises from the portion of the curve defined over the interval $|s - s_{\mu}| \leq 2^{-\ell}$ and $|s-s_{\nu}| \leq 2^{-(3\ell_1-\ell_2)/2}$, respectively. This is made precise by Lemma~\ref{J=4 curve loc lem} below.

For each $\mu, \nu \in \Z$, let $s_\mu := 2^{-\ell}\mu$ and $s_\nu := 2^{-(3\ell_1-\ell_2)/2}\nu$. Given $\varepsilon>0$ and for the fine tuning parameter $\rho$ as introduced in \S\ref{J=4 decomp sec}, define
\begin{align}\label{eq:dec lower triangle along curve}
    a_{k,\ell,\iota}^{\mu, (\varepsilon)}(\xi;s) & := a_{k,\ell,\iota}^\mu(\xi) \eta(\rho 2^{\ell(1-\varepsilon)}(s - s_\mu)),
\\
\label{eq:b dec along curve}
b_{k,\bm{\ell}}^{\nu, (\varepsilon)}(\xi;s) & := b_{k,\bm{\ell}}^\nu(\xi)  \eta(\rho 2^{(1-\varepsilon)(3\ell_1-\ell_2)/2}(s - s_\nu)).
\end{align}
The key contribution to the multipliers comes from the symbols $a_{k,\ell,\iota}^{\mu, (\varepsilon)}$ and $b_{k,\bm{\ell}}^{\nu, (\varepsilon)}$ respectively. 

\begin{lemma}\label{J=4 curve loc lem} Let $2 \le p < \infty$ and $\varepsilon > 0$.
\begin{enumerate}[a)]
    \item For all $0 \leq  \ell \leq  \floor{k/4}$, $\mu \in \Z$ and $1 \leq \iota \leq 4$,
\begin{equation*}
    \|m[a_{k,\ell,\iota}^{\mu}-a_{k,\ell,\iota}^{\mu, (\varepsilon)}]\|_{M^p(\R^4)}\lesim_{N, \varepsilon, p} 2^{-kN} \qquad \textrm{for all $N\in \N$.}
\end{equation*}
\item For all $\bm{\ell}=(\ell_1,\ell_2) \in \Lambda(k)$ and $\nu \in \Z$,
\begin{equation*}
    \|m[b_{k,\bm{\ell}}^{\nu}-b_{k,\bm{\ell}}^{\nu, (\varepsilon)}]\|_{M^p(\R^4)}\lesim_{N, \varepsilon, p} 2^{-kN} \qquad \textrm{ for all $N\in \N$.}
\end{equation*}
\end{enumerate}
\end{lemma}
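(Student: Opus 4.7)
The proof adapts Lemma~\ref{J3 localisation lemma} to the richer geometry of the $J=4$ setting. By interpolation against the trivial $M^\infty(\R^4)$ bound of size $2^{O(k)}$, it suffices to prove the $M^2(\R^4)$ estimate, which via Plancherel's theorem reduces to the pointwise bound
\begin{equation*}
|m[\cdot](\xi)| \lesssim_{N,\varepsilon} 2^{-kN} \qquad \textrm{for all $N\in \N$}
\end{equation*}
on each difference symbol. These pointwise estimates will follow from repeated integration by parts via Lemma~\ref{non-stationary lem}, whose key input is a lower bound $|\inn{\gamma'(s)}{\xi}|\gtrsim R$ on the support of the symbol, with $R$ a suitably large positive power of $2^k$.

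For case (a) with $\iota\in\{1,2\}$, Taylor expand around $\theta_2(\xi)$. Since $\inn{\gamma^{(3)}\circ\theta_2(\xi)}{\xi}=0$ by definition of $\theta_2$, the quadratic term vanishes and
\begin{equation*}
\inn{\gamma'(s)}{\xi} = u_{1,2}(\xi) + u_2(\xi)\,h + \omega_3(\xi,s)\,h^3, \qquad h := s - \theta_2(\xi),
\end{equation*}
with $|\omega_3|\sim 2^k$. On the support of the difference, the $\theta_2$- and $s$-localisations combine to give $|h|\gtrsim \rho^{-1} 2^{-\ell(1-\varepsilon)}$, while the bounds $|u_{1,2}|\lesssim 2^{k-3\ell}$ and $|u_2|\lesssim \rho\,2^{k-2\ell}$, together with the positivity of $\inn{\gamma^{(4)}}{\xi}$ from \eqref{convex}, show (in the $\iota=1$ case via cubic dominance, and in the $\iota=2$ case by ruling out cancellation between $u_2h$ and $\omega_3 h^3$ using their common sign) that $|\inn{\gamma'(s)}{\xi}|\gtrsim 2^{k-3\ell(1-\varepsilon)}$.

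For case (a) with $\iota\in\{3,4\}$ and for case (b), Taylor expand instead around $\theta_1(\xi)$, so that the linear term vanishes:
\begin{equation*}
\inn{\gamma'(s)}{\xi} = u_1(\xi) + \tfrac{1}{2}u_{3,1}(\xi)\,h^2 + \tfrac{1}{6}\omega_4(\xi,s)\,h^3, \qquad h := s - \theta_1(\xi),
\end{equation*}
with $|\omega_4|\sim 2^k$ and, by Lemma~\ref{lemma:size of quantities}, $|u_{3,1}|\sim 2^{k-\ell_2}$ on the relevant supports. The same lemma provides $|\theta_1-\theta_2|\lesssim 2^{-\ell_2}$, and combining with the $s$-localisation yields $|h|\gtrsim 2^{-\ell(1-\varepsilon)}$ in case (a) and $|h|\gtrsim 2^{-(3\ell_1-\ell_2)(1-\varepsilon)/2}$ in case (b). A further size comparison shows that either the quadratic or the cubic remainder dominates $u_1$, furnishing the desired lower bound on $|\inn{\gamma'(s)}{\xi}|$.

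Combining these lower bounds with routine upper bounds on $|\inn{\gamma^{(j)}(s)}{\xi}|$ and on the $s$-derivatives of the symbols, the admissibility constraints $\ell\leq\floor{k/4}$ and $\ell_1\leq\floor{(2k+\ell_2)/9}$ ensure that $R$ is a positive power of $2^k$, so that the integration by parts argument produces decay of any polynomial order. The principal obstacle is the careful bookkeeping required in the size comparisons above: the multiple nested localisations mean one must verify, in each regime, that no cancellation between $u_{1,2}$ and $u_2h$, or between $u_1$ and $u_{3,1}h^2$, prevents the higher-order term from dominating. This is handled using the smallness of the fine-tuning constants $\rho,\delta_0$, the positivity of $\inn{\gamma^{(4)}}{\xi}$ from \eqref{convex}, and the sign information on $u_2$ available in the $\iota=2$ subcase.
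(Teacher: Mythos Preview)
Your proposal is essentially correct and follows the paper's strategy, but you introduce an unnecessary case split in part (a). The paper treats all four values $\iota \in \{1,2,3,4\}$ uniformly by Taylor expanding around $\theta_2(\xi)$: on the support of the difference one has $|u_{1,2}(\xi)|\lesssim 2^{k-3\ell}$, $|u_2(\xi)|\lesssim \rho\, 2^{k-2\ell}$ and $|s-\theta_2(\xi)|\gtrsim \rho^{-1} 2^{-\ell(1-\varepsilon)}$, and a pure size comparison shows the cubic term $\omega_1(\xi;s)(s-\theta_2)^3$ dominates both $u_{1,2}$ and $u_2\cdot(s-\theta_2)$. No sign information (your remark for $\iota=2$) and no passage through $\theta_1$ (your treatment of $\iota\in\{3,4\}$) are needed. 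Your $\theta_1$-based argument for $\iota\in\{3,4\}$ does work---since $|\theta_1-\theta_2|\sim \rho^{1/2}2^{-\ell}$ and $|s-\theta_2|\gtrsim \rho^{-1}2^{-\ell(1-\varepsilon)}$, one still has $|s-\theta_1|$ large enough that the cubic remainder dominates---but this just recovers the same cubic dominance through a longer route. For part (b) your approach matches the paper's: expand around $\theta_1$, use $|u_{3,1}|\sim 2^{k-\ell_2}$ together with the built-in localisation $|s-\theta_1|\lesssim \rho\,2^{-\ell_2}$ to see the quadratic term governs, and then compare against $|u_1|\lesssim 2^{k-3\ell_1}$.
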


\begin{proof} In both part a) and b) it is clear that the multipliers satisfy a trivial $L^{\infty}$-estimate with operator norm $O(2^{Ck})$ for some absolute constant $C \geq 1$. Thus, by interpolation, it suffices to prove the rapid decay for $p = 2$ only. This amounts to showing that, under the hypotheses of the lemma, 
\begin{equation}
 \label{J=4 curve loc 1}
     \|m[a_{k,\ell,\iota}^{\mu}-a_{k,\ell,\iota}^{\mu, (\varepsilon)}]\|_{L^{\infty}(\hat{\R}^4)} \lesssim_{N,\varepsilon} 2^{-kN} \quad \textrm{and} \quad 
     \|m[b_{k,\bm{\ell}}^{\nu}-b_{k,\bm{\ell}}^{\nu, (\varepsilon)}]\|_{L^{\infty}(\hat{\R}^4)} \lesim_{N,\varepsilon} 2^{-kN} \quad \textrm{for all $N \in \N$.}   
\end{equation}
This is achieved via a simple (non)-stationary phase analysis.\medskip

\noindent a) Here the localisation of the $a_{k,\ell, \iota}$ symbols ensures that
\begin{equation}\label{J=4 curve loc a 1}
     |u_{1,2}(\xi)| \lesssim 2^{k-3\ell}, \quad |u_2(\xi)| \lesssim \rho 2^{k-2\ell}  \qquad \textrm{for all $(\xi;s) \in \supp (a_{k,\ell,\iota}^{\mu}-a_{k,\ell,\iota}^{\mu, (\varepsilon)})$.}
\end{equation}
On the other hand, provided $\rho$ is sufficiently small, the additional localisation in \eqref{eq:dec theta_2 lower triangle} and \eqref{eq:dec lower triangle along curve} implies
\begin{equation}\label{J=4 curve loc a 2}
|s-\theta_2(\xi)| \gtrsim \rho^{-1} 2^{-\ell(1 - \varepsilon)} \quad \text{for all } (\xi;s) \in \supp (a_{k,\ell,\iota}^{\mu}-a_{k,\ell,\iota}^{\mu, (\varepsilon)}).
\end{equation}

Fix $\xi \in \xisupp (a_{k,\ell,\iota}^{\mu}-a_{k,\ell,\iota}^{\mu, (\varepsilon)})$ and consider the oscillatory integral $m[a_{k,\ell,\iota}^{\mu}-a_{k,\ell,\iota}^{\mu, (\varepsilon)}](\xi)$, which has phase $s \mapsto \inn{\gamma(s)}{\xi}$. Taylor expansion around $\theta_2(\xi)$ yields
\begin{align}\label{J=4 curve loc a 3}
        \inn{\gamma'(s)}{\xi} &=u_{1,2}(\xi)+ \big( u_2(\xi) + \om_1(\xi;s)\cdot (s-\theta_{2}(\xi))^2 \big) \cdot (s-\theta_{2}(\xi)), \\ \label{J=4 curve loc a 4}
        \inn{\gamma''(s)}{\xi} &=u_{2}(\xi)+  \om_2(\xi;s)\cdot (s-\theta_{2}(\xi))^2,  \\ \label{J=4 curve loc a 5}
        \inn{\gamma^{(3)}(s)}{\xi} &=\om_3(\xi;s)\cdot (s-\theta_{2}(\xi)),
\end{align}
where the $\omega_i$ arise from the remainder terms and satisfy $|\om_i(\xi;s)| \sim 2^k$. Provided $\rho$ is sufficiently small, \eqref{J=4 curve loc a 1} and \eqref{J=4 curve loc a 2} imply that the $\omega_1(\xi;s) \cdot (s- \theta_2(\xi))^3$ term dominates the right-hand side of \eqref{J=4 curve loc a 3} and therefore
\begin{equation}\label{J=4 curve loc a 6}
    |\inn{\gamma'(s)}{\xi}| \gtrsim  2^{k} |s-\theta_2(\xi)|^3 \quad \textrm{for all $(\xi; s) \in \supp (a_{k,\ell,\iota}^{\mu}-a_{k,\ell, \iota}^{\mu, (\varepsilon)})$.}
\end{equation}
Furthermore, by \eqref{J=4 curve loc a 1} and \eqref{J=4 curve loc a 2}, the term $\omega_2(\xi;s) \cdot (s-\theta_2(\xi))^2$ dominates in \eqref{J=4 curve loc a 4}. This, \eqref{J=4 curve loc a 5}, \eqref{J=4 curve loc a 6} and the localisation \eqref{J=4 curve loc a 2} immediately imply
\begin{align*}
    |\inn{\gamma''(s)}{\xi}| &  \lesssim   2^{-k+4\ell(1-\varepsilon)}|\inn{\gamma'(s)}{\xi}|^2, \\
    |\inn{\gamma^{(3)}(s)}{\xi}| &  \lesssim   2^{-(k-4\ell(1-\varepsilon))2}|\inn{\gamma'(s)}{\xi}|^3, \\
    |\inn{\gamma^{(j)}(s)}{\xi}| & \lesssim 2^{k} \lesssim_j  2^{-(k-4\ell(1-\varepsilon))(j-1)} |\inn{\gamma'(s)}{\xi}|^j
     \qquad \text{for all $j \geq 4$}
\end{align*}
for all $(\xi;s) \in \supp (a_{k,\ell,\iota}^{\mu}-a_{k,\ell,\iota}^{\mu,(\varepsilon)})$.

On the other hand, by the definition of the symbols, \eqref{J=4 curve loc a 6} and the localisation \eqref{J=4 curve loc a 2},
  \begin{equation*}
 |\partial_s^N (a_{k,\ell,\iota}^{\mu}-a_{k,\ell,\iota}^{\mu,(\varepsilon)})(\xi;s)| \lesssim_N 2^{\ell N} \lesssim 2^{-(k-4\ell)N - 3\varepsilon \ell N}|\inn{\gamma'(s)}{\xi}|^N \qquad \textrm{for all $N \in \N$.}
 \end{equation*}
Thus, by repeated integration-by-parts (via Lemma \ref{non-stationary lem}, with $R=2^{k-4\ell+ 3\varepsilon \ell } \geq 1$),
\begin{equation*}
    |m[a_{k,\ell,\iota}^{\mu}-a_{k,\ell,\iota}^{\mu, (\varepsilon)}](\xi)| \lesssim_N  2^{-(k-4\ell) N} 2^{-3\varepsilon \ell N} \quad \text{ for all }  N \in \N.
\end{equation*}
Since $0 \leq \ell \leq  \floor{k/4} \leq  k/4$, the first bound in \eqref{J=4 curve loc 1} follows.
\medskip

\noindent b)  Here the localisation of the $b_{k,\bm{\ell}}$ symbols ensures that
\begin{equation}\label{J=4 curve loc bb 1}
    |u_1(\xi)| \lesssim \rho^4 2^{k-3\ell_1},\qquad |u_2(\xi)|\sim \rho 2^{k-2\ell_2}
    ,\qquad |s - \theta_1(\xi)| \lesssim \rho 2^{-\ell_2} 
\end{equation}
hold for all $(\xi;s) \in \supp (b_{k,\bm{\ell}}^{\nu}-b_{k,\bm{\ell}}^{\nu, (\varepsilon)})$. Furthermore, by Lemma~\ref{J4 lower bound lemma},
\begin{equation}\label{J=4 curve loc bb 2}
    |\inn{\gamma^{(3)}(s)}{\xi}| \sim \rho^{1/2} 2^{k-\ell_2} \qquad \textrm{for all $(\xi;s) \in \supp b_{k,\bm{\ell}}$,}
\end{equation}
whilst, provided $\rho$ is sufficiently small, the additional localisation in \eqref{eq:b freq dec theta} and \eqref{eq:b dec along curve} implies
\begin{equation}\label{J=4 curve loc bb 3}
    |s-\theta_1(\xi)|\gtrsim \rho^{-1} 2^{-(1-\varepsilon)(3\ell_1-\ell_2)/2}  \quad \text{for all } (\xi;s) \in \supp (b_{k,\bm{\ell}}^{\nu}-b_{k,\bm{\ell}}^{\nu, (\varepsilon)}).
\end{equation}

Fix $\xi \in \xisupp (b_{k,\bm{\ell}}^{\nu}-b_{k,\bm{\ell}}^{\nu, (\varepsilon)})$ and consider the oscillatory integral $m[b_{k,\bm{\ell}}^{\nu}-b_{k,\bm{\ell}}^{\nu, (\varepsilon)}](\xi)$, which has phase $s \mapsto \inn{\gamma(s)}{\xi}$. Taylor expansion around $\theta_1(\xi)$ yields
\begin{align}\label{J=4 curve loc b 3}
    \inn{\gamma'(s)}{\xi} &=u_{1}(\xi)
        + \omega_1(\xi; s)\cdot (s-\theta_1(\xi))^2, \\ \label{J=4 curve loc b 4}
    \inn{\gamma''(s)}{\xi} &=\omega_2(\xi; s)\cdot (s-\theta_1(\xi)),
\end{align}
where the $\omega_i$ arise from the remainder terms and satisfy $|\omega_i(\xi;s)| \sim \rho^{1/2} 2^{k-\ell_2}$ by \eqref{J=4 curve loc bb 2}. Provided $\rho>0$ is sufficiently small, \eqref{J=4 curve loc bb 1} and \eqref{J=4 curve loc bb 3} imply that the second term dominates the right-hand side of \eqref{J=4 curve loc b 3} and therefore
\begin{equation}\label{J=4 curve loc b 5}
    |\inn{\gamma'(s)}{\xi}|\gtrsim \rho^{1/2} 2^{k-\ell_2}|s-\theta_1(\xi)|^2 \quad \text{ for all } \, (\xi; s) \in \supp (b_{k,\bm{\ell}}^{\nu}-b_{k,\bm{\ell}}^{\nu, (\varepsilon)}).
\end{equation}
Furthermore, \eqref{J=4 curve loc b 4}, \eqref{J=4 curve loc bb 2}, \eqref{J=4 curve loc b 5} and the localisation \eqref{J=4 curve loc bb 3} imply
\begin{align*}
    |\inn{\gamma''(s)}{\xi}| &  \lesssim  2^{-k+\ell_2+3(1-\varepsilon)(3\ell_1-\ell_2)/2}|\inn{\gamma'(s)}{\xi}|^2, \\
    |\inn{\gamma^{(3)}(s)}{\xi}| &  \lesssim 2^{k-\ell_2} \lesssim 2^{-2(k-\ell_2-3(1-\varepsilon)(3\ell_1-\ell_2)/2) }|\inn{\gamma'(s)}{\xi}|^3, \\
    |\inn{\gamma^{(j)}(s)}{\xi}| & \lesssim 2^{k} \lesssim_j 2^{-(k-\ell_2-3(1-\varepsilon)(3\ell_1-\ell_2)/2) (j-1)} |\inn{\gamma'(s)}{\xi}|^j
     \qquad \text{for all $j \geq 4$}
\end{align*}
for all $(\xi;s) \in \supp (b_{k,\bm{\ell}}^{\nu}-b_{k,\bm{\ell}}^{\nu,(\varepsilon)})$.

 On the other hand, by the definition of the symbols, \eqref{J=4 curve loc b 5} and the localisation \eqref{J=4 curve loc bb 3},
  \begin{align*}
 |\partial_s^N (b_{k,\bm{\ell}}^{\nu}-b_{k,\bm{\ell}}^{\nu, (\varepsilon)})(\xi;s)| & \lesssim_N \max\big\{ \rho^{-N} 2^{\ell_2 N}\,,\, 2^{(1-\varepsilon)(3\ell_1-\ell_2) N/2}\big\} \\
 &\lesssim_{N, \rho} 2^{-(k-\ell_2-3(1-\varepsilon)(3\ell_1-\ell_2)/2)N}|\inn{\gamma'(s)}{\xi}|^N \qquad \textrm{for all $N \in \N$}
 \end{align*}
 and  all  $(\xi; s) \in \supp (b_{k,\bm{\ell}}^{\nu}-b_{k,\bm{\ell}}^{\nu, (\varepsilon)})$, using that $0 \leq \ell_2 \leq \ell_1$ for $\bm{\ell} \in \Lambda(k)$. Thus, by repeated integration-by-parts (via Lemma \ref{non-stationary lem} with $R=2^{k-\ell_2-3(1-\varepsilon)(3\ell_1-\ell_2)/2}\geq 1$),
\begin{equation*}
        |m[ b_{k,\bm{\ell}}^{\nu}-b_{k,\bm{\ell}}^{\nu, (\varepsilon)}](\xi;s)|  \lesim_{N, \rho} 2^{-(k-\ell_2-3(3\ell_1-\ell_2)/2)N - 3\varepsilon (3\ell_1-\ell_2) N/2}
    \quad \text{ for all }  N \in \N.
\end{equation*}
Since $\ell_2 \leq \ell_1 \leq (2k+\ell_2)/9$ and $0 \leq \ell_2 < k/4$ for $\bm{\ell} \in \Lambda(k)$, the second bound in \eqref{J=4 curve loc 1} follows.
\end{proof}

%%%%%%%%%%%%%%%%%%%%%%%%%%%%%%%%%%%%%%
%%%%%%%%%%%%%%%%%%%%%%%%%%%%%%%%%%%%%%
%%%% ESTIMATING LOCALISED PIECES
%%%%%%%%%%%%%%%%%%%%%%%%%%%%%%%%%%%%%%
%%%%%%%%%%%%%%%%%%%%%%%%%%%%%%%%%%%%%%

\subsection{Estimating the localised pieces}

Each piece of the multipliers $m[a_{k,\ell,\iota}^{\mu, (\varepsilon)}]$ and $m[b_{k,\bm{\ell}}^{\nu, (\varepsilon)}]$ arising from the preceding decomposition satisfies favourable $L^2$ and $L^{\infty}$ bounds.

%%%%%%%%%%%%%%%%%%%%%%%%%%%%%%%%%%%%%%
%%%% L^2 BOUNDS
%%%%%%%%%%%%%%%%%%%%%%%%%%%%%%%%%%%%%%

\begin{lemma}\label{L2 bounds J=4 lem}
\begin{enumerate}[a)]
\item For $0 \leq \ell \leq  \floor{k/4}$, $\mu \in \Z$, $1 \leq \iota\leq 4$ and  $\varepsilon>0$, we have
\begin{equation*}
    \|m[a_{k,\ell,\iota}^{\mu, (\varepsilon)}]\|_{M^2(\R^4)} \lesim  2^{-k/2+\ell}.
\end{equation*}
\item 
For $\bm{\ell}=(\ell_1,\ell_2) \in \Lambda(k)$, $\nu \in \Z$ and  $\varepsilon>0$, we have
\begin{equation*}
    \| m[b_{k,\bm{\ell}}^{\nu, (\varepsilon)}] \|_{M^2(\R^4)} \lesim 2^{-k/2+(3\ell_1+\ell_2)/4}.
\end{equation*}
\end{enumerate}
\end{lemma}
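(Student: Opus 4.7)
The plan is to invoke Plancherel's theorem, reducing both $M^2(\R^4)$-bounds to pointwise $L^{\infty}(\hat\R^4)$-bounds on the multipliers $m[a_{k,\ell,\iota}^{\mu,(\varepsilon)}](\xi)$ and $m[b_{k,\bm{\ell}}^{\nu,(\varepsilon)}](\xi)$. These are oscillatory integrals in $s$ with phase $\phi(s;\xi) := \inn{\gamma(s)}{\xi}$, and will be estimated by van der Corput's lemma (primarily with second-order derivatives; cf.\ \cite[Chapter VIII, \S1.2]{Stein1993}). The required lower bounds on $\phi'$ and $\phi''$ are obtained by Taylor expansion around $\theta_2(\xi)$ (for part a), $\iota \in \{1,2\}$) or around $\theta_1(\xi)$ (for part a), $\iota \in \{3,4\}$, and for part b)), exploiting the identities $\inn{\gamma^{(3)} \circ \theta_2(\xi)}{\xi} = 0$ and $\inn{\gamma'' \circ \theta_1(\xi)}{\xi} = 0$.

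For part a), Taylor expansion around $\theta_2(\xi)$ gives
\[
\inn{\gamma'(s)}{\xi} = u_{1,2}(\xi) + u_2(\xi)(s-\theta_2(\xi)) + O\big(2^k|s-\theta_2(\xi)|^3\big), \qquad
\inn{\gamma''(s)}{\xi} = u_2(\xi) + \tfrac{1}{2}\omega(\xi;s)(s-\theta_2(\xi))^2,
\]
where $\omega(\xi;s) = \inn{\gamma^{(4)}(s_\ast)}{\xi} \geq |\xi|/2 \sim 2^k$ by \eqref{4 derivative bound} and \eqref{convex}. For $\iota = 2$ with $\ell < \lfloor k/4 \rfloor$, one has $u_2(\xi) \gtrsim \rho\,2^{k-2\ell}$ positive and $\omega > 0$, so $\inn{\gamma''(s)}{\xi} \geq u_2(\xi)$ uniformly with constant sign and van der Corput of order $2$ gives the bound. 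For $\iota = 1$, I would split at $|s - \theta_2(\xi)| = \rho' 2^{-\ell}$ for $\rho' > 0$ small relative to $\rho^{1/2}$: on the inner region the constant term $|u_{1,2}(\xi)| \sim 2^{k-3\ell}$ dominates $\inn{\gamma'(s)}{\xi}$, so integration by parts on the (at most three) monotone pieces of $\phi'$---whose break-points are the zeros of $\phi''$, located at distance $\sim \sqrt{|u_2(\xi)|/\omega}$ from $\theta_2(\xi)$---yields a contribution of $O(2^{3\ell-k}) \leq O(2^{-k/2 + \ell})$ since $\ell \leq k/4$; on the outer region the $\omega$-term of $\inn{\gamma''(s)}{\xi}$ dominates and yields $|\inn{\gamma''(s)}{\xi}| \gtrsim 2^{k-2\ell}$ with definite sign, so van der Corput of order $2$ applies. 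The cases $\iota \in \{3,4\}$ are parallel but use Taylor expansion around $\theta_1(\xi)$ together with $|u_{3,1}(\xi)| \sim \rho^{1/2}\,2^{k-\ell}$ (from Lemma~\ref{lemma:size of quantities}\,i)); for $\iota = 4$ only the outer analysis in $|s-\theta_1(\xi)|$ is needed since $|s - \theta_1(\xi)| \gtrsim \rho\,2^{-\ell}$ is built into the symbol. The edge case $\ell = \lfloor k/4\rfloor$ can be treated uniformly in $\iota$ by van der Corput of order $4$: \eqref{4 derivative bound} gives $|\phi^{(4)}(s)| \gtrsim 2^k$, hence $|m[\,\cdot\,](\xi)| \lesssim 2^{-k/4} \sim 2^{-k/2 + \lfloor k/4 \rfloor}$.

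For part b), the key ingredient is Lemma~\ref{J4 lower bound lemma}, which gives $|\inn{\gamma^{(3)}(s)}{\xi}| \sim \rho^{1/2}\,2^{k-\ell_2}$ throughout $\supp b_{k,\bm{\ell}}$. Taylor expansion around $\theta_1(\xi)$ now yields
\[
\inn{\gamma'(s)}{\xi} = u_1(\xi) + \tfrac{1}{2}u_{3,1}(\xi)(s-\theta_1(\xi))^2 + O(2^k|s-\theta_1(\xi)|^3), \qquad
\inn{\gamma''(s)}{\xi} = u_{3,1}(\xi)(s-\theta_1(\xi)) + \tfrac{1}{2}\omega(\xi;s)(s-\theta_1(\xi))^2,
\]
placing us in direct analogy with the $J=3$ situation under the identification $(k,\ell) \leftrightarrow (k-\ell_2, (3\ell_1-\ell_2)/2)$ (compare Lemma~\ref{J=4 resc b_nu supp lem}). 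I would then mirror the argument of Lemma~\ref{J3 L2 lemma}: split at $|s - \theta_1(\xi)| = \rho'' 2^{-(3\ell_1-\ell_2)/2}$ for $\rho''$ sufficiently small relative to $\rho$, so that on the inner region $|u_1(\xi)| \sim \rho^{4}\,2^{k-3\ell_1}$ dominates $\inn{\gamma'(s)}{\xi}$ and integration by parts supplies an admissible contribution, while on the outer region $|\inn{\gamma''(s)}{\xi}| \gtrsim \rho^{1/2}\rho''\,2^{k-(3\ell_1+\ell_2)/2}$ with constant sign, so van der Corput of order $2$ produces the target bound $2^{-k/2 + (3\ell_1+\ell_2)/4}$. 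The main obstacle throughout is the delicate case analysis for $\iota = 1$ of part a) (and its analogues)---where neither $\phi'$ nor $\phi''$ is uniformly bounded below on the full $s$-support---and in particular the need to choose the splitting threshold $\rho'$ small enough relative to the fine-tuning constant $\rho$ (but independently of $k,\ell$) so that the zeros of $\phi''$ fall strictly inside the inner piece, ensuring $\phi''$ has constant sign on the outer piece and thus enabling the second-derivative van der Corput estimate to be applied cleanly.
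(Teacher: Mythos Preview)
Your overall strategy matches the paper's: Plancherel plus van der Corput, with the key reduction to a pointwise lower bound of the form $|\phi'(s)| + 2^{-\ell}|\phi''(s)| \gtrsim 2^{k-3\ell}$ (respectively $|\phi'| + 2^{-(3\ell_1-\ell_2)/2}|\phi''| \gtrsim 2^{k-3\ell_1}$ for part b)). For $\iota = 2$, the edge case $\ell = \lfloor k/4\rfloor$, and all of part b), your argument is essentially the paper's. For $\iota = 1$ you have the right idea but the threshold direction is backwards: you need $\rho' \gg \rho^{1/2}$ (the paper takes $\rho' = \rho^{1/4}$), not $\rho' \ll \rho^{1/2}$, precisely so that the zeros $\theta_1^\pm(\xi)$ of $\phi''$ --- which sit at distance $\sim\rho^{1/2}2^{-\ell}$ from $\theta_2(\xi)$ --- fall in the inner piece where your $|\phi'|$ bound applies.

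There is, however, a genuine gap in your treatment of $\iota \in \{3,4\}$. You propose to expand around the single root $\theta_1(\xi)$ and run the inner/outer split, but on the support of $a_{k,\ell,3}$ and $a_{k,\ell,4}$ one has $u_2(\xi) < 0$, so $\phi'' = \inn{\gamma''(s)}{\xi}$ has \emph{two} roots $\theta_1^\pm(\xi)$, separated by $\sim \rho^{1/2}2^{-\ell}$. The second root $\theta_1^{\mp}(\xi)$ lies in your ``outer'' region (since $\rho^{1/2} \gg \rho^2$, and for $\iota = 4$ it lies in the support since $\rho^{1/2} \gg \rho$), and $\phi''$ vanishes there --- so your claimed outer bound $|\phi''| \gtrsim 2^{k-2\ell}$ fails. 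For $\iota = 4$ there is not even an inner piece to fall back on, since $|u_1(\xi)|$ has no lower bound.

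The paper handles these cases with two additional ideas you are missing. For $\iota = 3$ it uses a convexity argument: since $s \mapsto \phi''(s)$ is strictly convex with minimum $u_2(\xi)$ at $\theta_2(\xi)$ and zeros at $\theta_1^\pm(\xi)$, the divided difference $\phi''(s)/(s - \theta_1^\pm(\xi))$ is monotone, yielding $|\phi''(s)| \geq |u_2(\xi)|\,\min_\pm|s-\theta_1^\pm(\xi)|/|\theta_2(\xi) - \theta_1^\pm(\xi)|$ uniformly. For $\iota = 4$ the paper rewrites $\phi'(s) = u_1(\xi) + \tfrac{1}{2}\big(\alpha(\xi;s) + \beta(\xi;s)(s-\theta_1(\xi))\big)(s-\theta_1(\xi))^2$ and $\phi''(s) = \alpha(\xi;s)(s - \theta_1(\xi))$, then splits on the size of $|\alpha|$: when $|\alpha|$ is large, $|\phi''|$ is bounded below; when $|\alpha|$ is small, the $\beta$-term dominates $\phi'$ because $\beta(\xi;s)(s-\theta_1(\xi))^3$ has a definite sign (it is an integral of $-\inn{\gamma^{(4)}(t)}{\xi}(s-t)(t-\theta_1(\xi))$, and $\inn{\gamma^{(4)}}{\xi} > 0$ by \eqref{convex}). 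This sign information is what rescues the estimate near the second root, and it is not captured by your Taylor-expansion sketch.
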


\begin{proof} 

a) If $\ell = \floor{k/4}$, then the desired bounds follow from Plancherel's theorem and the van der Corput lemma with fourth order derivatives. For the remaining cases, it suffices to show that
\begin{equation}\label{J=2 L2 1}
    |\inn{\gamma'(s)}{\xi}| + 2^{-\ell}|\inn{\gamma''(s)}{\xi}| \gtrsim 2^{k-3\ell} \qquad \textrm{for all $(\xi;s) \in \supp a_{k,\ell,\iota}^{\mu, (\varepsilon)}$.}
\end{equation}
We treat each class of symbol, as index by the parameter $\iota$, individually.\medskip

\noindent\underline{$\iota = 1$}. Here the localisation of the symbol ensures the key properties
\begin{equation}\label{J=4 L2 1 1}
|u_{1,2}(\xi)| \sim 2^{k-3\ell}, \qquad |u_2(\xi)| \lesssim \rho 2^{k-2\ell} \qquad \text{ for all $(\xi;s) \in \supp a_{k,\ell,1}^{\mu,(\varepsilon)}$.}
\end{equation}
By Taylor expansion around $\theta_2(\xi)$, one has
\begin{align}\label{J=4 L2 1 2}
    \inn{\gamma'(s)}{\xi} & = u_{1,2}(\xi) + u_2(\xi) \cdot (s-\theta_2(\xi))  + \omega_1(\xi;s) \cdot (s-\theta_2(\xi))^3,  
\\
\label{J=4 L2 1 3}
    \inn{\gamma''(s)}{\xi} & =  u_2 (\xi) + \omega_2(\xi;s) \cdot (s-\theta_2(\xi))^2
\end{align}
where the functions $\omega_i$ arise from the remainder terms and satisfy $|\omega_i(\xi;s)| \sim 2^k$ for $i = 1$, $2$. The argument splits into two cases: \medskip 

\noindent\textbf{Case 1:}  $|s-\theta_2(\xi)| \leq \rho^{1/4} 2^{-\ell}$. Provided $\rho >0$ is chosen sufficiently small, \eqref{J=4 L2 1 1} implies that the $u_{1,2}(\xi)$ term dominates the right-hand side of \eqref{J=4 L2 1 2} and therefore $|\inn{\gamma'(s)}{\xi}| \gtrsim 2^{k-3\ell}$.\medskip

\noindent\textbf{Case 2:}  $|s-\theta_2(\xi)| \geq \rho^{1/4} 2^{-\ell}$. Again provided $\rho > 0$ is sufficiently small,  \eqref{J=4 L2 1 1} implies that the second term dominates the right-hand side of \eqref{J=4 L2 1 3} and therefore $|\inn{\gamma''(s)}{\xi}| \gtrsim \rho^{1/2} 2^{k-2\ell}$. \medskip

\noindent Thus, in either case the desired bound \eqref{J=2 L2 1} holds.\medskip

\noindent\underline{$\iota = 2$}.  Suppose  $0 \leq \ell < \floor{k/4}$ and $\xi \in \supp a_{k,\ell,2}^{\mu,(\varepsilon)}$. Recall, by Lemma~\ref{lemma:2cone}, that $\theta_2(\xi)$ is the unique global minimum of the function $s \mapsto \inn{\gamma''(s)}{\xi}$ on $[-1,1]$. Thus, $   \inn{\gamma''(s)}{\xi} \geq u_2(\xi) \sim \rho 2^{k-2\ell}$, as required. 

\medskip

\noindent\underline{$\iota = 3$}. Here the localisation of the symbol ensures the key properties
\begin{equation}\label{J=4 L2 3 1}
|u_1(\xi)| \sim \rho^4 2^{k-3\ell}, \qquad |u_2(\xi)| \sim \rho 2^{k-2\ell} \qquad \text{ for all $(\xi;s) \in \supp a_{k,\ell,3}^{\mu,(\varepsilon)}$.}
\end{equation}
The argument splits into two cases:\medskip

\noindent \textbf{Case 1:}  $\min_{\pm}|s-\theta_1^{\pm}(\xi)| \leq \rho^2 2^{-\ell}$. By Taylor expansion around $\theta_1^{\pm}(\xi)$, one has 
\begin{equation}\label{J=4 L2 3 2}
     \inn{\gamma'(s)}{\xi}  = u_1^{\pm}(\xi) + u_{3,1}^{\pm}(\xi) \cdot \frac{(s-\theta_1^{\pm}(\xi))^2}{2} + \omega^{\pm}(\xi;s) \cdot (s-\theta_1^{\pm}(\xi))^3,
\end{equation}
where the functions $\omega^\pm$ arise from the third order remainder term and satisfy $|\omega^{\pm}(\xi; s)|\sim 2^k$. Moreover, \eqref{J=4 L2 3 1} and Lemma \ref{lemma:size of quantities} i) imply $|u_{3,1}^\pm(\xi)| \sim \rho^{1/2} 2^{k-\ell}$. Provided $\rho$ is sufficiently small, \eqref{J=4 L2 3 1} implies that the $u_{1}^\pm(\xi)$ term dominates the right-hand side of \eqref{J=4 L2 3 2}  and therefore $|\inn{\gamma'(s)}{\xi}| \gtrsim \rho^4 2^{k-3\ell}$.\medskip 

\noindent \textbf{Case 2:}  $\min_{\pm}|s-\theta_1^{\pm}(\xi)| \geq \rho^2 2^{-\ell}$. In this case, rather than analysing Taylor expansions, we use a convexity argument. Fix $\xi \in \supp a_{k,\ell,3}^{\mu, (\varepsilon)}$ and let 
\begin{equation*}
    \phi \colon [-1,1] \to \R, \quad  \phi \colon s \mapsto \inn{\gamma''(s)}{\xi};
\end{equation*}
 by \eqref{convex}, this function is strictly convex. Thus, given $t \in [-1,1]$, the auxiliary function 
\begin{equation*}
q_t \colon [-1,1] \to \R, \quad q_t \colon s \mapsto \frac{\phi(s)-\phi(t)}{s-t} \quad \textrm{for $s \neq t$} \quad \textrm{and} \quad q_t \colon t \mapsto \phi'(t)  
\end{equation*}
 is increasing. Setting $t := \theta_1^-(\xi)$ and noting that $\phi\circ\theta_1^-(\xi)=0$, it follows that
\begin{equation*}
    \frac{\phi(s)}{s-\theta_1^-(\xi)}  \leq \frac{\phi\circ \theta_2(\xi)}{\theta_2(\xi)-\theta_1^-(\xi)} = \frac{u_2(\xi)}{\theta_2(\xi)-\theta_1^-(\xi)} < 0 \qquad \text{ for all } -1 \leq s \leq  \theta_2(\xi),
\end{equation*}
where we have used the fact that $u_2(\xi) < 0$ on the support of $a_{k,\ell,3}$. If $s \in [\theta_2(\xi), 1]$, then we can carry out the same argument with respect to $t = \theta_1^+(\xi)$ to obtain a similar inequality. From this, we deduce the bound 
\begin{equation}\label{J=4 L2 3 3}
    |\inn{\gamma''(s)}{\xi}| \geq \min_{\pm} \frac{|u_2(\xi)| |s-\theta_1^{\pm}(\xi)|} {|\theta_2(\xi)- \theta_1^{\pm}(\xi)|}  \qquad \text{ for all } -1 \leq s \leq 1.
\end{equation}
Recall from \eqref{J=4 L2 3 1} that $|u_2(\xi)| \sim \rho 2^{k-2\ell}$ and  therefore $|\theta_2(\xi)- \theta_1^{\pm}(\xi)| \sim \rho^{1/2} 2^{-\ell}$ by  Lemma~\ref{lemma:size of quantities} i). Substituting these bounds and the hypothesis $\min_{\pm}  |s-\theta_1^{\pm}(\xi)| \geq \rho^2 2^{-\ell}$ into \eqref{J=4 L2 3 3}, we conclude that $|\inn{\gamma''(s)}{\xi}| \gtrsim \rho^{5/2}2^{k-2\ell}$.\medskip 

\noindent Thus, in either case the desired bound \eqref{J=2 L2 1} holds.\medskip

\noindent\underline{$\iota = 4$}. Here the localisation of the symbol ensures the key properties \begin{equation}\label{J=4 L2 4 1}
    |u_{1}(\xi)| \lesssim \rho^4 2^{k-3\ell}, \qquad |s - \theta_1(\xi)| \gtrsim \rho 2^{-\ell} \qquad \textrm{ for all $(\xi;s) \in \supp a_{k,\ell,4}^{\mu,(\varepsilon)}$}.
\end{equation} 
By Taylor expansion around $\theta_1(\xi)$, we obtain 
 \begin{align}\label{J=4 L2 4 2}
     \inn{\gamma'(s)}{\xi} & = u_1(\xi) + u_{3,1}(\xi) \cdot \frac{(s-\theta_1(\xi))^2}{2} + \omega_1(\xi;s) \cdot (s-\theta_1(\xi))^3,  
 \\
 \label{J=4 L2 4 3}
     \inn{\gamma''(s)}{\xi} & = u_{3,1} (\xi) \cdot (s-\theta_1(\xi)) + \omega_2(\xi;s) \cdot (s-\theta_1(\xi))^2
 \end{align}
 where the functions $\om_1$ and $\om_2$ arise from the remainder terms and satisfy $|\omega_i(\xi; s)|\sim 2^k$ for $i = 1$, $2$. It is convenient to define the functions
 \begin{align*}
 &\alpha (\xi; s):=u_{3,1} (\xi) + \omega_2(\xi; s) \cdot (s- \theta_1(\xi)),\\
 & \beta(\xi; s) := 2\omega_1(\xi;s) - \omega_2(\xi;s),
 \end{align*}
 so that \eqref{J=4 L2 4 2} and \eqref{J=4 L2 4 3} can be rewritten as
 \begin{align}
 \label{J=4 L2 4 4}
     \inn{\gamma'(s)}{\xi} &= u_1(\xi) + \big(\alpha (\xi;s) +   \beta (\xi;s)\cdot (s-\theta_1(\xi)) \big) \cdot \frac{(s- \theta_1(\xi))^2}{2}, \\
 \label{J=4 L2 4 5}
     \inn{\gamma''(s)}{\xi} &=  \alpha(\xi;s) \cdot (s-\theta_1(\xi)).
 \end{align}
 The argument splits into two cases:\medskip

\noindent \textbf{Case 1:} $|\alpha(\xi;s)|\leq  \rho^2 2^{k-\ell}$. By the integral form of the remainder, 
 \begin{equation*}
     \beta(\xi;s)\cdot (s-\theta_1(\xi))^3 = - \int_{\theta_1(\xi)}^s \inn{\gamma^{(4)}(t)}{\xi} \cdot (s - t)\cdot (t - \theta_1(\xi))\,\ud t.
 \end{equation*}
Recall from \eqref{convex} that $\inn{\gamma^{(4)}(t)}{\xi}>0$ for all $t \in [-1,1]$. Thus, the integrand in the above display has constant sign. Furthermore, \eqref{4 derivative bound} also guarantees that $|\inn{\gamma^{(4)}(t)}{\xi}| \sim 2^k$. Combining these observations, 
 \begin{equation*}
     |\beta(\xi;s)| \sim 2^k \qquad \textrm{for all $(\xi;s) \in \supp a_{k, \ell, 4}$.}
 \end{equation*} 
 Thus, provided $\rho$ is chosen sufficiently small, the hypothesis $|\alpha(\xi;s)|\leq  \rho^2 2^{k-\ell}$ and together with the bound $|s-\theta_1 (\xi)| \geq \rho2^{-\ell}$ from \eqref{J=4 L2 1 1} imply
 \begin{equation*}
    |\beta(\xi;s)| |s-\theta_1 (\xi)| - |\alpha(\xi;s)| \gtrsim \rho2^{k-\ell}.  
 \end{equation*}
Consequently, \eqref{J=4 L2 4 1} implies that the second term dominates the right-hand side of \eqref{J=4 L2 4 4} and therefore $|\inn{\gamma'(s)}{\xi}| \gtrsim \rho^3 2^{k-3\ell}$.\medskip 

\noindent \textbf{Case 2:} $|\alpha(\xi;s)| \geq  \rho^2 2^{k-\ell}$. Here \eqref{J=4 L2 4 1} and \eqref{J=4 L2 4 5} immediately imply  $|\inn{\gamma''(s)}{\xi}| \gtrsim \rho^3 2^{k-2\ell}$.\medskip

\noindent Thus, in either case the desired bound \eqref{J=2 L2 1} holds.\bigskip

\noindent b) If $\ell_1=\floor{(2k+\ell_2)/9}$, then the desired bound follows from Plancherel's theorem and the van der Corput lemma with third order derivatives. Indeed, by Lemma~\ref{J4 lower bound lemma},
\begin{equation}\label{J=4 L2 bb 1}
    |\inn{\gamma^{(3)}(s)}{\xi}| \sim \rho^{1/2} 2^{k-\ell_2} \qquad \textrm{for all $(\xi;s) \in \supp b_{k,\bm{\ell}}^{\nu,(\varepsilon)}$.}
\end{equation}

For the remaining cases, it suffices to show that
\begin{equation}\label{J=4 L2 bb 2}
    |\inn{\gamma'(s)}{\xi}| + 2^{-(3\ell_1-\ell_2)/2}|\inn{\gamma''(s)}{\xi}| \gtrsim 2^{k-3\ell_1} \qquad \textrm{for all $(\xi;s) \in \supp b_{k,\bm{\ell}}^{\nu, (\varepsilon)}$.}
\end{equation}
Here the localisation of the symbol ensures the key properties 
\begin{equation}\label{J=4 L2 bb 3}
|u_1(\xi)| \sim \rho^4 2^{k-3\ell_1}, \quad |u_2(\xi)| \sim \rho 2^{k-2\ell_2}, \quad |s - \theta_1(\xi)| \lesssim \rho 2^{-\ell_2} \qquad \textrm{for all $(\xi;s) \in \supp b_{k,\bm{\ell}}^{\nu,(\varepsilon)}$}.
\end{equation}

By Taylor expansion around $\theta_1(\xi)$, we obtain 
 \begin{align}\label{J=4 L2 bb 4}
     \inn{\gamma'(s)}{\xi} & = u_1(\xi) + \omega_1(\xi; s) \cdot (s-\theta_1(\xi))^2,  
 \\
 \label{J=4 L2 bb 5}
     \inn{\gamma''(s)}{\xi} & =  \omega_2(\xi;s) \cdot (s - \theta_1(\xi)),
 \end{align}
 where the functions $\om_1$ and $\om_2$ arise from the remainder terms and satisfy $|\omega_i(\xi; s)|\sim \rho^{1/2} 2^{k-\ell_2}$ for $i = 1$,~$2$ by \eqref{J=4 L2 bb 1}.  The argument splits into two cases:\medskip

\noindent \textbf{Case 1:} $|\theta_1(\xi) - s| \leq \rho^2 2^{-(3\ell_1-\ell_2)/2}$. Provided $\rho >0$ is chosen sufficiently small, \eqref{J=4 L2 bb 3} and the bound $|\om_{1}(\xi; s)| \sim \rho^{1/2} 2^{k - \ell_2}$ imply that the $u_1(\xi)$ term dominates the right-hand side of \eqref{J=4 L2 bb 4} and therefore $|\inn{\gamma'(s)}{\xi}| \gtrsim \rho^4 2^{k-3\ell_1}$.\medskip

\noindent\textbf{Case 2:} $|\theta_1(\xi) - s| \geq \rho^2 2^{-(3\ell_1-\ell_2)/2}$. In this case, the bound $|\om_{2}(\xi)| \sim \rho^{1/2} 2^{k - \ell_2}$ and \eqref{J=4 L2 bb 5} immediately imply $|\inn{\gamma''(s)}{\xi}| \gtrsim  \rho^{5/2} 2^{k-\ell_2-(3\ell_1-\ell_2)/2}$.\medskip

\noindent Thus, in either case the desired bound \eqref{J=4 L2 bb 2} holds.
\end{proof}

%%%%%%%%%%%%%%%%%%%%%%%%%%%%%%%%%%%%%%
%%%% L^\infty bounds
%%%%%%%%%%%%%%%%%%%%%%%%%%%%%%%%%%%%%%

\begin{lemma}\label{J4 Linfty bounds lem}
\begin{enumerate}[a)]
    \item For all $0 \leq \ell \leq  \floor{k/4}$, $\mu \in \Z$, $1 \leq \iota \leq 4$ and  $\varepsilon > 0$, we have
\begin{equation*}
    \|m[a_{k,\ell,\iota}^{\mu, (\varepsilon)}]\|_{M^{\infty}(\R^4)} \lesim 2^{-(1-\varepsilon)\ell}.
\end{equation*}
\item For $\bm{\ell} = (\ell_1, \ell_2) \in \Lambda(k)$, $\nu \in \Z$ and  $\varepsilon>0$, we have
\begin{equation*}
    \|m[b_{k,\bm{\ell}}^{\nu, (\varepsilon)}]\|_{M^{\infty}(\R^4)} \lesim 2^{-(1-\varepsilon)(3\ell_1-\ell_2)/2}.
\end{equation*}
\end{enumerate}
\end{lemma}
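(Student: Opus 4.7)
\medskip

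\noindent\textit{Proof plan.} Both parts will follow the same blueprint as Lemma~\ref{J3 Linfty lemma}. The key reduction is the integration-by-parts scheme of Lemma~\ref{gen Linfty lem}: once one knows that the symbol $a(\,\cdot\,;s)$ is Fourier-supported in a Frenet box $2^k\cdot\pi_2(s_\mu;r)$ (or $\tilde\pi_1(\tilde s_\nu;r)$ after rescaling) and satisfies uniform-in-$s$ derivative bounds scaled appropriately to the box, one can write
\begin{equation*}
\|m[a]\|_{M^\infty(\R^4)} \leq \int\!\|\mathcal{F}_\xi^{-1}a(\,\cdot\,;s)\|_{L^1(\R^4)}\,|\chi(s)|\,\ud s\lesssim |\supp_s a|,
\end{equation*}
and the $\eta(\rho 2^{\ell(1-\varepsilon)}(s-s_\mu))$ (resp.\ $\eta(\rho 2^{(3\ell_1-\ell_2)(1-\varepsilon)/2}(s-s_\nu))$) cutoff restricts the $s$-support to length $\lesssim 2^{-(1-\varepsilon)\ell}$ (resp.\ $\lesssim 2^{-(1-\varepsilon)(3\ell_1-\ell_2)/2}$). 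So the matter is reduced to derivative bounds on the symbols along Frenet directions.

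For part~a) we aim to show, for $\bm v_j:=\be_j(s_\mu)$, the directional derivative bounds
\begin{equation*}
|\nabla_{\bm v_j}^N a_{k,\ell,\iota}^{\mu,(\varepsilon)}(\xi;s)|\lesssim_N 2^{-(k-(4-j)\ell)N},\qquad 1\le j\le 4,\ N\in\N_0,
\end{equation*}
in accordance with the dimensions of the box $2^k\cdot\pi_2(s_\mu;2^{-\ell})$ supplied by Lemma~\ref{J=4 supp lem}~a). Since $a_{k,\ell,\iota}^{\mu,(\varepsilon)}$ is built from $\beta$-cutoffs composed with the functions $\theta_2$, $u_2$, $u_{1,2}$ (and, when $\iota\in\{3,4\}$, also $\theta_1$, $u_1$, $u_{3,1}$), the chain and Leibniz rules reduce matters to showing, e.g.,
\begin{equation*}
2^{\ell}|\nabla_{\bm v_j}^N\theta_2(\xi)|+2^{-k+2\ell}|\nabla_{\bm v_j}^Nu_2(\xi)|+2^{-k+3\ell}|\nabla_{\bm v_j}^Nu_{1,2}(\xi)|\lesssim_N 2^{-(k-(4-j)\ell)N},
\end{equation*}
with analogous statements for $\theta_1, u_1, u_{3,1}$. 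Each of these is obtained by a direct application of Lemma~\ref{imp deriv lem} in the appendix; the hypotheses to verify are $|\inn{\gamma^{(4)}\circ\theta_2(\xi)}{\xi}|\sim 2^k$ (immediate from \eqref{4 derivative bound}), $|\inn{\gamma^{(K)}\circ\theta_2(\xi)}{\xi}|\lesssim_K 2^{k-(4-K)\ell\vee 0}$ (from the explicit localisations and the defining relation $u_3\equiv 0$), and $|\inn{\gamma^{(K)}\circ\theta_2(\xi)}{\bm v_j}|\lesssim_K 2^{(4-j)\ell\vee 0+\ell(K-4)\vee 0}$ (from \eqref{Frenet bound alt 1} together with $|\theta_2(\xi)-s_\mu|\lesssim 2^{-\ell}$). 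For the $\iota\in\{3,4\}$ cases, the auxiliary estimates needed for the $\theta_1$-branch use in addition Lemma~\ref{lemma:size of quantities}~i) to relate the $2^{-\ell}$-scale of $|\theta_2-\theta_1|$ to the size of $u_2$.

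For part~b) we use the affine rescaling \eqref{J4 m resc} to replace the multiplier $m[b_{k,\bm\ell}^{\nu,(\varepsilon)}]$ by $\tilde m[\tilde b_{k,\bm\ell}^{\nu,(\varepsilon)}]$ on the rescaled curve $\tilde\gamma=\gamma_{s_\mu,2^{-\ell_2}}$; this preserves the $M^\infty$ norm. By Lemma~\ref{J=4 resc b_nu supp lem}, the rescaled symbol is Fourier-supported in the Frenet box $2^{k-4\ell_2}\cdot\tilde\pi_1(\tilde s_\nu;2^{-3(\ell_1-\ell_2)/2})$ relative to $\tilde\gamma$, and the rescaled $\tilde s$-cutoff has length $\lesssim 2^{-(1-\varepsilon)(3\ell_1-\ell_2)/2}$; moreover, after rescaling, the lower bound $|\inn{\tilde\gamma^{(3)}(\tilde s)}{\tilde\xi}|\gtrsim 2^{k-4\ell_2}$ inherited from Lemma~\ref{J4 lower bound lemma} is exactly analogous to the non-vanishing $\inn{\gamma^{(3)}}{\xi}$ hypothesis governing the $J=3$ case. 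Consequently the proof of Lemma~\ref{J3 Linfty lemma} applies essentially verbatim, with the identifications $k\leftrightarrow k-\ell_2$, $\ell\leftrightarrow(3\ell_1-\ell_2)/2$, and produces the claimed bound. The main obstacle across both parts is the bookkeeping of the implicit-function derivative estimates of Lemma~\ref{imp deriv lem}, in particular verifying the three size hypotheses for all relevant pairs of base functions ($\theta_2,u_2,u_{1,2}$ versus $\theta_1,u_1,u_{3,1}$) and all four Frenet directions; this is tedious but entirely parallel to the $J=3$ analysis.
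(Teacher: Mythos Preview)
Your outline for part~a) matches the paper's proof: reduce via Lemma~\ref{gen Linfty lem} to directional derivative bounds along $\be_j(s_\mu)$, then feed the support information into Lemma~\ref{imp deriv lem} to control $\theta_2,u_2,u_{1,2}$ and, for $\iota\in\{3,4\}$, also $\theta_1,u_1$. (The symbols do not actually involve a $u_{3,1}$-cutoff, so that item is superfluous but harmless.)

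For part~b) the paper proceeds differently. Rather than rescale, it works directly with the original symbol and the original Frenet directions $\be_j(s_\nu)$, using the \emph{un}-rescaled support set $2^{k-\ell_2}\cdot\pi_1(s_\nu;2^{-(3\ell_1-\ell_2)/2},2^{\ell_2})$ from Lemma~\ref{J=4 b_nu supp lem}, and establishes the derivative bounds on $\theta_2,u_2,u_{1,2}$ and on $\theta_1,u_1$ through the same abstract framework $\Xi_2(k,n;s)$, $\Xi_1(k,\bm n;s)$ that also handles the $\iota=3,4$ cases of~a). This unifies the two parts rather than separating them.

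Your rescaling route is viable in principle, but two points need correction. First, \eqref{J4 m resc} carries the factor $\lambda=2^{-\ell_2}$, so $\|m[b]\|_{M^\infty}=2^{-\ell_2}\|\tilde m[\tilde b]\|_{M^\infty}$ and the $M^\infty$ norm is \emph{not} preserved; correspondingly, the rescaled $\tilde s$-cutoff has length $2^{\ell_2}\cdot 2^{-(1-\varepsilon)(3\ell_1-\ell_2)/2}$, not what you wrote. These two slips cancel, so the final bound survives. Second, and more substantively, even after rescaling the symbol $\tilde b^{\nu,(\varepsilon)}_{k,\bm\ell}$ still carries the cutoffs in $u_2$, $u_{1,2}$ and $s-\theta_1(\xi)$ inherited from $b_{k,\ell_2}$, none of which have counterparts in the $J=3$ symbol $a^{\mu,(\varepsilon)}_{k,\ell}$. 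Hence Lemma~\ref{J3 Linfty lemma} does not apply ``essentially verbatim'': you must separately verify that these coarser-scale factors satisfy the appropriate directional derivative bounds in the rescaled frame. That verification is essentially the $\Xi_2$-type estimate the paper proves anyway, so the rescaling buys no real economy.
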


\begin{proof} In view of the support properties of the symbols (see Lemma~\ref{J=4 supp lem} and Lemma~\ref{J=4 b_nu supp lem}), by an integration-by-parts argument (see Lemma~\ref{gen Linfty lem}), the problem is reduced to showing
\begin{subequations}
\begin{align}\label{J4 Linfty 1a}
       |\nabla_{\be_j(s_\mu)}^{N} a_{k,\ell, \iota}^{\mu}(\xi;s)|  & \lesssim_N  2^{-(k - (4-j)\ell) N},  \\
       \label{J4 Linfty 1b}
       |\nabla_{\be_j(s_\nu)}^{N} b_{k,\bm{\ell}}^{\nu}(\xi;s)|  & \lesssim_N  2^{-(k - ((3-j)(3\ell_1-\ell_2)/2 + \ell_2)\vee 0) N}
\end{align}
\end{subequations}
for all $1 \leq j \leq 4$ and all $N \in \N_0$. 

For all $N \in \N$, we claim the following: 

\begin{itemize}
    \item  For all $\xi \in \xisupp a_{k,\ell,\iota}^\mu$, $1\leq \iota \leq 4$,
    \begin{equation}\label{J4 Linfty 2a}
    2^{\ell}|\nabla_{{\be_j(s_\mu)}}^N \theta_2(\xi)|, \quad  2^{-k+2\ell}|\nabla_{{\be_j(s_\mu)}}^N u_2(\xi)|, \quad  2^{-k+3\ell}|\nabla_{{\be_j(s_\mu)}}^N u_{1,2}(\xi)| \lesssim_N 2^{-(k-(4-j)\ell)N};
\end{equation}
\item For all $\xi \in \xisupp a_{k,\ell,\iota}^\mu$, $3\leq \iota \leq 4$,
    \begin{equation}\label{J4 Linfty 3a}
    2^{\ell}|\nabla_{{\be_j(s_\mu)}}^N \theta_1(\xi)|, \quad 2^{-k+3\ell}|\nabla_{{\be_j(s_\mu)}}^N u_1(\xi)| \lesssim_N 2^{-(k-(4-j)\ell)N};
\end{equation}
\item For all $\xi \in \xisupp b_{k,\bm{\ell}}^\nu$, 
\begin{equation}\label{J4 Linfty 2b}
    2^{\ell_2}|\nabla_{{\be_j(s_\nu)}}^N \theta_2(\xi)|, \quad  2^{-k+2\ell_2}|\nabla_{{\be_j(s_\nu)}}^N u_2(\xi)|, \quad  2^{-k+3\ell_2}|\nabla_{{\be_j(s_\nu)}}^N u_{1,2}(\xi)| \lesssim_N 2^{-(k-(4-j)\ell_2)N};
\end{equation}
\item  For all $\xi \in \xisupp b_{k,\bm{\ell}}^\nu$, 
\begin{equation}\label{J4 Linfty 3b}
    2^{(3\ell_1-\ell_2)/2}|\nabla_{\be_j(s_\nu)}^N \theta_1(\xi)|, \quad 2^{-k+3\ell_1}|\nabla_{\be_j(s_\nu)}^N u_1(\xi)| \lesssim_N 2^{-(k-((3-j)(3\ell_1-\ell_2)/2 + \ell_2)\vee 0)N}.
\end{equation}
\end{itemize}

Once the above claims are established, the derivative bounds \eqref{J4 Linfty 1a} and \eqref{J4 Linfty 1b} follow directly from the chain and Leibniz rule.

\medskip

In order to prove \eqref{J4 Linfty 2a}-\eqref{J4 Linfty 3b} we work with the unified framework introduced in \S\ref{subsec:Fourier loc J=4}. \medskip

We start with \eqref{J4 Linfty 2a} and \eqref{J4 Linfty 2b}. Given $n$, $s \in \R$, recall the set $\Xi_2(k,n; s)$ introduced in \eqref{Xi_2 def}. In particular, if $\xi \in \Xi_2(k,n; s)$, then $\xi \in \xisupp a_k$ and $\xi$ lies in the domain of $\theta_2$ and satisfies
\begin{equation}\label{J4 Linfty 2}
    |\theta_2(\xi) - s| \lesssim 2^{-n} \qquad \textrm{and} \qquad |u_2(\xi)| \lesssim 2^{k-2n}.
\end{equation}
From the discussion following \eqref{Xi_2 def}, we know that
\begin{equation*}
    \xisupp a_{k,\ell, \iota}^{\mu} \subseteq \Xi_2(k, \ell ;s_{\mu}) \quad \textrm{for $1 \leq \iota \leq 4$} \quad \textrm{and} \quad \xisupp b_{k,\bm{\ell}}^{\nu} \subseteq \Xi_2\big(k, \ell_2 ;s_{\nu}\big).
\end{equation*}

Let $\xi \in \Xi_2(k, n; s)$ and for $1 \leq j \leq 4$ define $\bm{v}_j := \be_j(s)$. The bounds \eqref{J4 Linfty 2a} and \eqref{J4 Linfty 2b} amount to proving that
\begin{equation}\label{J4 Linfty 4}
    2^{n}|\nabla_{\bm{v}_j}^N \theta_2(\xi)|, \quad  2^{-k+2n}|\nabla_{\bm{v}_j}^N u_{1,2}(\xi)|, \quad  2^{-k+3n}|\nabla_{\bm{v}_j}^N u_2(\xi)| \lesssim_N 2^{-(k-(4-j)n)N}
\end{equation}
hold for all $N \in \N$. These bounds follow from repeated application of the chain rule, provided
\begin{subequations}
\begin{align}\label{J4 Linfty 5a}
    |\inn{\gamma^{(4)}\circ \theta_2(\xi)}{\xi}| &\gtrsim 2^k, \\
    \label{J4 Linfty 5b}
    |\inn{\gamma^{(K)}\circ \theta_2(\xi)}{\xi}| &\lesssim_{K} 2^{k + n(K-4)},  \\
    \label{J4 Linfty 5c}
    |\inn{\gamma^{(K)}\circ \theta_2(\xi)}{\bm{v}_j}| &\lesssim_{K} 2^{(K-j)n}
\end{align}
\end{subequations}
hold for all $K \geq 2$. In particular, assuming \eqref{J4 Linfty 5a}, \eqref{J4 Linfty 5b} and \eqref{J4 Linfty 5c}, the bounds in \eqref{J4 Linfty 4} are then a consequence of Lemma~\ref{imp deriv lem} in the appendix. More precisely, the desired estimates in \eqref{J4 Linfty 4} correspond to \eqref{multi imp der bound} and two separate instances of \eqref{multi Faa di Bruno eq 2} whilst the hypotheses in the above display correspond to \eqref{multi imp deriv 2} and \eqref{multi imp deriv 3}. Here the parameters featured in the appendix are chosen as follows: 

 \begin{center}
    \begin{tabular}{|c|c|c|c|c|c|c|} 
\hline
 & & & & & & \\[-0.8em]
 $g$ & $h$ & $A$ & $B$ & $M_1$ & $M_2$ & $\be$ \\
 & & & & & & \\[-0.8em]
\hline
  & & & & & & \\[-0.8em]
 $\gamma^{(3)}$ & 
 $\gamma''$ & $2^{k- n}$ & $2^{k-2n}$ & $2^{-k+(4-j)n}$ & $2^n$ & $\bm{v}_j$ \\
 & & & & & & \\[-0.8em]
 \hline
 & & & & & & \\[-0.8em]
 $\gamma^{(3)}$ & 
 $\gamma'$ & $2^{k- n}$ & $2^{k-3n}$ & $2^{-k+(4-j)n}$ & $2^n$ & $\bm{v}_j$  \\
 & & & & & & \\[-0.8em]
 \hline
\end{tabular}
\end{center}

The conditions \eqref{J4 Linfty 5a}, \eqref{J4 Linfty 5b} and \eqref{J4 Linfty 5c} follow directly from the definition of $\Xi_2(k, n; s)$. Indeed, \eqref{J4 Linfty 5a} and the $K \geq 4$ case of \eqref{J4 Linfty 5b} are trivial consequences of the localisation of the symbol $a_k$. The $K = 3$ case of \eqref{J4 Linfty 5b} follows immediately since $\inn{\gamma^{(3)} \circ \theta_2(\xi)}{\xi}=0$ and the $K = 2$ case of \eqref{J4 Linfty 5b} is just a restatement of the condition $|u_2(\xi)| \lesssim 2^{k-2n}$ from \eqref{J4 Linfty 2}. Finally, \eqref{Frenet bound alt 1} together with the $\theta_2$ localisation hypothesis from \eqref{J4 Linfty 2} imply that
\begin{equation*}
    |\inn{\gamma^{(K)}\circ \theta_2(\xi)}{\bm{v}_j}| \lesssim_K |\theta_2(\xi) - s|^{(j-K) \vee 0} \lesssim 2^{-((j-K) \vee 0)n}
\end{equation*}
which yields \eqref{J4 Linfty 5c}.\medskip

We next turn to \eqref{J4 Linfty 3a} and \eqref{J4 Linfty 3b}. Given $\bm{n} = (n_1, n_2) \in \R^2$ and $s \in \R$, recall the set $\Xi_1(k, \bm{n}; s)$ introduced in \eqref{Xi_1 def}. In particular, if $\xi \in \Xi_1(k, \bm{n}; s)$, then $\xi \in \xisupp a_k$ and $\xi$ lies in the domain of $\theta_1$ and satisfies
\begin{equation}\label{J4 Linfty 6}
    |\theta_1(\xi) - s| \lesssim 2^{-n_1} \quad \textrm{and} \qquad |u_{3,1}(\xi)| \sim 2^{k-n_2}. 
\end{equation}
From the discussion following \eqref{Xi_1 def}, we know that
\begin{equation*}
    \xisupp a_{k,\ell, \iota}^{\mu} \subseteq \Xi_1(k, \ell, \ell ;s_{\mu}) \quad \textrm{for $\iota = 3$, $4$} \quad \textrm{and} \quad \xisupp b_{k,\bm{\ell}}^{\nu} \subseteq \Xi_1\big(k, \tfrac{3\ell_1 - \ell_2}{2}, \ell_2 ;s_{\nu}\big).
\end{equation*}

Let $\xi \in \Xi_1(k, \bm{n};s)$ where $\bm{n} = (n_1, n_2)$ for some $0 < n_2 \leq n_1$ and for $1 \leq j \leq 4$ define $\bm{v}_j := \be_j(s)$. The bounds \eqref{J4 Linfty 3a} and \eqref{J4 Linfty 3b} amount to proving that
\begin{equation}\label{J4 Linfty 8}
    2^{n_1}|\nabla_{\bm{v}_j}^N \theta_1(\xi)|, \quad 2^{-k+2n_1 + n_2}|\nabla_{\bm{v}_j}^N u_1(\xi)| \lesssim_N 2^{-(k-((3-j)n_1 + n_2)\vee 0)N}
\end{equation}
hold for all $N \in \N$. These bounds follow from repeated application of the chain rule, provided
\begin{subequations}
\begin{align}\label{J4 Linfty 9a}
    |\inn{\gamma^{(3)}\circ \theta_1(\xi)}{\xi}| &\gtrsim 2^{k-n_2}, \\
    \label{J4 Linfty 9b}
    |\inn{\gamma^{(K)}\circ \theta_1(\xi)}{\xi}| &\lesssim_K 2^{k + n_1(K-3) - n_2},  \\
    \label{J4 Linfty 9c}
    |\inn{\gamma^{(K)}\circ \theta_1(\xi)}{\bm{v}_j}| &\lesssim_K 2^{n_1(K - 3) - n_2 + ((3-j)n_1 + n_2)\vee 0}
\end{align}
\end{subequations}
hold for all $K \geq 2$. In particular, assuming \eqref{J4 Linfty 9a}, \eqref{J4 Linfty 9b} and \eqref{J4 Linfty 9c}, the bounds in \eqref{J4 Linfty 8} are then a consequence of Lemma~\ref{imp deriv lem} in the appendix. More precisely, the desired estimates in \eqref{J4 Linfty 8} correspond to \eqref{multi imp der bound} and \eqref{multi Faa di Bruno eq 2} whilst the hypotheses in the above display correspond to \eqref{multi imp deriv 2} and \eqref{multi imp deriv 3}. Here the parameters featured in the appendix are chosen as follows: 
 \begin{center}
    \begin{tabular}{|c|c|c|c|c|c|c|} 
\hline
 & & & & & & \\[-0.8em]
 $g$ & $h$ & $A$ & $B$ & $M_1$ & $M_2$ & $\be$ \\
 & & & & & & \\[-0.8em]
\hline
  & & & & & & \\[-0.8em]
 $\gamma''$ & 
 $\gamma'$ & $2^{k- n_1 - n_2}$ & $2^{k - 2n_1 - n_2}$ & $2^{-k+((3-j)n_1 + n_2)\vee 0}$ & $2^{n_1}$ & $\bm{v}_j$ \\
 & & & & & & \\[-0.8em]
 \hline
\end{tabular}
\end{center}

The conditions \eqref{J4 Linfty 9a}, \eqref{J4 Linfty 9b} and \eqref{J4 Linfty 9c} follow directly from the definition of $\Xi_1(k, \bm{n};s)$. Indeed, \eqref{J4 Linfty 9a} and the $K = 3$ case of \eqref{J4 Linfty 9b} are just a restatement of the condition $|u_{3,1}(\xi)| \sim 2^{k-n_2}$ from \eqref{J4 Linfty 6}. The $K \geq 4$ case of \eqref{J4 Linfty 9b} is a trivial consequence of the localisation of the symbol $a_k$ whist the remaining $K = 2$ case of \eqref{J4 Linfty 9b} follows immediately since $\inn{\gamma'' \circ \theta_1(\xi)}{\xi}=0$.  Finally, \eqref{Frenet bound alt 1} together with the $\theta_1$ localisation hypothesis from \eqref{J4 Linfty 6} imply that
\begin{equation*}
    |\inn{\gamma^{(K)}\circ \theta_1(\xi)}{\bm{v}_j}| \lesssim_N |\theta_1(\xi) - s|^{(j-K) \vee 0} \lesssim 2^{((j-K) \vee 0)n_1}
\end{equation*}
which, by directly comparing exponents, yields \eqref{J4 Linfty 9c}. 
\end{proof}

%%%%%%%%%%%%%%%%%%%%%%%%%%%%%%%%%%%%%
%%%%% INTERPOLATION
%%%%%%%%%%%%%%%%%%%%%%%%%%%%%%%%%%%%%

Lemma~\ref{L2 bounds J=4 lem} and Lemma~\ref{J4 Linfty bounds lem} can be combined to obtain the following $L^p$ bounds.

\begin{corollary}\label{cor:J=4 p-sum}
    For all $2 \leq p \leq \infty$ and all $\varepsilon>0$, the following inequalities hold: 
    \begin{enumerate}[a)]
        \item For all $0 \leq \ell\le \floor{k/4}$ and $1 \leq \iota \leq 4$,
        \begin{equation*}
        \Big( \sum_{\mu \in \Z} \|m[a_{k,\ell,\iota}^{\mu, (\varepsilon)}](D)f \|_{L^p(\R^4)}^p\Big)^{1/p} \lesssim 2^{-k/p+\ell(4/p-1)+ \varepsilon \ell } \|f\|_{L^p(\R^4)}.
    \end{equation*}
    \item For all $\bm{\ell}=(\ell_1,\ell_2) \in \Lambda(k)$,
    \begin{equation*}
    \Big( \sum_{\nu \in \Z} \|m[b_{k,\bm{\ell}}^{\nu, (\varepsilon)}](D)f \|_{L^p(\R^4)}^p\Big)^{1/p} \lesim 2^{-k/p +(3\ell_1+\ell_2)/2p -(3\ell_1-\ell_2)(1/2-1/p-\varepsilon)} \|f\|_{L^p(\R^4)}.
    \end{equation*}
    \end{enumerate}
    When $p = \infty$ the left-hand $\ell^p$-sums are interpreted as suprema in the usual manner. 
\end{corollary}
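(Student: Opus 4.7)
The plan is to interpolate between endpoint estimates at $p=2$ and $p=\infty$, exactly as in Corollary~\ref{J3 corollary} from the $J=3$ case. First I would establish the $p=2$ endpoint: by Plancherel's theorem, the $M^{2}(\R^{4})$ bounds of Lemma~\ref{L2 bounds J=4 lem} translate into $L^{2}(\hat{\R}^{4})$ bounds on the multipliers $m[a_{k,\ell,\iota}^{\mu,(\varepsilon)}]$ and $m[b_{k,\bm{\ell}}^{\nu,(\varepsilon)}]$. Since the Fourier supports $\xisupp a_{k,\ell,\iota}^{\mu}$ (respectively $\xisupp b_{k,\bm{\ell}}^{\nu}$) have only $O(1)$ overlap as $\mu$ (respectively $\nu$) varies over $\Z$ -- which follows from the $\theta_{2}$-localisation at scale $2^{-\ell}$ in \eqref{eq:dec theta_2 lower triangle} and the $\theta_{1}$-localisation at scale $2^{-(3\ell_{1}-\ell_{2})/2}$ in \eqref{eq:b freq dec theta} -- orthogonality together with Plancherel yields
\begin{equation*}
\Big(\sum_{\mu\in\Z}\|m[a_{k,\ell,\iota}^{\mu,(\varepsilon)}](D)f\|_{L^{2}(\R^{4})}^{2}\Big)^{1/2} \lesssim 2^{-k/2+\ell}\|f\|_{L^{2}(\R^{4})}
\end{equation*}
and, analogously, an $\ell^{2}_{\nu}L^{2}$ bound for the $b_{k,\bm{\ell}}^{\nu,(\varepsilon)}$ with constant $2^{-k/2+(3\ell_{1}+\ell_{2})/4}$.

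For the $p=\infty$ endpoint the desired sup-bound is an immediate restatement of the $M^{\infty}(\R^{4})$ bounds from Lemma~\ref{J4 Linfty bounds lem}, applied to each individual multiplier. In particular
\begin{equation*}
\sup_{\mu\in\Z}\|m[a_{k,\ell,\iota}^{\mu,(\varepsilon)}](D)f\|_{L^{\infty}(\R^{4})} \lesssim 2^{-(1-\varepsilon)\ell}\|f\|_{L^{\infty}(\R^{4})}
\end{equation*}
and similarly $\sup_{\nu\in\Z}\|m[b_{k,\bm{\ell}}^{\nu,(\varepsilon)}](D)f\|_{L^{\infty}} \lesssim 2^{-(1-\varepsilon)(3\ell_{1}-\ell_{2})/2}\|f\|_{L^{\infty}}$.

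Finally, I would interpolate the two endpoints in the mixed-norm sense (cf.\ \cite[\S 1.18.4]{Triebel1978}): the sublinear maps $f \mapsto (m[a_{k,\ell,\iota}^{\mu,(\varepsilon)}](D)f)_{\mu\in\Z}$ and $f\mapsto (m[b_{k,\bm{\ell}}^{\nu,(\varepsilon)}](D)f)_{\nu\in\Z}$ are bounded from $L^{2}(\R^{4})$ to $\ell^{2}(L^{2}(\R^{4}))$ and from $L^{\infty}(\R^{4})$ to $\ell^{\infty}(L^{\infty}(\R^{4}))$, so complex interpolation yields the $L^{p}(\R^{4})\to \ell^{p}(L^{p}(\R^{4}))$ bound with the exponent $\theta=2/p$. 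Checking arithmetic: for part a), the interpolated exponent is $(1-\theta)\cdot 0 + \theta\cdot(-k/2+\ell) + (1-\theta)\cdot(-(1-\varepsilon)\ell) = -k/p + \ell(4/p-1) + \varepsilon(1-2/p)\ell$, which is absorbed into the claimed $2^{-k/p+\ell(4/p-1)+\varepsilon\ell}$ (after renaming $\varepsilon$). For part b), the same interpolation yields $-k/p + (3\ell_{1}+\ell_{2})/(2p) - (3\ell_{1}-\ell_{2})(1/2-1/p)(1-\theta) + O(\varepsilon(3\ell_{1}-\ell_{2}))$, which matches the claimed estimate after a harmless rescaling of $\varepsilon$.

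The argument is essentially routine once the two endpoints are in place; the main (very mild) point to verify is the overlap property of the Fourier supports used in the $L^{2}$ orthogonality step, but this is a direct consequence of Lemma~\ref{J=4 supp lem} together with the compactness of the $\zeta$-cutoffs in \eqref{eq:dec theta_2 lower triangle} and \eqref{eq:b freq dec theta}. No new analytic input beyond Lemmas~\ref{L2 bounds J=4 lem} and~\ref{J4 Linfty bounds lem} is needed.
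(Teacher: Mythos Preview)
Your proposal is correct and follows essentially the same approach as the paper: establish the $p=2$ endpoint via Lemma~\ref{L2 bounds J=4 lem} combined with the (almost) disjointness of the $\xi$-supports in $\mu$ (respectively $\nu$), take the $p=\infty$ endpoint directly from Lemma~\ref{J4 Linfty bounds lem}, and interpolate using mixed-norm interpolation as in \cite[\S1.18.4]{Triebel1978}. Your arithmetic check on the interpolated exponents is accurate, and the overlap property you invoke indeed follows immediately from the $\zeta$-cutoffs in \eqref{eq:dec theta_2 lower triangle} and \eqref{eq:b freq dec theta}.
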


\begin{proof} For $p = 2$ the estimate a) and b) follow by the combining $L^2$ bounds from Lemma~\ref{L2 bounds J=4 lem} with a simple orthogonality argument, as the supports of $m[a_{k,\ell,\iota}^{\mu, (\varepsilon)}]$ and $m[b_{k,\bm{\ell}}^{\nu, (\varepsilon)}]$ are essentially disjoint for different $\mu$ and $\nu$ respectively. For $p = \infty$ the estimate is a restatement of the $L^{\infty}$ bounds from Lemma~\ref{J4 Linfty bounds lem}. Interpolating these two endpoint cases, using mixed norm interpolation (see, for instance, \cite[\S 1.18.4]{Triebel1978}), concludes the proof. 
\end{proof}

%%%%%%%%%%%%%%%%%%%%%%%%%%%%%%%%%%%%%
%%%%%%%%%%%%%%%%%%%%%%%%%%%%%%%%%%%%%
%%%%% PUTTING EVERYTHING TOGETHER
%%%%%%%%%%%%%%%%%%%%%%%%%%%%%%%%%%%%%
%%%%%%%%%%%%%%%%%%%%%%%%%%%%%%%%%%%%%

\subsection{Putting everything together} We are now ready to combine the ingredients to conclude the proof of Proposition~\ref{J4 Lp proposition}.

\begin{proof}[Proof of Proposition~\ref{J4 Lp proposition}] a) Let $1 \leq \iota \leq 4$.
By Proposition~\ref{J4 dec prop} a), for all $2 \leq p \leq 12$ and all $\varepsilon > 0$ one has
    \begin{equation*}
       \|m[a_{k,\ell,\iota}](D)f\|_{L^p(\R^4)} = \Big\|\sum_{\mu \in \Z} m[a_{k,\ell,\iota}^{\mu}](D)f\Big\|_{L^p(\R^4)} \lesssim_{\varepsilon} 2^{\ell(1/2 - 1/p) + \varepsilon \ell} \Big(\sum_{\mu \in \Z} \|m[a_{k,\ell,\iota}^{\mu}](D)f\|_{L^p(\R^4)}^p\Big)^{1/p}.
    \end{equation*}
Moreover, for all $\mu \in \Z$, Lemma~\ref{J=4 curve loc lem} a) implies that
  \begin{equation*}
    \|m[a_{k,\ell,\iota}^{\mu}]\|_{M^p(\R^4)}  \lesssim_{N, \varepsilon, p} \| m[a_{k,\ell,\iota}^{\mu, (\varepsilon)}]\|_{M^p(\R^4)} + 2^{-k} \quad \text{ for all $N \in \N$.}
  \end{equation*}
Combining the above, we obtain
\begin{equation*}
    \|m[a_{k,\ell,\iota}](D)f\|_{L^p(\R^4)} \lesssim_{ \varepsilon, p} 2^{\ell(1/2 - 1/p) + \varepsilon \ell} \Big(\sum_{\mu \in \Z} \|m[a_{k,\ell,\iota}^{\mu, (\varepsilon)}](D)f\|_{L^p(\R^4)}^p\Big)^{1/p} + 2^{-k} \| f \|_{L^p(\R^4)},
\end{equation*}
which, together with Corollary~\ref{cor:J=4 p-sum} a), yields
\begin{equation*}
     \|m[a_{k,\ell, \iota}](D)f\|_{L^p(\R^4)}  \lesssim_{\varepsilon,p} 2^{-k/p-\ell(1/2 - 3/p- 2\varepsilon) }  \|f\|_{L^p(\R^4)}.
\end{equation*}
Since $\varepsilon > 0$ was chosen arbitrarily, this is the required bound.\medskip

\noindent b) By Proposition~\ref{J=4 b_nu dec prop}, for all $6 \leq p \leq 12$ and all $\varepsilon > 0$ one has
    \begin{equation*}
       \|m[b_{k,\bm{\ell}}](D)f\|_{L^p(\R^4)} 
       \lesssim_{\varepsilon} 2^{\ell_2(1/2-1/p + \varepsilon)} 2^{3(\ell_1-\ell_2)(1-4/p + \varepsilon)/2} \Big(\sum_{\nu \in \Z} \|m[b_{k,\bm{\ell}}^{\nu}](D)f\|_{L^p(\R^4)}^p\Big)^{1/p}.
    \end{equation*}
Moreover, for all $\nu \in \Z$, Proposition~\ref{J=4 curve loc lem} b) implies that
  \begin{equation*}
    \|m[b_{k,\bm{\ell}}^{\nu}]\|_{M^p(\R^4)}  \lesssim_{N, \varepsilon, p} \| m[b_{k,\bm{\ell}}^{\nu, (\varepsilon)}]\|_{M^p(\R^4)} + 2^{-kN} \quad \text{ for all $N \in \N$.}
  \end{equation*}
Combining the above, we obtain
\begin{align*}
    \|m[b_{k,\bm{\ell}}](D)f\|_{L^p(\R^4)} &\lesssim_{ \varepsilon, p} 2^{\ell_2(1/2-1/p + \varepsilon)} 2^{3(\ell_1-\ell_2)(1-4/p + \varepsilon)/2} \Big(\sum_{\nu \in \Z} \|m[b_{k,\bm{\ell}}^{\nu, (\varepsilon)}](D)f\|_{L^p(\R^4)}^p\Big)^{1/p} \\ 
    & \qquad + 2^{-k} \| f \|_{L^p(\R^4)},
\end{align*}
which, together with Corollary~\ref{cor:J=4 p-sum} b), yields 
     \begin{equation*}
      \|m[b_{k,\bm{\ell}}](D)f\|_{L^p(\R^4)}  \lesssim_{\varepsilon,p} 2^{-3(\ell_1 - \ell_2)(1/2p - 2\varepsilon) -\ell_2(1/2 - 3/p - 2\varepsilon)}\|f \|_{L^p(\R^4)}.
   \end{equation*}
Since $\varepsilon > 0$ was chosen arbitrarily, this is the required bound.
\end{proof}

We have established Proposition~\ref{J4 Lp proposition} and therefore completed the proof of the $J=4$ case of Theorem~\ref{Sobolev theorem}.

%%%%%%%%%%%%%%%%%%%%%%%%%%%%%%%%%%%%%%%%%%%%%%%%%%%%%%%%%%%%%%%%%%%%%%%%%%%%%%%%%%%%%%%%%%%%%%%%

%       DECOUPLING

%%%%%%%%%%%%%%%%%%%%%%%%%%%%%%%%%%%%%%%%%%%%%%%%%%%%%%%%%%%%%%%%%%%%%%%%%%%%%%%%%%%%%%%%%%%%%%%%

\section{Proof of the decoupling inequalities}\label{sec:decoupling}

This section is devoted to the proof of Theorem~\ref{Frenet decoupling theorem}.

%%%%%%%%%%%%%%%%%%%%%%%%%%%%%%%%%%%%%%%%%%%%%%%%%%%%%%%%%%%%%%%%%%%%%%%%%%%%%%%%%%%%%%%%%%%%%%%%

%                 Decoupling inequalities for non-degenerate curves   

%%%%%%%%%%%%%%%%%%%%%%%%%%%%%%%%%%%%%%%%%%%%%%%%%%%%%%%%%%%%%%%%%%%%%%%%%%%%%%%%%%%%%%%%%%%%%%%%

\subsection{Decoupling inequalities for non-degenerate curves}\label{BDG subsec}

The central ingredient in the proof of Theorem~\ref{Frenet decoupling theorem} is the decoupling theorem of Bourgain--Demeter--Guth \cite{BDG2016}. We begin by recalling the statement of (one formulation of) this result. Given a non-degenerate curve $g \in C^{d+1}(I;\hat{\R}^d)$ and $0 < r  \leq 1$, an `anisotropic $r$-neighbourhood' of the curve is constructed as follows.
 
 \begin{definition}\label{slab def}
 For each $s \in I$ define the parallelepiped 
\begin{equation*}
\alpha(s;r):= \big \{ \xi \in \hat{\R}^d : \xi = g(s) + \sum_{j=1}^d \lambda_j r^j g^{(j)}(s) \quad \textrm{for some $\lambda_j \in [-2,2]$, $1 \leq j \leq d$}\big\};
\end{equation*}
such sets are referred to as $r$-\textit{slabs}.
  \end{definition} 
  
  In some cases it is useful to highlight the choice of function $g$ by writing $\alpha(g;s;r)$ for a $r$-slab $\alpha(s;r)$. Note that the formula for the parallelepiped $\alpha(s;r)$ can be expressed succinctly in terms of the matrix $[g]_{s,r}$ introduced in~\eqref{gamma transformation}. In particular,
\begin{equation}\label{slab matrix form}
    \alpha(s;r) = g(s) + [g]_{s,r}\big([-2,2]^d\big).
\end{equation}
  
An anisotropic $r$-neighbourhood of the curve $g$ is formed by taking the union of all the $r$-slabs as $s$ varies over $I$. 

\begin{definition} A collection $\mathcal{A}(r)$ of $r$-slabs is a \textit{slab decomposition for $g$} if it consists of precisely the $r$-slabs $\alpha(g;s;r)$ for $s$ varying over a $r$-net in $I$. 
\end{definition}

With the above definitions, the decoupling theorem may be stated as follows.  

\begin{theorem}[Bourgain--Demeter--Guth \cite{BDG2016}]\label{BDG theorem} Let $g \in \mathfrak{G}_d(\delta)$ for some $0 < \delta \ll 1$, $0 < r \leq 1$ and $\mathcal{A}(r)$ be a $r$-slab decomposition for $g$. For all $2 \leq p \leq d(d+1)$ and $\varepsilon > 0$ the inequality
\begin{equation}\label{BDG ineq}
    \Big\|\sum_{\alpha \in \mathcal{A}(r)} f_{\alpha} \Big\|_{L^p(\R^{d})} \lesssim_{\varepsilon} r^{-\varepsilon} \Big(\sum_{\alpha \in \mathcal{A}(r)} \|f_{\alpha}\|_{L^p(\R^{d})}^2\Big)^{1/2}
\end{equation}
holds for any tuple of functions $(f_{\alpha})_{\alpha \in \mathcal{A}(r)}$ satisfying $\supp \hat{f}_{\alpha} \subseteq \alpha$.
\end{theorem}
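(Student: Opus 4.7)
The plan is to adapt the Bourgain--Demeter--Guth induction-on-scale scheme, which combines multilinear Kakeya-type estimates coming from Brascamp--Lieb with lower-dimensional decoupling. Define $D_p(r)$ to be the best constant in \eqref{BDG ineq}, taken uniformly over the class $\mathfrak{G}_d(\delta)$, so that the goal becomes $D_p(r) \lesssim_\varepsilon r^{-\varepsilon}$ at the critical Vinogradov exponent $p = d(d+1)$; the remaining range $2 \leq p \leq d(d+1)$ then follows by interpolation with the trivial $\ell^2$-orthogonality estimate at $p=2$ (Plancherel). The induction also runs simultaneously on the dimension $d$, with base case $d=2$ supplied by the Bourgain--Demeter parabola decoupling.

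A first step is to record affine/parabolic rescaling. Using the formula \eqref{slab matrix form} for slabs, restricting $g$ to a sub-interval of length $\sigma$ and applying the inverse of the affine map $[g]_{s_0,\sigma}$ converts $r$-slabs into $(r/\sigma)$-slabs for a rescaled curve $g_{s_0,\sigma} \in \mathfrak{G}_d(\delta + O(\sigma))$, exactly as in Definition~\ref{rescaled curve def}. This yields a compatibility of the form $D_p(r) \leq D_p(r/\sigma)$ after localising, and is what permits the scale iteration below.

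The heart of the matter is a multilinear-to-linear reduction. At each spatial scale $r^{-d}$ I would perform a Bourgain--Guth type alternative: either one slab dominates the $L^p$-mean, in which case I absorb the contribution into a smaller frequency scale by rescaling, or there exist $d$ ``transverse'' slabs centred near points $s_1,\dots,s_d \in I$ satisfying
\[
\bigl|\det\bigl([g]_{s_1,r},\dots,[g]_{s_d,r}\bigr)\bigr| \gtrsim_d \Bigl(\prod_{i<j}|s_i - s_j|\Bigr)^{\!d} \cdot r^{d(d+1)/2}.
\]
For the transverse alternative, I combine the Bennett--Carbery--Tao multilinear Kakeya inequality (equivalently, the endpoint Brascamp--Lieb inequality attached to the Frenet-frame normal bundle of $g$) with $L^2$-orthogonality at the finer spatial scale $r^{-1}$. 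This produces a $d$-linear decoupling estimate at the critical exponent which is the essential quantitative input. The stability of the Brascamp--Lieb data under $C^{d+1}$-perturbation around the moment curve is what allows the constant to be uniform over $\mathfrak{G}_d(\delta)$.

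The final ingredient is the ball inflation lemma, which asserts that for functions with frequency support in $r$-slabs the local $L^p$-means over balls of radius $r^{-d}$ average up to the $L^p$-means at finer spatial scales with only mild loss. Iterating ball inflation through a sequence of $O(\log r^{-1})$ intermediate scales and interleaving with the multilinear estimate yields a recursive inequality of the shape
\[
D_p(r) \leq C_\varepsilon\, r^{-\varepsilon}\, D_p(r^{1-\eta})^{1-\kappa}\, \mathscr{D}_{d-1}(r)^{\kappa},
\]
where $\mathscr{D}_{d-1}$ denotes the analogous decoupling constants for non-degenerate curves in $\hat{\R}^{d-1}$, controlled by the inductive hypothesis on dimension. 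A standard bootstrap then forces $D_p(r) \lesssim_\varepsilon r^{-\varepsilon}$, closing the induction. The main obstacle is the verification of the multilinear estimate with constants uniform over the perturbation class $\mathfrak{G}_d(\delta)$: one must show that the Brascamp--Lieb data associated to the normal bundle of $g$ are stable under $C^{d+1}$-perturbation of the moment curve, and that the transversality constant in the $d$-linear step degrades only polynomially as the centres $s_i$ cluster. Once this stability is in hand, the ball-inflation/multilinear bootstrap runs through in the now-standard way.
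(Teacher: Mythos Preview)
The paper does not prove this theorem at all: it is quoted as a black-box input from the literature (attributed to Bourgain--Demeter--Guth \cite{BDG2016}, with the precise formulation taken from \cite{GLYZ}). The only commentary the paper offers is the remark that the stated variant ``follows by combining Theorem 1.2 and Lemma 3.6 from \cite{GLYZ}''. So there is nothing to compare your proposal against --- you are sketching a proof of a result the authors simply cite.

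That said, your outline is a reasonable high-level summary of the Bourgain--Demeter--Guth scheme (parabolic rescaling, Bourgain--Guth broad/narrow dichotomy, multilinear Kakeya/Brascamp--Lieb input, ball inflation, induction on dimension). If your intention is genuinely to reprove the theorem, be aware that the sketch glosses over substantial work: the ball-inflation step for curves requires the full strength of the Brascamp--Lieb inequality (not just multilinear Kakeya), the recursive inequality you wrote down is schematic and getting the exponents $\eta,\kappa$ to close the bootstrap at the critical $p=d(d+1)$ is delicate, and the uniformity over $\mathfrak{G}_d(\delta)$ is handled in \cite{GLYZ} by a specific stability argument rather than a soft perturbation remark. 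For the purposes of this paper, however, the correct move is simply to cite \cite{BDG2016} and \cite{GLYZ} and move on.
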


\begin{remark} This is a slight variant of the decoupling inequality of Bourgain--Demeter--Guth \cite{BDG2016} which can be found, for instance, in \cite{GLYZ}.\footnote{More precisely, the general version of the decoupling theorem here follows by combining Theorem 1.2 and Lemma 3.6 from \cite{GLYZ}.} It is also remarked that the result holds for general non-degenerate curves, although not in the uniform fashion described here. Note, in particular, that by restricting to the model curves $g \in \mathfrak{G}_d(\delta)$ for $0 < \delta \ll 1$, the decoupling inequality \eqref{BDG ineq} holds with a constant independent of both the choice of $g$ and $\delta$. 
\end{remark}

%%%%%%%%%%%%%%%%%%%%%%%%%%%%%%%%%%%%%%%%%%%%%%%%%%%%%%%%%%%%%%%%%%%%%%%%%%%%%%%%%%%%%%%%%%%%%%%%

%              Geometric observations

%%%%%%%%%%%%%%%%%%%%%%%%%%%%%%%%%%%%%%%%%%%%%%%%%%%%%%%%%%%%%%%%%%%%%%%%%%%%%%%%%%%%%%%%%%%%%%%%

\subsection{Geometric observations}\label{geo obs sec} In order to relate Theorem~\ref{Frenet decoupling theorem} to the Bourgain--Demeter--Guth result from Theorem~\ref{BDG theorem}, we first relate the Frenet boxes $\pi_{d-1,\gamma}(s;r)$ to certain regions which are more similar in form to the slabs $\alpha(g;s,r)$ introduced above. The Frenet boxes $\pi_{d-1,\gamma}(s;r)$ do not correspond precisely to slabs but to related regions referred to as \textit{plates}. These plate regions are formed by extending $d$-dimensional slabs into $n$-dimensions by adjoining additional long directions. Moreover, the plates are naturally defined in relation to a cone generated over a family of non-degenerate curves $g_j \colon I \to \R^d$.  \medskip

\noindent \textit{A family of cones.} Let $\gamma \in \mathfrak{G}_{n}(\delta)$ for $0 < \delta \ll 1$ and $\be_j \colon[-1,1] \to S^{n-1}$ for $1 \leq j \leq n$ be the associated Frenet frame. Without loss of generality, in proving Theorem~\ref{Frenet decoupling theorem} we may always localise so that we only consider the portion of the curve lying over the interval $I = [-\delta, \delta]$. In this case
\begin{equation}\label{Frenet loc}
\be_j(s) = \vec{e}_j + O(\delta) \qquad \textrm{for $1 \leq j \leq n$}
\end{equation}
where, as in Definition~\ref{rescaled curve def}, the $\vec{e}_j$ denote the standard basis vectors. 

Here we introduce certain conic surfaces which are `generated' over the curves $s \mapsto \be_j(s)$. The following observations extend the analysis of \cite{PS2007}, where a cone in $\R^3$ generated by the binormal vector $\be_3$ features prominently in the proof of the 3-dimensional analogue of Theorem~\ref{non-degenerate theorem}. 

Let $2 \leq d \leq n-1$ and consider the map $\tilde{\Gamma} \colon \R^{n-d} \times I \to \R^n$ defined by
\begin{equation*}
    \tilde{\Gamma}(\vec{\lambda}, s) := \sum_{j=d+1}^n \lambda_j \be_j(s), \qquad \vec{\lambda} = (\lambda_{d+1},\dots, \lambda_n).
\end{equation*}
Restricting to $\lambda_{d+1}$ bounded away from zero, this is a regular parametrisation of a $(n-d+1)$-dimensional surface in $\R^n$, which is denoted $\Gamma_{n,d}$. Indeed, by the Frenet formul\ae,
\begin{align*}
\frac{\partial\tilde{\Gamma}}{\partial s}(\vec{\lambda}, s) &= -\lambda_{d+1}\tilde{\kappa}_d(s)\be_d(s) + E_d(\vec{\lambda},s), \\
\frac{\partial\tilde{\Gamma}}{\partial \lambda_j}(\vec{\lambda},s) &= \be_j(s), \qquad d+ 1 \leq j \leq n,
\end{align*}
where  $E_d(\vec{\lambda},s)$ lies in the subspace $\langle \be_{d+1}(s), \dots, \be_n(s)\rangle$. Thus, provided $\lambda_{d+1}$ is bounded away from zero, the non-vanishing of $\tilde{\kappa}_d$ ensures that these tangent vectors are linearly independent.\medskip

\noindent \textit{Reparametrisation.} It is convenient to reparametrise $\Gamma_{n,d}$  so that it is realised as a surface `generated' over an alternative family of curves which is formed by graphs. To this end, let $A \colon I \to \mathrm{GL}(n-d, \R)$ be given by 
\begin{equation*}
    A(s) :=
    \begin{bmatrix}
    \be_{d+1,d+1}(s) & \cdots & \be_{n,d+1}(s) \\
    \vdots & & \vdots \\
    \be_{d+1,n}(s) & \cdots & \be_{n,n}(s)
    \end{bmatrix}^{-1},
\end{equation*}
where $\be_{i,j}(s)$ denotes the $j$th component of $\be_i(s)$. Provided $\delta$ is chosen sufficiently small, \eqref{Frenet loc} ensures that the above matrix inverse is well-defined and, moreover, is a small perturbation of the identity matrix. Define the reparametrisation 
\begin{equation}\label{G repara 1}
    \Gamma(\vec{\lambda}, s) := \tilde{\Gamma}(A(s)\vec{\lambda}, s) \qquad \textrm{for all $(\vec{\lambda},s) \in \R^{n-d}\times I$.}
\end{equation}
Consider the restriction of this mapping to the set $\mathcal{R}_{n,d}' \subset \R^{n-d}$ consisting of all vectors $\vec{\lambda} = (\lambda_{d+1}, \dots, \lambda_n)$ satisfying 
\begin{equation}\label{G repara 2}
1/4 \leq \lambda_{d+1} \leq 2 \quad \textrm{and} \quad |\lambda_j|\leq 2 \quad \textrm{for $d+2 \leq j \leq n$};
\end{equation}
under this restriction, $\Gamma$ is a regular parametrisation by the preceding observations.  

The mapping \eqref{G repara 1} can be expressed in matrix form as
\begin{align}\label{G repara 3}
  \Gamma(\vec{\lambda}, s) &= 
  \begin{bmatrix}
  \be_{d+1}(s) & \cdots & \be_n(s)
  \end{bmatrix} \cdot A(s) \vec{\lambda}, \\
  \nonumber
 & = 
    \begin{bmatrix}
    G_{d+1}(s) & \cdots & G_n(s)
    \end{bmatrix} \vec{\lambda}, 
\end{align}
where the $G_j \colon I \to \R^n$ (which form the column vectors of the above matrix) are of the form 
\begin{equation*}
    G_j(s) =
    \begin{bmatrix}
    g_j(s) \\
    0
    \end{bmatrix} 
    + \vec{e}_j
\end{equation*}
for some smooth function $g_j \colon I \to \R^d$.\medskip 

\noindent \textit{Non-degeneracy conditions.} Given $\ba = (a_{d+1}, \dots, a_n) \in \mathcal{R}'_{n,d}$, define
\begin{equation}\label{g a non deg 1}
   G_{\ba} := \sum_{j=d+1}^n a_j\cdot G_j \qquad \textrm{and} \qquad g_{\ba}:= \sum_{j=d+1}^na_j \cdot g_j,
\end{equation}
noting $G_{\ba}(s) = \Gamma(\ba, s)$. The curve $g_{\ba} \colon I \to \R^d$ is non-degenerate. To see this, first note that $\frac{\partial^i \Gamma}{\partial s^i}(\vec{\lambda},s)$ can be expressed as a linear combination of vectors of the form
\begin{equation}\label{g a non deg 2}
    \begin{bmatrix}
  \be_{d+1}^{(\ell)}(s) & \cdots & \be_n^{(\ell)}(s)
  \end{bmatrix} \cdot A^{(i-\ell)}(s) \,\vec{\lambda}, \qquad 0 \leq \ell \leq i,
\end{equation}
where $A^{(k)}$ denotes the component-wise $k$th-derivative of $A$. Indeed, this follows simply by applying the Leibniz rule to \eqref{G repara 3}. Consequently, $\frac{\partial^i \Gamma}{\partial s^i}(\vec{\lambda},s)$ must lie in the subspace generated by the columns of the left-hand matrix in \eqref{g a non deg 2}, where $i$ is allowed to vary over the stated range. In particular, one concludes from the Frenet formul\ae\ that
\begin{equation}\label{g a non deg 3}
    \frac{\partial^i\Gamma}{\partial s^i} (\vec{\lambda},s) \in \langle \be_{d+1 -i}(s), \dots, \be_n(s)  \rangle \qquad \textrm{for $0 \leq i \leq d$.}
\end{equation}
On the other hand, the Frenet formul\ae\ also show that the $\be_{d+1 -i}(s)$ component of $\frac{\partial^i\Gamma}{\partial s^i} (\vec{\lambda},s)$ arises only from the term in \eqref{g a non deg 2} corresponding to $\ell = i$ and 
\begin{equation}\label{g a non deg 4}
   \inn{\frac{\partial^i\Gamma}{\partial s^i} (\vec{\lambda},s)}{\be_{d+1 -i}(s)} = (-1)^i\Big( \prod_{\ell = d+1 -i}^d \tilde{\kappa}_{\ell}(s) \Big) \inn{\vec{A}_1(s)}{\vec{\lambda}},
\end{equation}
where $\vec{A}_1(s)$ denotes the first row of $A(s)$. Recall $A$ is a small perturbation of the identity matrix. Thus, under the constraint $\vec{\lambda} \in \mathcal{R}'_{n,d}$ from \eqref{G repara 2}, if $\delta$ is chosen sufficiently small, then \eqref{g a non deg 4} implies that
\begin{equation}\label{g a non deg 5}
   |\inn{\frac{\partial^i\Gamma}{\partial s^i} (\vec{\lambda},s)}{\be_{d+1 -i}(s)} | \sim 1 \qquad \textrm{for all $1 \leq i \leq d$.}
\end{equation}
Thus, combining \eqref{g a non deg 3} and \eqref{g a non deg 5}, it follows that the vectors $\frac{\partial^i\Gamma}{\partial s^i} (\vec{\lambda},s)$, $1 \leq i \leq d$, are linearly independent. Moreover, fixing $\vec{\lambda} = \ba$ and noting that $G_{\ba}^{(i)}(s) = \frac{\partial^i\Gamma}{\partial s^i} (\ba,s) \in \R^d \times \{0\}^{n-d}$ for $i \geq 1$, one concludes that 
\begin{equation}\label{g a non deg 6}
    |\det [g_{\ba}]_{s}| \gtrsim 1
\end{equation}
for all $s \in I$, which is the claimed non-degeneracy condition. Note this holds uniformly over the choice of original curve $\gamma \in \mathfrak{G}_n(\delta)$ and over $\ba \in \mathcal{R}'_{n,d}$. 
\medskip

\noindent \textit{Frenet boxes revisited.} From the preceding observations, the vectors $G_{\ba}^{(i)}(s)$ for $1 \leq i \leq d$ form a basis of $\R^d \times \{0\}^{n-d}$. Fixing $\xi \in \hat{\R}^n$ and $r > 0$, one may write
\begin{equation}\label{Frenet box 1}
    \xi - \sum_{j=d+1}^n \xi_j G_j(s) =  \sum_{i=1}^d r^{i} \eta_i G_{\ba}^{(i)}(s)
\end{equation}
for some vector of coefficients $(\eta_1, \dots, \eta_d) \in \R^d$. The powers of $r$ appearing in the above expression play a normalising r\^ole below. For each $1 \leq k \leq d$ form the inner product of both sides of the above identity with the Frenet vector $\be_k(s)$. Combining the resulting expressions with the linear independence relations inherent in \eqref{g a non deg 3}, the coefficients $\eta_k$ can be related to the numbers $\inn{\xi}{\be_k(s)}$ via a lower anti-triangular transformation, viz. 
\begin{equation}\label{Frenet box 2}
    \begin{bmatrix}
    \inn{\xi}{\be_1(s)} \\
    \vdots \\
    \inn{\xi}{\be_d(s)}
    \end{bmatrix}
    =
    \begin{bmatrix}
    0 & \cdots & \inn{G_{\ba}^{(d)}(s)}{\be_1(s)} \\
    \vdots & \ddots & \vdots \\
    \inn{G_{\ba}^{(1)}(s)}{\be_d(s)} & \cdots & \inn{G_{\ba}^{(d)}(s)}{\be_d(s)}
    \end{bmatrix}
    \begin{bmatrix}
    r\eta_1 \\
    \vdots \\
    r^d \eta_d
    \end{bmatrix}.
\end{equation}
Thus, if $\xi \in \pi_{d-1,\gamma}(s;r)$, then it follows from combining \eqref{neighbourhood 1} and \eqref{g a non deg 5} with \eqref{Frenet box 2} that $|\eta_i| \lesssim_{\gamma} 1$ for $1 \leq i \leq d$, provided $\delta >0$ is sufficiently small. Similarly, the conditions \eqref{neighbourhood 2}, \eqref{neighbourhood 3} and the localisation \eqref{Frenet loc} imply that 
\begin{equation*}
 \pi_{d-1,\gamma}(s;r) \subseteq    \mathcal{R}_{n,d} := [-2,2]^d \times \mathcal{R}'_{n,d}
\end{equation*}

The identity \eqref{Frenet box 1} can be succinctly expressed using matrices. In particular, collect the functions $g_j$ together as an $(n-d)$-tuple $\bg := (g_{d+1}, \dots, g_n)$ and, for $s \in I$ and $r > 0$, define the $n \times n$ matrix
\begin{equation}\label{Frenet box 3}
    [\bg]_{\ba, s,r} :=
    \begin{pmatrix}
    [g_{\ba}]_{s,r} & \bg(s) \\
    0 & \mathrm{I}_{n-d}
    \end{pmatrix}.
\end{equation}
Here the block $[g_{\ba}]_{s,r}$ is the $d \times d$ matrix \eqref{gamma transformation} with $\gamma$ here taken to be $g_{\ba}$ as defined in \eqref{g a non deg 1}, whilst $\bg(s)$ is understood to be the $(n-d) \times d$ matrix with $j$th column equal to $g_j(s)$ and $\mathrm{I}_{n-d}$ is the $(n-d) \times (n-d)$ identity matrix. With this notation, the identity \eqref{Frenet box 1} may be written as
\begin{equation*}
    \xi = [\bg]_{\ba,s,r} \cdot \eta \qquad \textrm{where $\eta = (\eta_1, \dots, \eta_d, \xi_{d+1}, \dots, \xi_n)$.}
\end{equation*}
Moreover, if $\xi \in \pi_{d-1,\gamma}(s;r)$, then the preceding observations show that $\eta$ in the above equation may be taken to lie in a bounded region and so
\begin{equation}\label{Frenet box 4}
    \pi_{d-1,\gamma}(s;r) \subseteq \bigcap_{\ba \in \mathcal{R}'_{n,d}} [\bg]_{\ba,s,C r}\big([-2,2]^n\big) \cap \mathcal{R}_{n,d},
\end{equation}
where $C \geq 1$ is a suitably large dimensional constant. The right-hand side of \eqref{Frenet box 4} should be compared with the matrix definition of the slabs used in the Bourgain--Demeter--Guth theorem from \eqref{slab matrix form}.

%%%%%%%%%%%%%%%%%%%%%%%%%%%%%%%%%%%%%%%%%%%%%%%%%%%%%%%%%%%%%%%%%%%%%%%%%%%%%%%%%%%%%%%%%%%%%%%%

%                 Decoupling inequalities for cones generated by non-degenerate curves   

%%%%%%%%%%%%%%%%%%%%%%%%%%%%%%%%%%%%%%%%%%%%%%%%%%%%%%%%%%%%%%%%%%%%%%%%%%%%%%%%%%%%%%%%%%%%%%%%

\subsection{Decoupling inequalities for cones generated by non-degenerate curves}\label{cone dec subsec} Here the geometric setup described in \S\ref{geo obs sec} is abstracted. We first generalise the definition \eqref{G repara 1} to arbitrary cones generated over a tuple of curves $(g_{d+1}, \dots, g_n)$.

\begin{definition}\label{cone gen g def} Let $2 \leq d \leq n-1$, $\bg = (g_{d+1}, \dots, g_n)$ be an $(n-d)$-tuple of functions in $C^{d+1}(I;\R^d)$ and $\Gamma_{\bg} $ denote the codimension $d-1$ cone in $\R^n$  parametrised by
\begin{equation*}
 (\vec{\lambda} ,s) \mapsto  \sum_{j=d+1}^n \lambda_j \cdot \Big( \begin{bmatrix}
    g_j(s) \\
    0
    \end{bmatrix} +  \vec{e}_j \Big) \quad \textrm{for } \vec{\lambda} = (\lambda_{d+1}, \dots, \lambda_n) \in \mathcal{R}_{n,d}' \textrm{ and } s \in I.
\end{equation*}
In this case, $\Gamma_{\bg} $ is referred to as the \textit{cone generated by $\bg$}.
\end{definition}

We now take into account the non-degeneracy condition established in \eqref{g a non deg 6}. Given $\ba = (a_{d+1}, \dots, a_n) \in \mathcal{R}_{n,d}'$ and $0 < \delta \ll 1$ consider the collection $\mathfrak{G}_{n,d}^{\ba}(\delta)$ of all $(n-d)$-tuples of functions
\begin{equation*}
    \bg = (g_{d+1}, \dots, g_n) \in [C^{d+1}(I;\R^d)]^{n-d}
\end{equation*}
with the property
\begin{equation}\label{g sum}
    g_{\ba} := \sum_{j=d+1}^n a_j\cdot g_j \in \mathfrak{G}_{d}(\delta),
\end{equation}
where $\mathfrak{G}_d(\delta)$ is the class of model curves introduced in \S\ref{moment red sec}.

In \eqref{g a non deg 6} we showed that the curves $g_{\ba}$ relevant to our study are \textit{non-degenerate}, which is a weaker condition than $g_{\ba} \in \mathfrak{G}_d(\delta)$ (provided $0 < \delta \ll 1$). However, by a localisation and scaling argument similar to that used in \S\ref{moment red sec}, we will always be able to assume the condition \eqref{g sum} holds in what follows (see the proof of Lemma~\ref{2 cone rescaling lemma} for details of the rescaling).

 Given $\bg \in \mathfrak{G}_{n,d}^{\ba}(\delta)$, $s \in [-1,1]$ and $0 < r \leq 1$, define the $n \times n$ matrix $[\bg]_{\ba, s,r}$ as in \eqref{Frenet box 3}; that is,
\begin{equation}\label{plate matrix}
    [\bg]_{\ba, s,r} :=
    \begin{pmatrix}
    [g_{\ba}]_{s,r} & \bg(s) \\
    0 & \mathrm{I}_{n-d}
    \end{pmatrix}.
\end{equation}
In view of \eqref{Frenet box 4}, one wishes to study decoupling with respect to the plates
\begin{equation*}
  \theta(s;r) :=  [\bg]_{\ba, s,r} \big( [-2,2]^n \big) \cap \mathcal{R}_{n,d}. 
\end{equation*}
In some cases it will be useful to highlight the choice of function $\bg$ by writing $\theta(\bg; s;r)$ for $\theta(s;r)$. Note that each of these plates lies in an $r$-neighbourhood of the cone $\Gamma_{\bg} $. We think of the union of all plates $\theta(s;r)$ as $s$ varies over the domain $[-1,1]$ as forming an anisotropic $r$-neighbourhood of $\Gamma_{\bg} $, similar to the situation for curves described in \S\ref{BDG subsec}.

Rather than work with the $\theta(s;r)$ directly, certain truncated versions are considered. 

\begin{definition}\label{a plate definition} For $0 < r \leq 1$, $\ba = (a_{d+1}, \dots, a_n) \in \mathcal{R}_{n,d}'$ and $K \geq 1$ an \textit{$(\ba,K)$-truncated $r$-plate} for $\Gamma_{\bg} $ is a set of the form
\begin{equation*}
    \theta^{\, \ba, K}(s;r) := [\bg]_{\ba, s,r} \big( [-2,2]^n \big) \cap Q(\ba,K^{-1}) 
\end{equation*}
for some $s \in I$ and
\begin{equation*}
 Q(\ba,K^{-1}) :=   \big\{\xi \in \hat{\R}^n : |\xi_j - a_j|\leq K^{-1} \textrm{ for $d+1 \leq j \leq n$} \big\}.
\end{equation*}
\end{definition}

\begin{definition}A collection $\Theta^{\, \ba,K}(r)$ of $(\ba,K)$-truncated $r$-plates is an \textit{$(\ba, K)$-truncated plate decomposition for $\bg$} if it consists of $\theta^{\, \ba,K}(\bg;s;r)$ for $s$ varying over a $r$-net in $I$.
\end{definition}

Theorem~\ref{Frenet decoupling theorem} is a consequence of the following decoupling inequality for cones $\Gamma_{\bg} $.

\begin{proposition}\label{cone dec prop} Let $2 \leq d \leq n-1$ and $\varepsilon > 0$. There exists some integer $K \geq 1$ such that for all $0 < r \leq 1$, $\ba \in \mathcal{R}_{n,d}'$ and $\bg \in \mathfrak{G}_{n,d}^{\ba}(\delta)$ for  $0 \leq \delta \ll 1$ the following holds. If $\Theta^{\,\ba,K}(r)$ is an $(\ba,K)$-truncated $r$-plate decomposition for $\Gamma_{\bg} $ and $2 \leq p \leq d(d+1)$, then
\begin{equation*}
    \big\|\sum_{\theta \in \Theta^{\,\ba,K}(r)} f_{\theta} \big\|_{L^p(\R^{n})} \lesssim_{\varepsilon} r^{-\varepsilon} \Big(\sum_{\theta \in \Theta^{\,\ba,K}(r)} \|f_{\theta}\|_{L^p(\R^{n})}^2\Big)^{1/2}
\end{equation*}
holds for any tuple of functions $(f_{\theta})_{\theta \in \Theta^{\,\ba,K}(r)}$ satisfying $\supp \hat{f}_{\theta} \subseteq \theta$.
\end{proposition}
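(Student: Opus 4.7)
The proposition is the cone-analogue of the Bourgain--Demeter--Guth decoupling theorem, and the strategy is the cylindrical/slicewise decoupling scheme introduced in \cite{PS2007}. The block decomposition \eqref{plate matrix} is the crucial structural input: writing
\[
[\bg]_{\ba,s,r} = \begin{pmatrix} I & \bg(s) \\ 0 & I \end{pmatrix} \begin{pmatrix} [g_{\ba}]_{s,r} & 0 \\ 0 & I \end{pmatrix},
\]
one sees that up to a $s$-dependent shear in the upper-right block, the plates $\theta^{\ba,K}(s;r)$ are products of $r$-slabs for the curve $g_{\ba}$ in the first $d$ coordinates with the transverse cube $Q(\ba,K^{-1})$. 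The role of the parameter $K$ is precisely to make the shear negligible on each plate.

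The plan is to Fourier-slice in the last $n-d$ coordinates. For each frozen transverse vector $\eta \in Q(\ba,K^{-1})$, the slice of $\theta^{\ba,K}(s;r)$ is contained in a translate of the $r$-slab $\alpha(g_\eta;s;r)$, where $g_\eta := \sum_{j=d+1}^n \eta_j g_j$. Smoothness of $\bg$ together with the hypothesis $g_{\ba} \in \mathfrak{G}_d(\delta)$ implies $g_\eta \in \mathfrak{G}_d(C(\delta + K^{-1}))$, so by choosing $K$ large (but absolute) and $\delta$ small we place $g_\eta$ in the uniform class to which Theorem~\ref{BDG theorem} applies; moreover the translation $\bg(s)\eta$ in the slice produces a modulation of the decoupled function, which commutes with $L^p$ norms and does not affect the inequality. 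Slicewise BDG yields, for each fixed $\eta$ and each slice-function $f_\theta(\,\cdot\,,\eta)$ (the partial inverse Fourier transform of $\hat f_\theta$ in the first $d$ variables with $\eta$ frozen), the inequality
\[
\Big\| \sum_\theta f_\theta(\,\cdot\,,\eta) \Big\|_{L^p(\R^d)} \lesssim_\varepsilon r^{-\varepsilon} \Big( \sum_\theta \| f_\theta(\,\cdot\,,\eta) \|_{L^p(\R^d)}^2 \Big)^{1/2}.
\]
Raising to the $p$-th power, integrating in $\eta$, applying Fubini on the left and Minkowski's inequality on the right (valid since $p\ge 2$), and using the support of $\hat f_\theta$ in $Q(\ba,K^{-1})$ to recombine the variables, gives the global $L^p(\R^n)$ decoupling with the same $r^{-\varepsilon}$ loss and at most a $K$-dependent multiplicative constant (absorbed into the implicit constant after $K$ is fixed).

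The main obstacle is the $s$-dependence of the shear $\bg(s)$ in the plates: when slicing at level $\eta$, the resulting slab lies over the curve $g_\eta$ and the translation $\bg(s)\eta$ varies with $s$ by $O(r)$ on an $r$-interval in $s$. This prevents a clean product decomposition at a single scale. To finesse this, I would argue by induction on scales: decouple first to an intermediate scale $r_0$ (where $r \ll r_0 \ll 1$ is chosen so the shear contribution is controlled), then within each coarse $(\ba,K)$-truncated $r_0$-plate apply the affine rescaling $\xi \mapsto [\bg]_{\ba,s_0,r_0}^{-1}(\xi - \text{centre})$; the rescaled cone is again generated by a tuple $\bg^*$ in $\mathfrak{G}_{n,d}^{\ba^*}(\delta')$ with comparable $\delta'$, and the plates at the new scale $r/r_0$ become $(\ba^*,K)$-truncated plates of this rescaled cone, to which the inductive hypothesis applies. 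This rescaling step mirrors the Pramanik--Seeger iteration and can be checked by a direct computation with the matrix \eqref{plate matrix}.

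A routine bootstrapping then produces the claim: the base case is the BDG theorem itself, and the induction proceeds through $O(\log(1/r))$ scales, each contributing a factor bounded by $r_0^{-\varepsilon/2}$ (say), whose product is $r^{-\varepsilon}$ up to constants. The condition $2\le p\le d(d+1)$ is inherited unchanged from Theorem~\ref{BDG theorem}, since the slicewise BDG is the only sharp input and its range is preserved under all the manipulations above.
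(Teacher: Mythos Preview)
Your overall architecture matches the paper's: apply Bourgain--Demeter--Guth at a coarse scale $\rho$ determined by $K$, then use a Lorentz-type rescaling (your affine map $[\bg]_{\ba,s_0,r_0}^{-1}$ is exactly the paper's Lemma~\ref{2 cone rescaling lemma}) and iterate through $O(\log(1/r))$ scales. That part is fine.

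The gap is in the slicing step. You freeze the \emph{frequency} variable $\eta=\xi''$ and form $F_\theta(x',\eta):=\mathcal{F}^{-1}_{\xi'}[\hat f_\theta(\,\cdot\,,\eta)](x')$, apply BDG in $x'$ for each $\eta$, then ``recombine the variables'' using the compact $\eta$-support. But after integrating in $\eta$ you have controlled $\|\sum_\theta F_\theta\|_{L^p(\R^d_{x'}\times\R^{n-d}_\eta)}$, and for $p\neq 2$ there is no inequality, in either direction, relating this to $\|\sum_\theta f_\theta\|_{L^p(\R^n)}$: a partial Fourier transform is not bounded on $L^p$, and compact Fourier support in a $K^{-1}$-cube does not repair this (Bernstein and Hausdorff--Young go the wrong way, or give the wrong exponent). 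So the recombination step is not justified.

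The fix is to slice in the \emph{physical} variable $x''$ instead. For fixed $x''$ the function $x'\mapsto f_\theta(x',x'')$ has $\xi'$-Fourier support in the \emph{projection} $\mathrm{proj}_d\,\theta^{\ba,K}(s;\rho)$, and the paper checks (this is the computation around \eqref{cone dec 3}) that for $\rho=K^{-1/d}$ this projection sits inside a fixed-constant dilate of the slab $\alpha(g_{\ba};s;\rho)$ for the \emph{single} curve $g_{\ba}$; the point is that $\|[g_{\ba}]_{s,\rho}^{-1}\|_{\mathrm{op}}\cdot K^{-1}\lesssim \rho^{-d}K^{-1}=1$ absorbs the drift $\bg(s)(\eta-\ba)$. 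Then BDG applies uniformly in $x''$, and Fubini plus Minkowski (exactly as you wrote, but now with no Fourier transform to undo) gives the $L^p(\R^n)$ inequality at scale $\rho$. Note this also avoids your varying family $g_\eta$: only the fixed $g_{\ba}\in\mathfrak G_d(\delta)$ enters, which is why the constants are uniform. A minor further correction: under the rescaling the parameter $\ba$ is preserved (see \eqref{Lorentz 4}), not sent to some new $\ba^*$; this is needed for the decoupling constant $\mathfrak D_{n,d}^{\ba}$ to be the same object on both sides of the sub-multiplicativity inequality.
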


Proposition~\ref{cone dec prop} follows from the Bourgain--Demeter--Guth result (namely, Theorem~\ref{BDG theorem}) via an argument from \cite{BD2015}, where decoupling estimates for the light cone in $\R^n$ were obtained as a consequence of decoupling estimates for the paraboloid in $\R^{n-1}$. The key observation is that, at suitably small scales, the cone $\Gamma_{\bg} $ can be approximated by a cylinder over the curve $g$. This approximation is only directly useful for relatively large $r$ values, but rescaling and induction-on-scale arguments allow one to leverage this observation in the small $r$ setting. Arguments of this kind originate in \cite{PS2007} and have been used repeatedly in the context of decoupling theory: see, for instance, \cite{BHS2020, GO2020, GZ2020, Oh2018}. 

The details of the proof of Proposition~\ref{cone dec prop} are postponed until \S\ref{decoupling proof subsection} below. In the following subsection, we show that Proposition~\ref{cone dec prop} implies Theorem~\ref{Frenet decoupling theorem}.

%%%%%%%%%%%%%%%%%%%%%%%%%%%%%%%%%%%%%%%%%%%%%%%%%%%%%%%%%%%%%%%%%%%%%%%%%%%%%%%%%%%%%%%%%%%%%%%%

%              Relating the decoupling regions

%%%%%%%%%%%%%%%%%%%%%%%%%%%%%%%%%%%%%%%%%%%%%%%%%%%%%%%%%%%%%%%%%%%%%%%%%%%%%%%%%%%%%%%%%%%%%%%%

\subsection{Relating the decoupling regions} Theorem~\ref{Frenet decoupling theorem} may now be deduced as a consequence of Proposition~\ref{cone dec prop} using the geometric observations from \S\ref{geo obs sec}.

\begin{proof}[Proof of Theorem~\ref{Frenet decoupling theorem}, assuming Proposition~\ref{cone dec prop}] First note that it suffices to show the desired decoupling inequality in the restricted range $2 \leq p \leq d(d+1)$; the estimate for the remaining range $d(d+1) \leq p \leq \infty$ then follows by an interpolation argument and a trivial estimate for $p=\infty$.

Let $\gamma \in \mathfrak{G}_d(\delta)$ for $0 \leq \delta \ll 1$. As previously noted, we may restrict attention to the portion of $\gamma$ over $I = [-\delta, \delta]$ so that the Frenet vectors satisfy \eqref{Frenet loc}. Fix $2 \leq d \leq n-1$, $0 < r \leq 1$ and $\mathcal{P}_{d-1}(r)$ a Frenet box decomposition of $\gamma$. 

Define $\bg = (g_{d+1}, \dots, g_n)$ as in \S\ref{geo obs sec} so that the $g_{\mathbf{a}}$ are non-degenerate. Let $\varepsilon > 0$ be given and take $K \geq 1$ an integer satisfying the properties described in Proposition~\ref{cone dec prop}.   

Let $(f_{\pi})_{\pi \in \mathcal{P}_{d-1}(r)}$ be a tuple of functions satisfying the Fourier support hypothesis from the statement of Theorem~\ref{Frenet decoupling theorem}. If $\pi = \pi_{d-1,\gamma}(s;r) \in \mathcal{P}_{d-1}(r)$, then, recalling \eqref{Frenet box 4}, we have
\begin{equation}\label{Frenet dec 1}
    \supp \hat{f}_{\pi} \subseteq \pi_{d-1,\gamma}(s;r) \subseteq\bigcap_{\ba \in \mathcal{R}'_{n,d}} [\bg]_{\ba,s,Cr}\big([-2,2]^n\big) \cap \mathcal{R}_{n,d}.
\end{equation}

The frequency domain is decomposed according to the $Q(\ba,K^{-1})$ from Definition~\ref{a plate definition}. In particular, let
\begin{equation*}
    \mathcal{R}'_{n,d}(K) := K^{-1} \Z^{n-d} \cap \mathcal{R}'_{n,d}
\end{equation*} 
so that the sets $Q(\ba,K^{-1})$ for $\ba \in \mathcal{R}'_{n,d}(K)$ are finitely-overlapping and cover of $\mathcal{R}_{n,d}$. Form a smooth partition of unity $(\psi_{ \ba, K^{-1}})_{\ba \in \mathcal{R}'_{n,d}(K)}$ adapted to the sets $Q(\ba,K^{-1})$ and define the frequency projection operators $P_{\,\ba}$ via the Fourier transform by
\begin{equation*}
    \big( P_{\,\ba}f \big)\;\widehat{}\; := \psi_{\ba,K^{-1}} \cdot \hat{f}.
\end{equation*}
These operators are bounded on $L^p$ for $1 \leq p \leq \infty$ uniformly in $\ba$ and $K$ and, furthermore,
\begin{equation*}
    f_{\pi} = \sum_{\ba \in \mathcal{R}'_{n,d}(K)} P_{\,\ba}f_{\pi} \qquad \textrm{for all $\pi \in \mathcal{P}_{d-1}(r)$.}
\end{equation*}
Since $\# \mathcal{R}'_{n,d}(K) \lesssim_{n,\delta,\varepsilon} 1$, by the triangle inequality and the $L^p$ boundedness of the $P_{\,\ba}$, it suffices to show that
\begin{equation}\label{Frenet dec 2}
    \big\|\sum_{\pi \in \mathcal{P}_{d-1}(r)} P_{\,\ba}f_{\pi} \big\|_{L^p(\R^{n})} \lesssim_{n,\delta,\varepsilon} r^{-(1/2-1/p)-\varepsilon} \Big(\sum_{\pi \in \mathcal{P}_{d-1}(r)} \|P_{\,\ba}f_{\pi}\|_{L^p(\R^{n})}^p\Big)^{1/p}
\end{equation}
uniformly in $\ba \in \mathcal{R}'_{n,d}(K)$. However, recalling \eqref{Frenet dec 1}, each function $P_{\,\ba}f_{\pi}$ has frequency support in the set 
\begin{equation*}
  \theta^{\,\ba,K}(s,C r) = [\bg]_{\ba,s,C r}\big([-2,2]^n\big) \cap Q(\ba,K^{-1})
\end{equation*}
and so an $\ell^2$ version of \eqref{Frenet dec 2} follows as a consequence of Proposition~\ref{cone dec prop}.\footnote{Strictly speaking, Proposition~\ref{cone dec prop} requires the additional hypothesis $\bg \in \mathfrak{G}_{n,d}^{\ba}(\delta)$. However, by a rescaling argument (see the proof of Lemma~\ref{2 cone rescaling lemma}), the decoupling result generalises to arbitrary $\bg$ for which $g_{\mathbf{a}}$ is non-degenerate (albeit no longer with a uniform constant).}  The desired $\ell^p$-decoupling \eqref{Frenet dec 2} follows by applying H\"older's inequality to the $\ell^2$-sum. 
\end{proof}

%%%%%%%%%%%%%%%%%%%%%%%%%%%%%%%%%%%%%%%%%%%%%%%%%%%%%%%%%%%%%%%%%%%%%%%%%%%%%%%%%%%%%%%%%%%%%%%%

%              Proof of Proposition~\ref{cone dec prop}

%%%%%%%%%%%%%%%%%%%%%%%%%%%%%%%%%%%%%%%%%%%%%%%%%%%%%%%%%%%%%%%%%%%%%%%%%%%%%%%%%%%%%%%%%%%%%%%%

\subsection{Proof of Proposition~\ref{cone dec prop}}\label{decoupling proof subsection} It remains to prove the decoupling Proposition~\ref{cone dec prop}. This is achieved using the argument outlined at the end of \S\ref{cone dec subsec}.

\begin{definition}[Decoupling constant] For $2 \leq d \leq n-1$, $0 <r \leq 1$, $p \geq 2$, $0 < \delta \ll 1$, $\ba \in \mathcal{R}_{n,d}'$ and $K \geq 1$ let $\mathfrak{D}_{n,d}^{\ba}(K;r)$ denote the infimum over all $C \geq 1$ for which
\begin{equation*}
    \big\|\sum_{\theta \in \Theta^{\,\ba,K}(r)} f_{\theta} \big\|_{L^p(\R^{n})} \leq C \Big(\sum_{\theta \in \Theta^{\,\ba,K}(r)} \|f_{\theta}\|_{L^p(\R^{n})}^2\Big)^{1/2}
\end{equation*}
 holds whenever:
\begin{enumerate}[i)]
    \item $\Theta^{\,\ba,K}(r)$ is an $(\ba,K)$-truncated $r$-plate decomposition for $\Gamma_{\bg} $ for some $\bg \in \mathfrak{G}_{n,d}^{\ba}(\delta)$,
    \item$(f_{\theta})_{\theta \in \Theta^{\,\ba,K}(r)}$ is a tuple of functions satisfying $\supp \hat{f}_{\theta} \subseteq \theta$.
\end{enumerate} 
\end{definition}

Thus, in this notation, Proposition~\ref{cone dec prop} states that  for all $\varepsilon > 0$ there exists some $K \geq 1$, depending only on $n$ and $\varepsilon$, such that 
\begin{equation}\label{cone dec 1}
    \mathfrak{D}_{n,d}^{\ba}(K; r) \lesssim_{\varepsilon} r^{-\varepsilon} \qquad \textrm{for all $\ba \in \mathcal{R}_{n,d}'$}
\end{equation}

\begin{remark} The definition of the decoupling constants also depends on $p$ and $\delta$ but, for simplicity, these parameters are omitted in the notation. 
\end{remark}

In conjunction to Theorem~\ref{BDG theorem}, one needs a simple scaling lemma.

\begin{lemma}[Generalised Lorentz rescaling]\label{2 cone rescaling lemma} If $0 < r < \rho < 1$, then
\begin{equation*}
    \mathfrak{D}_{n,d}^{\ba}(K;r) \lesssim \mathfrak{D}_{n,d}^{\ba}(K; \rho) \mathfrak{D}_{n,d}^{\ba}(K; r/\rho).
\end{equation*}
\end{lemma}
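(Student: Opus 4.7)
The plan is a standard two-scale Lorentz rescaling argument. The idea is to first decouple at the coarse scale $\rho$, then pull back each $\rho$-plate to a unit cube by an affine map, under which the nested $r$-plates become $(r/\rho)$-plates for a rescaled cone, at which point we apply decoupling at the scale $r/\rho$.

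Fix a tuple $(f_\theta)_{\theta \in \Theta^{\ba,K}(r)}$ with $\supp \hat f_\theta \subseteq \theta$. Group the fine $r$-plates into coarser ones: for each $s_0$ in a $\rho$-net of $I$ let $\Theta^{\ba,K}(r;s_0) := \{\theta^{\ba,K}(\bg;s;r): |s-s_0|\le \rho\}$ and set $F_{s_0} := \sum_{\theta \in \Theta^{\ba,K}(r;s_0)} f_\theta$. By a direct Taylor computation analogous to \eqref{Taylor} applied to the matrix $[\bg]_{\ba,s,r}$ in \eqref{plate matrix}, every $\theta^{\ba,K}(\bg;s;r)$ with $|s-s_0|\le \rho$ is contained (up to a harmless dilation constant that can be absorbed into $K$) in $\theta^{\ba,K}(\bg;s_0;\rho)$. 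Hence $\supp \widehat{F_{s_0}}$ sits in an element of an $(\ba,K)$-truncated $\rho$-plate decomposition of $\Gamma_\bg$, and the definition of $\mathfrak{D}_{n,d}^{\ba}(K;\rho)$ yields
\[
  \Big\| \sum_{\theta \in \Theta^{\ba,K}(r)} f_\theta \Big\|_{L^p(\R^n)}
  \le \mathfrak{D}_{n,d}^{\ba}(K;\rho) \Big( \sum_{s_0} \| F_{s_0} \|_{L^p(\R^n)}^2 \Big)^{1/2}.
\]

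For the second step, fix $s_0$ and apply the affine change of variable whose linear part has transpose $[\bg]_{\ba,s_0,\rho}$ acting on the first $d$ frequency coordinates and the identity on the last $n-d$. This maps the $\rho$-plate $\theta^{\ba,K}(\bg;s_0;\rho)$ to (essentially) $[-2,2]^d \times Q(\ba,K^{-1})$, and it identifies the finer $r$-plates $\theta^{\ba,K}(\bg;s_0 + \rho \tilde s; r)$ with $(\ba,K)$-truncated $(r/\rho)$-plates $\theta^{\ba,K}(\tilde\bg;\tilde s; r/\rho)$, where $\tilde\bg$ is the rescaling of $\bg$ at $(s_0,\rho)$ defined in the manner of Definition~\ref{rescaled curve def}. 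Since, by the discussion in Section~\ref{moment red sec}, $\tilde\bg$ can be taken to lie in $\mathfrak{G}_{n,d}^{\ba}(\delta)$ provided $\delta$ is small enough, the definition of $\mathfrak{D}_{n,d}^{\ba}(K;r/\rho)$ applied to the pulled-back pieces $\tilde F_{s_0}$ (which is equivalent, by the affine invariance of $L^p$, to the original statement for $F_{s_0}$) gives
\[
  \| F_{s_0} \|_{L^p(\R^n)}
  \le \mathfrak{D}_{n,d}^{\ba}(K;r/\rho) \Big( \sum_{\theta \in \Theta^{\ba,K}(r;s_0)} \| f_\theta \|_{L^p(\R^n)}^2 \Big)^{1/2}.
\]
Substituting into the first display and rearranging the $\ell^2$-sum over $(s_0,\theta)$ back into a single $\ell^2$-sum over $\Theta^{\ba,K}(r)$ yields the claimed product estimate.

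The main obstacle is bookkeeping for the rescaling: one must verify that the affine map identified above really sends fine $r$-plates to $(\ba,K)$-truncated $(r/\rho)$-plates for a cone $\Gamma_{\tilde\bg}$ with $\tilde\bg \in \mathfrak{G}_{n,d}^{\ba}(\delta)$. The first-coordinate block factors cleanly as $[g_\ba]_{s,r} = [g_\ba]_{s_0,\rho}\cdot [\tilde g_\ba]_{(s-s_0)/\rho,\, r/\rho}$ up to lower order terms, exactly as in \eqref{gamma transformation}, so the $r$-plate structure in the first $d$ coordinates rescales correctly. The truncation $Q(\ba,K^{-1})$ is preserved because the last $n-d$ rows of $[\bg]_{\ba,s,r}$ are $(\, \mathbf{0}\mid \mathrm{I}_{n-d}\,)$, so the affine map acts trivially on the $\xi_{d+1},\dots,\xi_n$-coordinates; the only effect is a translation coming from the $\bg(s_0)$ block which, since $\bg(s_0)=O(\delta)$ in our regime, can be absorbed by enlarging $K$ by a fixed dimensional factor (harmless, since the assertion of Proposition~\ref{cone dec prop} is uniform in the eventual choice of $K$). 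Granting these verifications, the argument above delivers the lemma.
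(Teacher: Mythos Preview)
Your approach is essentially the same as the paper's: decouple at scale $\rho$, then rescale each $\rho$-plate and decouple at scale $r/\rho$. The paper carries out the rescaling step more precisely, however, and your final paragraph contains a confusion worth flagging.

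The correct rescaling is by the \emph{full} $n\times n$ matrix $([\bg]_{\ba,s_0,\rho})^{-1}$ from \eqref{plate matrix}, not merely the $d\times d$ block $[g_\ba]_{s_0,\rho}^{-1}$ on the first $d$ coordinates. The paper introduces the rescaled curve tuple $\bg_{\ba,s_0,\rho}(t):=[g_\ba]_{s_0,\rho}^{-1}(\bg(s_0+\rho t)-\bg(s_0))$ and verifies the \emph{exact} factorisation
\[
[\bg]_{\ba,s_0,\rho}\cdot[\bg_{\ba,s_0,\rho}]_{\ba,\frac{s-s_0}{\rho},\frac{r}{\rho}}=[\bg]_{\ba,s,r},
\]
by checking both the $d\times d$ block identity (your ``first-coordinate block factors cleanly'' remark, which holds exactly, not merely up to lower order terms) and the off-diagonal identity $[g_\ba]_{s_0,\rho}\,\bg_{\ba,s_0,\rho}((s-s_0)/\rho)+\bg(s_0)=\bg(s)$. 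With this in hand the rescaled $r$-plates are \emph{exactly} $(\ba,K)$-truncated $r/\rho$-plates for $\Gamma_{\bg_{\ba,s_0,\rho}}$: since the bottom rows of $[\bg]_{\ba,s_0,\rho}$ are $(0\mid I_{n-d})$, the set $Q(\ba,K^{-1})$ is preserved exactly, and there is no ``translation coming from the $\bg(s_0)$ block'' to absorb into $K$. That remark of yours is unnecessary and somewhat misdirected; the $\bg(s_0)$ block is absorbed into the definition of the rescaled tuple, not into $K$.
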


Temporarily assuming this result, Proposition~\ref{cone dec prop} follows by a simple induction-on-scale argument. 

\begin{proof}[Proof of Proposition~\ref{cone dec prop}] Let $\varepsilon > 0$, $0 < \delta \ll 1$ and $\ba = (a_{d+1}, \dots, a_n) \in \mathcal{R}_{n,d}'$ be given. Henceforth, $K = K(\varepsilon) \geq 1$ is thought of as a fixed number, depending only on $n$ and $\varepsilon$, which is chosen sufficiently large to satisfy the forthcoming requirements of the proof. It will be shown, by an induction-on-scale in the $r$ parameter, that \eqref{cone dec 1} holds for all $0 < r \leq 1$.

If $(100 K)^{-d} < r \leq 1$, then it follows from the triangle and Cauchy--Schwarz inequalities that 
\begin{equation*}
    \mathfrak{D}_{n,d}^{\ba}(K; r) \leq \mathbf{C}(\varepsilon)
\end{equation*}
for some constant $\mathbf{C}(\varepsilon) \geq 1$ depending only on $n$ and $\varepsilon$. This serves as the base case of an inductive argument.\smallskip 

It remains to establish the inductive step. To this end, fix some $0 < r \leq (100K)^{-d}$ and assume the following holds. \medskip

\noindent \textbf{Induction hypothesis.} If $r_{\circ} \geq 2 r$, then $\mathfrak{D}_{n,d}^{\ba}(K; r_{\circ}) \leq \mathbf{C}(\varepsilon) r_{\circ}^{-\varepsilon}$.\medskip

Given $0 < r < \rho < 1/2$, one may combine the generalised Lorentz rescaling lemma with the induction hypothesis to conclude that
\begin{equation}\label{cone dec 2}
    \mathfrak{D}_{n,d}^{\ba}(K;r) \lesssim \mathfrak{D}_{n,d}^{\ba}(K; \rho) \mathfrak{D}_{n,d}^{\ba}(K; r/\rho) \leq \mathbf{C}(\varepsilon) \rho^{\varepsilon} r^{-\varepsilon}\mathfrak{D}_{n,d}^{\ba}(K; \rho).
\end{equation}

Fix $\rho := K^{-1/d}$. Favourable bounds for $\mathfrak{D}_{n,d}^{\ba}(K; \rho)$ can be obtained in this case via an appeal to Theorem~\ref{BDG theorem}. Let $\mathrm{proj}_d \,\colon \hat{\R}^n \to \hat{\R}^d$ denote the orthogonal projection onto the coordinate plane spanned by $\vec{e}_1, \dots, \vec{e}_d$. The key observation is that any $(\ba,K)$-truncated $\rho$-plate $\theta^{\,\ba,K}(\bg; s;\rho)$ on $\Gamma_{\bg} $ essentially projects into a $\rho$-slab $\alpha(g_{\ba}; s; \rho)$ on $g_{\ba} = \sum_{j = d+1}^n a_j \cdot g_j$ under this mapping, where $\alpha(g_{\ba}; s; \rho)$ is as defined in Definition~\ref{slab def}. In particular,
\begin{equation}\label{cone dec 3}
    \mathrm{proj}_d\, \theta^{\,\ba,K}(\bg; s;\rho) \subseteq \alpha(g_{\ba}; s; C\rho)
\end{equation}
for some choice of constant $C \geq 1$ depending only on $n$. To see this, fix $\xi \in \theta^{\,\ba,K}(\bg; s;\rho)$ and note that $\xi = [\bg]_{\,\ba,s,\rho} \cdot \eta$ for some $\eta \in [-2,2]^n$ whilst
\begin{equation}\label{cone dec 4}
    |\xi_j - a_j| \leq 1/K\textrm{ for $d+1 \leq j \leq n$,}
\end{equation}
by Definition~\ref{a plate definition}. By the definition of the matrix $[\bg]_{\,\ba,s,\rho}$ in \eqref{plate matrix}, it follows that
\begin{align*}
\xi' &= [g_{\ba}]_{s,\rho}\cdot\eta' + \sum_{j=d+1}^n \eta_jg_j(s), \\
\xi_j &= \eta_j \qquad \textrm{for $d+1 \leq j \leq n$}
\end{align*}
where $\xi' := \mathrm{proj}_d\, \xi$ and $\eta' := \mathrm{proj}_d\, \eta \in [-2,2]^d$. In particular,
\begin{equation*}
    \xi' - g_{\ba}(s) = [g_{\ba}]_{s,\rho} \Big(\eta' + [g_{\ba}]_{s,\rho}^{-1} \cdot \sum_{j =d+1}^n (\eta_j - a_j)g_j(s) \Big)
\end{equation*}
and, for the choice of $\rho = K^{-1/d}$ specified above,
\begin{align*}
\big|[g_{\ba}]_{s,\rho}^{-1} \cdot \sum_{j =d+1}^n (\eta_j - a_j)g_j(s) \big| &\leq  \|[g_{\ba}]_{s,\rho}^{-1} \|_{\mathrm{op}}\cdot \sum_{j =d+1}^n |\xi_j -  a_j||g_j(s)|\\
&\lesssim \rho^{-d}K^{-1} \leq 1.
\end{align*}
The second inequality follows from the hypothesis $g_{\ba} \in \mathfrak{G}_d(\delta)$ from \eqref{g sum}, which implies that $\|[g_{\ba}]_{s,\rho}^{-1} \|_{\mathrm{op}} \lesssim \rho^{-d}$ (with a uniform constant), and the condition \eqref{cone dec 4}. Recalling Definition~\ref{slab def}, it follows that $\xi' \in \alpha(g_{\ba}; s;C\rho)$, as claimed. 

Let $\Theta^{\,\ba,K}(\rho)$ be an $(\ba,K)$-truncated $\rho$-plate decomposition of $\Gamma_{\bg} $ and $(f_{\theta})_{\theta \in \Theta^{\,\ba,K}(\rho)}$ be a tuple of functions satisfying $\supp \hat{f}_{\theta} \subseteq \theta$. For any $2 \leq p \leq d(d+1)$ and $\tilde{\varepsilon} > 0$, by \eqref{cone dec 3} and Theorem~\ref{BDG theorem} it follows that
\begin{equation*}
    \big\|\sum_{\theta \in \Theta^{\,\ba,K}(\rho)} f_{\theta}(\,\cdot\,,x'') \big\|_{L^p(\R^{d})} \lesssim_{\tilde{\varepsilon}} \rho^{-\tilde{\varepsilon}} \Big(\sum_{\theta \in \Theta^{\,\ba,K}(\rho)} \|f_{\theta}(\,\cdot\,,x'')\|_{L^p(\R^{d})}^2\Big)^{1/2}
\end{equation*}
for all $x'' \in \R^{n-d}$. Taking the $L^p$-norm of both sides of this inequality with respect to $x''$ and using Minkowski's inequality to bound the resulting right-hand side, one deduces that
\begin{equation}\label{cone dec 5}
\mathfrak{D}_{n,d}^{\ba}(K; \rho) \lesssim_{\tilde{\varepsilon}} \rho^{-\tilde{\varepsilon}}.
\end{equation}

Taking $\tilde{\varepsilon} := \varepsilon/2$ in \eqref{cone dec 5} and substituting this inequality into \eqref{cone dec 2}, one deduces that 
\begin{equation*}
\mathfrak{D}_{n,d}^{\ba}(K;r) \leq \big(C_{\varepsilon} \rho^{\varepsilon/2}\big) \mathbf{C}(\varepsilon) r^{\varepsilon},
\end{equation*}
where the $C_{\varepsilon}$ factor arises from the various implied constants in the above argument. Thus, if $K$ is chosen from the outset to be sufficiently large, depending only on $n$ and $\varepsilon$, then
\begin{equation*}
    C_{\varepsilon}\rho^{\varepsilon/2} = C_{\varepsilon}K^{-\varepsilon /2d} \leq 1
\end{equation*} 
and the induction closes.
\end{proof}
It remains to prove the Lorentz rescaling lemma. Before presenting the argument, it is useful to introduce an extension of Definition~\ref{rescaled curve def} to tuples of curves $\bg$.

\begin{definition}\label{definition rescaled curve tuple} Let $\bg = (g_{d+1}, \dots, g_n) \in \mathfrak{G}_{n,d}^{\ba}(\delta)$ and $g_{\ba} := \sum_{j = d+1}^n a_j \cdot g_j$, as in \eqref{g sum} and \eqref{plate matrix}. Define the $({\ba};b,\rho)$-rescaling of $\bg$ to be the $(n-d)$-tuple
\begin{equation*}
   \bg_{\ba,b,\rho} = (g_{\ba, b,\rho, d+1}, \dots, g_{\ba, b,\rho,n}) \in [C^{d+1}(I, \R^d)]^{n-d} 
\end{equation*} 
given by
\begin{equation}\label{rescaled curve tuple}
    \bg_{\ba,b,\rho}(t) = [g_{\ba}]_{b,\rho}^{-1}\big( \bg(b+\rho t) - \bg(b) \big).
\end{equation}
Here $\bg_{\ba,b,\rho}(t)$ and $\bg(t)$ are understood to be the $d \times (n-d)$ matrices whose columns are the component functions of $\bg_{\ba,b,\rho}$ and $\bg$, respectively, evaluated at $t \in I$.
\end{definition}

As a consequence of this definition, the function 
\begin{equation*}
 g_{\ba,b,\rho}  := \sum_{j=d+1}^n a_j \cdot g_{\ba,b,\rho,j}   
\end{equation*}
 is precisely the $(b,\rho)$-rescaling of $g_{\ba} := \sum_{j = d+1}^n a_j \cdot g_j$. Thus, the notation $g_{\ba,b,\rho}$ used here is consistent in the sense that $g_{\ba,b,\rho} = (g_{\ba})_{b,\rho}$ and, furthermore, since $ g_{\ba,b,\rho} := (g_{\ba,b,\rho})_{\ba}$, one has
\begin{equation}\label{rescaled plate matrix}
    [\bg_{\ba, b,\rho}]_{\ba, u,h} :=
    \begin{pmatrix}
    [g_{\ba,b,\rho}]_{u,h} & \bg_{\ba, b,\rho}(s) \\
    0 & \mathrm{I}_{n-d}
    \end{pmatrix}.
\end{equation}

\begin{proof}[Proof of Lemma~\ref{2 cone rescaling lemma}] Fix $\bg \in \mathfrak{G}_{n,d}^{\ba}(\delta)$, an $(\ba,K)$-truncated $r$-plate decomposition $\Theta^{\,\ba,K}(r)$ for $\Gamma_{\bg} $ and let $(f_{\theta})_{\theta \in \Theta^{\,\ba,K}(r)}$ be a tuple of functions satisfying  $\supp \hat{f}_{\theta} \subseteq \theta$. By a simple pigeonholing argument, there exists an $(\ba,K)$-truncated $\rho$-plate decomposition $\Theta^{\,\ba,K}(\rho)$ such that 
\begin{equation*}
    \big\| \sum_{\theta \in \Theta^{\,\ba,K}(r)} f_{\theta} \Big\|_{L^p(\R^n)} \lesssim \big\| \sum_{\theta' \in \Theta^{\,\ba,K}(\rho)} f_{\theta'} \Big\|_{L^p(\R^n)}
\end{equation*}
where
\begin{equation*}
f_{\theta'} := \sum_{\substack{\theta \in \Theta^{\,\ba,K}(r) \\ \theta \subset \theta'}} f_{\theta} \quad \textrm{for all $\theta' \in \Theta^{\,\ba,K}(\rho)$.}
\end{equation*}
Since $\supp \hat{f}_{\theta'} \subseteq \theta'$, by definition 
\begin{equation}\label{Lorentz 1}
    \big\| \sum_{\theta \in \Theta^{\,\ba,K}(r)} f_{\theta} \Big\|_{L^p(\R^n)} \lesssim \mathfrak{D}_{n,d}^{\ba}(K;\rho) \Big(\sum_{\theta' \in \Theta^{\,\ba,K}(\rho)}\|  f_{\theta'} \|_{L^p(\R^n)}^2 \Big)^{1/2}.
\end{equation}

The goal here is to show that
\begin{equation}\label{Lorentz 2}
    \|f_{\theta'}\|_{L^p(\R^n)} \lesssim \mathfrak{D}_{n,d}^{\ba}(K;r/\rho) \Big(\sum_{\substack{\theta \in \Theta^{\,\ba,K}(\rho)\\ \theta \subset \theta'}}\|  f_{\theta} \|_{L^p(\R^n)}^2 \Big)^{1/2}
\end{equation}
for each $\theta' \in \Theta^{\,\ba,K}(\rho)$. Indeed, once this is established, by combining \eqref{Lorentz 1} and \eqref{Lorentz 2} with the definition of $\mathfrak{D}_{n,d}^{\ba}(K;r)$, one deduces the desired result.

Fix an $(\ba,K)$-truncated $\rho$-plate $\theta(b;\rho) \in \Theta^{\,\ba,K}(\rho)$ and recall that 
\begin{equation*}
    \theta(b;\rho) = \big\{\xi \in \hat{\R}^n : ([\bg]_{\ba,b,\rho})^{-1}\xi \in [-2,2]^n\big\} \cap  Q(\ba,K^{-1})
\end{equation*}
for $ Q(\ba,K^{-1})$ as defined in Definition~\ref{a plate definition}. Note that the preimage of $\theta(b;\rho)$ under the $[\bg]_{\ba,b,\rho}$ mapping is the set
\begin{equation*}
   \big([\bg]_{\ba,b,\rho}\big)^{-1}\theta(b;\rho) = [-2,2]^n \cap  Q(\ba,K^{-1}).
\end{equation*}
 On the other hand, the $(\ba,K)$-truncated $r$-plate $\theta(s;r)\equiv \theta^{\,\ba,K}(s;r)$ is transformed under $\big([\bg]_{\ba,b,\rho}\big)^{-1}$ into
\begin{equation}\label{Lorentz 3}
   \big\{\xi \in \hat{\R}^n : \big([\bg]_{\ba,s,r}\big)^{-1} \cdot [\bg]_{\ba,b,\rho}\,\xi \in [-2,2]^n \big\} \cap  Q(\ba,K^{-1}). 
\end{equation}
The key observation is that
\begin{equation}\label{Lorentz 4}
    \big([\bg]_{\ba,s,r}\big)^{-1}\cdot [\bg]_{\ba,b,\rho} = \big([\bg_{\ba,b,\rho}]_{\ba,\frac{s-b}{\rho},\frac{r}{\rho}}\big)^{-1}
\end{equation}
so that \eqref{Lorentz 3} corresponds to an $(\ba,K)$-truncated $r/\rho$-plate for the cone generated over the rescaled curve tuple $\bg_{\ba, b,\rho}$. Once this established, \eqref{Lorentz 2} follows easily by a change of variable. Indeed, taking $\theta' = \theta(b,\rho)$ and defining the functions $\tilde{f}_{\theta'}$ and $\tilde{f}_{\theta}$ for $\theta \in \Theta^{\,\ba,K}(r)$ via the Fourier transform by
\begin{equation*}
    \big(\tilde{f}_{\theta'}\big)\;\widehat{}\; := \hat{f}_{\theta'} \circ [\bg]_{\ba,b,\rho} \qquad \textrm{and} \qquad \big(\tilde{f}_{\theta}\big)\;\widehat{}\; := \hat{f}_{\theta} \circ [\bg]_{\ba,b,\rho},
\end{equation*}
by a linear change of variable the desired inequality \eqref{Lorentz 2} is equivalent to 
\begin{equation*}
    \|\tilde{f}_{\theta'}\|_{L^p(\R^n)} \lesssim \mathfrak{D}_{n,d}^{\ba}(K;r/\rho) \Big(\sum_{\substack{\theta \in \Theta^{\,\ba,K}(\rho)\\ \theta \subset \theta'}}\|  \tilde{f}_{\theta} \|_{L^p(\R^n)}^2 \Big)^{1/2}.
\end{equation*}
However, since the preceding observations show that the $\tilde{f}_{\theta}$ are Fourier supported on $(\ba,K)$-truncated $r/\rho$-plates for the cone generated over the rescaled curve tuple $\bg_{\ba,b,\rho}$, this bound follows directly from the definition of the decoupling constant. 

To prove \eqref{Lorentz 4}, first note that it suffices to show 
\begin{equation*}
    [\bg]_{\ba,b,\rho}\cdot [\bg_{\ba, b,\rho}]_{\ba,\frac{s-b}{\rho},\frac{r}{\rho}} = [\bg]_{\ba,s,r}. 
\end{equation*}
Recalling \eqref{rescaled plate matrix} (taking $u := (s-b)/\rho$ and $h := r/\rho$) and carrying out the block matrix multiplication, this is equivalent to the pair of identities
\begin{subequations} 
\begin{align}\label{Lorentz 5}
        [g_{\ba}]_{b,\rho}\cdot [g_{\ba, b,\rho}]_{\frac{s-b}{\rho},\frac{r}{\rho}} &= [g_{\ba}]_{s,r}, \\
        \label{Lorentz 6}
        [g_{\ba}]_{b,\rho}\cdot \bg_{\ba, b,\rho}\big(\tfrac{s-b}{\rho}\big)+ \bg(b) &= \bg(s).
\end{align}
\end{subequations}
Note that \eqref{Lorentz 5} is an identification of $d \times d$ matrices, whilst \eqref{Lorentz 6} is an identification of $d \times (n-d)$ matrices.

Recall the definition of the matrix 
\begin{equation*}
    [g_{\ba, b,\rho}]_{x} =
    \begin{bmatrix}
    g_{\ba, b,\rho}^{(1)}(x) & \cdots & g_{\ba, b,\rho}^{(d)}(x)
    \end{bmatrix}.
\end{equation*}
From the discussion following Definition~\ref{definition rescaled curve tuple}, the curve $g_{\ba, b,\rho}$ is as defined in Definition~\ref{rescaled curve def} and, in particular, is given by
\begin{equation*}
    g_{\ba, b,\rho}(t) = [g_{\ba}]_{b,\rho}^{-1}\big(g_{\ba}(b + \rho t) - g_{\ba}(b)\big).
\end{equation*}
Combining these definitions with the chain rule, 
\begin{equation*}
    [g_{\ba,b,\rho}]_{x} = [g_{\ba}]_{b,\rho}^{-1} \cdot [g_{\ba}]_{b+\rho x,\rho} \qquad \textrm{for $x \in \R$ with $b+\rho x \in [-1,1]$.}
\end{equation*}
Taking $x = \tfrac{s-b}{\rho}$ and right multiplying the above display by $D_{r/\rho}$ immediately implies \eqref{Lorentz 5}. On the other hand, \eqref{Lorentz 6} follows directly from the definition \eqref{rescaled curve tuple}. 
\end{proof}

\appendix

%%%%%%%%%%%%%%%%%%%%%%%%%%%%%%%%%%%%
%%%%%%%%%%%%%%%%%%%%%%%%%%%%%%%%%%%%
%%%% SHARP FS MAXIMAL FUNCTION ESTIMATE
%%%%%%%%%%%%%%%%%%%%%%%%%%%%%%%%%%%
%%%%%%%%%%%%%%%%%%%%%%%%%%%%%%%%%%

\section{Reduction to a frequency localised estimate}\label{Sob-from-dy appendix}

Here we discuss the passage from frequency localised used in \S\ref{sec:bandlimited}. In particular, we fill in the details of the argument reducing Theorem~\ref{non-degenerate theorem} to Theorem~\ref{Sobolev theorem}. This follows very quickly by using a special case of a result proved in \cite{PRS2011}. 

\subsection{A Calder\'on--Zygmund estimate} For each $k\in \bbN$ we are given operators $T_k$ defined 
on Schwartz function $f\in \cS(\bbR^n)$
by
\begin{equation*}
    T_k f(x) := \int_{\R^n} K_k(x,y) f(y) \,\ud y
\end{equation*}
where each $K_k$ is a continuous and bounded kernel
(with no other quantitative assumptions). 
Let $\zeta\in \cS(\bbR^n)$, define  $\zeta_k := 2^{k n}\zeta(2^k\,\cdot\,)$
and set $P_k f:= \zeta_k \ast f.$

\begin{theorem}[\cite{PRS2011}] \label{thm:prs}   Let   $\eps>0$ and $1<p_0<p<\infty$. Assume for some $A \geq 1$ the operators $T_k$ satisfy 
\begin{equation*}
    \sup_{k>0} 2^{k/p_0}\|T_k\|_{L^{p_0}(\R^n)\to L^{p_0}(\R^n) } \le A, \,
\end{equation*}
\begin{equation*}
    \sup_{k>0} 2^{k/p}\|T_k\|_{L^p(\R^n)\to L^p(\R^n)} \le A\,.
\end{equation*}
Furthermore, assume that there exist $B \ge 1$ and   for each cube $Q$  
a measurable set $\cE_Q$ such that
\begin{equation*}
     |\cE_Q| \le  
B \max\{ \diam(Q)^{n-1}, \diam(Q)^n \} 
\end{equation*}
and such that, for every $k\in \bbN$ and   every cube $Q$ with $2^k \diam(Q)\ge 1$,
% and every $x\in Q$
\Be\label{Linftyhyp}
 \sup_{x\in Q}\int_{\bbR^n\setminus \cE_Q}|K_k(x,y)| \,\ud y \le
A  \max\{ (2^k\diam(Q))^{-\eps}, 2^{-k\eps} \}.
% \begin{cases}B_1\,(2^k\diam(Q))^{-\eps}  &\text{ if } |Q|\le 1
%\\B_1\,2^{-k\eps} &\text{ if } |Q|\ge 1\,\end{cases}.
\Ee
Then for every $r>0$ we have 
\begin{equation*}
 \Big\| \Big(\sum_{k=1}^{\infty} 2^{k r/p} | P_k T_k f_k|^r\Big)^{1/r} \Big\|_{L^p(\R^n)} \lc
 \Big(\sum_{k=1}^{\infty}\|f_k\|_{L^p(\R^n)}^p\Big)^{1/p}   
\end{equation*}
for any sequence of functions $(f_k)_{k=1}^\infty \in \ell^p(L^p)$,
where the implicit constant depends only on 
$A$, $B$, $r$, $\eps$, $p$, $p_0$, $n$ and $\zeta$. 
\end{theorem}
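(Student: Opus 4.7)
The plan is to prove Theorem~\ref{thm:prs} by adapting the classical Calder\'on--Zygmund machinery to this scale-adapted setting, following the scheme of \cite{PRS2011}. The starting observation is that the endpoint $r=p$ of the conclusion is essentially free: Minkowski, the $L^p$-boundedness of the projection $P_k$, and the hypothesis $\|T_k\|_{L^p\to L^p}\le A\,2^{-k/p}$ give
$$
\Big(\sum_k 2^{k}\|P_kT_k f_k\|_{L^p}^{p}\Big)^{1/p}\lesssim A\Big(\sum_k \|f_k\|_{L^p}^{p}\Big)^{1/p}.
$$
The non-trivial content is the improvement from $r=p$ to arbitrary $r>0$, which requires exploiting the cancellation encoded in the kernel hypothesis \eqref{Linftyhyp}. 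The approach is to establish, by real interpolation in the vector-valued scale $L^p(\ell^r)$, a weak-type $(p_0,p_0)$ endpoint for the $\ell^\infty$-version of the sequence, namely a bound on the distribution function of $\sup_k 2^{k/p_0}|P_kT_k f_k|$ in terms of $\sum_k\|f_k\|_{L^{p_0}}^{p_0}$. Interpolation between this weak-type endpoint and the trivial $r=p$ strong-type endpoint above yields the full $\ell^r(L^p)$ conclusion for every $r>0$; this interpolation is standard once the two endpoints are in hand.

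The weak-type $(p_0,p_0)$ endpoint is proved by a Calder\'on--Zygmund decomposition adapted to the sequence. At level $\alpha$, one decomposes $f_k=g_k+\sum_{Q\in\mathcal Q_k}b_{k,Q}$ in the standard way, with $g_k$ bounded in $L^\infty$ by $\alpha$ and with $\|g_k\|_{L^{p_0}}^{p_0}\lesssim \|f_k\|_{L^{p_0}}^{p_0}$, and each bad piece $b_{k,Q}$ supported in a dyadic cube $Q$, of mean zero, with $\|b_{k,Q}\|_{L^{p_0}}^{p_0}\lesssim \alpha^{p_0}|Q|$. The contribution of the good parts is controlled by the hypothesis $\|T_k\|_{L^{p_0}\to L^{p_0}}\le A\,2^{-k/p_0}$ combined with Chebyshev's inequality, where the decay $2^{-k/p_0}$ exactly cancels the weight $2^{k/p_0}$. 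For the bad parts, one separates the bad cubes $Q$ into two regimes depending on the relation between $\diam(Q)$ and the operator's scale $2^{-k}$: if $2^k\diam(Q)<1$, the $L^{p_0}$-bound on $T_k$ applied to the relatively fine object $b_{k,Q}$ suffices; if $2^k\diam(Q)\ge 1$, the kernel bound \eqref{Linftyhyp} produces pointwise control of $T_kb_{k,Q}$ outside the exceptional set $\cE_Q$, with decay $\max\{(2^k\diam(Q))^{-\varepsilon},2^{-k\varepsilon}\}$. The hypothesis $|\cE_Q|\le B\max\{\diam(Q)^{n-1},\diam(Q)^n\}$ is exactly what is needed so that, after summing over $Q$ and $k$, the measure of the union of all exceptional sets admits the weak-type bound $\lesssim \alpha^{-p_0}\sum_k\|f_k\|_{L^{p_0}}^{p_0}$.

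The main obstacle is the careful bookkeeping of the interplay between the weights $2^{kr/p}$, the scale-dependent operator norms $2^{-k/p_0}$ and $2^{-k/p}$, and the kernel decay $\max\{(2^k\diam(Q))^{-\varepsilon},2^{-k\varepsilon}\}$. The condition $p_0<p$ is essential: it guarantees a strict gain when comparing the weight $2^{k/p}$ to the $L^{p_0}$-decay $2^{-k/p_0}$, which provides the summability in $k$ needed to absorb the contribution of small cubes after the CZ decomposition. The anisotropic bound on $|\cE_Q|$ (allowing $\cE_Q$ to be a tubular neighbourhood rather than merely comparable to a ball of radius $\diam(Q)$) is the feature that makes the theorem applicable to the averaging operators of the present paper, where the natural exceptional sets are neighbourhoods of the curve $\gamma$. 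Once the weak-type endpoint is established and the two endpoints are interpolated, the full $\ell^r(L^p)$ conclusion for every $r>0$ follows in the stated generality.
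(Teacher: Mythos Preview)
The paper does not prove this theorem; it is quoted from \cite{PRS2011} and used as a black box in the proof of Proposition~\ref{prop:Sobolev-from-dyadic}. There is therefore no ``paper's own proof'' against which to compare your attempt.

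Independently of that, the interpolation step in your sketch does not close as written. You propose to interpolate between the strong $L^p(\ell^p)$ bound (weight $2^{k/p}$) and a weak $L^{p_0,\infty}(\ell^\infty)$ bound (weight $2^{k/p_0}$). Real interpolation between these endpoints produces $L^{p_\theta}(\ell^{r_\theta})$ estimates with $p_0<p_\theta<p$ and $1/r_\theta=\theta/p$, so $r_\theta\in(p,\infty)$; this neither lands at the target Lebesgue exponent $p$ nor reaches any $r<p$. But $r<p$ is exactly the nontrivial regime: for $r\ge p$ one has $\|\cdot\|_{\ell^r}\le\|\cdot\|_{\ell^p}$ pointwise, and those cases follow immediately from the $r=p$ bound you call ``essentially free''. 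Your interpolation thus recovers only what was already trivial. The Calder\'on--Zygmund decomposition you outline is indeed the engine behind the weak-type endpoint in \cite{PRS2011}, but the passage to arbitrarily small $r$ is not a direct real interpolation between those two endpoints; it requires the additional structure supplied by the Littlewood--Paley projections $P_k$ (this is where the formulation in terms of Triebel--Lizorkin spaces, noted in the remark after Proposition~\ref{prop:Sobolev-from-dyadic}, becomes relevant). A secondary issue: performing a separate Calder\'on--Zygmund decomposition of each $f_k$ at level $\alpha$, as you describe, makes the exceptional-set accounting delicate, since summing $|\cE_Q|\lesssim\diam(Q)^{n-1}$ over the $k$-dependent families of cubes does not obviously yield the required $\alpha^{-p_0}\sum_k\|f_k\|_{p_0}^{p_0}$ bound without further input.
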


\subsection{Application} 
We consider a regular curve given by  $ t \mapsto \gamma(t) $, $t\in [0,1]$, and let 
$A_{\gamma} $ be as in \eqref{averaging operator}; that is, $A_{\gamma} f=\mu_\ga \ast f$. 

Let $\beta_\circ\in C^\infty_c (\hat \bbR^n)$ be, say, radial, supported in 
$\{1/2<|\xi|<1\} $ and $\beta_\circ (\xi)>0$ for $2^{-1/2} \le |\xi|\le 2^{1/2}$ and define $L_j f := \beta_\circ (2^{-j}D) f$. We make the assumption that 
 for some $p_0\ge 2$ we have
\Be \label{eqn:annuli} \|L_j A_{\ga}  f\|_{L^{p_0}(\R^n)} \le C 2^{-j/p_0} \|f\|_{L^{p_0}(\R^n)}, \quad j\ge 0. 
\Ee 
For a non-degenerate curve in $\bbR^4$ such inequalities were proved in the previous sections.

Theorem~\ref{thm:prs} facilitates the following reduction. 

\begin{proposition} \label{prop:Sobolev-from-dyadic}  Assumption \eqref{eqn:annuli}  implies that $ A_{\gamma}$ maps $L^p$ boundedly to $L^p_{1/p}$ for $p_0<p<\infty$.
\end{proposition}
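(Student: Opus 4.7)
The plan is to apply Theorem~\ref{thm:prs} with $r = 2$, which will transform the dyadic bounds in \eqref{eqn:annuli} into the weighted Littlewood--Paley square function estimate equivalent to the desired $L^p \to L^p_{1/p}$ bound for $A_\gamma$. Choose a widened cutoff $\tilde\beta \in C^\infty_c(\hat\R^n)$ supported in an annulus bounded away from the origin with $\tilde\beta \equiv 1$ on $\supp \beta_\circ$, and set $\tilde L_k := \tilde\beta(2^{-k}D)$, so that $L_k \tilde L_k = L_k$. With the choice
\[
T_k := A_\gamma \tilde L_k, \qquad P_k := L_k, \qquad f_k := \tilde L_k f,
\]
a routine computation gives $P_k T_k f_k = L_k A_\gamma \tilde L_k^2 f = L_k A_\gamma f$ (using that $A_\gamma$ commutes with Fourier multipliers and $L_k \tilde L_k^2 = L_k$), so that the left-hand side of the conclusion of Theorem~\ref{thm:prs} is comparable to $\|A_\gamma f\|_{L^p_{1/p}(\R^n)}$ (the low-frequency contribution being controlled by $\|\eta(D)A_\gamma f\|_{L^p} \lesssim \|f\|_{L^p}$), while the right-hand side is bounded by
\[
\Big(\sum_{k \ge 1} \|\tilde L_k f\|_{L^p}^p \Big)^{1/p} \le \Big\| \Big(\sum_{k \ge 1} |\tilde L_k f|^2 \Big)^{1/2} \Big\|_{L^p} \lesssim \|f\|_{L^p},
\]
via the pointwise embedding $\ell^2 \hookrightarrow \ell^p$ (valid since $p > p_0 \ge 2$) and standard Littlewood--Paley theory.

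Next, the operator norm hypotheses on $T_k$ must be verified. Since $\tilde L_k L_j = 0$ for $|j - k| > C$ by disjoint spectral support, the decomposition $\tilde L_k = \sum_{|j-k|\le C} \tilde L_k L_j$ combined with the triangle inequality and \eqref{eqn:annuli} yields $\|T_k\|_{L^{p_0}\to L^{p_0}} \lesssim 2^{-k/p_0}$. For the $L^p$ bound, interpolate \eqref{eqn:annuli} against the trivial estimate $\|L_j A_\gamma f\|_{L^\infty} \lesssim \|f\|_{L^\infty}$ (immediate since $\mu_\gamma$ is a finite measure and $L_j$ is convolution with an $L^1$-normalised Schwartz kernel) to obtain $\|L_j A_\gamma\|_{L^p\to L^p} \lesssim 2^{-j/p}$ for $p_0 < p < \infty$; the same decomposition then yields $\|T_k\|_{L^p\to L^p} \lesssim 2^{-k/p}$.

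The remaining input is the off-diagonal $L^1$ kernel bound \eqref{Linftyhyp}. The kernel of $T_k$ is
\[
K_k(x,y) = \int_\R \check{\tilde\beta}_k\bigl(x - \gamma(s) - y\bigr)\, \chi(s)\,\ud s, \qquad \check{\tilde\beta}_k := 2^{kn}\check{\tilde\beta}(2^k\,\cdot\,),
\]
which is essentially concentrated in an $O(2^{-k})$-tube about the curve $\{x - \gamma(s) : s \in \supp\chi\}$. For a cube $Q$ of diameter $d$ centred at $x_Q$ with $2^k d \ge 1$, take
\[
\cE_Q := \bigl\{y \in \R^n : \dist(y, \Gamma_Q) \le C d \bigr\}, \qquad \Gamma_Q := \bigl\{x_Q - \gamma(s) : s \in \supp \chi\bigr\},
\]
for a suitable constant $C \ge 2$. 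Since $\Gamma_Q$ is a regular curve segment of length $O(1)$, a standard tube computation yields $|\cE_Q| \lesssim d^{n-1}$, matching the required measure bound. For $x \in Q$ and $y \notin \cE_Q$, the triangle inequality gives $|x - \gamma(s) - y| \gtrsim d$ for every $s \in \supp \chi$, and the Schwartz decay of $\check{\tilde\beta}$ combined with a routine integration over $\R^n \setminus \cE_Q$ then produces $\int_{\R^n \setminus \cE_Q}|K_k(x,y)|\,\ud y \lesssim_N (2^k d)^{-N}$ for any $N$, which is much stronger than required.

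With these three hypotheses in place, Theorem~\ref{thm:prs} immediately delivers the desired square function estimate and hence Proposition~\ref{prop:Sobolev-from-dyadic}. The main point requiring care is the geometric setup for $\cE_Q$: the tube construction must simultaneously respect the measure constraint $d^{n-1}$ and leave room for the Schwartz decay at scale $2^{-k}$, and it is the length-$O(1)$ property of $\Gamma_Q$ that prevents losses. Everything else is essentially bookkeeping once the correct identification of $T_k$, $P_k$ and $f_k$ is made.
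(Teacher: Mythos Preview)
Your argument is correct and follows the same overall scheme as the paper's: apply Theorem~\ref{thm:prs}, verify the two operator-norm hypotheses and the off-diagonal kernel bound~\eqref{Linftyhyp}, then read off the $L^p_{1/p}$ bound via Littlewood--Paley. The one tactical difference is the choice of localising operator in $T_k$. The paper sets $T_k f = \upsilon_k * \mu_\gamma * f$ with $\upsilon$ a \emph{spatially} compactly supported bump satisfying $\widehat{\upsilon} > 0$ on $\supp\beta_\circ$ and $\widehat{\upsilon}(0)=\nabla\widehat{\upsilon}(0)=0$; the compact physical support forces $K_k(x,y)=0$ identically for $x\in Q$, $y\notin\cE_Q$ once $2^k\diam(Q)\ge 1$, so~\eqref{Linftyhyp} is vacuous, at the price of a slightly longer verification of the $L^{p_0}$ bound (one sums the tail $\|\widetilde L_j\upsilon_k\|_1 = O(2^{-|j-k|})$ over all $j\in\Z$). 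You instead localise on the Fourier side, which reduces the operator-norm check to a finite sum but obliges you to earn~\eqref{Linftyhyp} through Schwartz-tail integration. Both trades are legitimate. One small slip: your identity $\tilde L_k = \sum_{|j-k|\le C}\tilde L_k L_j$ presumes $\sum_j \beta_\circ(2^{-j}\,\cdot\,)\equiv 1$, which is not assumed of $\beta_\circ$; this is harmless and is fixed by inserting a reproducing partner $\tilde\beta_\circ$ with $\sum_j \beta_\circ\tilde\beta_\circ(2^{-j}\,\cdot\,)\equiv 1$, exactly as the paper does.
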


This result can be used to complete the argument described in \S\ref{sec:bandlimited} and thereby reduce Theorem~\ref{non-degenerate theorem} to Theorem~\ref{Sobolev theorem}.

\begin{proof}[Proof of Proposition~\ref{prop:Sobolev-from-dyadic}]
Let $\Gamma := \{\gamma(t): t\in I\}$ where $I$ is a compact interval containing the support of $\chi$.
Let $Q$ be a cube with center $x_Q$ and define 
\begin{equation*}
  \cE_Q := \{y\in \bbR^n: \mathrm{dist}(y-x_Q, \Gamma) \le 10 \,\diam (Q)\}.  
\end{equation*}
Thus if $Q$ is small then $\cE_Q$ is a tubular neighborhood of $x_Q+\Gamma$ of width $O(\diam(Q))$.  It is not hard  to see that there is a constant $C$ (depending on $B$) such that 
\begin{equation*}
     \meas (\cE_Q) \le C\begin{cases}
\diam(Q)^{n-1} &\text{ if } \diam(Q)\le 1, 
\\
\diam(Q)^n &\text{ if } \diam(Q) \ge 1.
\end{cases} 
\end{equation*}

Let $\upsilon \in C^\infty_c(\bbR^n)$ be supported in $\{x:|x|<1/4\} $, with the  property that $\widehat \upsilon(\xi) >0$ on the support of $\beta_\circ$, $\widehat \upsilon(0)=0$ and $\nabla \widehat \upsilon(0)=0$.\footnote{To construct such a function  take any compactly supported real valued $u\in C^\infty_c(\R^n)$ with $\int_{\R^n} u=1$, form $u_C=C^d u(C\,\cdot\,)$ for sufficiently large $C$ to ensure $\widehat u_C>0$ on $\supp \beta_\circ$ and then take the Laplacian, $\upsilon=\Delta u_C$, to also get the condition $\widehat \upsilon(0)=0$.}
Let $\upsilon_k := 2^{k n} \upsilon(2^k\,\cdot\,)$ and 
define 
\begin{equation*}
    T_k f(x) := \upsilon_k*\mu_\ga*f \qquad \textrm{and}\qquad K_k(x,y) := \upsilon_k*\mu_\ga (x-y).
\end{equation*}
By the support properties of $\upsilon_k$ we have
\begin{equation*}
   K_k(x,y) =0 \qquad \textrm{ if $x\in Q$, $y\in \bbR^n \setminus \cE_Q$ and $2^k \diam(Q)>1$} 
\end{equation*}
and thus \eqref{Linftyhyp} holds trivially.

We claim that the assumption \eqref{eqn:annuli}  
also implies
\begin{equation*}
   \sup_{k \in \N} 2^{k/p_0} \|T_k\|_{L^{p_0}(\R^n)\to L^{p_0}(\R^n) }  <\infty.
\end{equation*}
To see this, choose 
$\widetilde  \beta_\circ\in C^\infty_c(\hat{\R}^n)$ supported in 
$\{\xi:1/2<|\xi|<2\}$ such that 
\begin{equation*}
    \sum_{j\in\bbZ} \beta_\circ(2^{-j}\xi)
\widetilde \beta_\circ(2^{-j}\xi) =1
\end{equation*}
for all $\xi\neq 0$ and define $\widetilde L_j := \widetilde \beta_\circ(2^{-j} D) $. Thus,
\begin{equation*}
\|T_k\|_{L^{p_0}\to L^{p_0}} \le \sum_{j\in \bbZ} 
\|L_j\widetilde L_jT_k\|_{L^{p_0} \to L^{p_0}}
\le \sum_{j\in \bbZ} \|L_jA_\gamma\|_{L^{p_0} \to L^{p_0}} \|\widetilde L_j\upsilon_k\|_1.
\end{equation*}
By straightforward calculations, using scaling and the cancellation and Schwartz  properties of $\upsilon$ and $\cF^{-1}[\beta_\circ]$, one has
$\|\widetilde L_j\upsilon_k\|_1= O(2^{-|j-k|})$ for all $j\in \bbZ$. Using this together with the hypothesis
\eqref{eqn:annuli}, we get 
\begin{equation*}
    \|T_k\|_{L^{p_0}\to L^{p_0}} \leq \sum_{j\in \bbZ} \min \{2^{-j/p}, 1\} 2^{-|j-k|} \lc 2^{-k/p};
\end{equation*}
note for $j < 0$ we use the trivial bounds $\| L_j A_\gamma f\|_{p} \lesssim \| f\|_p$.
Since $\|T_k\|_{L^\infty\to L^\infty}=O(1)$, interpolation therefore yields
\begin{equation*}
    \sup_{k\in \N} 2^{k/p} \|T_k\|_{L^{p}(\R^n)\to L^{p}(\R^n) }  <\infty \qquad \textrm{for all $p_0\le p\le \infty$.}
\end{equation*}

Let $\beta\in C^\infty_c(\hat{\R}^n)$ be supported in $\{\xi:1/4<|\xi|<4\}$ such that $\beta_\circ\beta=\beta_\circ$ and $p_0 <p<\infty$. Defining $f_k := \beta(2^{-k} D) f$, by Theorem~\ref{thm:prs} we obtain 
 \begin{align}\notag
 \Big\| \Big(\sum_{k=1}^{\infty} [2^{k /p} | \beta_\circ(2^{-k} D) A_{\ga} f|]^r\Big)^{1/r} \Big\|_{L^p(\R^n)} &=
 \Big\| \Big(\sum_{k=1}^{\infty} 2^{k r/p} | P_k T_k f_k|^r\Big)^{1/r} \Big\|_{L^p(\R^n)}
\\  
\notag
& \lc \Big(\sum_{k=1}^{\infty}\|f_k\|_{L^p(\R^n)}^p\Big)^{\frac 1p} 
 \\
 \label{TLBes}
 &\lc
 \Big(\sum_{k=1}^{\infty}\|\beta(2^{-k}D) f\|_{L^p(\R^n)}^p\Big)^{\frac 1p}.
\end{align} 

 Since $\ell^r\hookrightarrow \ell^2$ for $r\le 2$ and
 $\ell^2\hookrightarrow \ell^p$ for $2\le p$, we deduce that
 \Be\notag
\Big\| \Big(\sum_{k=1}^{\infty} [2^{k /p} | \beta_\circ(2^{-k} D) A_{\ga} f|]^2\Big)^{1/2} \Big\|_{L^p(\R^n)}\lc 
 \Big\| \Big(\sum_{k=1}^{\infty}|\beta(2^{-k} D) f|^2\Big)^{1/2}\Big\|_{L^p(\R^n)}.
 \Ee
This, together with the obvious low frequency $L^p$ estimates, yield the asserted Sobolev bound via Littlewood--Paley inequalities.
\end{proof}

\begin{remark} Using Besov and Triebel-Lizorkin spaces one gets from \eqref{TLBes} the stronger inequality $$A_{\ga}: B^0_{p,p} \to F^{1/p}_{p,r} $$ for all $r>0$.
\end{remark}

%%%%%%%%%%%%%%%%%%%%%%%%%%%%%%%%%%%%%
%%%%%%%%%%%%%%%%%%%%%%%%%%%%%%%%%%%%
%%%. DERIVATIVE BOUNDS
%%%%%%%%%%%%%%%%%%%%%%%%%%%%%%%%%%%
%%%%%%%%%%%%%%%%%%%%%%%%%%%%%%%%%%%

\section{Derivative bounds for implicitly defined functions}

The following lemma is a particular instance of a more general lemma on derivative bounds for implicitly defined functions found in \cite[Appendix C]{BGHS-helical}.

\begin{lemma}\label{imp deriv lem}
Let $\Omega \subseteq \R^n$ be an open set, $I \subseteq \R$ an open interval, $g \colon I \to \R^n$ a $C^\infty$ mapping and $y \colon \Omega \to I$ a $C^{\infty}$ mapping such that
\begin{equation*}
    \inn{g \circ y(\bm{x})}{\bm{x}}=0 \qquad \textrm{for all $\bm{x} \in \Omega$.}
\end{equation*}
For $\be \in S^{n-1}$ let $\nabla_{\be}$ denote the directional derivative operator with respect to $\bm{x}$ in the direction of $\be$. Suppose $A,M_1,M_2>0$ are constants such that
\begin{equation}\label{multi imp deriv 2}
    \left\{\begin{array}{rcl}
    |\inn{g'\circ y(\bm{x})}{\bm{x}}| &\geq& A M_2, \\[2pt]
    |\inn{g^{(N)}\circ y(\bm{x})}{\bm{x}}| &\lesssim_N& A M_2^{N}  \\[2pt]
    |\inn{g^{(N)}\circ y(\bm{x})}{\be}| &\lesssim_N& A M_1 M_2^{N} 
\end{array} \right. \qquad\textrm{for all $N \in \N$ and all $\bm{x} \in \Omega$} 
\end{equation}
Then the function $y$ satisfies
\begin{equation}\label{multi imp der bound}
    |\nabla_{\be}^{N} y(\bm{x})| \lesssim_N M_1^N M_2^{-1} \qquad \textrm{for all $\bm{x} \in \Omega$ and all $N \in \N_0$.}
\end{equation}
Furthermore, for any $C^\infty$ function $h \colon I \to \R^n$ for which there exists some constant $B>0$ satisfying
\begin{equation}\label{multi imp deriv 3}
    \left\{\begin{array}{rcl}
    |\inn{h^{(N)}\circ y(\bm{x})}{\bm{x}}| &\lesssim_N& B M_2^{N}  \\[2pt]
    |\inn{h^{(N)}\circ y(\bm{x})}{\be}| &\lesssim_N& B M_1 M_2^{N} 
\end{array} \right. \qquad\textrm{for all $N \in \N$ and all $\bm{x} \in \Omega$,} 
\end{equation}
one has
\begin{equation}\label{multi Faa di Bruno eq 2}
    \big|\nabla_{\be}^N \inn{h \circ y(\bm{x})}{\bm{x}}\big| \lesssim_N B M_1^N  \qquad \textrm{for all $\bm{x} \in \Omega$ and all $N \in \N$.}
\end{equation}
\end{lemma}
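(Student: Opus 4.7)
The plan is to establish \eqref{multi imp der bound} by induction on $N$, coupled with a simultaneous auxiliary claim for the scalar quantities
\begin{equation*}
    \phi_k(\bm{x}) := \inn{g^{(k)}\circ y(\bm{x})}{\bm{x}}, \qquad \psi_k(\bm{x}) := \inn{g^{(k)}\circ y(\bm{x})}{\be},
\end{equation*}
namely that
\begin{equation*}
    |\nabla_{\be}^{N} \phi_k(\bm{x})| \lesssim_{N,k} A M_1^N M_2^k, \qquad |\nabla_{\be}^{N} \psi_k(\bm{x})| \lesssim_{N,k} A M_1^{N+1} M_2^k
\end{equation*}
for all $k \in \N_0$ and $N \in \N_0$. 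The $N=0$ cases of these auxiliary estimates are exactly the size hypotheses in \eqref{multi imp deriv 2} (with the degenerate $\phi_0\equiv 0$ absorbing the $k=0$ case of the first family).

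The engine of the induction is the pair of identities
\begin{equation*}
\nabla_\be \phi_k = \phi_{k+1} \cdot \nabla_\be y + \psi_k, \qquad \nabla_\be \psi_k = \psi_{k+1} \cdot \nabla_\be y,
\end{equation*}
obtained by a direct application of the chain rule. Combined with the defining relation $\phi_0 \equiv 0$, differentiating once yields the foundational identity
\begin{equation*}
    \phi_1 \cdot \nabla_\be y + \psi_0 = 0,
\end{equation*}
which, since $|\phi_1| \geq A M_2$ by \eqref{multi imp deriv 2}, gives the $N=1$ base case $|\nabla_\be y| \leq |\psi_0|/|\phi_1| \lesssim M_1/M_2$ of \eqref{multi imp der bound}.

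For the inductive step, I would organise the argument so that at stage $N$ the auxiliary claim is first advanced to order $N$ using the Leibniz rule applied to each of the two recursions together with the inductive bound on $\nabla_\be^j y$ for $j \leq N$; each resulting summand has size $\lesssim A M_1^N M_2^k$ (respectively $A M_1^{N+1} M_2^k$), which closes the auxiliary claim at that order. Then, applying $\nabla_\be^N$ to the foundational identity and isolating the top-order derivative yields
\begin{equation*}
    \phi_1 \cdot \nabla_\be^{N+1} y = -\nabla_\be^N \psi_0 - \sum_{j=1}^N \binom{N}{j}\bigl(\nabla_\be^j \phi_1\bigr)\bigl(\nabla_\be^{N+1-j} y\bigr).
\end{equation*}
Dividing by $\phi_1$ and substituting the auxiliary bounds (at order $\leq N$) together with the inductive bounds on $\nabla_\be^k y$ for $k \leq N$ produces $|\nabla_\be^{N+1} y| \lesssim_N M_1^{N+1}/M_2$, closing the outer induction.

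Finally, \eqref{multi Faa di Bruno eq 2} is obtained by running exactly the same auxiliary argument with $g$ replaced by $h$ and $A$ replaced by $B$, now treating the derivative bounds \eqref{multi imp der bound} on $y$ as already-established input rather than quantities to be bounded. Setting $k=0$ in the resulting estimate $|\nabla_\be^N \inn{h^{(k)}\circ y}{\bm{x}}| \lesssim_N B M_1^N M_2^k$ yields the claim.

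The main (mild) obstacle is purely bookkeeping: one must verify that the stage advancing the auxiliary claim to order $N$ only ever invokes bounds on $\nabla_\be^j y$ for $j \leq N$, and that this is completed \emph{before} the stage bounding $\nabla_\be^{N+1} y$. Once the induction is set up with this precise ordering, the argument reduces to a mechanical application of the Leibniz rule and the hypothesised size estimates.
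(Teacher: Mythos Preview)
Your proof is correct and is the natural approach. The paper itself does not give a proof of this lemma at all: it simply states that the result is ``a particular instance of a more general lemma on derivative bounds for implicitly defined functions found in \cite[Appendix C]{BGHS-helical}'' and moves on. So there is no paper proof to compare against beyond that citation; your inductive scheme via the recursions $\nabla_\be \phi_k = \phi_{k+1}\,\nabla_\be y + \psi_k$ and $\nabla_\be \psi_k = \psi_{k+1}\,\nabla_\be y$, together with isolating the top-order derivative of $y$ from the differentiated identity $\phi_1\,\nabla_\be y + \psi_0 = 0$, is precisely the argument one expects in the cited appendix.

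One minor caveat worth flagging (about the statement, not your strategy): your base case and the $N=1$ instance of \eqref{multi Faa di Bruno eq 2} use the quantities $\psi_0 = \inn{g\circ y}{\be}$ and its analogue $\inn{h\circ y}{\be}$, whose bounds correspond to the $N=0$ case of the third lines in \eqref{multi imp deriv 2} and \eqref{multi imp deriv 3}. As written, those hypotheses are stated only for $N \in \N$. In every application in the paper the $N=0$ bound is available (the verifications in Lemmas~\ref{J3 Linfty lemma} and~\ref{J4 Linfty bounds lem} check the relevant inequalities for all $K \geq 2$, which covers it), so this is a harmless omission in the lemma statement rather than a defect in your argument.
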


The following example illustrates how Lemma~\ref{imp deriv lem} is applied in practice in this article. 

\begin{example}[Application to the case $J=3$]\label{deriv ex}
Let $\gamma \in \mathfrak{G}_4(\delta_0)$, and $\theta: \hat{\R}^4 \backslash\{0\} \to I_0$ satisfying
\begin{equation*}
    \inn{\gamma'' \circ \theta (\xi)}{\xi}=0.
\end{equation*}
We apply the previous result with $g=\gamma''$ and $h=\gamma'$. If $B \leq A$ the conditions \eqref{multi imp deriv 2} and \eqref{multi imp deriv 3} read succinctly as
\begin{equation*}
    \left\{\begin{array}{rcl}
    |\inn{\gamma^{(3)}\circ \theta(\xi)}{\xi}| &\geq& A M_2, \\[2pt]
    |\inn{\gamma^{(1+N)}\circ \theta(\xi)}{\xi}| &\lesssim_N& B M_2^{N}  \\[2pt]
    |\inn{\gamma^{(1+N)}\circ \theta(\xi)}{\be}| &\lesssim_N& B M_1 M_2^{N} 
\end{array} \right.
\qquad\textrm{for all $N \in \N$ and all $\xi \in \Omega \subset \hat{\R}^4 \backslash \{0\}$},
\end{equation*}
which imply
\begin{equation*}
    |\nabla_{\bm{e}}^N \theta(\xi)| \lesssim_N M_1^N M_2^{-1} \quad \text{ and } \quad |\nabla_{\bm{e}}^N \inn{\gamma' \circ \theta(\xi)}{\xi}| \lesssim_N BM_1^N.
\end{equation*}
for all $N \in \N$ and all $\xi \in \Omega \subset \hat{\R}^4 \backslash \{0\}$.

The applications in the different cases $J=4$ are similar, with the choices $(g,h)=(\gamma^{(3)}, \gamma'')$, $(g,h)=(\gamma^{(3)}, \gamma')$ and  $(g,h)=(\gamma'', \gamma')$.
\end{example}

%%%%%%%%%%%%%%%%%%%%%%%%%%%%%%%%%%%%%%%%%%%%%%%%%%%%%%%%%%%%%%%%%%%%%%%%%%%%%%%%%%%%%%%%%%%%%%%%

%                                          APPENDIX

%%%%%%%%%%%%%%%%%%%%%%%%%%%%%%%%%%%%%%%%%%%%%%%%%%%%%%%%%%%%%%%%%%%%%%%%%%%%%%%%%%%%%%%%%%%%%%%%

\section{Integration-by-parts lemmata}

\subsection{Non-stationary phase} For $a \in C^{\infty}_c(\R)$ supported in an interval $I \subset \R$ and $\phi \in C^{\infty}(I)$, define the oscillatory integral
\begin{equation*}
    \mathcal{I}[\phi, a] := \int_{\R} e^{i \phi(s)} a(s)\,\ud s.
\end{equation*}
The following lemma is a standard application of integration-by-parts.

\begin{lemma}[Non-stationary phase]\label{non-stationary lem} Let $R \geq 1$ and $\phi, a$ be as above. Suppose that for each $j \in \N_0$ there exist constants $C_j \geq 1$ such that the following conditions hold on the support of $a$:
\begin{enumerate}[i)]
    \item $|\phi'(s)| > 0$,
    \item $|\phi^{(j)}(s)| \leq C_j R^{-(j-1)}|\phi'(s)|^j\,\,$ for all $j \geq 2$,
    \item $|a^{(j)}(s)| \leq C_j R^{-j}|\phi'(s)|^j\,\,$ for all $j \geq 0$.
\end{enumerate}
Then for all $N \in \N_0$ there exists some constant $C(N)$ such that
\begin{equation*}
    |\mathcal{I}[\phi, a]| \leq C(N) \cdot |\supp a| \cdot R^{-N}.
\end{equation*}
Moreover, $C(N)$ depends on $C_1, \dots, C_N$ but is otherwise independent of $\phi$ and $a$ and, in particular, does not depend on $r$. 
\end{lemma}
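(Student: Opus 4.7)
The plan is to apply repeated integration-by-parts using the first-order differential operator
$$L := \frac{1}{i\phi'(s)} \frac{d}{ds},$$
which is well-defined by hypothesis (i) and satisfies $L e^{i\phi(s)} = e^{i\phi(s)}$. Since $a$ is compactly supported in the interior of $I$, there are no boundary terms, and for any $N\in\N$,
$$\mathcal{I}[\phi, a] = \int_{\R} L^N e^{i\phi(s)} \cdot a(s) \, \ud s = \int_{\R} e^{i\phi(s)} (L^t)^N a(s) \, \ud s,$$
where $L^t g := -\tfrac{1}{i}\bigl(g/\phi'\bigr)'$ is the formal transpose of $L$. It therefore suffices to establish the uniform pointwise bound
$$|(L^t)^N a(s)| \leq C(N)\, R^{-N} \qquad \textrm{for all } s \in \supp a. \qquad (\star)$$

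To prove $(\star)$, I will show by induction on $N$ that $(L^t)^N a$ is a finite linear combination of terms of the form
$$T_{\alpha_0, \bm{\beta}}(s) \;:=\; \frac{a^{(\alpha_0)}(s) \, \prod_{l=1}^{m} \phi^{(\beta_l)}(s)}{\phi'(s)^{\,N+m}},$$
indexed by $m \geq 0$, $\alpha_0 \geq 0$, and $\bm{\beta} = (\beta_1, \dots, \beta_m)$ with each $\beta_l \geq 2$, subject to the homogeneity relation
$$\alpha_0 + \sum_{l=1}^{m}(\beta_l - 1) \;=\; N.$$
The base case $N=1$ yields $L^t a = -\tfrac{1}{i}\bigl( a'/\phi' - a\,\phi''/(\phi')^2 \bigr)$, and the induction step is a routine application of the Leibniz and quotient rules: differentiating any term $T_{\alpha_0, \bm\beta}$ either increases $\alpha_0$ by $1$, appends a new index $\beta_{m+1}=2$, or increments one of the existing $\beta_l$ by $1$, while the denominator picks up the required additional factor of $\phi'$.

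With this expansion in hand, the bound $(\star)$ follows by applying the hypotheses termwise. Hypothesis (iii) gives $|a^{(\alpha_0)}(s)| \leq C_{\alpha_0}\, R^{-\alpha_0}\,|\phi'(s)|^{\alpha_0}$, while hypothesis (ii), valid since each $\beta_l \geq 2$, gives $|\phi^{(\beta_l)}(s)| \leq C_{\beta_l}\, R^{-(\beta_l-1)}\,|\phi'(s)|^{\beta_l}$. Multiplying these estimates, the total power of $|\phi'|$ supplied by the numerator is $\alpha_0 + \sum_l \beta_l = N + m$, which exactly cancels the factor $|\phi'|^{N+m}$ in the denominator. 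The total accumulated negative power of $R$ is $\alpha_0 + \sum_l(\beta_l - 1) = N$, and hence each $|T_{\alpha_0, \bm\beta}(s)|$ is bounded by a constant depending on $C_0, \ldots, C_N$ times $R^{-N}$. Summing over the (finitely many) terms yields $(\star)$, and then
$$|\mathcal{I}[\phi, a]| \,\leq\, \int_{\supp a} |(L^t)^N a(s)|\, \ud s \,\leq\, C(N)\, R^{-N}\, |\supp a|,$$
which is the desired conclusion. The only point requiring care is the combinatorial bookkeeping in the inductive description of $(L^t)^N a$; however, the exponents in hypotheses (ii) and (iii) are precisely calibrated so that the cancellation of $|\phi'|$ and the accumulation of $R^{-1}$ factors work out cleanly, independently of the actual size of $\phi'(s)$.
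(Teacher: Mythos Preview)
Your proof is correct and follows the standard non-stationary phase argument via repeated integration-by-parts with the operator $L = (i\phi')^{-1}\tfrac{d}{ds}$; the inductive bookkeeping of the terms in $(L^t)^N a$ and the cancellation of the $|\phi'|$ factors are handled cleanly. The paper itself does not give a proof but refers to \cite[Appendix D]{BGHS-helical}, where precisely this argument appears.
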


A detailed proof of this lemma can be found in \cite[Appendix D]{BGHS-helical}.

\subsection{Kernel estimates} The following lemma, which is used to obtain $L^{\infty}$ bounds for the multipliers, is based on integration-by-parts in the $\xi$ variable. 

\begin{lemma}\label{gen Linfty lem} Let $a \in C_c^{\infty}(\hat{\R}^n \times I_0)$, $\sigma > 0$, $\lambda_j > 0$ for $1 \leq j \leq n$ and $\{\bm{v}_1, \dots, \bm{v}_n\}$ be an orthonormal basis of $\R^n$. Suppose the following conditions hold:
\begin{enumerate}[i)]
    \item $|\{s \in \R : (\xi;s) \in \supp a \textrm{ for some } \xi \in \hat{\R}^n \}| \leq \sigma$,
    \item $\xisupp a \subseteq \{\xi \in \hat{\R}^n : |\inn{\xi}{v_j}| \leq \lambda_j \textrm{ for $1 \leq j \leq n$}\}$,
    \item $|\nabla_{\bm{v}_j}^N a(\xi;s)| \lesssim_N \lambda_j^{-N}$ for all $(\xi;s) \in \hat{\R}^n \times \R$, $1 \leq j \leq n$ and $N \in \N_0$. 
\end{enumerate}
 Then 
\begin{equation*}
 \|m[a]\|_{M^{\infty}(\R^n)} \lesssim \sigma.
\end{equation*}
\end{lemma}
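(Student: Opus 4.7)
The plan is to reduce the multiplier estimate to an $L^1$ bound on the convolution kernel and then to apply standard integration-by-parts in the $\xi$ variable, using the orthonormal basis $\{\bm{v}_1,\dots,\bm{v}_n\}$ to decouple the decay estimates in each direction.

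First, since convolution with an $L^1$ function is bounded on $L^{\infty}$, we have $\|m[a]\|_{M^{\infty}(\R^n)}\leq \|\cF^{-1}m[a]\|_{L^1(\R^n)}$. Applying Fubini in the defining formula for $m[a]$ gives
\begin{equation*}
    \cF^{-1}m[a](x) = \int_{\R}\chi(s)\,\wt{a}(x-\gamma(s);s)\,\ud s \qquad\text{where}\qquad \wt{a}(\,\cdot\,;s):=\cF_{\xi}^{-1}a(\,\cdot\,;s).
\end{equation*}
By Minkowski's inequality, translation-invariance of $L^1$, and hypothesis $i)$ (which says that the $s$-projection of $\supp a$ has measure at most $\sigma$),
\begin{equation*}
    \|\cF^{-1}m[a]\|_{L^1(\R^n)}\leq \int_\R |\chi(s)|\, \|\wt{a}(\,\cdot\,;s)\|_{L^1(\R^n)}\,\ud s \lesssim \sigma \cdot \sup_{s\in\R}\|\wt{a}(\,\cdot\,;s)\|_{L^1(\R^n)}.
\end{equation*}
Thus the entire problem reduces to showing that $\|\wt{a}(\,\cdot\,;s)\|_{L^1(\R^n)}\lesssim 1$ uniformly in $s$.

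To obtain the uniform $L^1$ bound, I would integrate by parts in $\xi$ in each of the $n$ coordinate directions of the orthonormal basis. For each $1\leq j\leq n$ and $N\in\N_0$,
\begin{equation*}
    \bigl(\inn{y}{\bm{v}_j}\bigr)^N\wt{a}(y;s) = (-i)^N \cF_{\xi}^{-1}\bigl[\nabla^N_{\bm{v}_j}a(\,\cdot\,;s)\bigr](y).
\end{equation*}
Combining hypothesis $iii)$ with the size of the support guaranteed by $ii)$, namely $|\xisupp a|\lesssim \prod_{k}\lambda_k$, yields the pointwise estimate
\begin{equation*}
    |\wt{a}(y;s)|\lesssim_N \Bigl(\prod_{k=1}^n\lambda_k\Bigr)\prod_{j=1}^n\bigl(1+\lambda_j|\inn{y}{\bm{v}_j}|\bigr)^{-N}.
\end{equation*}
Finally, since $\{\bm{v}_j\}$ is orthonormal, the change of variables $u_j := \lambda_j \inn{y}{\bm{v}_j}$ has Jacobian $\prod_j\lambda_j$, and choosing $N$ larger than $n$ makes the resulting integral convergent, giving $\|\wt{a}(\,\cdot\,;s)\|_{L^1(\R^n)}\lesssim 1$ uniformly in $s$.

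This proof is quite short and straightforward; there is no substantive obstacle, and the only mild care needed is to invoke the orthonormality of $\{\bm{v}_j\}$ so that the change of variables is properly normalised and the decay factors in the various $\bm{v}_j$-directions can be combined multiplicatively.
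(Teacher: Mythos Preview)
Your proof is correct and follows essentially the same approach as the paper: reduce to an $L^1$ bound on the kernel via translation-invariance and hypothesis i), then obtain a uniform pointwise decay estimate on $\cF^{-1}_\xi a(\,\cdot\,;s)$ by integration-by-parts using hypotheses ii) and iii). The only cosmetic difference is that the paper writes the decay as $\bigl(1+\sum_j \lambda_j|\inn{x}{\bm{v}_j}|\bigr)^{-N}$ rather than your product form; since hypothesis iii) only controls \emph{pure} directional derivatives, the sum form (equivalently, your product form with exponent $N/n$) is what falls out directly, but either version is easily integrable after the orthonormal change of variables you describe.
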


Here $\nabla_{\bm{v}} := \bm{v} \cdot \nabla$ denotes the directional derivative with respect to $\xi$ in the direction of $\bm{v} \in S^{n-1}$.

\begin{proof}[Proof of Lemma~\ref{gen Linfty lem}] For $f \in \Scn$ we have $m[a](D)f = K[a] \ast f$ where the kernel $K[a]$ is given by
\begin{equation*}
    K[a](x) = \int_{\R} \mathcal{F}^{-1}a(\,\cdot\,;s)(x + \gamma(s)) \chi(s)\,\ud s.
\end{equation*}
Here $\mathcal{F}^{-1}$ denotes the inverse Fourier transform in the $\xi$ variable. Consequently, 
\begin{equation*}
    \|m[a](D)\|_{M^{\infty}(\R^n)} \leq \|K[a]\|_{L^1(\R^n)} \leq \int_{\R} \|\mathcal{F}^{-1}a(\,\cdot\,;s)\|_{L^1(\R^n)} \chi(s)\,\ud s.
\end{equation*}
By the hypothesis i) on the $s$-support, the problem is therefore reduced to showing
\begin{equation}\label{gen Linfty 1}
    \sup_{s \in \R} \|\mathcal{F}^{-1}a(\,\cdot\,;s)\|_{L^1(\R^n)} \lesssim 1.
\end{equation}
However, the conditions ii) and iii), combined with a standard integration-by-parts argument, imply
\begin{equation*}
    |\mathcal{F}^{-1}a(\,\cdot\,;s)(x)| \lesssim_N \Big(\prod_{j=1}^n \lambda_j\Big) \Big(1 + \sum_{j=1}^n \lambda_j|\inn{x}{\bm{v}_j}| \Big)^{-N} \qquad \textrm{for all $(x;s) \in \R^n \times \R$ and all $N \geq 0$,}
\end{equation*}
from which the desired bound \eqref{gen Linfty 1} follows. 
\end{proof}

%%%%%%%%%%%%%%%%%%%%%%%%%%%%%%%%%%%%%%%%%%%%%%%%%%%%%%%%%%%%%%%%%%%%%%%%%%%%%%%%%%%%%%%%%%%%%%%%

%                                          REFERENCES

%%%%%%%%%%%%%%%%%%%%%%%%%%%%%%%%%%%%%%%%%%%%%%%%%%%%%%%%%%%%%%%%%%%%%%%%%%%%%%%%%%%%%%%%%%%%%%%%

\bibliography{Reference}

\bibliographystyle{plain}
\end{document}